\documentclass[preprint, reqno, 11pt]{imsart}
\pdfoutput=1
\usepackage[utf8]{inputenc}
\usepackage{amsmath}
\usepackage{amssymb}
\usepackage{graphicx}
\usepackage[numbers]{natbib}
\usepackage{bbm}
\usepackage{appendix}
\usepackage{amsthm}
\usepackage{xcolor}
\usepackage{tabularx,multicol,multirow,booktabs,csquotes,comment}
\usepackage[margin=0.9in]{geometry}
\usepackage{verbatim}
\usepackage{hyperref}
\hypersetup{
  colorlinks,
  linkcolor={red!50!black},
  citecolor={blue},
  urlcolor={blue!80!black}
}
\RequirePackage{hypernat}

\RequirePackage{algorithm}
\RequirePackage{algpseudocode}

\usepackage[shortlabels]{enumitem}
\usepackage{subfigure}
\usepackage[noabbrev,capitalize]{cleveref}

\newtheorem{thm}{Theorem}[section]
\newtheorem*{thm*}{Theorem}
\newtheorem{cor}[thm]{Corollary}
\newtheorem{defi}{Definition}[section]
\newtheorem{assume}{Assumption}[section]
\newtheorem{prop}[thm]{Proposition}

\newtheorem{lemma}[thm]{Lemma}
\newtheorem{ex}{Example}[section]
\newtheorem{remark}{Remark}[section]

\newcommand{\diag}{\text{\normalfont{Diag}}}

\DeclareMathOperator{\an}{\alpha_{\mathnormal p}}

\DeclareMathOperator{\mcp}{\mathsf{\normalfont{Poly}}}

\DeclareMathOperator{\EE}{\mathbb{E}}
\DeclareMathOperator{\Var}{\textnormal{Var}}
\DeclareMathOperator{\Cov}{\textnormal{Cov}}
\DeclareMathOperator{\PP}{\mathbb P}
\DeclareMathOperator{\RR}{\mathbb R}

\DeclareMathOperator*{\xp}{\xrightarrow{\mathnormal P}}
\DeclareMathOperator*{\xd}{\xrightarrow{\mathnormal d}}
\DeclareMathOperator{\bbeta}{\beta^\star}

\DeclareMathOperator*{\argmax}{arg\,max}
\DeclareMathOperator*{\argmin}{arg\,min}
\DeclareMathOperator{\bZ}{Z}
\DeclareMathOperator{\EEB}{\EE_{\bbst}}
\DeclareMathOperator{\sumik}{\sum_{i=1}^k}

\newcommand{\F}{\mathcal{F}}
\newcommand{\thetaveb}{\hat{\theta}^{\mathsf{vEB}}}

\newcommand{\QQ}{\mathbb{Q}}
\newcommand{\R}{\mathbb{R}}

\newcommand{\dkl}{\mathrm{D_{KL} }}
\newcommand{\dtv}{\mathrm{D_{TV} }}
\newcommand{\Flm}{F_{\mathrm{lim}}}

\newcommand{\sumin}{\sum_{\mathnormal i=1}^{\mathnormal p}}
\newcommand{\sumij}{\sum_{1\mathnormal{ \le i\neq j \le p}}}
\newcommand{\maxin}{\max_{\mathnormal i=1}^{\mathnormal p}}
\newcommand{\sumjn}{\sum_{\mathnormal j=1}^{\mathnormal p}}
\newcommand{\sumji}{\sum_{\mathnormal j\neq i}^{\mathnormal p}}
\newcommand{\OPX}{\mathnormal O_{\mathnormal P,X}}
\newcommand{\bbst}{\beta^\star}
\newcommand{\OPB}{\mathnormal O_{\mathnormal P,\bbst}}

\newcommand{\thetahat}{\hat{\theta}_{\mathnormal p}}

\begin{document}
	\begin{frontmatter}
		\title{Parametric Mean-Field Empirical Bayes in High-dimensional Linear Regression}
		\runtitle{Parametric Empirical Bayes in Regression}
        \runauthor{Lee and Deb}
		
		 \begin{aug}
			\author{\fnms{Seunghyun} \snm{Lee}\ead[label=e1]{sl4963@columbia.edu}}
		 	\and
		 	\author{\fnms{Nabarun} \snm{Deb}\ead[label=e2]{nabarun.deb@chicagobooth.edu}}
		 \end{aug}
        \address{Columbia University\printead[presep={,\ }]{e1}}
        \address{University of Chicago\printead[presep={,\ }]{e2}}
  
		\begin{abstract}
          In this paper, we consider the problem of parametric empirical Bayes estimation of an i.i.d. prior in high-dimensional Bayesian linear regression, with random design. We obtain the asymptotic distribution of the variational Empirical Bayes (vEB) estimator (motivated from \cite{carbonetto2012vi} and \cite{kim2024flexible}) which approximately maximizes a variational lower bound of the intractable marginal likelihood. We characterize a sharp phase transition behavior for the vEB estimator --- namely that it is information theoretically optimal (in terms of limiting variance) up to $p=o(n^{2/3})$ while it suffers from a sub-optimal convergence rate in higher dimensions. In the first regime, i.e., when $p=o(n^{2/3})$, we show how the estimated prior can be calibrated to enable valid coordinate-wise and delocalized inference, both under the \emph{empirical Bayes posterior} and the oracle posterior. In the second regime, we propose a debiasing technique as a way to improve the performance of the vEB estimator beyond $p=o(n^{2/3})$. Extensive numerical experiments corroborate our theoretical findings.
        \end{abstract}
		
		\begin{keyword}[class=MSC]
        \kwd[Primary ]{62C12}
        \kwd[; secondary ]{62F12}%
        \kwd{62J05}
        \end{keyword}

        \begin{keyword}
        \kwd{Bayesian linear regression}
        \kwd{empirical Bayes}
        \kwd{Mean-Field variational approximation}
        \kwd{hierarchical model}
        \kwd{high-dimensional statistics}
        \end{keyword}
		
	\end{frontmatter}
	
	\maketitle
	
\section{Introduction}
Consider a canonical linear regression model
\begin{align}\label{eq:regression model}
    y \mid X, \beta \sim N(X\beta, \sigma^2 I_n),
\end{align}
where $y \in \RR^n$ denotes the responses, $X \in \RR^{n \times p}$ denotes the design matrix, $\beta \in \RR^p$ is the unknown regression coefficients, and $\sigma^2 > 0$ is the error variance (potentially unknown). We work under a high-dimensional asymptotic setting, where both the sample size $n$ and the number of parameters $p$ grow to infinity. 
Here, we additionally posit an i.i.d. prior on the coefficient vector $\beta=(\beta_1,\ldots ,\beta_p)$, where the (random) coefficients ($\beta_i$) are generated i.i.d. from an \emph{unknown} prior distribution $\mu_{\theta_0}$, and $\theta_0 \in \Theta \subset \R^k$ is a (deterministic) finite-dimensional parameter. These models arise naturally in estimating effect size distributions and in applications involving genetic associations; see e.g.~\cite{zhang2018estimation,o2021distribution,zhou2021fast}. 
Our goal in this paper is two-fold: 

\begin{enumerate}[(a)]
\item To \emph{estimate the prior} $\mu_\theta$ in a data-driven manner, and 

\item To use the \emph{estimated prior for downstream inference} under the \emph{oracle posterior} on $\beta$. 
\end{enumerate}

 While traditional Bayesian inference assumes that the prior $\mu_\theta$ is specified by the statistician, this can be problematic for high-dimensional noisy data. For example, consider a toy scenario where $p=n$ and $X$ is the $n\times n$ identity matrix. This is the classical Gaussian sequence model $$y_i \mid \beta_i \overset{ind}{\sim} N(\beta_i,\sigma^2).$$ 
 In this case, the posterior distribution of $\beta \mid y,X$ witnesses proportional contribution from the prior and the likelihood, which implies that there is no posterior contraction. In other words, all posterior based inference depends heavily on the prior, even asymptotically as $n\to\infty$. A misspecified prior from the statistician would therefore result in inconsistent posterior inference. This motivates the development of procedures that can consistently estimate $\mu_{\theta_0}$. 
 
 \emph{Empirical Bayes} (EB) addresses this problem by using the observed data to estimate the prior. For simplicity, suppose the variance $\sigma^2$ is known. A natural strategy for estimating $\mu_{\theta_0}$ is to maximize the marginalized likelihood
\begin{align}\label{eq:marginal}
    m_\theta (y):=&\Big(\frac{1}{2\pi\sigma^2}\Big)^{\frac{n}{2}} \int \exp\Big(-\frac{1}{2\sigma^2}\|{y}-{X}{\beta}\|^2\Big) \prod_{i=1}^p d\mu_\theta(\beta_i)
\end{align}
with respect to the `hyperparameter' $\theta$. However, except in extreme cases (such as when $\mu_\theta$ is a normal distribution or $X^\top X$ is diagonal), the integral in \eqref{eq:marginal} does not have an explicit form. The problem is particularly challenging under the high-dimensional setting with $p \to \infty$, as the marginal likelihood in \eqref{eq:marginal} is extremely difficult to compute by standard numerical integration or sampling-based approaches. 

Recent works (see \cite{carbonetto2012vi, kim2024flexible, mukherjee2023mean, fan2023gradient}) have addressed this issue by maximizing a more tractable \emph{variational lower bound}, also commonly known as the evidence lower bound (ELBO), instead of \eqref{eq:marginal}. Commonly known as \emph{variational empirical Bayes} (vEB), this procedure has been extremely popular in modern machine learning problems (such as Latent Dirichlet Allocation in \cite{blei2003latent} and variational autoencoders in \cite{kingma2013auto}), but with limited theoretical understanding. While necessary conditions for consistency was recently established in \cite{mukherjee2023mean}, the explicit convergence rates, limiting distributions, and asymptotic efficiency of vEB estimators have not been studied in the literature, to our knowledge. In this work, we take a first step in this direction by analyzing an approximate vEB estimator in the parametric setting (see \cite{carbonetto2012vi,fan2025dynamical,fan2025dynamical2}).  We provide a complete characterization of its asymptotic behavior when $p=o(n)$, under appropriate scaling of the design matrix. We then use the estimated prior for coordinate-wise and delocalized posterior inference.

\subsection{Organization and summary of main results}
This work first analyzes \textit{frequentist} inferential guarantees of the vEB estimator $\thetaveb$ that maximizes a variational approximation of the marginal likelihood $m_\theta(y)$, assuming a random design matrix $X$ and regularity conditions for the prior parametric family. Interestingly, the optimality of $\thetaveb$ exhibits a phase transition depending on the asymptotic scaling of the problem. When $p \ll n^{2/3}$, the vEB estimator is $\sqrt{p}$-consistent, asymptotically normal, and attains the information-theoretically optimal variance. In contrast, all of these properties fail when $p \gg n^{2/3}$. This is stated in the following theorem, which combines the separate upper and lower bound claims in \cref{sec:mainres}. While the following result is stated under a correctly specified parametric family for the prior, we also derive analogous limit distributions under misspecified priors (see \cref{cor:misspecified}). 

\begin{thm*}
    Suppose $y$ is generated from the Bayesian regression model \eqref{eq:regression model} with a true prior parameter $\theta_0$. Assume a random design matrix with a low signal to noise ratio (SNR) scaling, alongside standard regularity conditions on the prior (see Assumptions \ref{assmp:random design}--\ref{assmp:prior technical}).
    Let $\thetaveb$ be the vEB estimator (see \eqref{eq:est def}). Under the asymptotic regime $p = o(n^{2/3})$, $\thetaveb$ is $\sqrt{p}$-consistent with a limiting normal distribution, and attains the information-theoretic optimal limiting variance. However, when $p \gg n^{2/3}$, $\thetaveb$ is no longer optimal and is only $n/p$-consistent with a non-normal limit.
\end{thm*}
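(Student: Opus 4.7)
The plan is to derive both claims from the first-order optimality condition for $\thetaveb$. Let $L_n(\theta)$ denote the (profiled) variational objective that $\thetaveb$ maximizes; then $\nabla L_n(\thetaveb) = 0$, and a Taylor expansion around $\theta_0$ yields
\begin{equation*}
\thetaveb - \theta_0 = -\bigl[\nabla^2 L_n(\bar\theta)\bigr]^{-1} \nabla L_n(\theta_0)
\end{equation*}
for some intermediate $\bar\theta$. The program is therefore to (i) obtain the leading stochastic order of the variational score $\nabla L_n(\theta_0)$, and (ii) show that $-\nabla^2 L_n(\theta)/p$ converges in probability, uniformly on a shrinking neighborhood of $\theta_0$, to a deterministic positive-definite matrix $I(\theta_0)$. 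The phase transition at $p \asymp n^{2/3}$ will emerge from a bias--variance decomposition of $\nabla L_n(\theta_0)$.

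For the regime $p = o(n^{2/3})$, I would decompose $\nabla L_n(\theta_0) = S_n(\theta_0) + R_n(\theta_0)$, where $S_n$ is the score of the true log-marginal $\log m_{\theta_0}(y)$ and $R_n$ is the variational gap at the gradient level. Under the low-SNR random design, the mean-field optimal factor $q^\star_\theta$ is a small perturbation of the prior $\mu_\theta$; expanding this perturbation series in powers of the signal strength, one expects $R_n(\theta_0) = O_P(p^2/n)$, which is $o_P(\sqrt p)$ precisely when $p = o(n^{2/3})$. For the leading term $S_n(\theta_0)$, a Tweedie-type identity writes $\partial_\theta \log m_{\theta_0}(y)$ as $\sum_{i=1}^p \partial_\theta \log \mu_{\theta_0}(\beta^\star_i)$ plus a mean-zero fluctuation; a CLT for weakly dependent (or exchangeable) arrays then produces $S_n(\theta_0)/\sqrt{p} \xd N(0, I(\theta_0))$. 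Combining with the Hessian convergence gives $\sqrt p$-consistency and asymptotic normality, and the optimality of the limiting variance follows from a van Trees / Bayesian Cram\'er--Rao lower bound applied to parameter estimation under the marginal law of $y$.

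For the regime $p \gg n^{2/3}$ the remainder $R_n(\theta_0)$ dominates the stochastic term $S_n(\theta_0)$: it is a nearly deterministic bias of order $p^2/n$ whose fluctuations on the same scale are non-Gaussian, being driven by quadratic functionals of the design such as $\tr((X^\top X)^2)/n^2$. Inverting the Hessian then gives $\thetaveb - \theta_0 = O_P(p/n)$, i.e.\ $n/p$-consistency, and the limit is a polynomial (degree two) in Gaussian inputs. The main technical obstacle is precisely quantifying $R_n(\theta_0)$: standard ELBO gap bounds (as in \cite{mukherjee2023mean}) only control $L_n - \log m_\theta$ pointwise and are too crude for the gradient. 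To extract the $p^2/n$ constant I plan to expand the mean-field fixed-point equations under low-SNR as a perturbation series, invoke an envelope theorem so that the implicit dependence of $q^\star_\theta$ on $\theta$ does not contribute at first order, and track the leading quadratic-in-$\bbst$ term in the entropy correction. Characterizing the exact non-Gaussian limit and matching constants with the information-theoretic bound in the low-dimensional regime will be the most delicate steps.
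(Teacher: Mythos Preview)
Your Taylor-expansion framework and the identification of a $p^2/n$ bias driving the phase transition are correct, and match the paper's high-level structure. However, two concrete pieces of your plan would not work as written, and the paper's route avoids both.

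First, your decomposition $\nabla L_n(\theta_0)=S_n+R_n$ with $S_n$ the \emph{true} marginal score is a detour that creates, rather than removes, difficulty. The identity you call ``Tweedie-type'' actually reads $\partial_\theta\log m_{\theta_0}(y)=\EE_{\text{posterior}}\bigl[\sum_i\partial_\theta\log\mu_{\theta_0}(\beta_i)\bigr]$, a posterior (RFIM) expectation---not a sum over the true $\beta_i^\star$ plus a mean-zero remainder. Analyzing this requires exactly the RFIM moment machinery the paper reserves for the \emph{lower bound}. The paper instead works directly with the vEB score $\sum_i\nabla F_i(\theta_0)$, which is an explicit function of $(w_i,d_i)$. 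Since $w\mid X,\beta^\star$ is Gaussian with mean $d_i\beta_i^\star-\mathcal F_i$ (where $\mathcal F_i=\sum_{j\neq i}A_p(i,j)\beta_j^\star$ collects the off-diagonal design correlations), one conditions on $(X,\beta^\star)$, applies a second-order Poincar\'e inequality to get a conditional CLT, and then marginalizes $\beta^\star$. The bias $\frac{p^{3/2}}{n}\kappa(\theta_0)$ falls out of a second-order Taylor expansion of $\EE[h(w_i)\mid X,\beta^\star]=\Phi(d_0\beta_i^\star-\mathcal F_i)$ in $\mathcal F_i$, not from the ELBO gap per se; framing it as ``variational remainder'' obscures the mechanism and would force you to control the true score separately.

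Second, your description of the $p\gg n^{2/3}$ regime is wrong in a specific way: the limit is not a random non-Gaussian law driven by design fluctuations like $\tr((X^\top X)^2)/n^2$. The paper shows $\frac{n}{p}(\thetaveb-\theta_0)\xp V(\theta_0)^{-1}\kappa(\theta_0)$, a \emph{deterministic} constant (convergence in probability). The ``non-normal limit'' in the informal theorem statement refers to this degenerate point mass, arising because the $\frac{p^{3/2}}{n}\kappa(\theta_0)$ bias dominates the $O_P(1)$ Gaussian fluctuation of $\frac{1}{\sqrt p}\sum_i\Phi(d_0\beta_i^\star)$. Your perturbation-series program for the fixed-point equations is therefore aimed at the wrong target. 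For the optimality claim, the paper does not use van Trees; it computes $\mathcal I_p(\theta_0)/p\to V(\theta_0)$ directly by expressing the marginal Fisher information as RFIM moments and invoking the generalized moment bounds in Lemma~\ref{thm:CLT}.
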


Moving beyond frequentist recovery of the prior hyperparameter $\theta$, we illustrate in \cref{sec:consequences} that the vEB estimator can be used for downstream \textit{Bayesian} inference on the regression coefficients $\beta$, under the asymptotic regime $p = o(n^{2/3})$. First, we bound the total variation distance between coordinate-wise distributions under the \textit{EB posterior} (constructed using the estimated prior $\mu_{\thetaveb}$) and the \textit{oracle posterior} (constructed using the oracle prior $\mu_{\theta_0}$). This in particular implies that the mean/variance  computed under the variational approximation of the EB posterior are consistent.
Next, we address uncertainty quantification under the two posteriors. We mainly focus on the one-dimensional projection of the posterior, that is $q^\top \beta \mid y, X$, for some fixed vector $q \in \R^p$. Contrary to previous studies in high SNR settings which show that inference under the oracle posterior and the EB posterior are asymptotically equivalent (the so-called higher-order merging phenomena in \cite{rizzelli2024empirical}), we illustrate that this is not guaranteed in the low SNR setting. More specifically, the one-dimensional projection of the oracle posterior and the EB posterior merge if and only if the projection direction is an approximate contrast vector (i.e. $\frac{\sumin q_i}{\|q\|} = o(\sqrt{p})$), illustrating the need for caution for utilizing the EB posterior. Nonetheless, we provide EB adjusted credible intervals for such one-dimensional projections which provide valid  marginal coverage under the \emph{oracle posterior}, using the estimated prior hyperparameter  $\thetaveb$. 

In \cref{sec:simulations}, we verify satisfactory finite-sample performance of our theoretical results, under various priors that include common choices such as spike-and-slab and Gaussian mixtures. In \cref{sec:debiasing}, we also illustrate that the slower convergence rate of $\thetaveb$ in higher-dimensions can be improved by debiasing. To elaborate, we show that the debiased vEB estimator relaxes the requirement $p \ll n^{2/3}$ for the aforementioned normal limit to $p \ll n^{3/4}$. We conclude the paper in \cref{sec:conclusion} by discussing avenues for future work.

\subsection{Related works}

\subsubsection{Standard EB in denoising}
The standard EB setup for normal means \citep[this corresponds to taking $X = I_n$ in \eqref{eq:regression model}, see e.g.,][]{robbins1956empirical,james1992estimation,jiang2009general,castillo2020spike,soloff2025multivariate} lead to a product posterior, and hence results in \textit{independent observations} under the posterior. This allows the marginal likelihood to factorize, and does not require a high-dimensional integral computation as in \eqref{eq:marginal}. Additionally, the exact factorization of the posterior results in tractable representations for individual coordinates of the posterior mean and variance, using Tweedie's formula (see e.g. \cite{robbins1956empirical}).  In contrast, our regression setup does not have any such representations of the posterior moments due to the intractable normalizing constant, creating additional theoretical challenges for downstream inference.

\subsubsection{Statistical efficiency of MML for EB}
As stated earlier in \eqref{eq:marginal}, maximizing the marginal likelihood (MML) is a natural approach to empirical Bayes. Consequently, asymptotic properties of parametric MML was previously studied in the literature \citep{petrone2014bayes,szabo2013empirical,rousseau2017asymptotic,castillo2020spike,rizzelli2024empirical,zhang2020convergence}. 
In particular, \cite{petrone2014bayes} established consistency of the MML estimator to an ``oracle set'' as well as the consistency of the EB posterior. The recent work \cite{rizzelli2024empirical} additionally shows a higher-order merging of the EB posterior to the oracle posterior, which indicates that the EB posterior credible intervals enjoy coverage under the oracle posterior. 
However, this line of work %
considers a more classical scaling with high SNR where the EB posterior contracts. %
This is in sharp contrast to our high-dimensional low SNR scaling. Indeed, we will show that such higher-order merging may not happen under our setup. We also stress that going beyond consistency, no asymptotic normality or efficiency results have previously been established for the MML estimator, beyond simple settings.

\subsubsection{Gradient flow based methods}
Another direction for empirical Bayes methods in regression involves using gradient flows to solve the complex optimization problem in \eqref{eq:marginal} \citep{kuntz2023particle,fan2023gradient,fan2025dynamical,fan2025dynamical2}. In particular, the recent line of works \cite{fan2025dynamical,fan2025dynamical2} consider a very similar problem setup as ours, but instead analyze an adaptive Langevin dynamics estimator under proportional asymptotics $p \asymp n$. %
We believe these results complement ours, as we analyze the vEB estimator, consider the scaling $p = o(n)$, and impose different assumptions for priors. In \cref{sec:simulations}, we compare our vEB estimator with the adaptive Langevin algorithm from \cite{fan2025dynamical,fan2025dynamical2}. We also note that in the parametric EB setup, no consistency guarantees have been established for the gradient flow estimator.

\subsubsection{vEB for general latent variable models}
To our knowledge, vEB dates back to \cite{blei2003latent}, which first proposed the notion of maximizing the variational approximation lower bound of the intractable likelihood in the context of topic modeling. This idea was also utilized for unsupervised learning in the context of variational autoencoders (VAEs, \cite{kingma2013auto}) and item response theory \citep{cho2021gaussian} as well as various modern machine learning problems such as matrix factorization \citep{wang2021empirical,zhong2022empirical}, due to their excellent scalability. However, the theoretical understanding of such vEB procedures is limited, in particular for precise convergence rates and limiting distributions, which is precisely the gap we aim to fill in this paper.

\subsection{Notation}\label{subsec:notation}
\vspace{-1mm}
For two measures $\PP,\QQ$ on the same probability space, define the Kullback-Leibler (KL) divergence between $\QQ$ and $\PP$ as
 $$\dkl(\QQ\mid\PP) := \begin{cases} \EE_{\QQ}\log{\frac{d\QQ}{d\PP}} & \mbox{if } \QQ \ll \PP, \\  \infty & \mbox{otherwise},\end{cases}$$
and the Total Variation (TV) distance as 
 $$\dtv (\PP, \QQ) := \sup_{E \subseteq \R} \left|\PP(E) - \QQ(E) \right|.$$
 Let $I_n$ denote the $n \times n$ identity matrix. For two matrices $V_1, V_2$, write $V_1 \succ V_2$ when $V_1 - V_2$ is positive definite.
   Let $\|\cdot\|$ denote the usual $L^2$ norm for matrices/vectors. We use the usual Bachmann-Landau notations. In particular, for deterministic sequences $\{a_p\}_{p\ge 1}$, $ \{c_p\}_{p\ge 1}$ (with $c_p > 0$) and a constant $r > 0$, we write $a_p \lesssim_r c_p$ when there exists a constant $K(r)$ such that $a_p \le K(r) c_p$. Write $a_p\ll c_p$ if $a_p = o(c_p)$. Given a matrix/tensor $M_p$ we will use $\|M_p\|_F$ to denote its Frobenius norm. We also extend the $o/O$ notation to vectors/matrices/tensors: for a sequence of vectors/matrices/tensors $\{M_p\}_{p\ge 1}$ (with fixed dimensions), write $M_p = O(c_p)$ when $\|M_p\|_F = O(c_p)$, and write $M_p = o(c_p)$ when $\|M_p\|_F = o(c_p)$. We also use the notations $o_P, O_P$, and in particular write $o_{P,X}(1)$ and $\OPX(1)$ to capture the randomness of a random variable $X$.
   The notations $\overset{d}{\longrightarrow}, \overset{P}{\longrightarrow}$ denotes convergence in distribution and convergence in probability, respectively.
   For a vector $v \in \R^k$, write $v^2 := v \otimes v$ and $v^3 := v\otimes v \otimes v$, where $\otimes$ denotes the outer product. For a random variable $Z$, define its sub-Gaussian norm $\|Z\|_{\psi_2}$ as the smallest $K\ge 0$ that satisfies
   $$\EE e^{t(Z - \EE Z)} \le e^{K^2 t^2/2}, \quad \forall t \in \R.$$

   Throughout the paper, we work on an asymptotic setting where both $n,p \to \infty$. Hence, we understand $n \equiv n(p)$ as a function of $p$ and write limits in terms of $p$. All probabilities and expectations are conditioned on $X, \bbst$ unless specified otherwise. When the expectation is respect to $X$ or $\bbst$, we write $\EE_X$ or $\EE_{\bbst}$, respectively. We also define the notion of linear-quadratic tilts for probability measures.
\begin{defi}[Linear-quadratic tilt]\label{def:quadratic tilt}
    Given a base-measure $\mu_\theta$ and constants $d_i$ for each $i = 0, 1, \ldots, p$ (defined below in \eqref{eq:notation} and \cref{def:information}), let $\mu_{i,\theta}$ be a quadratic tilt of $\mu_\theta$ by setting the likelihood ratio as
    $$\frac{d\mu_{i,\theta}}{d\mu_\theta}(b) := e^{- \frac{d_i}{2}b^2 - C_\theta(d_i)}, \quad \text{where} \quad C_\theta(d_i) := \log \int e^{-\frac{d_i}{2}b^2} d \mu_\theta(b).$$
    Next, for $t\in\R$, let $\mu_{i,t,\theta}$ be the linear tilt of $\mu_{i,\theta}$, i.e., $$\frac{d\mu_{i,t,\theta}}{d\mu_{i,\theta}}(b) := e^{t b- \psi_{i,\theta}(t)}, \quad \text{where} \quad \psi_{i,\theta}(t) := \log \int e^{t b} d\mu_{i,\theta}(b).$$
    By properties of exponential families, the mean and variance of $\mu_{i,t,\theta}$ are $\psi_{i,\theta}'(t)$ and $\psi_{i,\theta}''(t)$, respectively.
\end{defi}

\section{Variational empirical Bayes}\label{sec:vEB}

\noindent To estimate the prior parameter $\theta$, we maximize a variational lower bound of the marginal log-likelihood $\log m_{\theta}(y)$. %
We first define some transformations of the data $(X, y)$, which will be useful in simplifying this quantity.
\begin{defi}[Transformed variables]\label{def:A}
Define $p$-dimensional vectors $w = (w_1, \ldots, w_p)$ and $d=(d_1,\ldots,d_p)$ as follows:
\begin{equation}\label{eq:notation}
w:=\frac{X^\top y}{\sigma^2}, \quad d_i :=\frac{(X^\top X)(i,i)}{\sigma^2} \quad \text{for} \;\;  i=1,\ldots, p.
\end{equation}
Also, let $A_p \in \RR^{p\times p}$ be the off-diagonal part of the matrix $-\sigma^{-2} X^\top X$ (with diagonal entries set to zero), that is
\begin{equation*}
    A_p(i,j) = - \frac{(X^\top X)(i,j)}{\sigma^2} \quad \text{if} \quad 1 \le i \neq j \le p \quad \mbox{and} \quad A_p(i,i) = 0 \quad \text{if} \quad 1\le i\le p.
\end{equation*}
\end{defi}

As $d$ and $A_p$ respectively denote the diagonal and off-diagonal parts of $\sigma^{-2} X^\top X$, we can write
$$\sigma^{-2} X^\top X = \diag(d) - A_p.$$
Using these notations, we expand the quadratic term in the exponent of \eqref{eq:marginal}:
$$ \exp \Big(- \frac{1}{2\sigma^2} \|y - X\beta\|^2\Big) = \exp \Big(- \frac{\|y\|^2}{2 \sigma^2} - \frac{1}{2} \sumin d_i \beta_i^2 + \frac{\beta^\top A_p \beta}{2}  + w^\top \beta \Big).$$
Now, by absorbing each summand $d_i \beta_i^2$ to the base measure $\mu_\theta$ and using the notations $C_\theta(d_i), \mu_{i,\theta}$ in \cref{def:quadratic tilt}, 
we can re-write the scaled log-likelihood as
\begin{align}\label{eq:marginal likelihood simplified}
\ell_p(\theta) := \frac{1}{p}\log m_{\theta}(y) = -\frac{n}{2p}\log(2\pi\sigma^2) - \frac{1}{2p\sigma^2}\lVert y\rVert^2 + \frac{1}{p} \sumin C_\theta(d_i) + \frac{1}{p}\log Z_p(w, \theta),
\end{align}
where 
\begin{align}\label{eq:normalizing constant def}
    Z_p(w, \theta) := \int \exp \Big(\frac{\beta^\top A_p \beta}{2} + w^\top \beta \Big) \prod_{i=1}^p d \mu_{i,\theta}(\beta_i).
\end{align}

We motivate the vEB (variational empirical Bayes) estimator, initially assuming a known variance $\sigma^2 > 0$ (see \cref{rem:unkvar} for the relaxation). While maximizing $\ell_p(\theta)$ as a function of $\theta$ is tempting, the normalizing constant $Z_p(w,\theta)$ in \eqref{eq:normalizing constant def} involves an intractable high-dimensional integral which makes the maximization computationally challenging. To remedy this issue, we build our estimator of $\theta_0$ using ideas from Mean-Field variational inference. To wit, 
first observe that the Gibbs variational formula implies the following dual representation of $\log Z_p(w,\theta)$ (c.f. \cite{wainwright2008graphical,mukherjee2022variational}):
\begin{align}
    \log Z_p(w, \theta) &= \sup_{\QQ \in \mathcal{P}(\R^p)}\left(\EE_{\QQ}\left[\frac{1}{2}\beta^{\top} A_p\beta+ w^{\top}\beta\right]-\dkl\big(\QQ|\prod_{i=1}^p \mu_{i,\theta}\big)\right), 
    \notag
\end{align}
where $ \mathcal{P}(\R^p)$ denotes the set of all probability measures on $\R^p$. Now, by restricting $\mathcal{P}(\R^p)$ to the set of product measures $\QQ = \prod_{i=1}^p Q_i$, we get a lower bound for $\log Z_p(w, \theta)$:
\begin{align}
    \log Z_p(w, \theta) &\ge \sup_{\QQ=\prod_{i=1}^p Q_i} \left(\EE_{\QQ}\left[\frac{1}{2}\beta^{\top} A_p\beta+ w^{\top}\beta\right]-\dkl\big(\QQ|\prod_{i=1}^p \mu_{i,\theta}\big)\right) \label{eq:MF over product} \\
    &= \sup_{u \in \R^p} \Bigg(\underbrace{\frac{u^\top A_p u}{2} + w^\top u - \sumin I_{i,\theta}(u_i)}_{=:M_\theta(u)}\Bigg), \label{eq:M_theta(u) definition}
\end{align}
where $I_{i,\theta}(t):=\dkl(\mu_{i,(\psi_{i,\theta}')^{-1}(t),\theta} | \mu_{i,\theta})$ denotes the KL divergence between a linear tilt (with mean $t$) and the base measure of the $i$th coordinate. 
Here \eqref{eq:M_theta(u) definition} follows from the fact that the supremum in \eqref{eq:MF over product} is attained by linear tilts $\mu_{i,\cdot,\theta}$, which allows reducing the optimization over measures to that over reals (see \cite{mukherjee2022variational,yan2020nonlinear}). Note that the vector $u$ effectively denotes the coordinate-wise mean of the product measure $\QQ$.

Hence, by plugging-in \eqref{eq:M_theta(u) definition} to \eqref{eq:marginal likelihood simplified}, the re-scaled marginal log-likelihood $\ell_p(\theta)$ can be lower bounded by the ``Mean-Field'' log-likelihood $\ell_p^{\text{MF}}(\theta)$ (equivalently, a re-scaled ELBO):

\begin{align}\label{eq:Mean-Field objective function}
    \ell_p(\theta) \ge \ell_p^{\text{MF}} (\theta) := -\frac{n}{2p}\log(2\pi\sigma^2) - \frac{1}{2p\sigma^2}\lVert y\rVert^2 + \frac{1}{p} \sumin C_\theta(d_i) + \frac{1}{p} \sup_{u \in \R^p} M_\theta (u).
\end{align}
The supremum in \eqref{eq:Mean-Field objective function} is still an implicit function of $\theta$. We therefore approximate it further to reduce to a more tractable objective in $\theta$. 

\begin{defi}[Mean-Field optimizers]\label{def:fixed point}
    Under a random design matrix (see \cref{assmp:random design} below), by \cite[Lemma 2.1]{lee2025clt}, the supremum in \eqref{eq:Mean-Field objective function} has a unique optimizer. Let us denote it by $u_{\theta} = (u_{1,\theta}, \ldots, u_{p,\theta})$. Then $u_{\theta}$ satisfies
    \begin{align}\label{eq:fixed pt}
        {u}_{i,\theta} = \psi_{i,\theta}' ({s}_{i,\theta} + w_i), \quad \text{where} \quad s_{i,\theta} := \sum_{j \neq i} A_p(i,j) {u}_{j,\theta},\quad  \forall i \le p.
    \end{align}
\end{defi}
\noindent Here, the identity \eqref{eq:fixed pt} follows by writing the first order conditions of the supremum.

For heuristics, assume that $\mu_{\theta}$ is compactly supported. Then under our random design condition (see \cref{assmp:random design}), direct computations show that the empirical measure $p^{-1}\sum_{i=1}^p \delta_{s_{i,\theta}}$ converges weakly to $0$. It is then natural to approximate $s_{i,\theta} \approx 0$ in the identity ${u}_{i,\theta} = \psi_{i,\theta}' ({s}_{i,\theta} + w_i)$ and define the approximate optimizer $\tilde{u}_{\theta} = (\tilde{u}_{1,\theta}, \ldots, \tilde{u}_{n,\theta})$ as $\tilde{u}_{i,\theta} := \psi'_{i,\theta}(w_i).$ As a result, by spelling out the KL divergences in $I_{i,\theta}$, we can write
$$\frac{1}{p}\sup_{u \in \R^p} M_\theta (u) \approx  \frac{\tilde{u}_{\theta}^\top A_p \tilde{u}_{\theta}}{2p} + w^{\top}\tilde{u}_{\theta}  - \sumin I_{i,\theta}(\tilde{u}_{i,\theta}) \approx \sumin \psi_{i,\theta} (w_i),$$
where $\psi_{i,\theta}$ is defined in \cref{def:quadratic tilt}. Now, define functions $F_i:\Theta \to \R$ for each $i \le p$ as 
\begin{align}\label{eq:def_Fi}
F_i(\theta) := \log \int e^{w_i \beta_i - \frac{d_i \beta_i^2}{2}} d\mu_\theta(\beta_i) = C_{\theta}(d_i)+\psi_{i,\theta}(w_i).
\end{align} 
By plugging in the above observations into the lower bound \eqref{eq:Mean-Field objective function}, we have
\begin{align}\label{eq:app-likeli}
\ell_p(\theta) \ge \ell_p^{\text{MF}}(\theta) \approx -\frac{n}{2p}\log(2\pi\sigma^2) - \frac{1}{2p\sigma^2}\lVert y\rVert^2 + \frac{1}{p} \sumin F_i(\theta).
\end{align}
Therefore maximizing the above (approximate) lower bound, we get our vEB estimator 
\begin{align}\label{eq:est def}
    \boxed{\thetaveb = \argmax_{\theta\in\Theta} \frac{1}{p} \sumin F_i(\theta).}
\end{align}

\begin{remark}[Unknown variance]\label{rem:unkvar}
So far, the vEB estimator has required the knowledge of the variance $\sigma^2 > 0$. Fortunately, even though consistent estimation for $\beta$ is impossible under the low SNR scaling with $\sigma^{-2}\|X^\top X\| = O(1)$, it is straightforward to consistently estimate $\sigma^2$ using the usual MSE estimator:
\begin{align}\label{eq:sigma square estimator}
    \hat{\sigma}^2 := \frac{y^\top (I_n - X (X^\top X)^{-1} X^\top) y}{n-p}.
\end{align}
Thus, by plugging-in the above variance estimate, the vEB estimator can be further extended. Alternatively, one may choose to maximize the approximate likelihood in \eqref{eq:app-likeli} with respect to both $(\theta,\sigma^2)$, as suggested in \cite{kim2024flexible}.
\end{remark}

\section{Prior recovery via vEB and Fisher optimality}\label{sec:mainres}

Now, we illustrate our main results regarding the statistical performance of $\thetaveb$. We discuss its limiting distribution in \cref{subsec:upper bound}, extension to the misspecified setting in \cref{subsec:misspecified}, and its information-theoretic optimality in \cref{sec:lower bound}. To present our theoretical results, we require several assumptions. First, to understand dimension dependence, we work under the following random design matrix with independent entries.

\begin{assume}[Random design]\label{assmp:random design}
    For some distribution $P$ with mean zero, variance one, and sub-Gaussian norm bounded by $K>0$,  suppose that $X_{k,i} \overset{i.i.d.}{\sim} \frac{1}{\sqrt{n}} P$ for all $k \le n, i \le p$. We will work under the asymptotic scaling $p=o(n)$.
\end{assume}

Here the $n^{-1/2}$ scaling for $X$ is chosen so that $\|X^\top X\|/\sigma^2 = \Theta(1)$, placing the model in a low signal to noise ratio (SNR) regime where individual regression coefficients $\beta$ are not consistently estimable. In contrast, when the prior belongs to a low-dimensional parametric family, its hyperparameters remain identifiable through aggregation across coordinates. This makes our high-dimensional scaling particularly natural for empirical Bayes analysis. Such regimes are standard in theoretical studies of high-dimensional Bayesian regression \citep{qiu2024sub,celentano2023mean,barbier2020mutual,lee2025bayesregression,saenz2025characterizing}, and in particular appealing for EB problems that analyze the consequences of estimating the prior \citep{mukherjee2023mean,fan2023gradient,fan2025dynamical2}. For simplicity, we also assume that the regression variance $\sigma^2$ is known; we expect that all following results will go through when this is replaced by the plug-in estimator in \eqref{eq:sigma square estimator} (since it is consistent at a much faster $\sqrt{n}$-rate; also see \cref{rem:unkvar}). 

Next, we impose the following regularity conditions on the prior, which are standard assumptions for efficient estimation in parametric models \citep[e.g. see Theorems 2.6 or 5.1 in][]{lehmann2006theory}. For example, exponential families satisfy these conditions.
\begin{assume}[Prior regularity]\label{assmp:prior}
    Suppose that the prior $\mu_\theta$ is a parametric family with log-likelihood $\ell(\theta; \beta)$ with respect to a non-degenerate base measure $\nu$, such that it satisfies the following regularity conditions:
    \begin{enumerate}
        \item[R0.] $\mu_\theta$ is identifiable and has common support.
        \item[R1.] The parameter space $\Theta \subset \R^k$ is a compact set for some fixed integer $k$, and the true parameter $\theta_0$ lives in the interior of $\Theta$.
        \item[R2.] %
        For all $\beta$ and $\theta \in \Theta$, the log-likelihood $\ell(\theta; \beta)$ is three-times differentiable.
        \item[R3.] The Bartlett's identities for $\nabla \ell := \frac{\partial \ell}{\partial \theta}$ and $\nabla^2 \ell := \frac{\partial^2 \ell}{\partial \theta^2}$ hold (see conditions (g), (h) in Theorem 2.6, \cite{lehmann2006theory}), and the Fisher information $I(\theta) := \Var_{B_\theta \sim \mu_\theta}[\nabla \ell(\theta; B_\theta)]$ is positive definite for all $\theta \in U$.
    \end{enumerate}
\end{assume}
\noindent In passing, we mention that R2 requires the derivatives to exist for all $\theta$, compared to the usual regularity condition that assumes differentiability only near $\theta_0$. This is mainly for technical convenience.

Finally, in this Section, we assume a well-specified model, in the sense that the prior parametric family $\mu_{\theta}$ and the regression variance $\sigma^2$ are both correct. We will discuss extensions to misspecified priors later in \cref{subsec:misspecified}.
\begin{assume}[Well-specified model]\label{assmp:well-specified}
    Assume that the data $y$ is generated from a correctly specified model with true prior parameter $\theta_0 \in \textnormal{int}(\Theta)$. In other words, assume that $y$ is generated from the same model used to construct the posterior:
    $$\beta_i^\star \stackrel{i.i.d.}{\sim} \mu_{\theta_0}, \quad y \mid X, \beta^\star \sim N(X \beta^\star, \sigma^2 I_n).$$
\end{assume}

\subsection{Upper bound}\label{subsec:upper bound}

To analyze $\thetaveb$, we first introduce additional notations that will be used to describe the limiting distribution.
\begin{defi}\label{def:information}
\begin{enumerate}[(a)]
    \item Recalling the regression variance $\sigma^2$, set $d_0 := \sigma^{-2}$. 
    \item For any $\theta \in \Theta$, define a pair of random variables $(B_\theta, W_\theta)$ by setting 
    $$B_\theta \sim \mu_\theta, \quad W_\theta \mid B_\theta \sim N(d_0 B_\theta, ~d_0).$$
    Also define a deterministic mapping $V:\Theta \to \R^{k\times k}$ by setting
    \begin{equation}\label{eq:V_theta}
        V(\theta) := I(\theta) - \EE \Big[\Var \big(\nabla \ell(\theta; B_\theta) \mid W_\theta \big) \Big] = \Var \Big[ \EE \big(\nabla \ell(\theta; B_{\theta}) \mid W_{\theta}\big) \Big] \succ 0_{k \times k}.
    \end{equation}
    \item For $t\in \R,~d >0$, let $B_{t,d,\theta}$ be a linear-quadratic tilted probability measure on $\R$ defined as $\frac{d B_{t,d,\theta}}{d B_\theta}(b) \propto e^{tb - \frac{d b^2}{2}},$ so that $B_\theta \mid (W_{\theta} = t) \sim B_{t,d_0,\theta}$.
    Finally, set $G_\theta: \R \to \R^k$ as $G_\theta(t) := \EE \nabla \ell(\theta; B_{t,d_0,\theta})$ and define a mapping $\kappa: \Theta \to \R^k$ as
    $$\kappa(\theta) := \frac{1}{2}\EE [B_\theta^2] \EE [G_\theta''(W_{\theta})].$$
\end{enumerate}
\end{defi}

We motivate the notation $W_\theta$ and $V(\theta)$ in part (b) above, which will later play a crucial role as the Fisher information. First, recall from R3 in \cref{assmp:prior} that $I(\theta) = \Var [\nabla \ell(\theta; B_\theta)]$ is exactly the Fisher information for a single \emph{observed} coefficient $B_{\theta} \sim \mu_\theta$. However, under the hierarchical model, we cannot directly observe the regression coefficient $\beta$ and we instead observe $y$. This is roughly equivalent to observing the transformed data $w$ in \eqref{eq:notation}: 
\begin{equation}\label{eq:w distribution}
    w = \sigma^{-2} X^\top y \mid X, \bbst \stackrel{d}{\equiv} N(d_0 X^\top X \bbst, d_0 X^\top X).
\end{equation}
Approximating $X^\top X \approx I_p$, we can understand each coordinate $w_i$ as i.i.d. random variables with distribution $W_{\theta}$.
Then, $V(\theta)$ can be understood as the Fisher information of the single sample $W_\theta$, which is formalized in the following lemma. %
Note that $V(\theta) \prec I(\theta)$ since $W_\theta$ has less information than $B_\theta$.

\begin{lemma}[Fisher information]\label{lem:variance checking}
    Let $\tilde{\ell}(\theta; \cdot)$ be the log-likelihood of the random variable $W_\theta$. Then, the corresponding Fisher information is $V(\theta)$:
    $$-\EE[\nabla^2 \tilde{\ell}(\theta; W_{\theta})] = V(\theta).$$
\end{lemma}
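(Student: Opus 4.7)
The plan is to invoke the classical missing-information/Louis identity: since $W_\theta$ is a noisy version of $B_\theta$ (via Gaussian convolution), the score of the marginal model equals the conditional expectation of the complete-data score, and the observed information is then obtained via Bartlett's second identity applied to the marginal.

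Concretely, I would first write the marginal density of $W_\theta$ as $\tilde{f}(w;\theta) = \int \phi(w;d_0 b, d_0)\, d\mu_\theta(b)$ where $\phi(\cdot;m,v)$ is the Gaussian density. Differentiating under the integral sign with respect to $\theta$ (justified by R2 and the dominated convergence arguments supporting R3, together with the smoothness of the Gaussian kernel), and using $\nabla_\theta \mu_\theta(b) = \mu_\theta(b)\,\nabla \ell(\theta;b)$, I obtain Fisher's identity for the score:
\begin{equation*}
\nabla \tilde{\ell}(\theta;w) \;=\; \frac{\int \nabla\ell(\theta;b)\,\phi(w;d_0 b, d_0)\,d\mu_\theta(b)}{\tilde{f}(w;\theta)} \;=\; \EE\bigl[\nabla\ell(\theta;B_\theta) \mid W_\theta = w\bigr],
\end{equation*}
where the last equality uses the definition of the conditional distribution $B_\theta \mid W_\theta$ implied by $(B_\theta,W_\theta)$.

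Next I would verify that the marginal family $\{\tilde{f}(\cdot;\theta)\}$ itself satisfies Bartlett's second identity, i.e.\ $-\EE[\nabla^2 \tilde{\ell}(\theta;W_\theta)] = \Var[\nabla\tilde{\ell}(\theta;W_\theta)]$. This follows by a standard differentiation-under-the-integral argument, exchanging two derivatives in $\theta$ with integration in $w$; the Gaussian convolution kernel supplies uniform smoothness, and R3 applied to $\mu_\theta$ (together with the compactness of $\Theta$ from R1) gives the dominating integrable envelope required to push Bartlett's identities from the prior through to the marginal.

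Combining the two displays,
\begin{equation*}
-\EE[\nabla^2\tilde{\ell}(\theta;W_\theta)] \;=\; \Var\bigl[\EE(\nabla\ell(\theta;B_\theta)\mid W_\theta)\bigr] \;=\; V(\theta),
\end{equation*}
where the second equality is the definition of $V(\theta)$ given in \eqref{eq:V_theta}. As a sanity check one may also note that the law of total variance yields $I(\theta) = \EE[\Var(\nabla\ell(\theta;B_\theta)\mid W_\theta)] + V(\theta)$, recovering the alternative form of $V(\theta)$ in \eqref{eq:V_theta}. The main (mild) obstacle is the measure-theoretic bookkeeping in transferring R3-type Bartlett identities from $\mu_\theta$ to the convolved marginal; this is routine because the Gaussian kernel is infinitely smooth with all derivatives bounded on compacts, but some care is needed to produce integrable majorants uniformly over $\Theta$.
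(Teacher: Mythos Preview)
Your proof is correct and closely parallels the paper's, with one stylistic difference worth noting. The paper bypasses the step of verifying Bartlett's second identity for the marginal family: it instead differentiates $\tilde{\ell}(\theta;t)$ twice directly to obtain the Louis-type expansion
\[
\nabla^2\tilde{\ell}(\theta;t)=\EE[\nabla^2\ell(\theta;B_{t,d_0,\theta})]+\EE[\nabla\ell(\theta;B_{t,d_0,\theta})^{\otimes 2}]-\bigl(\EE[\nabla\ell(\theta;B_{t,d_0,\theta})]\bigr)^{\otimes 2},
\]
and then takes the outer expectation over $t\sim W_\theta$, applying only the Bartlett identities for $\mu_\theta$ that are already assumed in R3. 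Your route---Fisher's identity for the score plus Bartlett's second identity applied to the marginal---is slightly cleaner algebraically but requires the extra justification you flagged (pushing the regularity through the Gaussian convolution). Both arguments are equally valid; the paper's avoids that bookkeeping at the cost of one more explicit differentiation.
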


The vector $\kappa(\theta)$ in part (c) is inevitably more technical. While we refrain from displaying the details, this quantity is crucial for understanding the mean of $\sumin \nabla F_i(\theta_0)$, which is the gradient of the objective function in \eqref{eq:est def}. We will further motivate $\kappa(\theta)$ after presenting the main result (see the proof overview that precedes \cref{rmk:rate}).

To analyze the limiting distribution, we require additional technical assumptions for the prior in addition to the usual ones in \cref{assmp:prior}.
\begin{assume}[Additional prior regularity]\label{assmp:prior technical}
Assume the following conditions on the prior likelihood $\ell(\theta; \beta)$ and tilted random variables $B_{t,d,\theta}$ (see part (iii) in \cref{def:information}).
\begin{enumerate}
    \item[R4.] The first three derivatives of the log-likelihood satisfy the following polynomial growth condition, where $r>0$ is some positive constant: $$\|\nabla \ell(\theta; \beta)\| + \|\nabla^2 \ell(\theta; \beta)\|_F + \|\nabla^3 \ell(\theta; \beta)\|_F\lesssim 1+|\beta|^r, \quad \forall \theta \in \Theta.$$
    \item[R5.] All finite moments of $B_{\theta_0}$ exist.
    \item[R6.] The tilted random variables $B_{t,d,\theta}$ satisfy: $$\sup_{d \in B_\epsilon(d_0)} \sup_{\theta \in \Theta} \big|\EE [B_{0,d,\theta}]\big| \lesssim 1, \quad \sup_{d \in B_\epsilon(d_0)} \sup_{\theta \in \Theta} \|B_{t,d,\theta}\|_{\psi_2} \lesssim 1 + |t|, \quad \forall t \in \R,$$
    where $B_\epsilon(d_0) := (d_0-\epsilon, d_0+\epsilon)$ for some $\epsilon < \frac{d_0}{2}$, and $\|\cdot\|_{\psi_2}$ denotes the sub-Gaussian norm.
\end{enumerate}
\end{assume}

\begin{remark}[On Conditions R4 and R5]
    R4 is a technical polynomial growth condition for the first three derivatives of the log-likelihood. Note that this condition is mild since the log-likelihood is typically polynomially bounded in $\beta$ for common parametric families. This is required to control the remainder term in the Taylor expansion for analyzing the first-order condition of \eqref{eq:est def}, and is reminiscent of the usual integrability assumption for $\nabla^3 \ell(\theta; \beta)$. Here we need the additional regularity for $\nabla \ell(\theta; \beta)$ and $\nabla^2 \ell(\theta; \beta)$ as the third derivative of $F_i(\theta)$ involves all three derivatives. Also, R5 is a standard moment assumption for the prior. Note that it is weaker than a sub-Gaussian assumption on the prior.
\end{remark}

\begin{remark}[On Condition R6]
R6 requires sub-Gaussian tails for linear and quadratic tilts of $B_\theta$. This is weaker than assuming sub-Gaussianity directly on the prior $\mu_\theta$. Here, as $\Theta$ and $B_\epsilon(d_0)$ are bounded sets, the uniform control over all $\theta, d$ is natural. 
On the other hand, the linear tilt parameter $t$ is unbounded, so instead of taking a supremum over $t$, we allow the sub-Gaussian norm to depend on $|t|$. This flexibility allows us to cover log-concave as well as non log-concave mixture priors (see \cref{lem:prior condition checking} below). We note that our results go through with minor modifications if we replace the linear growth of the sub-Gaussian norm in R6 with a general polynomial order growth condition. The linear growth is primarily for expositional simplicity. 
\end{remark}

The following lemma illustrates concrete instances where conditions R5, R6 in \cref{assmp:prior technical} hold, which already generalizes existing assumptions in the literature. In particular, we can accommodate priors with unbounded support, mixture priors, and sparsity inducing priors. On the other hand, existing works on low SNR high-dimensional Bayesian regression require the prior to have (a) bounded support \citep{mukherjee2023mean, lee2025bayesregression,fan2023gradient}, (b) log-concave densities \citep{lacker2024mean,sheng2025stability}, or (c) densities that satisfy the log-Sobolev inequality alongside a stronger version of our R4 \citep{fan2025dynamical}. These assumptions in the existing literature exclude common choices in practical EB methods, such as adaptive shrinkage priors (this is essentially scale mixtures of Gaussians, see \cite{stephens2017false}). %

\begin{lemma}\label{lem:prior condition checking}
    Conditions R5, R6 in \cref{assmp:prior technical} hold when the prior $\mu_\theta$ satisfies any one of the following conditions.
    \begin{enumerate}[(a)]
        \item $\mu_\theta$ has bounded support.
        \item %
        $\mu_\theta$ is a strongly log-concave density. %
        \item $\mu_\theta$ is a finite mixture of $\delta_0$ and Lebesgue densities which satisfy (R5), the first condition in (R6), and the stronger second condition in (R6):
        $$\sup_{t \in \R} \sup_{d \in B_\epsilon(d_0)} \sup_{\theta \in \Theta} \|B_{t,d,\theta}\|_{\psi_2} \lesssim 1.$$
        In particular, these conditions for the densities (individual mixture components) are satisfied by distributions from parts (a) and (b) above, so Gaussian mixture priors under any parametrization satisfy R5 and R6.
    \end{enumerate}
\end{lemma}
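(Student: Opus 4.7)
The proof handles the three cases separately, with (a) and (b) being essentially immediate and (c) requiring a mixture-level sub-Gaussian argument.

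\textbf{Parts (a) and (b).} In case (a), any linear-quadratic tilt $B_{t,d,\theta}$ inherits the common compact support $[-M,M]$ of $\mu_\theta$, so $|B_{t,d,\theta}| \le M$ a.s., from which R5, $|\EE B_{0,d,\theta}| \le M$, and the uniform bound $\|B_{t,d,\theta}\|_{\psi_2} \lesssim M$ all follow at once (in fact giving the stronger $t$-uniform form of R6). In case (b), the tilted log-density $\log \mu_\theta(b) + tb - db^2/2$ is $(\alpha+d)$-strongly concave, so $B_{t,d,\theta}$ remains $(\alpha+d)$-strongly log-concave. By the Brascamp-Lieb / Bakry-Emery inequality, such a one-dimensional measure is sub-Gaussian with norm $\lesssim (\alpha+d)^{-1/2} \lesssim 1$, uniformly in $(t,d,\theta)$. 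Gaussian tails give R5, and smoothness of $\EE B_{0,d,\theta}$ in $(d,\theta)$ over the compact set $B_\epsilon(d_0) \times \Theta$ gives the first part of R6.

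\textbf{Part (c).} Writing $\mu_\theta = \pi_0(\theta) \delta_0 + \sum_{j \ge 1} \pi_j(\theta) \nu_{j,\theta}$, a direct calculation shows the tilt preserves the mixture structure:
\begin{equation*}
B_{t,d,\theta} \sim w_0(t,d,\theta) \delta_0 + \sum_{j \ge 1} w_j(t,d,\theta) B_{t,d,\theta}^{(j)},
\end{equation*}
where $B_{t,d,\theta}^{(j)}$ is the tilt of $\nu_{j,\theta}$ and $w_j \propto \pi_j(\theta) Z_j(t,d,\theta)$. R5 for the mixture is immediate from R5 on each Lebesgue component, and the first part of R6 follows by averaging $|\EE B_{0,d,\theta}| \le \sum_{j \ge 1} w_j(0,d,\theta) |\EE B_{0,d,\theta}^{(j)}| \lesssim 1$. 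For the sub-Gaussian norm, I first apply the exponential family identity $\frac{d}{dt} \EE B_{t,d,\theta}^{(j)} = \Var B_{t,d,\theta}^{(j)} \le \|B_{t,d,\theta}^{(j)}\|_{\psi_2}^2 \lesssim 1$ (using the stronger form of R6 on densities) and integrate to obtain $|\EE B_{t,d,\theta}^{(j)}| \lesssim 1 + |t|$. Letting $K$ be the mixture selector with $m_k = \EE[B_{t,d,\theta} \mid K=k]$ (so $m_0 = 0$) and $m = \EE B_{t,d,\theta}$, one has $|m_K - m| \lesssim 1 + |t|$ a.s. Conditioning on $K$ gives
\begin{equation*}
\EE e^{\lambda(B_{t,d,\theta} - m)} \le e^{\lambda^2 C^2 / 2} \cdot \EE e^{\lambda(m_K - m)} \le \exp\bigl(\lambda^2 C^2/2 + C' \lambda^2 (1+|t|)^2\bigr),
\end{equation*}
where the first step uses conditional sub-Gaussianity of $B_{t,d,\theta} - m_K$ and the second uses Hoeffding's lemma on the bounded, centered random variable $m_K - m$. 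This yields $\|B_{t,d,\theta}\|_{\psi_2} \lesssim 1 + |t|$, verifying R6.

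\textbf{Main obstacle and closing remark.} The main technical point is the mixture-level sub-Gaussian bound in (c); the linear-in-$|t|$ factor in R6 arises precisely from the inter-component mean spread under large tilts, which cannot be avoided for genuine mixtures such as spike-and-slab priors, and this is why R6 allows the $(1+|t|)$ growth in the first place. The ``in particular'' claim then follows because bounded and strongly log-concave Lebesgue densities were shown above in (a), (b) to satisfy the stronger, $t$-uniform version of R6, and hence qualify as components in the mixture framework of (c).
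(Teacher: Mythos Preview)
Your proof is correct and follows essentially the same route as the paper's. For (a) and (b) you invoke boundedness and Bakry--\'Emery/strong log-concavity exactly as the paper does; for (c) you observe that tilting preserves the mixture structure, control the component means by integrating the variance bound, and then combine via conditioning plus Hoeffding's lemma---which is precisely the content of the paper's auxiliary \cref{lem:mixture of Gaussians}, only you prove it inline rather than stating it separately.
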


Now, we state our main result on limiting distributions of $\thetaveb$.

\begin{thm}\label{thm:upper bound}
    Suppose Assumptions \ref{assmp:random design}, \ref{assmp:prior}, \ref{assmp:well-specified}, \ref{assmp:prior technical} hold. Then, for general $p = o(n)$, we have  $\thetaveb - \theta_0 = O_P \Big(\frac{1}{\sqrt{p}} + \frac{p}{n}\Big)$. 
    In particular, the following limits hold.
    \begin{enumerate}[(a)]
        \item When $p \ll n^{2/3}$, we have
        \begin{align}\label{eq:normal limit}
            \sqrt{p}(\thetaveb - \theta_0) \mid X \xd N\Big(0, V(\theta_0)^{-1}\Big).
        \end{align}
        \item When $\frac{p^{3/2}}{n} \to \delta \in (0, \infty)$, we have
        $$\sqrt{p}(\thetaveb - \theta_0) \mid X \xd N\Big(\delta V(\theta_0)^{-1} \kappa(\theta_0), V(\theta_0)^{-1}\Big).$$
        \item In contrast, when $n^{2/3} \ll p \ll n$, we have
        $$\frac{n}{p}(\thetaveb - \theta_0) \mid X \xp V(\theta_0)^{-1}\kappa(\theta_0).$$
    \end{enumerate}
\end{thm}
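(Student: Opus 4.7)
The plan is to treat $\thetaveb$ as a classical M-estimator for the objective $\frac{1}{p}\sum_i F_i(\theta)$ and analyze it via a Taylor expansion of the first-order condition at $\theta_0$. The novelty lies in tracking the deterministic bias of the score $\sum_i \nabla F_i(\theta_0)$ arising from the off-diagonal entries of $\sigma^{-2}X^\top X$; this bias is what drives the phase transition at $p\asymp n^{2/3}$.

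I first establish consistency $\thetaveb \xp \theta_0$. Writing $w_i=\tilde w_i+(w_i-\tilde w_i)$, where $\tilde w_i:=d_i\beta_i^\star+\sigma^{-1}X_i^\top\epsilon$ is (conditionally on $X$) approximately distributed as $W_{\theta_0}$ and the cross-error $w_i-\tilde w_i=-\sum_{j\neq i}A_p(i,j)\beta_j^\star$ has conditional $L^2$ size $O(\sqrt{p/n})$, the polynomial growth bound R4 combined with the sub-Gaussian tilt bound R6 yield uniform convergence of $\frac{1}{p}\sum_i F_i(\theta)$ to a deterministic limit over the compact set $\Theta$. Bartlett's identities (R3) and identifiability (R0) imply this limit is uniquely maximized at $\theta_0$, which gives consistency via argmax continuity. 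Since $\theta_0\in\mathrm{int}(\Theta)$ by R1, the first-order condition $\sum_i\nabla F_i(\thetaveb)=0$ holds with probability tending to one, and a second-order Taylor expansion in $\theta$---with cubic remainder controlled by R4 combined with the sub-Gaussian moments of $B_{w_i,d_i,\theta_0}$ from R6---yields
\begin{equation*}
\thetaveb-\theta_0 \;=\; -\Bigl[\sum_i\nabla^2 F_i(\theta_0)\Bigr]^{-1}\sum_i\nabla F_i(\theta_0)+o_P\bigl(\|\thetaveb-\theta_0\|\bigr).
\end{equation*}

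For the Hessian, an exponential-family calculation gives $-\nabla^2 F_i(\theta_0)=-\EE_{p_i}[\nabla^2\ell(\theta_0;B)]-\Var_{p_i}[\nabla\ell(\theta_0;B)]$, where $p_i\propto e^{w_ib-d_ib^2/2}\,d\mu_{\theta_0}$. Replacing $(w_i,d_i)$ by $(W_{\theta_0},d_0)$ and combining Bartlett's second identity $\EE[\nabla^2\ell(\theta_0;B_{\theta_0})]=-I(\theta_0)$ with the law of total variance gives $-\frac{1}{p}\sum_i\nabla^2 F_i(\theta_0)\xp V(\theta_0)$, consistent with \cref{lem:variance checking}. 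For the score, modulo an $O(n^{-1/2})$ correction from $d_i-d_0$ (uniformly controlled by R6 and concentration of $\|X_i\|^2$), $\nabla F_i(\theta_0)\approx h_{\theta_0}(w_i)$, which I Taylor expand around $\tilde w_i$:
\begin{equation*}
h_{\theta_0}(w_i)\;=\;h_{\theta_0}(\tilde w_i)+h_{\theta_0}'(\tilde w_i)(w_i-\tilde w_i)+\tfrac{1}{2}h_{\theta_0}''(\tilde w_i)(w_i-\tilde w_i)^2+R_i.
\end{equation*}
The leading sum $\sum_i h_{\theta_0}(\tilde w_i)$ consists of near i.i.d.\ mean-zero random vectors (Bartlett gives $\EE h_{\theta_0}(W_{\theta_0})=0$ and the law of total variance gives $\Var\,h_{\theta_0}(W_{\theta_0})=V(\theta_0)$), so a conditional CLT---handling the mild coupling through the shared noise $\epsilon$ via the off-diagonal of $\sigma^{-2}X^\top X$, e.g.\ by first conditioning on $(X,\epsilon)$ and integrating $\epsilon$ out---yields $\frac{1}{\sqrt p}\sum_i h_{\theta_0}(\tilde w_i)\xd N(0,V(\theta_0))$ conditionally on $X$. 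The linear correction is a conditionally mean-zero bilinear form in $\beta^\star$ with standard deviation $O(p/\sqrt n)=o(\sqrt p)$ throughout $p=o(n)$. The quadratic correction, by conditional independence of $\tilde w_i$ and $(w_i-\tilde w_i)$ given $X$, has conditional mean proportional to $\|A_p\|_F^2\cdot\kappa(\theta_0)$, which concentrates (by Hanson--Wright) at a deterministic multiple of $(p^2/n)\kappa(\theta_0)$; fluctuations around this mean, as well as the cubic remainders $R_i$, are lower order via R6 tail bounds.

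Combining the pieces yields
\begin{equation*}
\thetaveb-\theta_0 \;=\; V(\theta_0)^{-1}\Bigl[\tfrac{1}{p}\sum_i h_{\theta_0}(\tilde w_i)+\tfrac{p}{n}\kappa(\theta_0)\Bigr]+o_P\bigl(1/\sqrt p+p/n\bigr),
\end{equation*}
from which the three regimes follow by balancing the stochastic scale $1/\sqrt p$ against the bias scale $p/n$: when $p\ll n^{2/3}$ the normal term dominates (part (a)); at $p^{3/2}/n\to\delta$ both survive at scale $1/\sqrt p$ (part (b)); and when $p\gg n^{2/3}$ the deterministic bias dominates, producing the probability limit at rate $n/p$ (part (c)). The hard part will be the sharp bias analysis of the score: isolating the $(p^2/n)\kappa(\theta_0)$ contribution from the quadratic Taylor term while simultaneously handling (i) the $d_i-d_0$ distortion at scale $n^{-1/2}$, (ii) the Lindeberg condition for the conditional CLT under the noise-induced dependence among $(\tilde w_i)_{i=1}^p$, and (iii) the higher-order Taylor remainders in both $\theta$ and $t$; these are precisely the places where the technical prior conditions R4--R6 of \cref{assmp:prior technical} enter substantively.
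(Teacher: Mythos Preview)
Your strategy is correct and parallels the paper's closely: both use a Wald-type consistency argument, linearize the first-order condition via Taylor expansion, identify the Hessian limit as $-V(\theta_0)$, and trace the $(p^2/n)\kappa(\theta_0)$ bias to the quadratic term in the cross-coupling $\mathcal{F}_i=-\,(w_i-\tilde w_i)=\sum_{j\neq i}A_p(i,j)\beta_j^\star$. The one genuine difference is the order of operations in the score CLT. You Taylor-expand $h_{\theta_0}(w_i)$ around $\tilde w_i$ first, separating the bias upfront, and then seek a CLT for $\sum_i h_{\theta_0}(\tilde w_i)$ via conditioning on $(X,\epsilon)$. The paper instead conditions on $(X,\beta^\star)$ first, so the score is a smooth function of the Gaussian noise $\epsilon$; it then invokes Chatterjee's second-order Gaussian Poincar\'e inequality to obtain a conditional CLT with mean $\mu_p(X,\beta^\star)=\frac{1}{\sqrt p}\sum_i\Phi(d_i\beta_i^\star-\mathcal{F}_i)$, where $\Phi(m):=\EE h_{\theta_0}(N(m,d_0))$, and only afterwards Taylor-expands $\Phi$ in $\mathcal{F}_i$ and applies the i.i.d.\ CLT over $\beta^\star$. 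The two routes are dual---each needs a two-layer ``condition then marginalize'' argument and the law-of-total-variance identity $T_1+T_2=V(\theta_0)$---but the paper's order buys exact Gaussianity at the first stage (making Poincar\'e clean) and exact i.i.d.\ structure at the second; your order front-loads the bias extraction conceptually but still requires a Gaussian-CLT step when integrating $\epsilon$ back in, so you will end up using essentially the same tool.

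One small imprecision: your linear remainder $\sum_i h'(\tilde w_i)\mathcal{F}_i$ is not in general conditionally mean-zero given $X$ when $\EE B_{\theta_0}\neq 0$; the $O_P(p/\sqrt n)$ bound you need comes instead from a second-moment calculation exploiting the bilinear-in-$X$ structure of $\mathcal{F}_i$ (this is how the paper's Lemma~A.7 handles the analogous term $\sum_i\Phi'(d_0\beta_i^\star)\mathcal{F}_i$).
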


We elaborate on each part of \cref{thm:upper bound}. First consider the setting of part (a) which shows $\sqrt{p}$-consistency of the estimator, with a normal limit. This is the typical parametric rate expected in this problem, as justified in \cref{rmk:rate} below.
In our hierarchical model, as we only have indirect measurements of $\beta^\star = (\beta_1^\star, \ldots, \beta_p^\star) \stackrel{i.i.d.}{\sim} \mu_{\theta_0}$, the limiting variance $V(\theta_0)^{-1}$ is strictly larger than $I(\theta_0)^{-1}$, the limiting variance for estimating $\theta_0$ directly from $\beta^\star$. We will further discuss the optimality of this limiting variance by deriving a matching lower bound in \cref{sec:lower bound}.

Part (b) considers higher dimensions with $p \propto n^{2/3}$. While the convergence rate and limiting variance is same as that in part (b), an additional bias appears in the limit. Note that this bias grows as $\delta = \lim_{p \to \infty}\frac{p^{3/2}}{n}$ increases. In part (c), we further increase the dimension so that $\delta \to \infty$, or equivalently $n^{2/3} \ll p$. The resulting limiting distribution exhibits a \emph{phase transition} and results in a slower rate of convergence (since $\frac{n}{p} \ll \sqrt{p}$ under this regime). This is because now the bias term in part (b) dominates the variance. 

To better understand this phase transition, we briefly discuss the proof strategy when $k=1$. Assuming that $\theta$ is a one-dimensional parameter, the first-order conditions for \eqref{eq:est def} gives
$$\sqrt{p} (\thetaveb-\theta_0) \approx \frac{\frac{1}{\sqrt{p}} \sumin F_i'(\theta_0)}{\frac{1}{p} \sumin F_i''(\theta_0)}.$$
We show that the denominator converges to $V(\theta_0)$ as long as $p = o(n)$. To analyze the numerator, we additionally condition on the true coefficients $\bbst$ and show that
\begin{align}\label{eq:conditional limit}
    \frac{1}{\sqrt{p}} \sumin F_i'(\theta_0) \mid X, \bbst \approx N\big(\mu_p(X,\bbst), T_2 \big), ~~ \text{where} ~~ \mu_p(X,\bbst) := \frac{1}{\sqrt{p}} \EE\Big[\sumin F_i'(\theta_0)\mid X, \bbst\Big]
\end{align}
and $T_2 > 0$ is some constant variance (see \eqref{eq:T_1, T_2}). The phase transition on $p$ arises from analyzing this mean term, where we view each summand $F_i'(\theta_0) = \EE \nabla \ell(\theta_0; B_{w_i,d_i,\theta_0})$ as a function of $w_i$ and use \emph{second-order} Taylor expansion $w_i = w_i(y) \approx d_0 \beta_i^\star$ to write
\begin{align}\label{eq:mu_p approximation}
    \mu_p(X,\bbst) \approx \frac{1}{\sqrt{p}} \sumin \EE \nabla \ell(\theta_0; B_{d_0 \beta_i^\star, d_0,\theta_0}) + \frac{p^{3/2}}{n} \kappa(\theta_0) = G_{\theta_0}(d_0 \beta_i^\star) + \frac{p^{3/2}}{n} \kappa(\theta_0).
\end{align}
Here, the bias term $\kappa(\theta_0)$ arises from expanding the second term in the Taylor expansion, as the first derivative term turns out to be always negligible.

When $p \ll n^{2/3}$, the first summand in \eqref{eq:mu_p approximation} dominates and gives the limit $\mu_p(X,\bbst) \mid X \xd N(0, T_1)$ for another constant $T_1>0$ (see \eqref{eq:T_1, T_2}). Combining this with the conditional normal limit \eqref{eq:conditional limit} using the fact that $T_1+T_2 = V(\theta_0)$, we get $\frac{1}{\sqrt{p}} \sumin F_i'(\theta_0) \mid X \xd N(0, V(\theta_0))$. The conclusion in part (a) follows by Slutsky's theorem.

In contrast, suppose $p \gg n^{2/3}$. Then, as the first term in the RHS of \eqref{eq:mu_p approximation} is $O_P(1)$, the second term dominates. The resulting non-degenerate limit of $\mu_p$ requires a slower convergence rate: $$\frac{n}{p^{3/2}}\mu_p(X,\bbst) \mid X \xd \kappa(\theta_0).$$
Consequently, we have $\frac{n}{p^2} \sumin F_i'(\theta_0) \mid X \xd \kappa(\theta_0),$ and the conclusion in part (c) follows again by Slutsky's theorem.
\begin{remark}[$\sqrt{p}$ is the optimal rate of convergence]\label{rmk:rate}
    The convergence rate $\sqrt{p}$ is information-theoretically tight. To see this, consider the setting where one directly observes all regression coefficients $\beta^\star = (\beta_1^\star, \ldots, \beta_p^\star) \stackrel{i.i.d.}{\sim} \mu_{\theta_0}$. The resulting convergence rate for estimating $\theta_0$ is $\sqrt{p}$, and hence a faster convergence rate is impossible given only $y$. 
    
    We can also show that under our general asymptotic scaling $p = o(n)$, the method of moments estimator for $\theta$ is $\sqrt{p}$-consistent, which illustrates that the slower rate in part (c) is sub-optimal. For illustration, assume a simple setting with $k = 1$ and suppose that the mean-mapping $M(\theta) := \EE B_\theta$ is bijective. Then, by setting $\hat{\beta} := (X^\top X)^{-1}X^\top y$ as the usual least squares estimator, we can define the first moment-based estimator as: $$\hat{\theta}^{\mathsf{MoM}}_p := M^{-1}\Big(\frac{1}{p}\sumin \hat{\beta}_i\Big).$$
    By applying the Delta method, we can show the following limit for $p=o(n)$:
    $$\sqrt{p}(\hat{\theta}^{\mathsf{MoM}}_p - \theta_0) \xd N\Big(0, \frac{\sigma^2 + \Var (B_{\theta_0})}{M'(\theta_0)^2}\Big).$$
    This limiting variance is sub-optimal and strictly greater than $V(\theta_0)^{-1}$.
\end{remark}

\begin{remark}[Faster rate of convergence at singularity]\label{rmk:singularity}
    In part (c) of \cref{thm:upper bound}, note that %
    the $k$-dimensional vector $\kappa(\theta_0)$ may be zero. While this singularity is typically a measure-zero event (see Figure \ref{fig:kappa} below for illustration), the convergence rate at such points is \emph{faster} than the $\frac{n}{p}$-rate in part (c). In particular, if $\kappa(\theta_0) = 0$, we can extend \cref{thm:upper bound} to show that $\thetaveb - \theta_0 = O_P \Big(\frac{1}{\sqrt{p}} + \big(\frac{p}{n}\big)^{3/2} \Big)$. Consequently, \eqref{eq:normal limit} extends to the higher-dimensional regime of $p = o(n^{3/4})$, and the phase transition no longer happens at $p \asymp n^{2/3}$.
    \begin{figure}[h!]
    \centering
    \includegraphics[width=\linewidth]{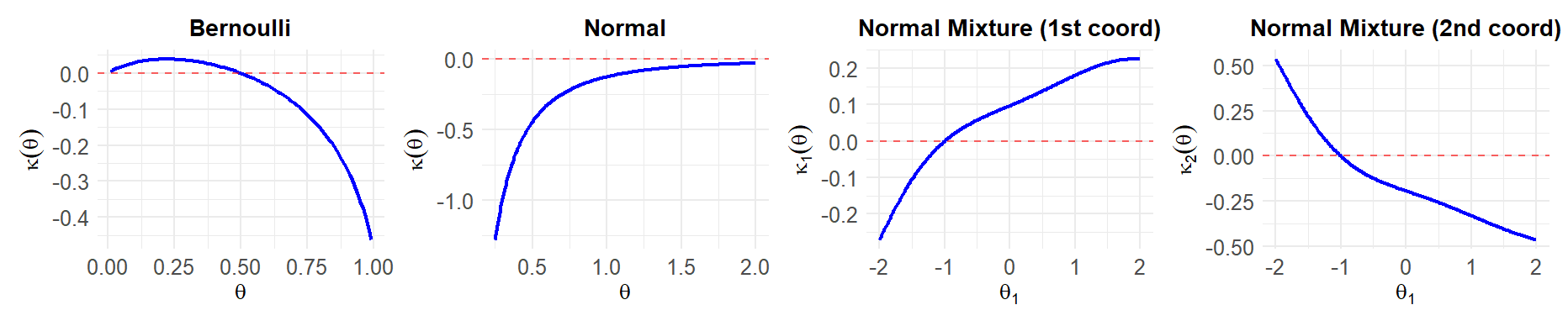}
    \vspace{-6mm}
    \caption{Visualization of $\kappa(\theta)$ under various priors: (first) $\textsf{Ber}(\theta)$, (second) Normal with unknown variance: $N(0, 1/\theta)$, (third/fourth) Gaussian mixtures $\frac{1}{2} N(\theta_1,1) + \frac{1}{2} N(\theta_2,0.25)$ with $\theta_2 = -1$ fixed throughout the visualization. For the mixture prior, $\kappa(\theta) = (\kappa_1(\theta), \kappa_2(\theta))$ is a two-dimensional vector, and each component is plotted in the third/fourth figure. We see that the singularity $\kappa(\theta) = 0$ is atypical. In particular, for the Gaussian mixture case, the singularity occurs when the two mixture components share a common mean $\theta_1 = \theta_2 = -1$.}
    \label{fig:kappa}
\end{figure}
\vspace{-5mm}
\end{remark}

To illustrate the limits in \cref{thm:upper bound}, we consider a toy example of the Gaussian prior with unknown mean. Here, conjugacy allows directly computing the limit distribution. %
\begin{ex}[Gaussian mean]\label{ex:gaussian}
    Suppose that the prior is $\mu_\theta \stackrel{d}{\equiv} N(\theta, 1)$. Then, the integral for defining the functions $F_i$ becomes tractable (see \eqref{eq:est def}), and we can write
    $$F_i(\theta) = \frac{1}{2}\log\Big(\frac{2\pi}{d_i+1}\Big) - \frac{\theta^2}{2} + \frac{(w_i+\theta)^2}{2(d_i+1)}.$$
    As this is a quadratic function in $\theta$, the vEB estimator in \eqref{eq:est def} can be explicitly written as
    $$\thetaveb = \frac{\sumin w_i/(d_i+1)}{\sumin d_i/(d_i+1)} = \frac{\sumin w_i}{p d_0} + O_P\Big(\frac{1}{\sqrt{n}}\Big),$$
    where the second equality used $d_i = d_0+O_{P,X}\Big(\frac{1}{\sqrt{n}}\Big).$
    Using the Gaussianity $w \mid X,\bbst \stackrel{d}{\equiv} N(\Sigma \bbst, \Sigma)$ where $\Sigma := d_0 X^\top X$ (where $d_0 = \sigma^{-2}$ is as in \cref{def:information}), we can write
    $$\thetaveb \mid X, \bbst \stackrel{d}{\equiv} \frac{1}{pd_0} N\Big({1_p^\top \Sigma \bbst}, 1_p^\top \Sigma 1_p\Big).$$
    Then, using the Gaussianity of $\bbst \stackrel{d}{\equiv} N(\theta_0 1_p, I_p)$, we can spell out the limit of the mean as 
    $$1_p^\top \Sigma \bbst \mid X \stackrel{d}{\equiv} N\Big(\theta_0 {1_p^\top \Sigma 1_p}, {1_p^\top \Sigma^2 1_p}\Big).$$
    By combining the two displays above and noting $\frac{1_p^\top \Sigma 1_p}{p} = d_0 + O_{P,X}\Big(\frac{1}{\sqrt{n}}\Big)$ (through standard moment computations), we have obtained the limit distribution of $\thetaveb$:
    $$\thetaveb \mid X \stackrel{d}{\equiv} N\Big(\theta_0, \frac{1+d_0}{p d_0}\Big) + O_P\Big(\frac{1}{\sqrt{n}}\Big).$$
    Under the asymptotic scaling $p = o(n)$, the error terms are negligible and we can write:
    \begin{equation}\label{eq:normal example limit}
        \sqrt{p}(\thetaveb - \theta_0) \mid X \xd N(0, 1+\sigma^2).
    \end{equation}

    To see that this matches \cref{thm:upper bound}, note that the corresponding log-likelihood derivative can be written as $\ell'(\theta; b) = b - \theta$ and $\Var(B_\theta \mid W_\theta) = \frac{1}{1+1/\sigma^2}$.
    We first compare with part (a) by computing the limiting variance under \cref{thm:upper bound}. Direct computations give 
    $$I(\theta) = \Var(B_\theta) = 1, \quad \EE \Var(B_\theta \mid W_\theta) = \frac{1}{1+1/\sigma^2},$$
    so we have $V(\theta) = \frac{1}{1+\sigma^2}$. This is exactly inverse of the limiting variance computed in \eqref{eq:normal example limit}.
    
    To check consistency with parts (b) and (c) in \cref{thm:upper bound}, note that $$G_\theta(t) = \EE \ell'(\theta; B_{t,d_0,\theta}) = \EE[B_{t,d_0,\theta}]-\theta = t$$ is exactly linear. Thus, $G_\theta'' \equiv 0$ for all $\theta$, and $\kappa \equiv 0$. In other words, every $\theta$ is a singular point in the sense of \cref{rmk:singularity}. The limit in part (c) becomes degenerate, and does not reflect the tight order as the actual limit in \eqref{eq:normal example limit}.
\end{ex}

\subsection{Extension to misspecified priors}\label{subsec:misspecified}
One natural question would be to go beyond the well-specified requirement in \cref{assmp:well-specified} by allowing $\bbst$ to be generated from a misspecified prior, say $\mu^\star \notin \{\mu_\theta\}_{\theta \in \Theta}$. Standard finite-dimensional Bayesian asymptotics \citep{berk1966limiting,kleijn2012bernstein} consider a \textit{misspecified likelihood} and showed that the resulting posterior contracts around a KL minimizer. Compared to such results, we consider a \textit{misspecified prior} while still assuming a correctly specified likelihood (see \eqref{eq:regression model}). Interestingly, as we shall illustrate, our results more resemble model misspecification in the frequentist literature \citep{white1982maximum}.

To formalize the setup, instead of requiring a well-specified class of priors as in \cref{assmp:well-specified}, assume that $\beta_i^\star \stackrel{i.i.d.}{\sim} \mu^\star$. Here, $\mu^\star$ is a true ``data-generating'' measure, which may not be parametrized as $\mu_\theta$. 
We also modify the notations in parts (b) and (c) of \cref{def:information} accordingly as follows.

\begin{defi}[Extension of \cref{def:information} to misspecified priors]

\begin{enumerate}
    \item[(a)] For any $\theta \in \Theta$, define random variables $(\beta^\star, W^\star, B_\theta^\star)$ as
    $$\beta^\star \sim \mu^\star, \quad W^\star \mid \beta^\star \sim N(d_0 \beta^\star, d_0), \quad B_\theta^\star \mid W^\star \sim  B_{W^\star,d_0,\theta}.$$
    Here, recall the notation $B_{t,d,\theta}$ from part (c) in \cref{def:information}. Set $V(\theta)$ as in \eqref{eq:V_theta}, and also define 
    $$V^\star(\theta) := V(\theta) - \EE\Big[\nabla^2 \ell(\theta; B_\theta^\star) + \big(\nabla \ell(\theta; B_\theta^\star) \big)\big(\nabla \ell(\theta; B_\theta^\star) \big)^\top \Big].$$
    \item[(ii)] Define a mapping $\kappa^\star: \Theta \to \R^k$ as
    $$\kappa^\star(\theta) := \frac{1}{2} \EE[(\beta^\star)^2] \EE[G_\theta''(W^\star)].$$
\end{enumerate}
\end{defi}
Recalling the discussion in \eqref{eq:w distribution}, $\beta^\star$ denotes a sample coefficient, and $W^\star$ denotes the approximation of each $w_i$. Also recalling from \cref{def:information} that $\frac{dB_{W^\star,d_0,\theta}}{d B_\theta}(b) \propto e^{-\frac{(W^\star-d_0 b)^2}{2d_0}}$, $B_\theta^\star$ denotes a draw from the posterior $\beta \mid W^\star$ (while using $\mu_\theta$ as the prior). Note that these modified notations reduce to that in \cref{def:information} under correct specification, in the sense that we have $B_\theta^\star \stackrel{d}{\equiv} B_\theta$ and $V^\star(\theta) = V(\theta)$ (by Bartlett's second identity) when $\mu^\star = \mu_\theta$. However such reductions are not possible for a general $\mu^\star \notin \{\mu_\theta\}_{\theta \in \Theta}$.

Finally, define $\theta^\star$ as the parameter value that minimizes the KL divergence between $W^\star$ and $W_\theta$, i.e.
\begin{equation}\label{eq:theta star}
    \theta^\star \in \argmin_{\theta \in \Theta} \dkl(W^\star \mid W_\theta). %
\end{equation}
Here, viewing $W^\star$ as an approximation of each $w_i$, $\theta^\star$ is the parameter value that best approximates this ``data-generating'' distribution $W^\star$. We formally state limit distributions of $\thetaveb$ under misspecified priors below.

    \begin{cor}[Generalization of \cref{thm:upper bound} to misspecified priors]\label{cor:misspecified}
        Suppose Assumptions \ref{assmp:random design}, \ref{assmp:prior} hold (where we replace all instances of $\theta_0$ to $\theta^\star$ in \cref{assmp:prior}), and suppose that \eqref{eq:theta star} has a unique minimizer. Assume R4,R6 for the class of priors $\mu_\theta$, and assume R5 for the random variable $B_{\theta^\star}^\star$. Then, for general $p = o(n)$,  $\thetaveb - \theta^\star = O_P \Big(\frac{1}{\sqrt{p}} + \frac{p}{n}\Big)$. 
        In particular, the following limits hold.
        \begin{enumerate}[(a)]
            \item When $p \ll n^{2/3}$, we have
            \begin{align*}
                \sqrt{p}(\thetaveb - \theta^\star) \mid X \xd N\Big(0, V^\star(\theta^\star)^{-1} V(\theta^\star) V^\star(\theta^\star)^{-1} \Big).
            \end{align*}
            \item When $\frac{p^{3/2}}{n} \to \delta \in (0, \infty)$, we have
            $$\sqrt{p}(\thetaveb - \theta^\star) \mid X \xd N\Big(\delta V^\star(\theta^\star)^{-1} \kappa^\star(\theta^\star), V^\star(\theta^\star)^{-1} V(\theta^\star) V^\star(\theta^\star)^{-1}\Big).$$
            \item When $n^{2/3} \ll p \ll n$, we have
            $$\frac{n}{p}(\thetaveb - \theta^\star) \mid X \xp V^\star(\theta^\star)^{-1}\kappa^\star(\theta^\star).$$
        \end{enumerate}
    \end{cor}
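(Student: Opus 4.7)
The plan is to mirror the proof of \cref{thm:upper bound}, with $\theta^\star$ replacing $\theta_0$ throughout and appropriate modifications accounting for the failure of the Bartlett identities under misspecification. The starting observation is that the population limit of the objective $\frac{1}{p}\sum_i F_i(\theta)$ is, up to a constant in $\theta$, equal to $\EE_{W^\star}[\log p_{W_\theta}(W^\star)] = -\dkl(W^\star \mid W_\theta) + \text{const}$, which by the uniqueness hypothesis in \eqref{eq:theta star} is maximized uniquely at the KL projection $\theta^\star$. This identifies the correct target.

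The consistency step $\thetaveb \xp \theta^\star$ proceeds via uniform convergence of $\theta \mapsto \frac{1}{p}\sum_i [F_i(\theta) - F_i(\theta^\star)]$ to $\theta \mapsto -[\dkl(W^\star \mid W_\theta) - \dkl(W^\star \mid W_{\theta^\star})]$ over the compact $\Theta$. Compactness (R1), polynomial growth of the log-likelihood derivatives (R4), and the sub-Gaussian tilt bounds (R6) together yield this uniform convergence, after which a standard argmax continuous-mapping argument closes the step using the unique-minimizer hypothesis.

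Once consistency is in hand, the first-order condition $\sum_i \nabla F_i(\thetaveb) = 0$ combined with a Taylor expansion around $\theta^\star$ yields
$$\sqrt{p}(\thetaveb - \theta^\star) \approx \Bigl[-\tfrac{1}{p}\sum_i \nabla^2 F_i(\theta^\star)\Bigr]^{-1} \cdot \tfrac{1}{\sqrt{p}}\sum_i \nabla F_i(\theta^\star),$$
with the third-order remainder controlled using R4 and R5 (applied to $B_{\theta^\star}^\star$). Direct differentiation of $F_i$ gives $\nabla^2 F_i(\theta) = \EE[\nabla^2\ell(\theta; B_{w_i,d_i,\theta})] + \Var[\nabla\ell(\theta; B_{w_i,d_i,\theta})]$; averaging over $w_i \approx W^\star$ and $d_i \approx d_0$, and noting that the terms $\EE[\nabla^2\ell(\theta^\star; B_{\theta^\star}^\star) + (\nabla\ell)(\nabla\ell)^\top]$ no longer vanish (as they did in the well-specified setup via Bartlett's second identity), produces exactly the correction in the definition of $V^\star$, yielding Hessian limit $-V^\star(\theta^\star)$. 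The score $\frac{1}{\sqrt{p}}\sum_i \nabla F_i(\theta^\star)$ is then analyzed by conditioning on $(X, \bbst)$: conditional asymptotic normality with mean $\mu_p^\star(X,\bbst)$ and the corresponding conditional variance, combined with a CLT over $\beta_i^\star \stackrel{\mathrm{i.i.d.}}{\sim} \mu^\star$ applied to $\mu_p^\star$, yields total limiting variance $V(\theta^\star)$. Expanding $\mu_p^\star$ to second order in $(X^\top X - I_p)$ in the spirit of \eqref{eq:mu_p approximation} produces the leading bias $\tfrac{p^{3/2}}{n}\kappa^\star(\theta^\star)$, with the moment $\EE[(\beta^\star)^2]$ now taken under $\mu^\star$. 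Combining by Slutsky yields the sandwich form $V^\star(\theta^\star)^{-1} V(\theta^\star) V^\star(\theta^\star)^{-1}$ and the same phase transition at $p \asymp n^{2/3}$ as in \cref{thm:upper bound}.

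The main obstacle I anticipate is the consistency argument: unlike the well-specified case, where local Fisher positivity provides strong concavity near $\theta_0$ for free, under misspecification only uniqueness of the KL minimizer is assumed, so uniform convergence over the entire $\Theta$ must be handled carefully. A secondary subtlety is that R5 is imposed only on $B_{\theta^\star}^\star$ rather than directly on $\mu^\star$, which requires propagating moment bounds through the linear-quadratic tilts to control both the Taylor remainders and the variance contributions in the CLT for the score.
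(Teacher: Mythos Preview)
Your proposal is correct and follows essentially the same route as the paper: identify $\theta^\star$ as the unique maximizer of the population objective via the KL representation, establish consistency, Taylor-expand the first-order condition around $\theta^\star$, show the Hessian average converges to $-V^\star(\theta^\star)$ (where the extra correction term arises precisely because Bartlett's second identity fails), and carry over the score CLT from \cref{lem:linear clt} under the notation change $(B_{\theta_0},W_{\theta_0})\to(B_{\theta^\star}^\star,W^\star)$ to obtain variance $V(\theta^\star)$ and bias $\kappa^\star(\theta^\star)$, then combine via Slutsky to get the sandwich form. The only minor deviation is that you argue consistency through uniform convergence over $\Theta$, whereas the paper invokes the Wald-type argument of \cref{lem:Wald} (pointwise LLN plus an envelope bound); both are standard and your route works under the stated compactness and growth assumptions.
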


As a sanity check, we recover the conclusion of \cref{thm:upper bound} by taking $\mu = \mu^\star$ in \cref{cor:misspecified}. Here, note that the normal limit in part (a) is analogous to the limit theory for the frequentist MLE under likelihood misspecification (e.g. see Theorem 3.2 in \cite{white1982maximum}).

\subsection{Lower bound}\label{sec:lower bound}
We show that the limiting variance in \eqref{eq:normal limit} is optimal by deriving a matching lower bound. For this goal, define the Fisher information for the observed responses $y$. While $y$ is $n$-dimensional, the Fisher information matrix $\mathcal{I}_p(\theta)$ is still indexed by $p$ for notational consistency.
\begin{defi}[Fisher information]
    Let $\mathcal{I}_p(\theta)$ denote the $k \times k$ Fisher information computed from the marginal likelihood $m_\theta$ (see \eqref{eq:marginal}):
    $$\mathcal{I}_p(\theta) := - \EE\Big[\frac{\partial^2 \log m_\theta(y)}{\partial \theta^2} \mid X \Big].$$ %
    Here, the information is conditioned on a realized design matrix (as the above expectation is taken with respect to $y \mid X$), and thus implicitly depends on $X$.
    Also, note the distinction from the notation $I(\theta)$ in \cref{def:information}, which denotes the Fisher information for a \emph{single sample} from the prior $\mu_\theta$.
\end{defi}

By the multivariate Cramér–Rao bound \citep[e.g. see Theorem 2.6.1 in][]{lehmann2006theory}, for any unbiased estimator $\thetahat$, its variance is bounded below by the inverse Fisher information:
$$\EE \left[(\hat{\theta}_p - \theta_0) (\hat{\theta}_p - \theta_0)^\top \mid X\right] \succeq {\mathcal{I}_p(\theta_0)}^{-1}, \quad \forall \theta_0 \in \Theta.$$
Thus, understanding the fundamental limits of estimation boils down to evaluating the Fisher information $\mathcal{I}_p(\theta_0).$ Below we state our main lower bound result, under a technical bounded support assumption.\footnote{This assumption follows as we use the moment bounds developed in \cite{lee2025clt}; we believe this can be relaxed to assuming $\Var[B_{t,d,\theta}] \le K$ for some uniform constant $K$, which also allows log-concave priors.}

\begin{assume}[Bounded support]\label{assmp:cpt support}
    Assume that the common support of the prior $\mu_\theta$ is bounded and that the log-likelihood $\ell(\theta; b)$ is continuous with respect to both $\theta, b$. %
    Note that this assumption is one sufficient condition for \cref{assmp:prior technical}.
\end{assume}

\begin{thm}\label{thm:lower bound}
    Suppose Assumptions \ref{assmp:random design}, \ref{assmp:prior}, \ref{assmp:well-specified}, \ref{assmp:cpt support} hold. Under the asymptotic scaling $p \ll n^{2/3}$, we have $\frac{\mathcal{I}_p(\theta_0)}{p} \mid X \xp V(\theta_0).$
\end{thm}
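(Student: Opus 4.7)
The plan is to evaluate $\mathcal{I}_p(\theta_0)$ via Bartlett's second identity as
$\mathcal{I}_p(\theta_0) = \EE[S_p(\theta_0) S_p(\theta_0)^\top \mid X]$, where $S_p(\theta) := \nabla_\theta \log m_\theta(y)$ is the score of the marginal likelihood; applicability of the identity will follow from \cref{assmp:cpt support} combined with R4, which provide the dominated-differentiation hypotheses on $m_\theta$. A direct differentiation through the integral in $Z_p(w, \theta)$ (see \eqref{eq:marginal likelihood simplified}) yields the compact representation
$$S_p(\theta_0) = \EE_{\beta \sim \Pi_{\theta_0}(\cdot \mid y,X)} \Big[\sumin \nabla \ell(\theta_0; \beta_i)\Big],$$
i.e.\ the score equals the oracle-posterior average of the coordinate-wise prior score.

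Next, I would replace this oracle posterior average by the mean-field surrogate
$T_p := \sumin h_{\theta_0}(w_i)$, where $h_\theta(t) = \EE \nabla \ell(\theta; B_{t,d_0,\theta})$; this is the analogue of $S_p(\theta_0)$ that one would obtain if the posterior were exactly the product measure $\prod_i \mu_{i, w_i, \theta_0}$. Establishing the $L^2$ bound $\EE[\|S_p(\theta_0) - T_p\|^2 \mid X] = o(p)$ with high probability over $X$ is the crux. In the regime $p \ll n^{2/3}$, this should follow from the same mean-field moment bounds of \cite{lee2025clt} that drive the proof of \cref{thm:upper bound}: posterior averages of coordinate-separable functionals concentrate around their product-measure analogues, with bias of order $p^{3/2}/n$, which is $o(\sqrt{p})$ exactly when $p = o(n^{2/3})$.

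Finally, I would compute $\Var[T_p \mid X]$ and combine. The diagonal contribution gives $\sumin \Var[h_{\theta_0}(w_i) \mid X] = p\, V(\theta_0) + o(p)$ via \cref{lem:variance checking}, together with the concentration of the diagonal entries of $d_0 X^\top X$ around $1$ and the polynomial-growth moment control from R4. The off-diagonal entries of $d_0 X^\top X$ being of order $1/\sqrt{n}$ makes $(w_i, w_j)$ nearly independent for $i\neq j$, so a first-order Taylor expansion bounds $\Cov[h_{\theta_0}(w_i), h_{\theta_0}(w_j) \mid X] = O(1/n)$ with high probability, contributing $O(p^2/n) = o(p)$ in total under $p = o(n)$. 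The cross-term $\Cov[S_p - T_p, T_p \mid X]$ is handled by Cauchy--Schwarz using the $o(p)$ bound from Step 2. This yields $\mathcal{I}_p(\theta_0) = p\, V(\theta_0) + o_P(p)$, as claimed.

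The principal obstacle is Step 2: the mean-field stability arguments that underlie the upper bound must be re-deployed here to control a \emph{posterior} functional in $L^2$ uniformly in the $X$-randomness, rather than a deterministic integrand used to define $\thetaveb$. Translating between these two viewpoints --- the variational objective $\sumin F_i(\theta)$ and the oracle posterior score --- in a way that remains sharp at the threshold $p \asymp n^{2/3}$ is where I expect the main technical effort to concentrate.
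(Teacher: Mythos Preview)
Your overall architecture is close to the paper's: both represent the score as a posterior expectation $S_p(\theta_0)=\EE^{\tilde{\PP}^{A_p,w}_{\theta_0}}\bigl[\sum_i\nabla\ell(\theta_0;B_i)\bigr]$ and then invoke mean-field moment bounds for RFIMs. However, your Step~2 contains a genuine gap that prevents the argument from reaching the full range $p\ll n^{2/3}$.

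The mean-field approximation error $S_p-T_p$ is \emph{not} of order $p^{3/2}/n$. Combining both parts of the first-moment bound (the paper's \cref{thm:CLT}(a)) gives
\[
|S_p-T_p|\;=\;O_{P,X}\Bigl(\tfrac{p}{\sqrt n}+\tfrac{p^2}{n}\Bigr)+O_{P,X}\Bigl(p\sqrt{\tfrac{p}{n}}\Bigr)\;=\;O_{P,X}\Bigl(\tfrac{p^{3/2}}{\sqrt n}\Bigr),
\]
uniformly in the external field $c=w$. Hence $\EE\bigl[\|S_p-T_p\|^2\mid X\bigr]=O_{P,X}(p^3/n)$, which is $o(p)$ only when $p\ll n^{1/2}$. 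You appear to have imported the rate $p^{3/2}/n$ from the bias analysis in the upper bound (\cref{thm:upper bound}); but that rate comes from a second-order Taylor expansion of the \emph{conditional mean of $T_p$} in the off-diagonal perturbations $\F_i$ (\cref{lem:A7}), which is a different object from $S_p-T_p$. There is no obvious cancellation that improves the $p^{3/2}/\sqrt n$ pointwise bound upon averaging over $y$: the leading contribution is $\sum_i s_{i,\theta_0}\,h_i'(w_i)$, whose mean over $y$ is a bilinear form $a^\top A_p b$ with $\|a\|,\|b\|=O(\sqrt p)$, hence of the same order.

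The paper sidesteps this by working with the second-derivative representation
\[
\mathcal{I}_p(\theta_0)\;=\;-\EE_y\Bigl[\EE^{\tilde{\PP}}\bigl[\textstyle\sum_i\nabla^2\ell(\theta_0;B_i)\bigr]\Bigr]-\EE_y\Bigl[\Var^{\tilde{\PP}}\bigl[\textstyle\sum_i\nabla\ell(\theta_0;B_i)\bigr]\Bigr].
\]
The first piece is a posterior \emph{mean} of a coordinatewise functional and is handled by \cref{thm:CLT}(a) plus an LLN, yielding $pI(\theta_0)+o(p)$. The second piece is a posterior \emph{variance}, which \cref{thm:CLT}(b) controls directly with error $O_{P,X}\bigl(1/\sqrt p+\sqrt{p^3/n^2}\bigr)=o(1)$ on the $p^{-1}$-scale, valid precisely for $p\ll n^{2/3}$. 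The point is that bounding the posterior second moment directly avoids squaring the $O(p^{3/2}/\sqrt n)$ first-moment error; this is what buys the extra range from $n^{1/2}$ to $n^{2/3}$. Your Step~3 computations for $\Var[T_p\mid X]$ are fine and indeed mirror the paper's LLN/covariance estimates, but they cannot compensate for the loss in Step~2.
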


As the scaled limiting Fisher information matches the limiting variance in \eqref{eq:normal limit}, we can conclude that the vEB estimator $\thetaveb$ attains the optimal limiting variance when $p \ll n^{2/3}$. The proof of \cref{thm:lower bound} builds upon the crucial observation that the Fisher information $\mathcal{I}_p(\theta_0)$ can be expressed as moments of \emph{Random Field Ising Models} (RFIMs, see \cref{def:rfim}). Utilizing recent developments for RFIMs in \cite{lee2025clt}, we establish moment bounds for the generalized summand $\sumin g(\beta_i)$ (see \cref{thm:CLT}) for some smooth function $g(\cdot)$, which may be of independent interest.

\section{Consequences of EB in downstream posterior inference}\label{sec:consequences}
Moving on from the frequentist guarantees for recovering the prior $\mu_{\theta_0}$ discussed in \cref{sec:mainres}, now we investigate consequences of EB (Empirical Bayes) in downstream \emph{posterior inference} for the regression coefficients $\beta = (\beta_1, \ldots, \beta_p)$. To set notation, we formally define the oracle posterior and the EB posterior. 

\begin{defi}[Oracle and EB posterior]\label{def:posterior}
    Given a prior $\mu_\theta$, let $\PP_{\theta}(\beta)$ denote the posterior distribution under the likelihood in \eqref{eq:regression model}:
    $$\PP_{\theta}(\beta) = \PP_{\theta}(\beta \mid y, X) \propto \PP(y \mid X, \beta) \prod_{i=1}^p d\mu_{\theta}(\beta_i).$$
    In particular, we say that $\PP_{\theta_0}(\beta)$ is the \emph{oracle posterior}. For \emph{any estimator} $\thetahat = \thetahat(y,X)$ of $\theta_0$, we say that $\PP_{\thetahat}(\beta)$ is the \emph{EB posterior}.
\end{defi}
\noindent Here note that the EB posterior $\PP_{\thetahat}$ depends on the estimated prior $\thetahat$. So, the data $y$ is double-dipped to construct $\PP_{\thetahat}$; once to write the likelihood $\PP(y \mid X, \beta)$ and second to estimate the prior $\thetahat$. Such notions of EB posteriors have been of interest for a wide range of statistical problems, including sparse linear regression \citep{johnstone2004needles,zhang2020convergence} as well as model selection \citep{rousseau2020asymptotic}.

In this section, the notation $\thetahat$ will denote any generic estimator, and $\thetaveb$ will continue to denote the vEB estimator \eqref{eq:est def}. Also, $\beta$ will denote a sample from the posterior (or its variational approximation), as opposed to the true regression coefficients $\beta^\star$ (see \cref{assmp:well-specified}).

Our main goal is to understand whether the EB posterior is a good approximation of the oracle posterior. In this section, we answer this question in terms of \emph{posterior inference} for (a) each coordinate, and (b) a one-dimensional projection. Due to technical reasons (mainly to use dependent concentration inequalities in \cite{lee2025clt}), this section will work under \cref{assmp:cpt support} and the asymptotic scaling $p = o(n^{2/3})$. %

\subsection{Estimating marginal distributions}

Here, note that even understanding the oracle posterior itself is a nontrivial task due to its dependent and high-dimensional structure. In fact, it is extremely challenging to even compute the posterior mean and variance due to the intractable normalizing constant. Fortunately, the variational approximation turns out to be useful for understanding these intractable quantities. Below, we introduce the notion of \emph{variational posteriors}, which are product measure approximations of the corresponding posterior.

\begin{defi}[Variational posteriors]\label{def:MF posterior}
    Recall from \cref{def:posterior} that $\PP_\theta$ denotes the usual posterior given a prior $\mu_\theta$. Define $\QQ_\theta$ as the Mean-Field variational approximation of $\PP_\theta$, i.e.,
    $$\QQ_\theta(\beta) := \argmin_{\QQ = \prod_{i=1}^p Q_i} \dkl(\QQ\mid \PP_\theta ),$$
    where each $Q_i$ has identical support as $\mu_\theta$. Note that $\QQ_\theta$ is uniquely determined when the design matrix satisfies \cref{assmp:random design}, as discussed in \cref{def:fixed point}. 
    In particular, we say that $\QQ_{\theta_0}$ is the \emph{variational oracle posterior}, and $\QQ_{\thetahat}$ is the \emph{variational EB posterior} (given an estimator $\thetahat$\footnote{Here, $\thetahat$ is not necessarily the vEB estimator $\thetaveb$. Note that $\QQ_{\thetahat}$ should be understood as the ``variational approximation'' of the EB posterior defined in \cref{def:posterior}, and does not have to use the vEB estimator.}). Under the variational oracle posterior, define the coordinate-wise mean and variance as
    $$u_i := \EE_{\beta \sim \QQ_{\theta_0}}[\beta_i], \quad \tau_i^2 := \Var_{\beta \sim \QQ_{\theta_0}}[\beta_i].$$
    Similarly, define analogous quantities under the variational EB posterior as
    $$\hat{u}_i := \EE_{\beta \sim \QQ_{\thetahat}}[\beta_i], \quad \hat{\tau}_i^2 := \Var_{\beta \sim \QQ_{\thetahat}}[\beta_i].$$
\end{defi}
Here, note that $u_i, \tau_i^2$ are oracle quantities that are functions of $\theta_0$, whereas $\hat{u}_i, \hat{\tau}_i^2$ depend on the estimated hyperparameter $\thetahat$. Also, note that these quantities are easily computable \citep[see e.g.][]{lee2025clt,lee2025bayesregression}, as opposed to the exact posterior mean and variance. In fact, these quantities connect to the Mean-Field optimizers from \cref{def:fixed point} as $u_i = u_{i,\theta_0}, \hat{u}_{i} = u_{i,\thetahat}$ (we will use the new notations for brevity).

The following proposition establishes high probability bounds between these quantities.
\begin{thm}\label{prop:mean variance}
    Suppose $p \ll n^{2/3}$ and assume Assumptions \ref{assmp:random design}, \ref{assmp:cpt support}. Consider any estimator $\thetahat$. For each $i \le p$, the following conclusions hold.
    \begin{enumerate}[(a)]
        \item For both the oracle posterior $\PP_{\theta_0}$ and EB posterior $\PP_{\thetahat}$, their variational approximation is accurate for each coordinate:
        $$\dtv\big(\PP_{\theta_0}(\beta_i), \QQ_{\theta_0}(\beta_i)\big) = O_{P,X}\Big(\sqrt{\frac{p}{n}}\Big), \quad \dtv\big(\PP_{\thetahat}(\beta_i), \QQ_{\thetahat}(\beta_i)\big) = O_{P,X}\Big(\sqrt{\frac{p}{n}}\Big), \quad \forall i \le p.$$
        In particular, the coordinate-wise mean and variance are consistently estimated:
        \begin{align}
            \EE_{\PP_{\theta_0}} [\beta_i] &= u_i + O_{P,X}\Big(\sqrt{\frac{p}{n}}\Big), \quad \Var_{\PP_{\theta_0}} [\beta_i] = \tau_i^2 + O_{P,X}\Big(\sqrt{\frac{p}{n}}\Big), \label{eq:mean oracle posterior} \\
            \EE_{\PP_{\thetahat}} [\beta_i] &= \hat{u}_i + O_{P,X}\Big(\sqrt{\frac{p}{n}}\Big), \quad \Var_{\PP_{\thetahat}} [\beta_i] = \hat{\tau}_i^2 + O_{P,X}\Big(\sqrt{\frac{p}{n}}\Big). \label{eq:mean EB posterior}
        \end{align}

        \item %
        The vEB posterior $\QQ_{\thetahat}$ is close to the oracle posterior $\PP_{\theta_0}$ for each coordinate:
        $$\dtv\big(\PP_{\theta_0}(\beta_i), \QQ_{\thetahat}(\beta_i)\big) = O_{P,X}\Big(\sqrt{\frac{p}{n}} + \big(\frac{p}{\sqrt{n}}+1\big) \|\thetahat - \theta_0\|\Big), \quad \forall i \le p.$$
        In particular, the coordinate-wise mean and variance satisfy the following:
        \begin{align}\label{eq:veb error}
            \EE_{\PP_{\theta_0}} [\beta_i] &= \hat{u}_i + O_{P}\Big(\sqrt{\frac{p}{n}} + \big(\frac{p}{\sqrt{n}}+1\big) \|\thetahat - \theta_0\|\Big), \\
            \Var_{\PP_{\theta_0}} [\beta_i] &= \hat{\tau}_i^2 + O_{P}\Big(\sqrt{\frac{p}{n}} + \big(\frac{p}{\sqrt{n}}+1\big) \|\thetahat - \theta_0\|\Big). \notag
        \end{align}
    \end{enumerate}
\end{thm}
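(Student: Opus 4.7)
Part (a) follows from existing mean-field accuracy results in this setting. Under \cref{assmp:random design} and \cref{assmp:cpt support} with $p \ll n^{2/3}$, the coordinate-wise accuracy $\dtv(\PP_{\theta}(\beta_i), \QQ_{\theta}(\beta_i)) = O_{P,X}(\sqrt{p/n})$ is known for any fixed $\theta$; see e.g.\ \cite{lee2025clt}. Applying this with $\theta = \theta_0$ and $\theta = \thetahat$ respectively yields the oracle and EB posterior statements (the latter using that the bound is uniform over $\theta$ in a neighborhood of $\theta_0$, together with consistency of $\thetahat$). The consistency of the coordinate-wise means and variances in \eqref{eq:mean oracle posterior}--\eqref{eq:mean EB posterior} then follows because \cref{assmp:cpt support} ensures $\beta_i$ is almost surely bounded, so the TV bound automatically controls $\EE[\beta_i]$ and $\EE[\beta_i^2]$.

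For part (b), my plan is to use the triangle inequality
\[\dtv(\PP_{\theta_0}(\beta_i), \QQ_{\thetahat}(\beta_i)) \le \dtv(\PP_{\theta_0}(\beta_i), \QQ_{\theta_0}(\beta_i)) + \dtv(\QQ_{\theta_0}(\beta_i), \QQ_{\thetahat}(\beta_i)),\]
where the first term is handled by part (a). For the second term, note from \cref{def:MF posterior} and \cref{def:quadratic tilt} that $\QQ_\theta(\beta_i) = \mu_{i, s_{i,\theta}+w_i, \theta}$, a linear-quadratic tilt of $\mu_\theta$. I would decompose the effect of $\theta_0 \to \thetahat$ into: (i) the change in the base measure $\mu_{\theta_0} \to \mu_{\thetahat}$, which by the smoothness R2, polynomial growth R4, and bounded support contributes $O(\|\thetahat - \theta_0\|)$ to the TV of the tilted measure; and (ii) the change in the linear tilt parameter from $s_{i,\theta_0}$ to $s_{i,\thetahat}$ (the quadratic tilt parameter $d_i$ depends only on $X$ and is unaffected). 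For (ii), using $s_{i,\theta} = \sum_{j\neq i} A_p(i,j) u_{j,\theta}$ together with $|A_p(i,j)| = O_{P,X}(1/\sqrt{n})$ and a uniform Lipschitz estimate $|u_{j,\theta_0} - u_{j,\thetahat}| \lesssim \|\thetahat-\theta_0\|$, Cauchy--Schwarz gives
\[|s_{i,\theta_0} - s_{i,\thetahat}| \le \Big(\sum_{j\neq i} A_p(i,j)^2\Big)^{1/2} \Big(\sum_{j\neq i}(u_{j,\theta_0} - u_{j,\thetahat})^2\Big)^{1/2} = O_{P,X}\!\bigl(p/\sqrt{n}\bigr)\,\|\thetahat-\theta_0\|.\]
Combining (i) and (ii) produces the factor $(p/\sqrt{n}+1)\|\thetahat-\theta_0\|$, and \eqref{eq:veb error} follows from the TV bound and bounded support as in part (a).

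The main technical obstacle is establishing the uniform-in-$j$ Lipschitz estimate $|u_{j,\theta_0} - u_{j,\thetahat}| \lesssim \|\thetahat-\theta_0\|$. The fixed-point equations from \cref{def:fixed point} couple all $p$ coordinates through $s_{i,\theta}$, so perturbing $\theta$ requires inverting a Jacobian of the form $I_p - D_\theta A_p$, with $D_\theta$ diagonal with entries $\psi_{i,\theta}''(s_{i,\theta}+w_i)$ that are uniformly bounded by properties of the linear-quadratic tilts (and the bounded support). Under \cref{assmp:random design}, $\|A_p\|_{\mathrm{op}} = O_{P,X}(\sqrt{p/n}) = o(1)$, so this Jacobian is a uniform contraction and its inverse is controlled in operator norm. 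Implicit differentiation of the fixed-point relation $u_{i,\theta} = \psi'_{i,\theta}(s_{i,\theta}+w_i)$ with respect to $\theta$, together with the smoothness conditions R2--R4 and the sub-Gaussianity R6 of the tilted measures, then yields the desired uniform Lipschitz bound. This contraction-plus-implicit-differentiation step is the crux; once it is in hand, the remaining pieces reduce to routine bookkeeping with the TV identities for linear-quadratic tilts.
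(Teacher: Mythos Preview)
Your proposal is essentially the same as the paper's proof. For part (a), the paper proves the TV bound directly from the conditional law $\beta_i \mid \beta_{(-i)} \sim \mu_{i,m_i+c_i,\theta}$ together with the RFIM moment bound $\EE|m_i - s_i| = O_{P,X}(\sqrt{\alpha_p})$ from \cite{lee2025clt}, emphasizing that the constants are uniform in $\theta$ so the argument applies verbatim to $\theta = \thetahat$. For part (b), the paper uses exactly your triangle-inequality decomposition and the same contraction argument: it writes $v = \hat u - u$, derives the linear equation $v = D_p v + E$ with $D_p(i,j) = \psi_{i,\theta_0}''(\xi_i)A_p(i,j)$ and $\|E\|_\infty \lesssim \|\thetahat-\theta_0\|$, inverts using $\|D_p\|_{\mathrm{op}} < 1/2$, and then applies Cauchy--Schwarz to bound $|s_i - \hat s_i|$.

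One small correction: the contraction argument yields only the $\ell^2$ bound $\|v\| = O_{P,X}(\sqrt{p}\,\|\thetahat-\theta_0\|)$, not the coordinate-wise Lipschitz estimate $|v_j|\lesssim \|\thetahat-\theta_0\|$ that you state (the $\ell^\infty\!\to\!\ell^\infty$ norm of $A_p$ is of order $p/\sqrt{n}$, which is not small). Fortunately your Cauchy--Schwarz step only uses $(\sum_j v_j^2)^{1/2}$, so the $\ell^2$ bound is exactly what you need and the argument goes through unchanged once you replace the per-coordinate claim by the $\ell^2$ one.
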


Here, part (a) illustrates that the variational approximations give meaningful estimates of the coordinate-wise mean and variance, for both the oracle posterior and EB posterior. Note that this claim justifies the variational approximation of the posterior, but does not discuss the effect of estimating the prior. Part (b) addresses this, by providing error bounds for moments of the vEB posterior and corresponding oracle quantities. For illustration, take $\thetahat = \thetaveb$. Then, still under $p \ll n^{2/3}$, part (a) of \cref{thm:upper bound} gives $\|\thetaveb -\theta_0\|=O_P \Big(\frac{1}{\sqrt{p}}\Big),$ and \eqref{eq:veb error} simplifies to:
$$\EE_{\PP_{\theta_0}} [\beta_i] = \hat{u}_i + O_{P}\Big(\sqrt{\frac{p}{n}} + \frac{1}{\sqrt{p}}\Big), \quad \Var_{\PP_{\theta_0}} [\beta_i] = \hat{\tau}_i^2 + O_{P}\Big(\sqrt{\frac{p}{n}} + \frac{1}{\sqrt{p}}\Big).$$
Thus, the vEB posterior constructed under $\thetaveb$ consistently estimates the marginal mean and variance. 

\subsection{Uncertainty quantification for linear statistics}

Now we move beyond pointwise estimation and discuss the consequences of vEB on uncertainty quantification. We first consider the oracle case. Due to our low SNR scaling where the posterior crucially depends on the choice of prior, one cannot establish a normal limit for each coordinate of the posterior \citep{castillo2020spike,saenz2025characterizing}. Recently, jointly with Sumit Mukherjee, the authors established normal limiting distributions for \emph{one-dimensional projections} of the posterior \citep{lee2025bayesregression}. We restate part (a) of Theorem 3.1 and Proposition 3.2 in \cite{lee2025bayesregression} below.
\begin{prop}[CLT under the oracle posterior]\label{prop:oracle CLT}
    Let $q \in \R^p$ be a deterministic vector normalized as $\|q\|=1$, satisfying $\|q\|_\infty=o(1)$. Assume that the prior $\mu_{\theta_0}$ is compactly supported. Set $\upsilon_p = \upsilon_p(y) := \Var_{\QQ_{\theta_0}} (\sumin q_i \beta_i) = \sumin q_i^2 \tau_i^2$. Under the scaling $p \ll n^{2/3}$, we have
    \begin{align}\label{eq:upsilon defn}
        \upsilon_p \mid X \xp \upsilon(\theta_0) := \EE \Var(W_{\theta_0} \mid B_{\theta_0}),
    \end{align}
    and the following limit holds under both the oracle and variational posteriors:
    $$\frac{\sumin q_i(\beta_i - u_i)}{\sqrt{\upsilon_p}} \mid y, X \xrightarrow[\PP_{\theta_0}]{d} N(0,1), \quad \frac{\sumin q_i(\beta_i - u_i)}{\sqrt{\upsilon_p}} \mid y, X \xrightarrow[\QQ_{\theta_0}]{d} N(0,1).$$
\end{prop}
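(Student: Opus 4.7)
The statement contains three distinct assertions: the probabilistic limit for $\upsilon_p$, a CLT under the Mean-Field approximation $\QQ_{\theta_0}$, and a CLT under the oracle posterior $\PP_{\theta_0}$. My plan is to address them in order of increasing difficulty, with the first two being weighted-LLN and Lindeberg--Feller arguments applied to the product structure of $\QQ_{\theta_0}$, and the third requiring a transfer from the variational posterior to the true posterior via a characteristic-function comparison.

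For $\upsilon_p \xp \upsilon(\theta_0)$: recall from \cref{def:fixed point} that $\tau_i^2 = \psi_{i,\theta_0}''(s_{i,\theta_0} + w_i)$. Under \cref{assmp:random design} and the scaling $p \ll n^{2/3}$, one can show (as in the heuristic preceding \eqref{eq:def_Fi}) that $\max_i |s_{i,\theta_0}| = o_{P,X}(1)$ and $d_i = d_0 + O_{P,X}(1/\sqrt{n})$, which together with the uniform sub-Gaussian control in R6 and compact support (\cref{assmp:cpt support}) yield $\tau_i^2 = \psi_{\theta_0,d_0}''(w_i) + o_{P,X}(1)$ uniformly in $i$, where $\psi_{\theta_0,d_0}$ denotes the common cumulant function. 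Then, because $\|q\|_2^2 = 1$ and $\|q\|_\infty = o(1)$, a weighted weak law applied to the approximately i.i.d.\ variables $w_i$ (with marginal $W_{\theta_0}$, using \eqref{eq:w distribution} and $X^\top X \approx I_p$) gives $\upsilon_p \xp \EE \psi_{\theta_0,d_0}''(W_{\theta_0})$. Identifying $\psi_{\theta_0,d_0}''(W_{\theta_0})$ with $\Var(B_{\theta_0} \mid W_{\theta_0})$ via standard exponential family / Bartlett identities produces the claimed limit.

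For the CLT under $\QQ_{\theta_0}$: by construction, the $\beta_i$ are independent with mean $u_i$, variance $\tau_i^2$, and uniformly bounded support (since linear--quadratic tilts of a compactly supported prior remain compactly supported with a common support bound $K$). Thus $\sumin q_i(\beta_i - u_i)/\sqrt{\upsilon_p}$ is a sum of independent, bounded, zero-mean variables with unit variance, and the Lindeberg condition reduces to $\max_i q_i^2 K^2 / \upsilon_p \to 0$, which holds because $\|q\|_\infty = o(1)$ and $\upsilon_p \to \upsilon(\theta_0) > 0$. The Lindeberg--Feller CLT then delivers the result (conditionally on $y, X$).

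The main obstacle is the CLT under the oracle posterior $\PP_{\theta_0}$, since the coordinates are no longer independent. My approach is the characteristic-function method: writing $S_p := \sumin q_i(\beta_i - u_i)/\sqrt{\upsilon_p}$, I would show $\EE_{\PP_{\theta_0}}[e^{itS_p}] \to e^{-t^2/2}$ by expressing the left-hand side as a ratio of normalizing constants,
\begin{equation*}
    \EE_{\PP_{\theta_0}}[e^{itS_p}] = \exp\!\left( \log Z_p\!\left(w + \tfrac{it q}{\sqrt{\upsilon_p}},\theta_0\right) - \log Z_p(w,\theta_0) - \tfrac{it u^\top q}{\sqrt{\upsilon_p}} \right),
\end{equation*}
and then using the second-order accuracy of the Mean-Field lower bound (\eqref{eq:Mean-Field objective function}) for $\log Z_p$ established in \cite{lee2025clt}. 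Under $p \ll n^{2/3}$, the approximation error and its first two derivatives in $w$ are $o(1)$, so the log-ratio matches the Gaussian cumulant expansion $-t^2/2$ up to $o(1)$. The technical crux is controlling this approximation uniformly under the complex shift $w \mapsto w + itq/\sqrt{\upsilon_p}$ for fixed $t$; this requires extending the real-input variational-accuracy estimates from \cite{lee2025clt} to small complex perturbations, which is essentially the analytic continuation argument that drives the CLT in \cite{lee2025bayesregression}. Once this log-ratio control is in place, the oracle CLT follows from the variational CLT by Lévy's continuity theorem.
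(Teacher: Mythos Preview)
The paper does not prove \cref{prop:oracle CLT} itself; it is restated from \cite{lee2025bayesregression}. The machinery the paper does import for posterior CLTs is \cref{lem:posterior clt} (Theorem~2.4 of \cite{lee2025clt}): a quantitative Kolmogorov--Smirnov bound between $\sum_i q_i(\beta_i-u_i)$ and $N(0,\upsilon_p)$ obtained via Stein's method of exchangeable pairs on the RFIM $\tilde\PP^{A_p,w}_{\theta_0}$, with explicit error terms $R_1,R_2,\alpha_p,\|A_p\|,\|q\|_\infty$ that are checked to vanish under $p\ll n^{2/3}$ (see the proof of \cref{thm:EB}(b) and the exchangeable-pair computations in the proof of \cref{thm:CLT}). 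Your first two parts---the weighted LLN for $\upsilon_p$ and Lindeberg--Feller under the product measure $\QQ_{\theta_0}$---are fine and match the natural arguments.

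Your third part takes a genuinely different route and has a real gap. The characteristic-function identity is correct, but the step you call the ``technical crux''---extending the Mean-Field accuracy of $\log Z_p$ to the complex shift $w\mapsto w+itq/\sqrt{\upsilon_p}$---is the entire substance of the oracle CLT, not a detail that can be deferred to a citation. The results in \cite{lee2025clt} control $\log Z_p(w)-\sup_u M_\theta(u)$ only for \emph{real} $w$, and there is no analytic-continuation argument there to invoke; your assertion that this is ``essentially the analytic continuation argument that drives the CLT in \cite{lee2025bayesregression}'' appears to be a guess rather than a fact---all the evidence in the present paper points to Stein's method. If instead you Taylor-expand the log-ratio in $t$, you must control $q^\top\Cov_{\PP_{\theta_0}}(\beta)q$ and higher cumulants of $q^\top\beta$ under the \emph{dependent} posterior, and those are exactly the moment estimates the exchangeable-pair construction in \cite{lee2025clt} is built to deliver. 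Either way your route circles back to the Stein-type machinery you were trying to bypass.
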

In other words, under the oracle posterior, a CLT with a normal limiting distribution holds for the statistic $\sumin q_i \beta_i$. Interestingly, the above result shows that one may replace the intractable mean and variance of the oracle posterior $\PP_{\theta_0}$ with the tractable counterparts that arise from the Mean-Field approximation $\QQ_{\theta_0}$. This allows one to perform posterior inference, such as constructing credible intervals with explicit expressions. Note that establishing limit distributions of the oracle posterior requires the asymptotic scaling $p \ll n^{2/3}$.

Armed with this result, our goal is to perform inference for one-dimensional projections $\sumin q_i\beta_i$ when $\theta_0$ is unavailable. Given an EB estimator $\thetahat$, two immediate questions are:
\begin{itemize}
    \item Does \cref{prop:oracle CLT} generalize to the EB posterior?
    \item Can one modify the centering and variance in \cref{prop:oracle CLT} to data-driven quantities (for example the plug-in counterparts using $\thetahat$) to get a limit under the oracle posterior?
\end{itemize}
The following result answers both questions: the first holds (see part (b)), whereas the plug-in version of the second generally fails (see part (c)). Recalling the notation $B_{t,d,\theta}$ and $W_{\theta_0}$ from \cref{def:information}, define the following Jacobian $J:\textnormal{int}(\Theta) \to \R^k$:
\begin{align}\label{eq:J}
    J(\theta) := \EE \left[\frac{\partial \EE[B_{W_\theta,d_0,\theta} \mid W_\theta]}{\partial \theta} \right], \quad \forall \theta \in \textnormal{int}(\Theta).
\end{align}

\begin{thm}\label{thm:EB}
    Let $q = (q_1, \ldots, q_p) \in \R^p$ denote a standardized deterministic vector with $\|q\|_\infty = o(1)$, and set $\hat{\upsilon}_p := \sumin q_i^2 \hat{\tau}_i^2$.
    Suppose $p \ll n^{2/3}$ and assume $\thetahat \xp \theta_0$.
    \begin{enumerate}[(a)]
    \item We have $\hat{\upsilon}_p \mid X \xp \upsilon(\theta_0),$ where $\upsilon(\theta_0)$ is the value defined in \eqref{eq:upsilon defn}.
    
    \item Assuming $\sumin q_i^4 = o\Big(\frac{n}{p^2}\Big)$, the following holds under the EB posterior ${\PP}_{\thetahat}$ (and also under the vEB posterior ${\QQ}_{\thetahat}$):
    \begin{align}\label{eq:CLT EB posterior}
        \frac{\sumin q_i(\beta_i - \hat{u}_i)}{\sqrt{\hat{\upsilon}_p}} \mid y, X \xrightarrow[\PP_{\thetahat}]{d} N(0,1).
    \end{align}
    
    \item Additionally assume that $\sqrt{p}(\thetahat-\theta_0) \mid X \xd N(0, S(\theta_0))$ for some continuous function $S:\Theta \to \R^{k \times k}$, and that $\frac{1}{\sqrt{p}} \sumin q_i \to \gamma$ for some constant $-1 \le \gamma \le 1$. 
    Then, when $\gamma = 0$, \eqref{eq:CLT EB posterior} holds under the oracle posterior ${\PP}_{\theta_0}$. For general $\gamma$, the following holds:
    \begin{align}\label{eq:CLT oracle posterior}
        \frac{\sumin q_i(\beta_i - \hat{u}_i)}{\sqrt{\hat{\upsilon}_p + \gamma^2 J(\thetahat)^\top S(\thetahat) J(\thetahat)}} \mid X \xrightarrow[\PP_{\theta_0}]{d} N(0,1).
    \end{align}
    Here, note that the LHS is a sample-based quantity that only depends on $y, X$. 

    \end{enumerate}
\end{thm}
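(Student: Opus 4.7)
Proof proposal. The plan is to build each of (a), (b), (c) on Proposition 4.3, progressively handling the extra randomness introduced by replacing $\theta_0$ with the estimator $\thetahat$.

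For part (a), since $\upsilon_p \mid X \xp \upsilon(\theta_0)$ already holds by Proposition 4.3, it suffices to show $|\hat{\upsilon}_p - \upsilon_p| \xp 0$. Writing $\tau_i^2 = \psi_{i,\theta_0}''(s_{i,\theta_0} + w_i)$ and $\hat{\tau}_i^2 = \psi_{i,\thetahat}''(s_{i,\thetahat} + w_i)$, and invoking the smoothness condition R2 together with continuity of the Mean-Field fixed point in $\theta$, I will obtain a uniform-in-$i$ bound $\max_i |\hat{\tau}_i^2 - \tau_i^2| \lesssim \|\thetahat - \theta_0\| + o_P(1) = o_P(1)$. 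Summing against $q_i^2$ (with $\sum_i q_i^2 = 1$) transfers this to $|\hat{\upsilon}_p - \upsilon_p|$.

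Part (b) is essentially a verbatim application of Proposition 4.3 with $\theta_0$ replaced by $\thetahat$. Since the conditional CLT there holds for any fixed $\theta \in \textnormal{int}(\Theta)$ with constants depending only on the smoothness and support of $\mu_\theta$, the statement admits a uniform extension over a shrinking neighborhood of $\theta_0$, and hence covers a random $\thetahat \xp \theta_0$.

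The substance lies in (c). Decompose
$$\sumin q_i(\beta_i - \hat{u}_i) = T_1 + T_2, \qquad T_1 := \sumin q_i(\beta_i - u_i), \quad T_2 := \sumin q_i(u_i - \hat{u}_i).$$
Conditional on $(y,X)$, $T_1$ is asymptotically $N(0, \upsilon_p)$ under $\PP_{\theta_0}$ by Proposition 4.3, while $T_2$ is deterministic. A Taylor expansion of $u_i = u_i(\theta)$ around $\theta_0$, combined with the near-equivalence $u_i(\theta) \approx \EE[B_{w_i,d_0,\theta}]$ (the Mean-Field corrections $s_{i,\theta}$ vanish in the regime $p \ll n^{2/3}$), gives
$$\sumin q_i \nabla_\theta u_i(\theta_0) = J(\theta_0) \sumin q_i + \sumin q_i \bigl(\nabla_\theta u_i(\theta_0) - J(\theta_0)\bigr) = \gamma \sqrt{p}\, J(\theta_0) + O_P(1),$$
where the residual term is controlled using $\|q\|_\infty = o(1)$, $\|q\| = 1$, and the near-i.i.d. structure of $\{w_i\}$. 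Multiplying by $\thetahat - \theta_0 = O_P(1/\sqrt{p})$ and controlling the quadratic Taylor remainder via R4 yields $T_2 = -\gamma J(\theta_0)^\top \sqrt{p}(\thetahat - \theta_0) + o_P(1)$, so by the assumed CLT for $\thetahat$, $T_2 \mid X \xd N(0, \gamma^2 J(\theta_0)^\top S(\theta_0) J(\theta_0))$.

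For the joint limit, the key observation is that $T_2$ is $\sigma(y,X)$-measurable while the conditional limit of $T_1 \mid y, X$ has variance $\upsilon(\theta_0)$ independent of $y$; this makes $T_1$ and $T_2$ asymptotically independent. Conditioning and applying bounded-continuous convergence to $\PP_{\theta_0}[T_1 \le t - T_2 \mid y, X]$, I deduce $T_1 + T_2 \mid X \xd N(0, \upsilon(\theta_0) + \gamma^2 J(\theta_0)^\top S(\theta_0) J(\theta_0))$ under $\PP_{\theta_0}$. Combining with part (a), continuity of $J, S$, and Slutsky gives the claimed CLT; when $\gamma = 0$, $T_2$ becomes negligible and the oracle-centered form from part (b) is recovered. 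The main obstacle will be making the Taylor expansion of the implicit Mean-Field quantities $u_i(\theta)$ rigorous---establishing the LLN for $\frac{1}{p}\sumin \nabla_\theta u_i(\theta_0)$ with limit $J(\theta_0)$, quantifying the error from dropping $s_{i,\theta}$, and controlling higher-order remainders. A secondary challenge is the uniform-in-$\theta$ extension of Proposition 4.3 and the joint asymptotic-independence argument, both of which require care in exchanging conditional expectation with limits.
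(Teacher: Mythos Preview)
Your proposal is correct and matches the paper's approach closely: the same $T_1 + T_2$ decomposition for (c), the same reduction $u_i(\theta) \approx \psi'_{0,\theta}(w_i)$ before Taylor expanding in $\theta$, the same weighted LLN for $\frac{1}{\sqrt p}\sum_i q_i\,\partial_\theta\psi'_{0,\theta_0}(w_i)\to\gamma J(\theta_0)$, and a conditional converging-together argument (your ``asymptotic independence via conditioning'') for the joint limit. For (b), the paper realizes your ``uniform-in-$\theta$ extension'' via a finite-sample Kolmogorov--Smirnov bound whose error terms do not depend on the prior hyperparameter, which is exactly the mechanism you describe; your anticipated obstacle of making the implicit-function Taylor expansion rigorous is handled there by a dedicated lemma that replaces $\sum_i q_i u_{i,\theta}$ with $\sum_i q_i \psi'_{0,\theta}(w_i)$ up to $o_P(1)$.
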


\begin{remark}
    Note that Cauchy-Schwartz gives $\frac{1}{\sqrt{p}} |\sumin q_i| \le \sqrt{\sumin q_i^2} = 1$, which justifies the range $|\gamma|\le 1$ in part (c). In particular, when $\gamma = 0$, one may understand $q$ as an approximate contrast vector.
    We also note in passing that the centering $\sumin q_i \hat{u}_i$ in parts (b), (c) can be replaced by a more direct value of $\sumin q_i \psi_{i,\thetahat}'(w_i)$.
\end{remark}

We elaborate on each conclusion. Part (a) extends the limit of the variational oracle posterior variance in \eqref{eq:upsilon defn} to the variance of the vEB posterior. Part (b) considers the EB posterior and shows that the oracle CLT in \cref{prop:oracle CLT} can be extended by replacing $\theta_0$ with $\thetahat$. Part (c) instead considers the oracle posterior, %
and shows that the limit \eqref{eq:CLT EB posterior} extends when $\gamma = 0$, or equivalently when $q$ is a contrast vector. However, for a general $\gamma \neq 0$, the limit no longer holds as the vEB posterior mean is biased. In other words, the estimation error of $\thetahat$ results in a bias term with nontrivial fluctuations: $$\EE_{\PP_{\theta_0}} [q^\top \beta] - \EE_{\PP_{\thetahat}} [q^\top \beta] = \sumin q_i u_i - \sumin q_i \hat{u}_i +o_P(1) = \Theta_P(1).$$
This is why we have to marginalize over $y$ to get a standard normal limit in \eqref{eq:CLT oracle posterior}. The notation $J(\theta)$ in \eqref{eq:J} also arises from this bias, as we use a first-order Taylor expansion to approximate $$\hat{u}_i - u_i = \EE_{\QQ_{\hat{\theta}_p}}[\beta_i] -\EE_{\QQ_{{\theta}_0}}[\beta_i] \approx (\hat{\theta}_p-\theta_0) J(\hat{\theta}_p).$$

The following corollary summarizes inferential consequences of \cref{thm:EB} by \textit{using the vEB estimator} $\thetaveb$ to construct data-driven $100(1-\alpha)\%$ credible intervals with (conditional) coverage under the EB posterior, and (marginal) coverage under the oracle posterior. Here, for $\alpha \in (0,1)$, let $c_{\alpha/2}$ denote a constant such that $\PP(N(0,1) > c_{\alpha/2}) = \alpha/2$.
\begin{cor}\label{cor:coverage}
\begin{enumerate}[(a)]
    \item The interval 
    $$\mathcal{I}(y, X, \thetaveb) = \Big[\sumin q_i \hat{u}_i \pm c_{\alpha/2} \sqrt{\hat{\upsilon}_p}\Big]$$
    has asymptotic $1-\alpha$ coverage under the EB posterior ${\PP}_{\thetaveb}$:
    $$\PP_{\thetaveb}\Big(\sumin q_i \beta_i \in {\mathcal{I}} (y, X, \thetaveb) \mid y,X\Big) \to 1-\alpha.$$
    However, $\mathcal{I}(y, X, \thetaveb)$ has undercoverage under the oracle posterior $\PP_{\theta_0}$ if $\gamma \neq 0$.
    
    \item The following interval with modified variance:
    \begin{align}\label{eq:vEB CI}
        \tilde{\mathcal{I}} (y, X, \thetaveb) = \Big[\sumin q_i \hat{u}_i \pm c_{\alpha/2} \sqrt{\hat{\upsilon}_p + \gamma^2 J(\thetaveb)^\top V(\thetaveb)^{-1} J(\thetaveb)}\Big]
    \end{align}
    has asymptotic $1-\alpha$ average coverage under the oracle posterior $\PP_{\theta_0}$:
    $$\PP_{\theta_0}\Big(\sumin q_i \beta_i \in \tilde{\mathcal{I}} (y, X, \thetaveb) \mid X\Big) \to 1-\alpha.$$
\end{enumerate}
\end{cor}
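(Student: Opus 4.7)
The plan is to apply Theorem~\ref{thm:EB} with $\hat\theta_p$ instantiated as $\thetaveb$. The consistency hypothesis $\thetaveb \xp \theta_0$ comes for free from Theorem~\ref{thm:upper bound}(a), which under $p = o(n^{2/3})$ gives $\sqrt{p}$-consistency with asymptotic covariance $V(\theta_0)^{-1}$; this identifies $S(\theta) = V(\theta)^{-1}$ in the hypothesis of Theorem~\ref{thm:EB}(c), which is exactly the expression appearing inside $\tilde{\mathcal{I}}$ in \eqref{eq:vEB CI}.

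For part (a), I would rewrite the conditional EB coverage probability as $\PP_{\thetaveb}\bigl(|\sumin q_i(\beta_i-\hat u_i)|/\sqrt{\hat\upsilon_p} \le c_{\alpha/2} \mid y,X\bigr)$ and apply the CLT \eqref{eq:CLT EB posterior} together with the Portmanteau theorem to get convergence in probability to $1-\alpha$. For the oracle undercoverage claim, I introduce $\sigma_{\mathrm{or}}^2 := \hat\upsilon_p + \gamma^2 J(\thetaveb)^\top V(\thetaveb)^{-1} J(\thetaveb)$ and rewrite
$$\PP_{\theta_0}\Big(\sumin q_i \beta_i \in \mathcal{I}(y,X,\thetaveb) \mid X\Big) = \PP_{\theta_0}\Big(\tfrac{|\sumin q_i(\beta_i-\hat u_i)|}{\sigma_{\mathrm{or}}} \le c_{\alpha/2}\cdot \tfrac{\sqrt{\hat\upsilon_p}}{\sigma_{\mathrm{or}}} \,\Big|\, X\Big).$$
Theorem~\ref{thm:EB}(c) provides the $N(0,1)$ limit of the standardized term under $\PP_{\theta_0}$, and Theorem~\ref{thm:EB}(a) together with continuity of $J,V$ makes the ratio $\sqrt{\hat\upsilon_p}/\sigma_{\mathrm{or}}$ converge in probability to $\rho := \sqrt{\upsilon(\theta_0)/(\upsilon(\theta_0) + \gamma^2 J(\theta_0)^\top V(\theta_0)^{-1} J(\theta_0))}$, which lies strictly in $(0,1)$ whenever $\gamma \neq 0$ and $J(\theta_0)\neq 0$. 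Slutsky then yields limiting coverage $\PP(|Z|\le \rho\, c_{\alpha/2}) < 1-\alpha$.

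Part (b) is the cleanest: the variance inside $\tilde{\mathcal{I}}$ is chosen precisely so that the standardized quantity matches \eqref{eq:CLT oracle posterior}. Rewriting
$$\PP_{\theta_0}\Big(\sumin q_i \beta_i \in \tilde{\mathcal{I}}(y,X,\thetaveb) \mid X\Big) = \PP_{\theta_0}\Big(\tfrac{|\sumin q_i(\beta_i-\hat u_i)|}{\sigma_{\mathrm{or}}} \le c_{\alpha/2} \,\Big|\, X\Big),$$
equation \eqref{eq:CLT oracle posterior} immediately yields convergence in probability to $1-\alpha$.

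The only non-routine piece of bookkeeping is continuity of $\theta \mapsto J(\theta)$ and $\theta \mapsto V(\theta)$ near $\theta_0$, which is needed to plug in $\thetaveb$ via Slutsky. Both maps are expectations over the tilted family $B_{t,d,\theta}$ and derivatives of $\ell(\theta;\beta)$, so continuity follows from R0--R4 by dominated convergence. A minor caveat for part (a) is that the strict undercoverage conclusion implicitly requires $J(\theta_0)\neq 0$: when $J(\theta_0)=0$ the two intervals coincide asymptotically and both attain nominal oracle coverage, a degenerate case that should be either flagged in the statement or treated as generic in $\theta_0$.
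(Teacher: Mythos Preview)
Your proposal is correct and matches the paper's intended approach: the corollary is stated as an immediate consequence of Theorem~\ref{thm:EB}, and the paper gives no separate proof. You have filled in the natural details---instantiating $\hat\theta_p = \thetaveb$ with $S(\theta) = V(\theta)^{-1}$ via Theorem~\ref{thm:upper bound}(a), and reading off both coverage claims from \eqref{eq:CLT EB posterior} and \eqref{eq:CLT oracle posterior}---and your explicit undercoverage computation via the ratio $\rho$ is a nice addition. Your caveat about $J(\theta_0)=0$ is exactly what the paper flags in Remark~\ref{rmk:singularity J}.
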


\begin{remark}[Comparison with high SNR posteriors]
    As noted in the introduction, this is in sharp contrast with the ``merging'' properties of the EB posterior in high SNR settings where the EB posterior is a higher-order approximation of the oracle posterior \citep[c.f. Prop 3.1,][]{rizzelli2024empirical}. In other words, the cited result states that the EB credible region can serve as an oracle credible region, without the need for variance inflation as in part (b).
\end{remark}

\begin{remark}[Singularity]\label{rmk:singularity J}
    Recall \cref{rmk:singularity}, where we exhibit desirable properties under $\kappa(\theta_0) = 0$. A similar situation may happen when $J(\theta_0) = 0$, under which the CIs $\mathcal{I}$ and $\tilde{\mathcal{I}}$ in parts (a) and (b) asymptotically coincide. We will further discuss this in the simulations in \cref{sec:oracle posterior simulation}.
\end{remark}

\section{Simulation studies}\label{sec:simulations}

We conduct simulation studies to assess (i) the accuracy for recovering the prior hyperparameter $\theta_0$ and (ii) Bayesian uncertainty quantification for the regression coefficients $\beta$. {After presenting the main simulation results in Sections \ref{subsec:sim-1}-\ref{sec:oracle posterior simulation}, we also propose a way to improve the vEB estimator by debiasing in \cref{sec:debiasing}.}

\subsection{Setup}\label{subsec:sim-1}
We vary the number of coefficients as $p = 25, 50, 100, 200$, and take $n = p^2$. All reported accuracy values that will follow are averaged over 400 replications, under a single realization of the random design $X$ with Gaussian entries (see \cref{assmp:random design}). We consider the following priors, which allow both discrete, continuous, and mixed distributions:
\begin{enumerate}[(a)]
    \item $\textsf{Ber}(\pi)$ with $\pi_0 = 0.5$, %
    \item $\textsf{Spike-Slab}(\pi,\tau^2) = \pi \delta_0 + (1-\pi) N(0, \tau^2)$ with $\pi_0 = 0.5, \tau_0 = 1$, %
    \item $\textsf{location-GMM}(\theta_1,\theta_2) = \frac{1}{2} N(\theta_1, 1) + \frac{1}{2} N(\theta_2, 0.25)$ with $\theta_{1,0} = -1, \theta_{2,0} = 1$\footnote{Here, the MoM estimator suffers from a fundamental label switching issue, as the moment equations cannot identify $(\theta_1,\theta_2) = (-1,1)$ and $(1,-1)$, so we additionally imposed $\theta_1 < \theta_2$. Interestingly, the same issue persists for the adaptive Langevin sampler, where $(1,-1)$ is a local minimizer of the free energy. Thus, we used a warm initialization for the Langevin sampler.}, %
\end{enumerate}
The first is a discrete sparse prior, the second is a version of the spike and slab prior (where the prior parameters are treated as fixed), and the third is a non-sparse, bimodal Gaussian location mixture prior. Further details on implementation, computational time comparisons,  and additional simulation results are provided in \cref{sec:sim details}.

\subsection{Accuracy of prior recovery}
We compare our vEB estimator $\thetaveb$ with the following alternative EB methods: 
\begin{enumerate}[(a)]
    \item Method of Moments (extension of \cref{rmk:rate} by considering higher-order moments),
    \item Adaptive Langevin Diffusion from \cite{fan2025dynamical,fan2025dynamical2} for the continuous \textsf{location-GMM} prior. We did not implement this for other priors as the available implementation requires a Lebesgue density.
\end{enumerate}
\begin{table}[h!]
    \centering
    \begin{tabular}{c|cc|cc|ccc}
    \toprule
    Prior & \multicolumn{2}{c}{\textsf{Ber}} & \multicolumn{2}{c}{\textsf{Spike-Slab}} & \multicolumn{3}{c}{\textsf{location-GMM}} \\ %
    $p$ \textbackslash{} Algo & vEB & MoM & vEB & MoM & vEB & MoM    & Langevin \\
    \midrule
    25 & 0.042 & 0.048 & 0.173 & 0.391 & 0.339 & 0.318 & 0.713 \\ %
    50 & 0.023 & 0.023 & 0.125 & 0.373 & 0.168 & 0.237 & 0.400 \\ %
    100& 0.011 & 0.012 & 0.086 & 0.335 & 0.065 & 0.203 & 0.182 \\ %
    200& 0.006 & 0.006 & 0.059 & 0.299 & 0.028 & 0.187 & 0.089 \\ %
    \bottomrule
    \end{tabular}
    \caption{MSE $\|\thetaveb - \theta_0\|^2$ for varying $p$ and $n = p^2$ (averaged over 400 replications). Note that the scaling varies across different parametric priors.}
    \vspace{-5mm}
    \label{tab:mse_theta}
\end{table}
\noindent \cref{tab:mse_theta} reports the Mean Squared Error (MSE) for estimating the prior parameter $\theta_0$ under the aforementioned three prior families. The results validate that the vEB estimator is consistent in the sense that the MSE decreases as the dimension $p$ increases. In particular, for the \textsf{Ber} and \textsf{location-GMM} families, the MSE for the vEB estimator is roughly proportional to $1/p$, which aligns with the theoretical $\sqrt{p}$-convergence rate (here the \textsf{Spike-Slab} family is ruled out as the MSE aggregates two parameters which are of different scale). 
Compared to other estimators, the vEB estimator attains a smaller error, and empirically validates our asymptotic efficiency claim.

We also remark that the vEB estimator is computationally appealing. For estimating the \textsf{location-GMM} prior under $p = 200$, the vEB estimator only took 0.05 seconds to compute, whereas the MoM and Langevin estimators took 1.6, 3.0 seconds, respectively\footnote{Here, the vEB and MoM estimators are implemented in \texttt{R} whereas the Langevin estimator is implemented in \texttt{Python}. See \cref{sec:sim details} for details.}. Complete results for computation time is reported in \cref{tab:time} in the supplement, which illustrate that computing the vEB estimator took less than 0.1 seconds in all simulation settings and was always the fastest.

\subsection{Accuracy of oracle posterior inference}\label{sec:oracle posterior simulation}
Next, we consider posterior inference and evaluate the accuracy of the proposed vEB credible intervals (CIs) in part (b) of \cref{cor:coverage}. We consider $90\%$ credible intervals for $q^\top \beta$, where $\beta$ is a draw from the oracle posterior $\PP_{\theta_0}$. We consider two choices for $q$: 
\begin{enumerate}[(a)]
    \item $q^{\textsf{avg}} = \frac{1_p}{\sqrt{p}}$, which corresponds to the coordinate-wise average,
    \item $q^{\textsf{con}} = \frac{1}{\sqrt{p}}(1_{p/2}^\top, -1_{p/2}^\top)^\top$, which is a contrast vector such that $\langle q^{\textsf{avg}}, q^{\textsf{con}}\rangle = 0$.
\end{enumerate}
Note that each choice of $q$ yields the value $\gamma = \lim_{p \to \infty} \langle q, q^{\textsf{avg}}\rangle = 1, 0$, respectively.
We compare the vEB credible interval $\tilde{\mathcal{I}}(y,X,\thetaveb)$ in \eqref{eq:vEB CI} against its oracle analog that builds upon the oracle CLT in \cref{prop:oracle CLT}. While both theoretical coverage guarantees for the vEB and oracle CIs require bounded priors, we believe this is a technical requirement due to our proof technique and did not truncate the priors here. In addition to the coverage probability, we also report the average width of each credible interval. 

Tables \ref{tab:posterior mean credible} and \ref{tab:posterior contrast credible} respectively considers $q = q^{\textsf{avg}}$ and $q^{\textsf{con}}$. Looking at the oracle coverage guarantees, we see that the asymptotic CLT kicks in even for moderate sample sizes in the sense that it gives the correct coverage even when $p = 25$. In terms of the vEB interval in \cref{tab:posterior mean credible}, it is wider than the oracle interval for the Bernoulli and GMM prior. This is as expected by \cref{cor:coverage}, since we take $\gamma = 1$. Interestingly, the widths are comparable under the Spike-Slab prior, which is because $J(\thetaveb) \approx J(\theta_0) = 0$ and the width of the CI does not have to be increased for correct coverage (as mentioned in \cref{rmk:singularity J}). Similar to this, the widths of the EB-CI and oracle-CI in \cref{tab:posterior contrast credible} are always comparable, as we have $\gamma = 0$. In both tables, the vEB credible interval enjoys close to $90\%$ coverage and indicates accurate finite-sample performance. This suggests that the bounded prior requirement in our results in \cref{sec:consequences} may potentially be relaxed. Finally, we note that the vEB CIs' coverage exhibits a larger standard error compared to the oracle CIs. This is more apparent in \cref{tab:posterior mean credible}, as the vEB CIs only exhibit marginal coverage and no conditional coverage, theoretically.

\begin{table}[h!]
    \centering
    \resizebox{\textwidth}{!}{
    \begin{tabular}{c|cccc|cccc|cccc}
    \toprule
    Prior & \multicolumn{4}{c}{\textsf{Ber}} & \multicolumn{4}{c}{\textsf{Spike-Slab}} & \multicolumn{4}{c}{\textsf{location-GMM}}\\
    CI & \multicolumn{2}{c}{\textsf{Oracle}} & \multicolumn{2}{c}{\textsf{vEB}} & \multicolumn{2}{c}{\textsf{Oracle}} & \multicolumn{2}{c}{\textsf{vEB}} & \multicolumn{2}{c}{\textsf{Oracle}} & \multicolumn{2}{c}{\textsf{vEB}} \\
    $p$ \textbackslash{} Object & width & cover & width & cover & width & cover & width & cover & width & cover & width & cover 
    \\
    \midrule
    25 & 1.47 & 0.902 (0.021) & 3.12 & 0.860 (0.259) & 1.87 & 0.899 (0.007) & 1.72 & 0.783 (0.270) & 2.50 & 0.897 (0.006) & 3.32 & 0.899 (0.107) \\
    50 & 1.47 & 0.901 (0.020) & 3.20 & 0.873 (0.231) & 1.87 & 0.900 (0.006) & 1.77 & 0.819 (0.220) & 2.52 & 0.898 (0.006) & 3.29 & 0.907 (0.088) \\
    100& 1.47 & 0.900 (0.018) & 3.22 & 0.890 (0.218) & 1.87 & 0.900 (0.006) & 1.82 & 0.862 (0.119) & 2.52 & 0.900 (0.006) & 3.26 & 0.894 (0.100) \\
    200& 1.47 & 0.901 (0.015) & 3.24 & 0.907 (0.184) & 1.87 & 0.900 (0.006) & 1.87 & 0.885 (0.059) & 2.52 & 0.900 (0.006) & 3.25 & 0.901 (0.092) \\
    \bottomrule
    \end{tabular}
    }
    \caption{Average credible interval (CI) width and average coverage probability (standard error) under the oracle posterior, where $q^{\textsf{avg}} = \frac{1_p}{\sqrt{p}}$ is the \emph{coordinate-wise average}.}
    \label{tab:posterior mean credible}
    \vspace{-7mm}
\end{table}

\begin{table}[h!]
    \centering
    \resizebox{\textwidth}{!}{
    \begin{tabular}{c|cccc|cccc|cccc}
    \toprule
     Prior & \multicolumn{4}{c}{\textsf{Ber}} & \multicolumn{4}{c}{\textsf{Spike-Slab}} & \multicolumn{4}{c}{\textsf{location-GMM}}\\
    CI & \multicolumn{2}{c}{\textsf{Oracle}} & \multicolumn{2}{c}{\textsf{vEB}} & \multicolumn{2}{c}{\textsf{Oracle}} & \multicolumn{2}{c}{\textsf{vEB}} & \multicolumn{2}{c}{\textsf{Oracle}} & \multicolumn{2}{c}{\textsf{vEB}} \\
    $p$ \textbackslash{} Object & width & cover & width & cover & width & cover & width & cover & width & cover & width & cover \\
    \midrule
    25 & 1.47 & 0.899 (0.022) & 1.27 & 0.822 (0.169) &  1.87 & 0.892 (0.006) & 1.72 & 0.774 (0.268) & 2.50 & 0.890 (0.006) & 2.47 & 0.874 (0.041) \\
    50 & 1.47 & 0.900 (0.020) & 1.39 & 0.876 (0.060) & 1.87 & 0.899 (0.006) & 1.77 & 0.815 (0.220) & 2.52 & 0.899 (0.006) & 2.46 & 0.884 (0.032) \\
    100& 1.47 & 0.900 (0.017) & 1.44 & 0.891 (0.024) & 1.87 & 0.899 (0.006) & 1.82 & 0.860 (0.120) & 2.52 & 0.900 (0.006) & 2.49 & 0.893 (0.021) \\
    200& 1.47 & 0.900 (0.016) & 1.45 & 0.898 (0.016) & 1.87 & 0.899 (0.006) & 1.87 & 0.885 (0.060) & 2.52 & 0.900 (0.006) & 2.50 & 0.893 (0.017) \\
    \bottomrule
    \end{tabular}
    }
    \caption{Average credible interval (CI) width and average coverage probability (standard error) under the oracle posterior, where $q^{\textsf{con}} = \frac{1}{\sqrt{p}}(1_{p/2}^\top, -1_{p/2}^\top)^\top$ is the \emph{contrast vector}.}
    \label{tab:posterior contrast credible}
    \vspace{-7mm}
\end{table}

\subsection{Improving the naive vEB estimator by debiasing}\label{sec:debiasing}
As the vEB estimator exhibits a sharp phase transition at $p\sim n^{2/3}$, a natural question arises: can one modify the vEB estimator to recover a $p^{-1/2}$ convergence rate beyond $p\sim n^{2/3}$? To this end, one natural strategy is debiasing, especially noting that the limiting distribution in part (c) of \cref{thm:upper bound} is essentially a degenerate bias. The following proposition shows that debiasing the vEB estimator can allow the normal limit \eqref{eq:normal limit} to extend to higher dimensions.
\begin{prop}\label{prop:debiasing}
    Assume the usual assumptions \ref{assmp:random design}--\ref{assmp:prior technical}, and consider the debiased estimator $$\tilde{\theta}^{\mathsf{d}} := \thetaveb - \frac{p}{n} V(\thetaveb)^{-1} \kappa(\thetaveb).$$ Then, the limit \eqref{eq:normal limit} holds for $\tilde{\theta}^{\mathsf{d}}$ as long as $p \ll n^{3/4}$. %
\end{prop}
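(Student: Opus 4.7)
The plan is to refine the expansion of $\thetaveb - \theta_0$ one Taylor order beyond what \cref{thm:upper bound} establishes, and then verify that the debiasing correction cancels the leading bias to the required precision. The starting point is the first-order condition $\sumin \nabla F_i(\thetaveb) = 0$. Together with the consistency $\thetaveb \xp \theta_0$ (which holds throughout $p = o(n)$ by \cref{thm:upper bound}), a standard Taylor expansion gives
\begin{align*}
\thetaveb - \theta_0 = -\Big[\tfrac{1}{p}\sumin \nabla^2 F_i(\theta_0)\Big]^{-1} \tfrac{1}{p}\sumin \nabla F_i(\theta_0) + R_p,
\end{align*}
where the Hessian sum converges in probability to $V(\theta_0)$ (as already established in the proof of \cref{thm:upper bound}) and $R_p$ is a remainder of quadratic order in $\thetaveb - \theta_0$.

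The crux of the argument is to sharpen equation \eqref{eq:mu_p approximation} for the conditional mean $\mu_p(X,\bbst) := \tfrac{1}{\sqrt{p}}\EE\big[\sumin \nabla F_i(\theta_0) \mid X, \bbst\big]$ by one more order. Recalling that $\nabla F_i(\theta_0)$ equals (up to smaller-order corrections from $d_i - d_0$) the function $h_{\theta_0}(w_i)$ introduced in \cref{def:information}, I would Taylor expand $h_{\theta_0}$ to third order around $d_0\beta_i^\star$ in the perturbation $\xi_i := w_i - d_0 \beta_i^\star$. Taking the conditional expectation over the regression noise (for fixed $X, \bbst$) and then averaging over $X$ using the growth bounds R4--R6 for $h_{\theta_0}'''$, the second-order term reproduces the $\tfrac{p^{3/2}}{n}\kappa(\theta_0)$ bias already identified in the proof of \cref{thm:upper bound}, while the third-order contribution adds a new remainder driven by mixed cubic moments of off-diagonal entries of $X^\top X - I_p$. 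The expected output is
\begin{align*}
\mu_p(X,\bbst) = \tfrac{1}{\sqrt{p}}\sumin \Phi(d_0\beta_i^\star) + \tfrac{p^{3/2}}{n}\kappa(\theta_0) + \OPB\!\left(\tfrac{p^2}{n^{3/2}}\right).
\end{align*}

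Substituting into the first display and using the conditional CLT for the fluctuation part (as in the proof of \cref{thm:upper bound}) yields
\begin{align*}
\thetaveb - \theta_0 = \tfrac{p}{n} V(\theta_0)^{-1}\kappa(\theta_0) + \tfrac{1}{\sqrt{p}} V(\theta_0)^{-1} Z_p + O_P\!\left(\tfrac{p^{3/2}}{n^{3/2}}\right),
\end{align*}
with $Z_p \xd N(0, V(\theta_0))$; the remainder is $o_P(p^{-1/2})$ exactly when $p \ll n^{3/4}$. For the debiasing correction, smoothness of $\theta \mapsto V(\theta)^{-1}\kappa(\theta)$ near $\theta_0$ (guaranteed by Assumptions \ref{assmp:prior} and \ref{assmp:prior technical}) and the rate $\thetaveb - \theta_0 = O_P(p^{-1/2} + p/n)$ from \cref{thm:upper bound} give
\begin{align*}
\tfrac{p}{n}V(\thetaveb)^{-1}\kappa(\thetaveb) = \tfrac{p}{n}V(\theta_0)^{-1}\kappa(\theta_0) + O_P\!\left(\tfrac{\sqrt{p}}{n} + \tfrac{p^2}{n^2}\right),
\end{align*}
and both error terms are $o_P(p^{-1/2})$ under the strictly weaker restriction $p \ll n^{4/5}$. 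Subtracting the debias correction from the expansion of $\thetaveb - \theta_0$ cancels the leading bias and leaves only the Gaussian fluctuation plus $o_P(p^{-1/2})$ terms, yielding $\sqrt{p}(\tilde\theta^{\mathsf{d}} - \theta_0) \xd N(0, V(\theta_0)^{-1})$ as claimed.

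The hard part will be the third-order refinement of $\mu_p$. The proof of \cref{thm:upper bound} only needed a second-order Taylor expansion of $h_{\theta_0}$, so the new work lies in rigorously controlling the cubic remainder: one must bound expectations of triple products of (correlated) off-diagonal entries of $X^\top X - I_p$ across index triples, and couple them with moment bounds on $h_{\theta_0}'''$ via R4--R6, to obtain the scale $p^2/n^{3/2}$ for the remainder in $\mu_p$. The scaling $p \ll n^{3/4}$ emerges precisely from demanding that this cubic bias be dominated by the $p^{-1/2}$ Gaussian fluctuation, and I expect it to be the binding constraint, all other error contributions in the debiasing argument being comfortably smaller under $p \ll n^{3/4}$.
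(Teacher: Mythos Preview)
Your proposal is correct and follows essentially the same route as the paper. The third-order refinement of $\mu_p$ that you flag as the ``hard part'' is in fact already carried out in the paper's proof of \cref{thm:upper bound} (see \eqref{eq:mean term expansion}, established via \cref{lem:A7}, which Taylor-expands $\Phi$ to third order in $\F_i$ and controls the cubic remainder at scale $p^{5/2}/n^{3/2}$), and its consequence is packaged as part (c) of \cref{lem:linear clt}; the paper's proof of \cref{prop:debiasing} is then just a few lines invoking \cref{lem:linear clt}(c) together with the same Lipschitz bound on $\theta\mapsto V(\theta)^{-1}\kappa(\theta)$ that you use.
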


The following \cref{fig:debias} considers the one-dimensional symmetric Gaussian mixture prior $\mu_\theta \equiv \frac{1}{2} N(\theta, 1) + \frac{1}{2} N(-\theta, 1)$, and illustrates that bias correction leads to better finite sample performance, which is more apparent for larger $p$. Our debiasing formula arises from correcting the second-order expansion of the likelihood. More specifically, noting that the slower rate of convergence of $\thetaveb$ arises from the second term in the RHS of \eqref{eq:mu_p approximation}, we correct for this bias by replacing $\kappa(\theta_0)$ with $\kappa(\thetaveb)$. This is in contrast to the more common first-order debiasing in the frequentist literature for estimating high-dimensional regression coefficients $\beta$ \citep{Geer2014debiasing,Bellec2022debiasing,Bellec2021,li2023spectrum}. We conjecture that it may be possible to extend the limit \eqref{eq:normal limit} to higher dimensions $p \gg n^{3/4}$ by additionally correcting for higher-order error terms in \eqref{eq:mu_p approximation}.

\begin{figure}[h!]
    \centering
    \includegraphics[width=0.55\linewidth]{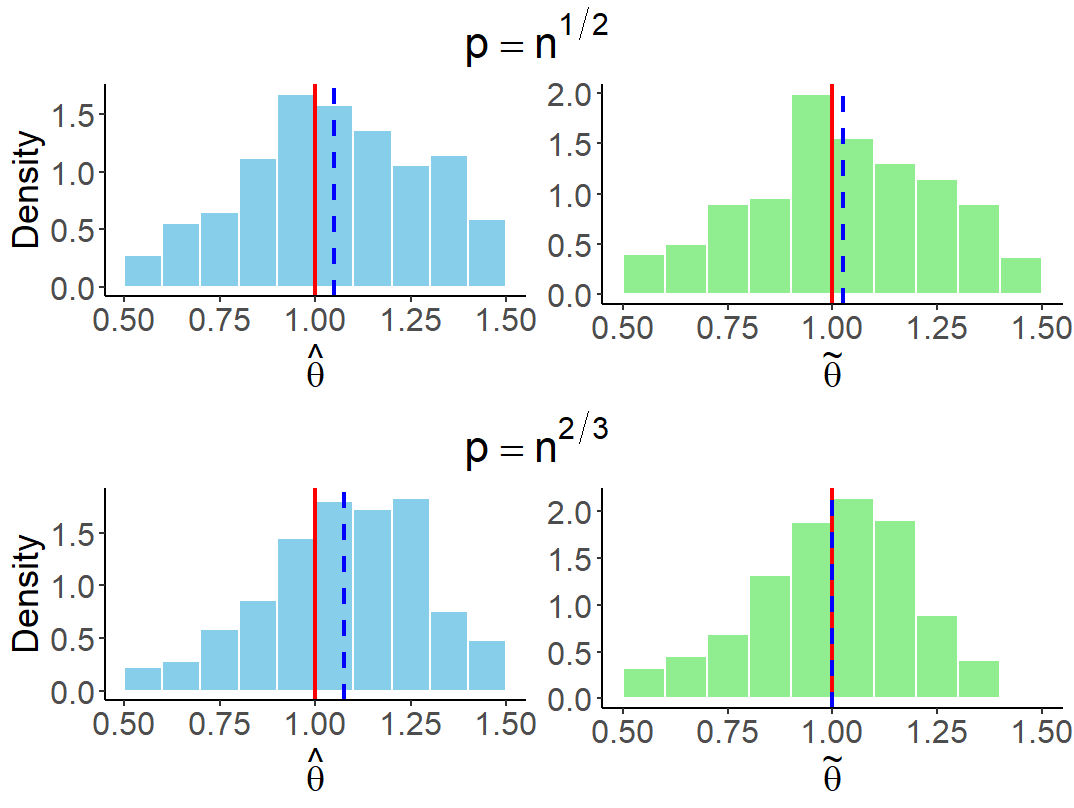}

    \includegraphics[width=0.4\linewidth]{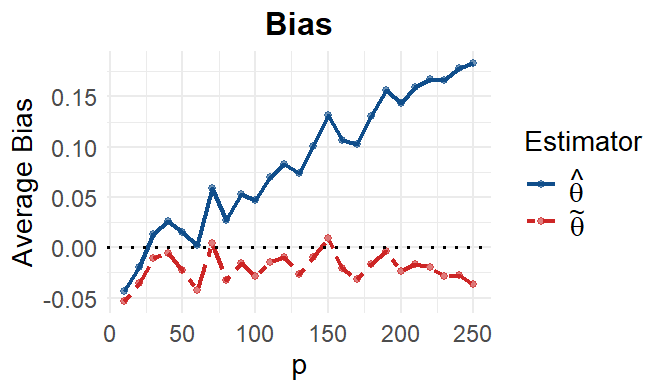}
    \quad
    \includegraphics[width=0.4\linewidth]{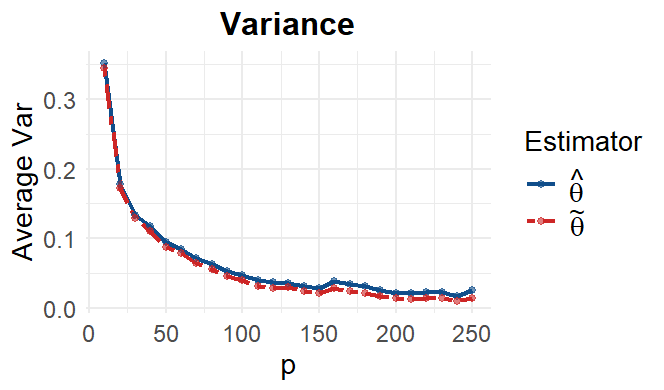}

    \caption{\textbf{(Top row)} Histogram of the estimators $\thetaveb$ and $\tilde{\theta}^{\mathsf{d}}$, where the prior is the symmetric Gaussian mixture $\frac{1}{2}N(\theta,1)+\frac{1}{2}N(-\theta,1)$. We fix $n = 1000$, and consider $p=\sqrt{n}$ and $n^{2/3}$ in each row. The solid red line shows the true value $\theta_0 = 1$ and the dotted blue line denotes the average across 400 replications (under a fixed realization of $X$). We see that $\thetaveb$ has a positive bias and is right-skewed whereas the debiased estimator $\tilde{\theta}^{\mathsf{d}}$ has smaller bias. \textbf{(Bottom row)} Average bias and variance of $\thetaveb$ and $\tilde{\theta}^{\mathsf{d}}$, where $p$ varies from $10$ to $250$ and $n=1000$ is fixed. As $p$ increases, the bias of $\thetahat$ also increases. %
    }
    \label{fig:debias}
\end{figure}

\section{Discussion}\label{sec:conclusion}
We discuss avenues for future research.
\begin{enumerate}[(a),itemsep=1em]
    \item \emph{Technical conditions.} \quad It would be important to improve upon various technical conditions. One immediate direction involves our design matrix assumption (see \cref{assmp:random design}), which requires entry-wise independence. While this is a natural starting point for theoretical understanding, it would be important to further relax this assumption to design matrices with less randomness, such as rotationally invariant matrices or fixed designs with correlated columns that were considered in the AMP literature \citep{li2023random,dudeja2024spectral}. We believe that the definition of the vEB estimator (see \eqref{eq:est def}) as well as the limit distributions and debiasing formula should change appropriately. %
    Additionally, some parts of our analysis require a bounded prior (see \cref{assmp:cpt support}). %
    As this restriction is inevitable due to limitations in existing literature for RFIMs \citep{lee2025clt}, it would be important to further understand RFIMs with relaxed conditions on the base measure.

    \item \emph{Singular or hierarchical priors.} \quad One common choice for practical empirical Bayes procedures is to use finite (Gaussian) mixtures to model the prior $\mu_\theta$. While our results allow such mixtures, we require standard regularity assumptions (see \cref{assmp:prior}), which rules out singular models such as overfitted mixtures. 
    Indeed, it is well-known that overfitted mixtures have a slower $p^{1/4}$-rate of convergence for parameter estimation, even under the non-hierarchical setting with $p$ i.i.d. observations \citep{ho2016strong}. It would be interesting to explore whether our phase transition for the vEB estimator persists under such singular priors. 
    On a related note, it would be interesting to view mixture priors as another layer of hierarchy and consider inference for each mixture component (for example, suppose $\beta_i \mid \Delta_i \sim P_{\Delta_i}$ and $\Delta_i \sim \textsf{Categorical}(\pi)$, where $P_1,\ldots,P_M$ denotes each mixture distribution, $\Delta_i$ denotes the mixture component, and $\pi$ is a $M$-dimensional weight vector). Related hierarchical modeling approaches were recently studied under the classical normal means EB problem \citep{kim2025eb} as well as oracle structured variational inference in regression \citep{sheng2025theory}.
    
    \item \emph{Going beyond naive MF.} \quad Finally, it would be interesting to utilize higher-order variational approximations (as opposed to the naive Mean-Field) such as the TAP free energy in statistical physics \citep{thouless1977solution,krzakala2014variational} to estimate the prior.
    It is known that the TAP free energy can capture the leading-order asymptotics of the log-partition function under a proportional scaling \citep{qiu2023tap}, but its tightness compared to NMF has not been investigated when $p = o(n)$. It would be interesting whether using the TAP free energy can improve prior recovery, and whether it has connections with the debiasing strategy mentioned in \cref{prop:debiasing}. %
\end{enumerate}

\section*{Acknowledgments}
The authors thank Sumit Mukherjee and Bodhisattva Sen for many helpful discussions throughout this work, Shunan Sheng and Bohan Wu for comments on the draft, and Zhou Fan for sharing the adaptive Langevin implementation in \cite{fan2025dynamical}.

\bibliographystyle{imsart-number}
\bibliography{reference}

@article{kingma2013auto,
  title={Auto-encoding variational bayes},
  author={Kingma, Diederik P and Welling, Max},
  journal={arXiv preprint arXiv:1312.6114},
  year={2013}
}

@article{castillo2020spike,
  title={Spike and slab empirical Bayes sparse credible sets},
  author={Castillo, Isma{\"e}l and Szab{\'o}, Botond},
  journal={Bernoulli},
  volume={26},
  number={1},
  pages={127--158},
  year={2020},
  publisher={JSTOR}
}

@article{fan2025dynamical,
  title={Dynamical mean-field analysis of adaptive Langevin diffusions: Propagation-of-chaos and convergence of the linear response},
  author={Fan, Zhou and Ko, Justin and Loureiro, Bruno and Lu, Yue M and Shen, Yandi},
  journal={arXiv preprint arXiv:2504.15556},
  year={2025}
}

@article{zhang2018estimation,
  title={Estimation of complex effect-size distributions using summary-level statistics from genome-wide association studies across 32 complex traits},
  author={Zhang, Yan and Qi, Guanghao and Park, Ju-Hyun and Chatterjee, Nilanjan},
  journal={Nature genetics},
  volume={50},
  number={9},
  pages={1318--1326},
  year={2018},
  publisher={Nature Publishing Group US New York}
}

@article{o2021distribution,
  title={The distribution of common-variant effect sizes},
  author={O’Connor, Luke J},
  journal={Nature genetics},
  volume={53},
  number={8},
  pages={1243--1249},
  year={2021},
  publisher={Nature Publishing Group US New York}
}

@article{amaro1991fluctuations,
  title={Fluctuations in the Curie-Weiss version of the random field Ising model},
  author={Amaro de Matos, JMG and Perez, J Fernando},
  journal={Journal of Statistical Physics},
  volume={62},
  number={3},
  pages={587--608},
  year={1991},
  publisher={Springer}
}

@incollection{nattermann1998theory,
  title={Theory of the random field Ising model},
  author={Nattermann, Thomas},
  booktitle={Spin glasses and random fields},
  pages={277--298},
  year={1998},
  publisher={World Scientific}
}

@article{bhattacharya2024ldp,
  title={LDP for inhomogeneous U-statistics},
  author={Bhattacharya, Sohom and Deb, Nabarun and Mukherjee, Sumit},
  journal={The Annals of Applied Probability},
  volume={34},
  number={6},
  pages={5769--5808},
  year={2024},
  publisher={Institute of Mathematical Statistics}
}

@article{deb2025pivotal,
  title={Pivotal CLTs for Pseudolikelihood via Conditional Centering in Dependent Random Fields},
  author={Deb, Nabarun},
  journal={arXiv preprint arXiv:2510.04972},
  year={2025}
}

@article{zhou2021fast,
  title={A fast and robust Bayesian nonparametric method for prediction of complex traits using summary statistics},
  author={Zhou, Geyu and Zhao, Hongyu},
  journal={PLoS genetics},
  volume={17},
  number={7},
  pages={e1009697},
  year={2021},
  publisher={Public Library of Science}
}

@article{petrone2014bayes,
  title={Bayes and empirical Bayes: do they merge?},
  author={Petrone, Sonia and Rousseau, Judith and Scricciolo, Catia},
  journal={Biometrika},
  volume={101},
  number={2},
  pages={285--302},
  year={2014},
  publisher={Oxford University Press}
}

@incollection{james1992estimation,
  title={Estimation with quadratic loss},
  author={James, William and Stein, Charles},
  booktitle={Proc. Fourth Berkeley Symp. Math. Statist. Prob},
  pages={361--379},
  year={1961},
  publisher={Springer}
}

@article{soloff2025multivariate,
  title={Multivariate, heteroscedastic empirical Bayes via nonparametric maximum likelihood},
  author={Soloff, Jake A and Guntuboyina, Adityanand and Sen, Bodhisattva},
  journal={Journal of the Royal Statistical Society Series B: Statistical Methodology},
  volume={87},
  number={1},
  pages={1--32},
  year={2025},
  publisher={Oxford University Press UK}
}

@article{jiang2009general,
  title={General maximum likelihood empirical Bayes estimation of normal means},
  author={Jiang, Wenhua and Zhang, Cun-Hui},
  year={2009}
}

@incollection{robbins1956empirical,
  title={{An empirical Bayes approach to statistics}},
  author={Robbins, Herbert E},
  booktitle={Breakthroughs in Statistics: Foundations and basic theory},
  pages={157–163},
  year={1956},
  publisher={Springer}
}

@article{ho2016strong,
  title={On strong identifiability and convergence rates of parameter estimation in finite mixtures},
  author={Ho, Nhat and Nguyen, XuanLong},
  year={2016}
}

@article {Deb2024detecting,
    AUTHOR = {Deb, Nabarun and Mukherjee, Rajarshi and Mukherjee, Sumit and
              Yuan, Ming},
     TITLE = {Detecting structured signals in {I}sing models},
   JOURNAL = {Ann. Appl. Probab.},
  FJOURNAL = {The Annals of Applied Probability},
    VOLUME = {34},
      YEAR = {2024},
    NUMBER = {1A},
     PAGES = {1--45},
      ISSN = {1050-5164,2168-8737},
   MRCLASS = {62G10 (62C20 62G20 82B20)},
  MRNUMBER = {4696272},
       DOI = {10.1214/23-aap1929},
       URL = {https://doi.org/10.1214/23-aap1929},
}

@article{Bellec2021,
author = {Pierre C. Bellec and Cun-Hui Zhang},
title = {{Debiasing convex regularized estimators and interval estimation in linear models}},
volume = {51},
journal = {The Annals of Statistics},
number = {2},
publisher = {Institute of Mathematical Statistics},
pages = {391 -- 436},
keywords = {bias correction, central limit theorem, confidence intervals, convex regularization, Gaussian Poincaré inequality, high-dimensional linear models, Lasso, Stein’s formula, variance estimation},
year = {2023},
doi = {10.1214/22-AOS2243},
URL = {https://doi.org/10.1214/22-AOS2243}
}

@article{mukherjee2022variational,
  title={Variational Inference in high-dimensional linear regression},
  author={Mukherjee, Sumit and Sen, Subhabrata},
  journal={The Journal of Machine Learning Research},
  volume={23},
  number={1},
  pages={13703--13758},
  year={2022},
  publisher={JMLRORG}
}

@article{chatterjee2011nonnormal,
  title={Nonnormal approximation by Stein’s method of exchangeable pairs with application to the Curie--Weiss model},
  author={Chatterjee, Sourav and Shao, Qi-Man},
  journal={The Annals of Applied Probability},
  volume={21},
  number={2},
  pages={464--483},
  year={2011},
  publisher={Institute of Mathematical Statistics}
}

@article{deb2020fluctuations,
  title={Fluctuations in mean-field Ising models},
  author={Deb, Nabarun and Mukherjee, Sumit},
  journal={The Annals of Applied Probability},
  volume={33},
  number={3},
  pages={1961--2003},
  year={2023},
  publisher={Institute of Mathematical Statistics}
}

@article{barbier2020mutual,
  title={Mutual information and optimality of approximate message-passing in random linear estimation},
  author={Barbier, Jean and Macris, Nicolas and Dia, Mohamad and Krzakala, Florent},
  journal={IEEE Transactions on Information Theory},
  volume={66},
  number={7},
  pages={4270--4303},
  year={2020},
  publisher={IEEE}
}

@article{qiu2024sub,
  title={Sub-optimality of the Naive Mean Field approximation for proportional high-dimensional Linear Regression},
  author={Qiu, Jiaze},
  journal={Advances in Neural Information Processing Systems},
  volume={36},
  year={2024}
}

@article{celentano2023mean,
  title={Mean-field variational inference with the TAP free energy: Geometric and statistical properties in linear models},
  author={Celentano, Michael and Fan, Zhou and Lin, Licong and Mei, Song},
  journal={arXiv preprint arXiv:2311.08442},
  year={2023}
}

@article{bhattacharya2023gibbs,
  title={Gibbs Measures with Multilinear Forms},
  author={Bhattacharya, Sohom and Deb, Nabarun and Mukherjee, Sumit},
  journal={arXiv preprint arXiv:2307.14600},
  year={2023}
}

@article{wainwright2008graphical,
  title={Graphical models, exponential families, and variational inference},
  author={Wainwright, Martin J and Jordan, Michael I and others},
  journal={Foundations and Trends{\textregistered} in Machine Learning},
  volume={1},
  number={1--2},
  pages={1--305},
  year={2008},
  publisher={Now Publishers, Inc.}
}

@book{vershynin2018high,
  title={High-dimensional probability: An introduction with applications in data science},
  author={Vershynin, Roman},
  volume={47},
  year={2018},
  publisher={Cambridge university press}
}

@article{van2014probability,
  title={Probability in high dimension},
  author={Van Handel, Ramon},
  journal={Lecture Notes (Princeton University)},
  year={2014}
}

@article{chatterjee2009fluctuations,
  title={Fluctuations of eigenvalues and second order Poincar{\'e} inequalities},
  author={Chatterjee, Sourav},
  journal={Probability Theory and Related Fields},
  volume={143},
  number={1},
  pages={1--40},
  year={2009},
  publisher={Springer}
}

@article{yan2020nonlinear,
author = {Jun Yan},
title = {{Nonlinear large deviations: Beyond the hypercube}},
volume = {30},
journal = {The Annals of Applied Probability},
number = {2},
publisher = {Institute of Mathematical Statistics},
pages = {812 -- 846},
keywords = {large deviations, mean-field, Random graphs, vector spin models},
year = {2020},
doi = {10.1214/19-AAP1516},
URL = {https://doi.org/10.1214/19-AAP1516}
}

@article{qiu2023tap,
  title={The TAP free energy for high-dimensional linear regression},
  author={Qiu, Jiaze and Sen, Subhabrata},
  journal={The Annals of Applied Probability},
  volume={33},
  number={4},
  pages={2643--2680},
  year={2023},
  publisher={Institute of Mathematical Statistics}
}

@article{mukherjee2023mean,
  title={A mean field approach to empirical bayes estimation in high-dimensional linear regression},
  author={Mukherjee, Sumit and Sen, Bodhisattva and Sen, Subhabrata},
  journal={arXiv preprint arXiv:2309.16843},
  year={2023}
}

@Manual{isingsampler,
    title = {IsingSampler: Sampling Methods and Distribution Functions
      for the Ising Model},
    author = {Sacha Epskamp},
    year = {2025},
    note = {R package version 0.2.4},
    url = {https://github.com/sachaepskamp/isingsampler},
  }

@article{fan2025dynamical2,
  title={Dynamical mean-field analysis of adaptive Langevin diffusions: Replica-symmetric fixed point and empirical Bayes},
  author={Fan, Zhou and Ko, Justin and Loureiro, Bruno and Lu, Yue M and Shen, Yandi},
  journal={arXiv preprint arXiv:2504.15558},
  year={2025}
}

@inproceedings{kuntz2023particle,
  title={Particle algorithms for maximum likelihood training of latent variable models},
  author={Kuntz, Juan and Lim, Jen Ning and Johansen, Adam M},
  booktitle={International Conference on Artificial Intelligence and Statistics},
  pages={5134--5180},
  year={2023},
  organization={PMLR}
}

@incollection{bakry2006diffusions,
  title={Diffusions hypercontractives},
  author={Bakry, Dominique and {\'E}mery, Michel},
  booktitle={S{\'e}minaire de Probabilit{\'e}s XIX 1983/84: Proceedings},
  pages={177--206},
  year={2006},
  publisher={Springer}
}

@article{johnstone2004needles,
author = {Iain M. Johnstone and Bernard W. Silverman},
title = {{Needles and straw in haystacks: Empirical Bayes estimates of possibly sparse sequences}},
volume = {32},
journal = {The Annals of Statistics},
number = {4},
publisher = {Institute of Mathematical Statistics},
pages = {1594 -- 1649},
keywords = {Adaptivity, Empirical Bayes, sequence estimation, Sparsity, thresholding},
year = {2004}
}

@article{rousseau2020asymptotic,
  title={Asymptotic frequentist coverage properties of Bayesian credible sets for sieve priors},
  author={Rousseau, Judith and Szabo, Botond},
  journal={The Annals of Statistics},
  volume={48},
  number={4},
  pages={2155--2179},
  year={2020},
  publisher={JSTOR}
}

@article{zhang2020convergence,
  title={Convergence Rates of Empirical Bayes Posterior Distributions: A Variational Perspective},
  author={Zhang, Fengshuo and Gao, Chao},
  journal={arXiv preprint arXiv:2009.03969},
  year={2020}
}

@article{cho2021gaussian,
  title={Gaussian variational estimation for multidimensional item response theory},
  author={Cho, April E and Wang, Chun and Zhang, Xue and Xu, Gongjun},
  journal={British Journal of Mathematical and Statistical Psychology},
  volume={74},
  pages={52--85},
  year={2021},
  publisher={Wiley Online Library}
}

@article{saenz2025characterizing,
  title={Characterizing Finite-Dimensional Posterior Marginals in High-Dimensional GLMs via Leave-One-Out},
  author={S{\'a}enz, Manuel and Sur, Pragya},
  journal={arXiv preprint arXiv:2601.00091},
  year={2025}
}

@article{lee2025clt,
  title={Fluctuations in random field {I}sing models},
  author={Lee, Seunghyun and Deb, Nabarun and  Mukherjee, Sumit},
  journal={Annals of Applied Probability},
  pages={just accepted},
  year={2026}
}

@article{blei2003latent,
  title={Latent Dirichlet Allocation},
  author={Blei, DM and Ng, AY and Jordan, MI},
  journal={Journal of Machine Learning Research},
  volume={3},
  year={2003}
}

@article{sheng2025theory,
  title={Theory and computation for structured variational inference},
  author={Sheng, Shunan and Wu, Bohan and Zhu, Bennett and Chewi, Sinho and Pooladian, Aram-Alexandre},
  journal={arXiv preprint arXiv:2511.09897},
  year={2025}
}

@article{zhong2022empirical,
  title={Empirical Bayes PCA in high dimensions},
  author={Zhong, Xinyi and Su, Chang and Fan, Zhou},
  journal={Journal of the Royal Statistical Society Series B: Statistical Methodology},
  volume={84},
  number={3},
  pages={853--878},
  year={2022},
  publisher={Oxford University Press}
}

@article{li2023random,
  title={Random linear estimation with rotationally-invariant designs: Asymptotics at high temperature},
  author={Li, Yufan and Fan, Zhou and Sen, Subhabrata and Wu, Yihong},
  journal={IEEE Transactions on Information Theory},
  volume={70},
  number={3},
  pages={2118--2153},
  year={2023},
  publisher={IEEE}
}

@article{dudeja2024spectral,
  title={Spectral universality in regularized linear regression with nearly deterministic sensing matrices},
  author={Dudeja, Rishabh and Sen, Subhabrata and Lu, Yue M},
  journal={IEEE Transactions on Information Theory},
  year={2024},
  publisher={IEEE}
}

@article{stephens2017false,
  title={False discovery rates: a new deal},
  author={Stephens, Matthew},
  journal={Biostatistics},
  volume={18},
  number={2},
  pages={275--294},
  year={2017},
  publisher={Oxford University Press}
}

@article{wang2021empirical,
  title={Empirical bayes matrix factorization},
  author={Wang, Wei and Stephens, Matthew},
  journal={Journal of Machine Learning Research},
  volume={22},
  number={120},
  pages={1--40},
  year={2021}
}

@article{lee2025bayesregression,
  title={{CLT in high-dimensional Bayesian linear regression with low SNR}},
  author={Lee, Seunghyun and Deb, Nabarun and Mukherjee, Sumit},
  journal={arXiv preprint arXiv:2507.23285},
  year={2025}
}

@article{rousseau2017asymptotic,
  title={Asymptotic behaviour of the empirical Bayes posteriors associated to maximum marginal likelihood estimator},
  author={Rousseau, Judith and Szabo, Botond},
  year={2017}
}

@article{szabo2013empirical,
    author = {B. T. Szab{\'o} and A. W. van der Vaart and J. H. van Zanten},
    title = {{Empirical Bayes scaling of Gaussian priors in the white noise model}},
    volume = {7},
    journal = {Electronic Journal of Statistics},
    number = {none},
    publisher = {Institute of Mathematical Statistics and Bernoulli Society},
    pages = {991 -- 1018},
    keywords = {Adaptation, bandwidth, Gaussian white noise, hyper-rectangle, normal means model, rate of contraction},
    year = {2013},
    doi = {10.1214/13-EJS798}
}

@article{Bellec2022debiasing,
author = {Pierre C. Bellec and Cun-Hui Zhang},
title = {{De-biasing the lasso with degrees-of-freedom adjustment}},
volume = {28},
journal = {Bernoulli},
number = {2},
publisher = {Bernoulli Society for Mathematical Statistics and Probability},
pages = {713 -- 743},
keywords = {Confidence interval, efficiency, Fisher information, High-dimensional data, Lasso, p-value, regression, Semiparametric model, statistical inference},
year = {2022},
doi = {10.3150/21-BEJ1348},
URL = {https://doi.org/10.3150/21-BEJ1348}
}

@article{li2023spectrum,
  title={Spectrum-aware debiasing: A modern inference framework with applications to principal components regression},
  author={Li, Yufan and Sur, Pragya},
  journal={arXiv preprint arXiv:2309.07810},
  year={2023}
}

@article{berk1966limiting,
  title={Limiting behavior of posterior distributions when the model is incorrect},
  author={Berk, Robert H},
  journal={The Annals of Mathematical Statistics},
  volume={37},
  number={1},
  pages={51--58},
  year={1966},
  publisher={Institute of Mathematical Statistics}
}

@article{kim2025eb,
    author = {Kim, Taehyun and Sen, Bodhisattva},
    title = {Nonparametric Empirical {B}ayes Estimation and Inference via Hierarchical Model},
    journal = {preprint},
    year = {2025}
}

@article{white1982maximum,
  title={Maximum likelihood estimation of misspecified models},
  author={White, Halbert},
  journal={Econometrica: Journal of the econometric society},
  pages={1--25},
  year={1982},
  publisher={JSTOR}
}

@article{Geer2014debiasing,
author = {Sara van de Geer and Peter B{\"u}hlmann and Ya’acov Ritov and Ruben Dezeure},
title = {{On asymptotically optimal confidence regions and tests for high-dimensional models}},
volume = {42},
journal = {The Annals of Statistics},
number = {3},
publisher = {Institute of Mathematical Statistics},
pages = {1166 -- 1202},
keywords = {central limit theorem, generalized linear model, Lasso, linear model, multiple testing, Semiparametric efficiency, Sparsity},
year = {2014},
doi = {10.1214/14-AOS1221},
URL = {https://doi.org/10.1214/14-AOS1221}
}

@article{thouless1977solution,
  title={Solution of `solvable model of a spin glass'},
  author={Thouless, David J and Anderson, Philip W and Palmer, Robert G},
  journal={Philosophical Magazine},
  volume={35},
  number={3},
  pages={593--601},
  year={1977},
  publisher={Taylor \& Francis}
}

@article{kleijn2012bernstein,
  title={The Bernstein-von-Mises theorem under misspecification},
  author={Kleijn, Bas JK and Van der Vaart, Aad W},
  year={2012}
}

@inproceedings{krzakala2014variational,
  title={Variational free energies for compressed sensing},
  author={Krzakala, Florent and Manoel, Andre and Tramel, Eric W and Zdeborov{\'a}, Lenka},
  booktitle={2014 IEEE International Symposium on Information Theory},
  pages={1499--1503},
  year={2014},
  organization={IEEE}
}

@article{sheng2025stability,
  title={Stability of Mean-Field Variational Inference},
  author={Sheng, Shunan and Wu, Bohan and Gonz{\'a}lez-Sanz, Alberto and Nutz, Marcel},
  journal={arXiv preprint arXiv:2506.07856},
  year={2025}
}

@article{lacker2024mean,
  title={Mean field approximations via log-concavity},
  author={Lacker, Daniel and Mukherjee, Sumit and Yeung, Lane Chun},
  journal={International Mathematics Research Notices},
  volume={2024},
  number={7},
  pages={6008--6042},
  year={2024},
  publisher={Oxford University Press}
}

@article{rizzelli2024empirical,
  title={Empirical Bayes in Bayesian learning: understanding a common practice},
  author={Rizzelli, Stefano and Rousseau, Judith and Petrone, Sonia},
  journal={arXiv preprint arXiv:2402.19036},
  year={2024}
}

@article{carbonetto2012vi,
author = {Peter Carbonetto and Matthew  Stephens},
title = {{Scalable Variational Inference for Bayesian Variable Selection in Regression, and Its Accuracy in Genetic Association Studies}},
volume = {7},
journal = {Bayesian Analysis},
number = {1},
publisher = {International Society for Bayesian Analysis},
pages = {73 -- 108},
keywords = {genetic association studies, Monte Carlo, Variable selection, variational inference},
year = {2012}
}

@article{kim2024flexible,
  title={A flexible empirical Bayes approach to multiple linear regression and connections with penalized regression},
  author={Kim, Youngseok and Wang, Wei and Carbonetto, Peter and Stephens, Matthew},
  journal={Journal of Machine Learning Research},
  volume={25},
  number={185},
  pages={1--59},
  year={2024}
}

@book{lehmann2006theory,
  title={Theory of point estimation},
  author={Lehmann, Erich L and Casella, George},
  year={2006},
  publisher={Springer Science \& Business Media}
}

@book{van2000asymptotic,
  title={Asymptotic statistics},
  author={Van der Vaart, Aad W},
  volume={3},
  year={2000},
  publisher={Cambridge university press}
}

@article{fan2023gradient,
  title={Gradient flows for empirical Bayes in high-dimensional linear models},
  author={Fan, Zhou and Guan, Leying and Shen, Yandi and Wu, Yihong},
  journal={arXiv preprint arXiv:2312.12708},
  year={2023}
}

\newpage 

\setcounter{page}{1}

\begin{appendix}
\noindent \textbf{Appendix}

\setcounter{figure}{0}
\setcounter{table}{0}

\renewcommand{\thefigure}{\thesection.\arabic{figure}}
\renewcommand{\thetable}{\thesection.\arabic{table}}
\vspace{2mm}
\noindent The Appendix is organized as follows. We prove all main theorems in \cref{sec:pf main results}. We prove main lemmas (including those in the main text) in \cref{sec:pf lemmas}, and prove additional technical results in \cref{sec:proof technical lemmas}. \cref{sec:sim details} discusses the simulation setups from the main paper in more detail and also presents additional numerical  results.

\section{Proof of main theorems}\label{sec:pf main results}

\subsection{Proof of \cref{thm:upper bound} and \cref{cor:misspecified}}
We begin with a modified version of Wald's consistency result (see Theorem 5.14 in \cite{van2000asymptotic}), where we relax the independent sample requirement by directly assuming the pointwise LLNs that are used in the proof. This will enable us to establish the consistency of $\thetaveb$. 
\begin{lemma}[Wald's consistency theorem]\label{lem:Wald}
    Let $g:\R \times \Theta \to \R$ be a function such that the map $\theta \to g(v,\theta)$ is upper-semicontinuous for all $v$. Let $\{V_i\}_{i\ge 1}$ and $V_{\infty}$ be random variables such that satisfy the LLN
    $$\frac{1}{p} \sumin g(V_i, \theta) \xp \EE g(V_{\infty},\theta), \quad \forall \theta\, \in \Theta.$$
    Assume that the map $\theta \to \EE g(V_{\infty},\theta)$ is uniquely maximized at $\theta_0$, and that for every sufficiently small ball $U \in \Theta$ we have
    $$\EE \sup_{\theta \in U} g(V_{\infty},\theta) < \infty.$$
    Define $\mathcal{M}_p(\theta) := \frac{1}{p} \sumin g(V_i,\theta)$. Let $\hat{\theta}_p$ be any estimator such that $\mathcal{M}_p(\thetahat) \ge \mathcal{M}_p(\theta_0) -o_P(1)$. Then, $\thetahat \xp \theta_0$.
\end{lemma}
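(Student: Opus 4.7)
The plan is to mimic the classical proof of Wald's consistency theorem (e.g., Theorem 5.14 in van der Vaart), replacing the appeal to the i.i.d.\ strong law of large numbers with the pointwise LLN assumed in the statement. The goal is to show that for every open neighborhood $U \subseteq \Theta$ of $\theta_0$, the event $\{\thetahat \notin U\}$ has probability tending to zero. Since $\Theta$ is compact (by R1 in \cref{assmp:prior}), the complement $K := \Theta \setminus U$ is compact, which will let me pass from a local argument to a global one via finite covers.

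First, I would localize around each $\theta^{\ast} \in K$. By upper-semicontinuity of $\theta \mapsto g(v, \theta)$, the map $B \mapsto \sup_{\theta \in B} g(v,\theta)$ is monotone non-increasing as $B$ shrinks to $\{\theta^\ast\}$, with pointwise limit $g(v, \theta^\ast)$. Combined with the integrable envelope condition (which holds for all sufficiently small $B$), the monotone convergence theorem yields
\[
\EE \sup_{\theta \in B} g(V_\infty, \theta) \;\searrow\; \EE g(V_\infty, \theta^\ast) \quad \text{as} \quad B \downarrow \{\theta^\ast\}.
\]
Since $\theta_0$ is the unique maximizer of $\theta \mapsto \EE g(V_\infty,\theta)$, I can then pick a small ball $B(\theta^\ast)$ around $\theta^\ast$ together with a constant $\epsilon(\theta^\ast) > 0$ such that $\EE \sup_{\theta \in B(\theta^\ast)} g(V_\infty, \theta) < \EE g(V_\infty,\theta_0) - 2\epsilon(\theta^\ast)$.

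Next, by compactness of $K$, finitely many such balls $B_1, \ldots, B_m$ cover $K$. Applying the pointwise LLN to the envelope functions $V \mapsto \sup_{\theta \in B_j} g(V, \theta)$, I obtain
\[
\sup_{\theta \in B_j} \mathcal{M}_p(\theta) \;\le\; \frac{1}{p} \sum_{i=1}^p \sup_{\theta \in B_j} g(V_i, \theta) \;\xp\; \EE \sup_{\theta \in B_j} g(V_\infty, \theta) \;<\; \EE g(V_\infty, \theta_0) - 2\epsilon_j.
\]
Taking the maximum over the finitely many $j$ and combining with the pointwise LLN $\mathcal{M}_p(\theta_0) \xp \EE g(V_\infty, \theta_0)$, I conclude that with probability tending to one, $\sup_{\theta \in K} \mathcal{M}_p(\theta) < \mathcal{M}_p(\theta_0) - \epsilon$, where $\epsilon := \min_j \epsilon_j > 0$. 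The near-maximality condition $\mathcal{M}_p(\thetahat) \ge \mathcal{M}_p(\theta_0) - o_P(1)$ then forces $\thetahat \in U$ w.h.p., which gives $\thetahat \xp \theta_0$.

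The main obstacle is that the hypothesis only posits the \emph{pointwise} LLN for $g(V_i, \theta)$, yet the envelope step above requires the LLN applied to the local suprema $\sup_{\theta \in B_j} g(V_i, \theta)$. Since the statement's preamble refers to ``the pointwise LLNs that are used in the proof'' (plural), the most natural reading is that this sup-version is included in the hypothesis. In the concrete vEB applications we care about, I would verify it separately using the concentration machinery of \cite{lee2025clt} applied to bounded smooth functions on bounded priors, together with the integrable envelope already assumed. All remaining ingredients (compactness, monotone convergence, uniqueness of the maximizer, near-maximality of $\thetahat$) are routine.
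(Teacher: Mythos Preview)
The paper does not actually supply a proof of this lemma; it states it as a modification of Theorem~5.14 in van der Vaart, remarking only that the i.i.d.\ strong law is replaced by the assumed LLNs. Your argument is precisely the classical Wald proof (compactness $\to$ local envelopes via upper-semicontinuity and monotone convergence $\to$ finite cover $\to$ uniform separation), so there is no substantive difference to report.

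You also correctly isolate the one real issue: the lemma as \emph{stated} assumes only the pointwise LLN at each fixed $\theta$, yet the argument requires the LLN for the local envelopes $V\mapsto\sup_{\theta\in B_j}g(V,\theta)$. Your reading of the preamble (``the pointwise LLNs that are used in the proof,'' plural) as covering these envelope LLNs is the intended one. Note that the paper's own application in the proof of \cref{thm:upper bound} checks exactly the three stated conditions (pointwise LLN via \cref{lem:LLN}, unique maximizer via \cref{lem:unique maximizer}, and the integrable envelope) and does not separately verify the envelope LLN either---so the gap you identify is shared between the lemma's formal statement and its use. In the concrete setting it is harmless, since the envelope functions inherit the polynomial-growth derivative needed for \cref{lem:LLN}(a) to apply, but your caution is well placed.
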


To apply \cref{lem:Wald}, we require several additional lemmas  whose proofs are deferred to \cref{sec:pfupbdlm}. Recall the notation $B_{\theta}$, $W_{\theta}$, and $B_{t,d,\theta}$ from \cref{def:information}. Also recall the definition of $w$ from \eqref{eq:notation}. Our first Lemma will be used to prove that the limit of $p^{-1}\sum_{i=1}^p F_i(\theta)$ (see \eqref{eq:est def}) equals $\Flm(\theta)$ where 
\begin{align}\label{eq:flm}
\Flm(\theta) := \EE \Big[\log \Big(\int e^{W_{\theta_0} b - \frac{d_0 b^2}{2}} d \mu_\theta(b) \Big) \Big].
\end{align}
To wit, let $\mcp(r)$ denote the set of all functions $h:\R \to \R$ with sub-polynomial growth $|h(z)| \lesssim 1+|z|^r$ for all $z\in\R$. Here $\lesssim$ hides constants free of $z$. 

\begin{lemma}[LLN]\label{lem:LLN}
Suppose R5 from Assumption \ref{assmp:prior technical} holds.
\begin{enumerate}[(a)]
    \item Let $g:\R \to \R$ be a $\mathcal{C}^1$ function with $g' \in \mcp(r)$ for some $r\ge 0$ and $\EE g(W_{\theta_0})<\infty$. Then, the following LLN holds:
    $$\frac{1}{p} \sumin g(w_i) \mid X \xp \EE g(W_{\theta_0}).$$
    \item Additionally assuming R6, for any function $h \in \mcp(r)$, we have 
    \begin{align*}
        \frac{1}{p} \sumin \EE h(B_{w_i,d_i,\theta_0}) \mid X &\xp \EE h(B_{\theta_0}), ~~ \frac{1}{p} \sumin \big[\EE h(B_{w_i,d_i,\theta_0})\big]^2 \mid X &\xp \EE \Big[ \EE \big(h(B_{\theta_0}) \mid W_{\theta_0}\big) \Big]^2.
    \end{align*}
\end{enumerate}
\end{lemma}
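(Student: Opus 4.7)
The plan is to prove both (a) and (b) via a conditional Chebyshev argument, establishing convergence of the conditional mean given $X$ and vanishing of the conditional variance given $X$. The entry point is the distributional identity $w\mid X,\beta^{\star}\sim N(d_{0}X^{\top}X\beta^{\star},\,d_{0}X^{\top}X)$, together with the random-design concentration $(X^{\top}X)_{ii}=1+O_{P,X}(n^{-1/2})$ and $(X^{\top}X)_{ij}=O_{P,X}(n^{-1/2})$ for $i\ne j$ implied by \cref{assmp:random design}; thus $w_{i}$ concentrates around $d_{0}\beta_{i}^{\star}$ plus a Gaussian fluctuation of variance $\approx d_{0}$, i.e., approximately $W_{\theta_{0}}$-distributed.

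For the mean in (a), I would further condition on $\beta^{\star}$: writing $\mu_{i}:=d_{0}(X^{\top}X\beta^{\star})_{i}$, $\sigma_{i}^{2}:=d_{0}(X^{\top}X)_{ii}$, and $\tilde g(m,s^{2}):=\mathbb{E}_{Z\sim N(0,1)}g(m+sZ)$, so that $\mathbb{E}[g(w_{i})\mid X,\beta^{\star}]=\tilde g(\mu_{i},\sigma_{i}^{2})$. Differentiation under the integral propagates the polynomial growth of $g'$ to the partials of $\tilde g$. Taylor-expanding $\tilde g(\mu_{i},\sigma_{i}^{2})$ around $(d_{0}\beta_{i}^{\star},d_{0})$, and using $\mathbb{E}_{\beta^{\star}}[(\mu_{i}-d_{0}\beta_{i}^{\star})^{2}\mid X]=O_{P,X}(p/n)$ together with $|\sigma_{i}^{2}-d_{0}|=O_{P,X}(n^{-1/2})$ and R5 on moments of $\mu_{\theta_{0}}$, the remainder averages to $o_{P,X}(1)$ uniformly in $i$; the leading term $\tilde g(d_{0}\beta_{i}^{\star},d_{0})$ averages to $\mathbb{E}g(W_{\theta_{0}})$ by the tower property.

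For the variance in (a), I would apply the law of total variance conditional on $(X,\beta^{\star})$. The within-$\beta^{\star}$ piece $p^{-2}\sum_{i,j}\mathrm{Cov}[g(w_{i}),g(w_{j})\mid X,\beta^{\star}]$ is handled by Gaussian covariance estimates (Gaussian interpolation / Mehler-type bounds): each off-diagonal summand is $O_{P,X}(n^{-1/2})$, driven by the small correlation $(X^{\top}X)_{ij}$, so the double sum is $o_{P,X}(1)$. The between-$\beta^{\star}$ piece is the $\beta^{\star}$-variance of $F(\beta^{\star}):=p^{-1}\sum_{i}\tilde g(\mu_{i},\sigma_{i}^{2})$; by Efron--Stein, each resampling increment $\mathbb{E}[(F(\beta^{\star})-F(\beta^{\star(k)}))^{2}\mid X]$ is bounded, via Cauchy--Schwarz and $\sum_{i\ne k}(X^{\top}X)_{ik}^{2}=O_{P,X}(p/n)$, by $O_{P,X}(p^{-2}+n^{-1})$, giving $\mathrm{Var}(F\mid X)=O_{P,X}(p^{-1}+p/n)=o_{P,X}(1)$ under $p=o(n)$.

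For (b), I would set $H(t,d):=\mathbb{E}h(B_{t,d,\theta_{0}})$. Exponential-family differentiation gives $\partial_{t}H(t,d)=\mathrm{Cov}(h(B_{t,d,\theta_{0}}),B_{t,d,\theta_{0}})$ and similarly for $\partial_{d}H$; combined with R6 these have polynomial growth in $|t|$, so $H(\cdot,d_{0})$ and $H(\cdot,d_{0})^{2}$ both satisfy the hypotheses of (a). Applying (a) yields $p^{-1}\sum_{i}H(w_{i},d_{0})\xp\mathbb{E}H(W_{\theta_{0}},d_{0})=\mathbb{E}h(B_{\theta_{0}})$ via the posterior identity $B_{\theta_{0}}\mid W_{\theta_{0}}\sim B_{W_{\theta_{0}},d_{0},\theta_{0}}$, and analogously for the squared version. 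To upgrade $d_{0}\leadsto d_{i}$, I would combine $|H(w_{i},d_{i})-H(w_{i},d_{0})|\lesssim|d_{i}-d_{0}|(1+|w_{i}|)^{r}$ with $\max_{i}|d_{i}-d_{0}|=O_{P,X}((\log p/n)^{1/2})$. The main obstacle will be the Efron--Stein step, which requires polynomial-growth control on $\partial_{\mu}\tilde g(\mu_{i},\sigma_{i}^{2})$ uniformly in $i$ using only the moment assumption R5 on $\mu_{\theta_{0}}$.
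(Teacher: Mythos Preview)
Your proposal is correct and follows the same high-level architecture as the paper (condition on $\beta^\star$, control the $w$-randomness and the $\beta^\star$-randomness separately, then reduce (b) to (a) via $H(t)=\EE h(B_{t,d_0,\theta_0})$ and a $d_i\to d_0$ replacement), but the specific tools differ in two places and the paper's choices are cleaner.

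For the $w$-variance given $(X,\beta^\star)$, the paper applies the multivariate Gaussian Poincar\'e inequality in one line: with $\Sigma=d_0 X^\top X$ and $\|\Sigma\|=O_{P,X}(1)$ under \cref{assmp:random design}, one gets $\Var\big(p^{-1}\sum_i g(w_i)\mid X,\beta^\star\big)\le \|\Sigma\|\,p^{-2}\sum_i\EE[g'(w_i)^2\mid X,\beta^\star]=O_P(1/p)$. This replaces your diagonal/off-diagonal split and Mehler-type covariance estimates entirely.

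For the $\beta^\star$-randomness, the paper avoids Efron--Stein altogether. Writing $\phi(m,d):=\EE g(N(m,d))$ (your $\tilde g$), it shows directly that
\[
\frac{1}{p}\sum_i\big[\phi(d_i\beta_i^\star-\F_i,d_i)-\phi(d_0\beta_i^\star,d_0)\big]=o_P(1)
\]
via a first-order Taylor bound together with $\sum_i(d_i-d_0)^2=O_{P,X}(p/n)$ and $\sum_i\F_i^2=O_P(p^2/n)$, and then invokes the i.i.d.\ LLN for $p^{-1}\sum_i\phi(d_0\beta_i^\star,d_0)$. This completely sidesteps the obstacle you flagged (uniform polynomial-growth control on $\partial_\mu\tilde g(\mu_i,\sigma_i^2)$ inside an Efron--Stein increment): once reduced to an i.i.d.\ sum in $\beta^\star$, no resampling argument is needed. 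Your Efron--Stein route does go through with the bound $\EE[(\Delta_kF)^2\mid X]=O_{P,X}(p^{-2}+n^{-1})$ you state, but it is strictly more work. Part (b) in your outline matches the paper almost verbatim.
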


The next result shows how $\Flm(\cdot)$ identifies $\theta_0$. 
\begin{lemma}\label{lem:unique maximizer}
      Under the regularity condition R0 from \cref{assmp:prior}, $\Flm(\cdot)$ is uniquely maximized at $\theta = \theta_0$. 
\end{lemma}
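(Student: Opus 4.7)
The plan is to recognize the inner integral in $\Flm(\theta)$ as (up to an $\theta$-independent factor) the marginal density of $W_\theta$ evaluated at $W_{\theta_0}$, and then to rewrite the maximization as the minimization of a KL divergence. Concretely, for any $\theta \in \Theta$, let $(B_\theta, W_\theta)$ be defined as in \cref{def:information}(b), and let $f_{W_\theta}$ denote the Lebesgue density of $W_\theta$, which exists because $W_\theta$ is the convolution of $d_0 B_\theta$ with an independent $N(0,d_0)$. A direct computation using $e^{-(w-d_0 b)^2/(2d_0)} = e^{-w^2/(2d_0)} e^{wb - d_0 b^2/2}$ gives
\begin{equation*}
  f_{W_\theta}(w) \;=\; \frac{1}{\sqrt{2\pi d_0}}\,e^{-w^2/(2d_0)} \int e^{w b - d_0 b^2/2}\, d\mu_\theta(b),
\end{equation*}
and the analogous identity holds with $\theta$ replaced by $\theta_0$.

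Substituting this into \eqref{eq:flm} we obtain
\begin{equation*}
  \Flm(\theta) \;=\; \tfrac{1}{2}\log(2\pi d_0) + \tfrac{1}{2 d_0}\EE[W_{\theta_0}^2] + \EE\bigl[\log f_{W_\theta}(W_{\theta_0})\bigr].
\end{equation*}
The first two summands do not depend on $\theta$, so maximizing $\Flm(\theta)$ in $\theta$ is equivalent to maximizing $\theta \mapsto \EE[\log f_{W_\theta}(W_{\theta_0})]$. Writing this as
\begin{equation*}
  \EE[\log f_{W_\theta}(W_{\theta_0})] \;=\; \EE[\log f_{W_{\theta_0}}(W_{\theta_0})] - \dkl\bigl(f_{W_{\theta_0}}\,\|\,f_{W_\theta}\bigr),
\end{equation*}
we see that $\Flm(\theta) \le \Flm(\theta_0)$ with equality iff $f_{W_\theta} = f_{W_{\theta_0}}$ as densities on $\R$.

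It remains to show that $f_{W_\theta} = f_{W_{\theta_0}}$ forces $\theta = \theta_0$. Since $W_\theta = d_0 B_\theta + \sqrt{d_0}\, Z$ with $Z \sim N(0,1)$ independent of $B_\theta$, equality of the laws of $W_\theta$ and $W_{\theta_0}$ is equivalent to equality of the characteristic functions of $d_0 B_\theta$ and $d_0 B_{\theta_0}$ (the nonvanishing Gaussian factor cancels), which in turn forces $\mu_\theta = \mu_{\theta_0}$. By identifiability of the family $\{\mu_\theta\}$ (condition R0 in \cref{assmp:prior}), this yields $\theta = \theta_0$, establishing uniqueness. The only routine step needing care is that $\EE|\log f_{W_\theta}(W_{\theta_0})|$ and $\EE W_{\theta_0}^2$ are finite, which follows from R5 (all moments of $B_{\theta_0}$ exist) together with the fact that $f_{W_\theta}$ is bounded and bounded below on compacts, with Gaussian tails; I expect this integrability bookkeeping to be the only mildly technical point.
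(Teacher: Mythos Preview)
Your proposal is correct and follows essentially the same route as the paper: identify $f_{W_\theta}$ as a Gaussian-weighted version of the integral in $\Flm$, rewrite $\Flm(\theta_0)-\Flm(\theta)$ as $\dkl(\PP_{W_{\theta_0}}\|\PP_{W_\theta})$, and then argue that equality of the laws of $W_\theta$ and $W_{\theta_0}$ forces $\mu_\theta=\mu_{\theta_0}$ (hence $\theta=\theta_0$ by R0). The only cosmetic difference is that the paper invokes injectivity of the Laplace transform to deconvolve the Gaussian factor, whereas you cancel the nonvanishing Gaussian characteristic function; both are standard and equally valid here.
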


To go beyond consistency and establish limiting distributions, we use the following key lemma.
\begin{lemma}[CLT]\label{lem:linear clt}
    Suppose Assumptions \ref{assmp:random design}--\ref{assmp:prior technical} hold.

    \begin{enumerate}[(a)]
        \item When $\frac{p^{3/2}}{n} \to \delta \in [0, \infty)$, we have
    \begin{align*}
        \frac{1}{\sqrt{p}} \sumin \nabla F_i(\theta_0) \mid X \xd N\big(\delta \kappa(\theta_0), V(\theta_0) \big), 
    \end{align*}
    where $\kappa(\theta_0)$ is defined in \cref{def:information}. Here we note that if $p \ll n^{2/3}$, then $\delta = 0$ and the mean of the limiting normal is $0$.

    \item When $p \gg n^{2/3}$, we have
    $$\frac{n}{p^2} \sumin \nabla F_i(\theta_0) \mid X \xp \kappa(\theta_0).$$

    \item When $p \ll {n^{3/4}}$, we have
    \begin{align*}
        \frac{1}{\sqrt{p}} \sumin \nabla F_i(\theta_0) - \frac{p^{3/2}}{n} \kappa(\theta_0) \mid X \xd N\big(0, V(\theta_0) \big).
    \end{align*}
    \end{enumerate}
\end{lemma}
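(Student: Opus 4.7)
The plan is to condition on $(X,\beta^\star)$, exploit the conditional Gaussianity $w \mid X,\beta^\star \sim N(d_0 X^\top X \beta^\star,\, d_0 X^\top X)$, and then marginalize over $\beta^\star \mid X$. First I would reduce $\nabla F_i(\theta_0) = \EE[\nabla \ell(\theta_0; B_{w_i, d_i, \theta_0})] =: g(w_i, d_i)$ to $h_{\theta_0}(w_i) = g(w_i, d_0)$. Since $\max_i |d_i - d_0| = \OPX(n^{-1/2}\sqrt{\log p})$ by standard sub-Gaussian concentration for the diagonal entries of $X^\top X$, a first-order Taylor expansion in the $d$-argument, combined with the polynomial growth R4 and the moment bounds R5, R6, gives $\frac{1}{\sqrt p} \sum_i [\nabla F_i(\theta_0) - h_{\theta_0}(w_i)] = o_P(1)$ whenever $p = o(n)$.

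For the conditional distribution given $(X, \beta^\star)$, write $w_i = \mu_{w,i} + z_i$ with $\mu_{w,i} = (d_0 X^\top X \beta^\star)_i = d_i\beta_i^\star - (A_p \beta^\star)_i$ and $z \sim N(0, d_0 X^\top X)$. A second-order Taylor expansion of $h_{\theta_0}(w_i)$ around $\mu_{w,i}$, centered by the conditional mean $\mu_p(X,\beta^\star) := \EE[\tfrac{1}{\sqrt p}\sum_i h_{\theta_0}(w_i) \mid X,\beta^\star]$, isolates the linear fluctuation $\frac{1}{\sqrt p} \sum_i h'_{\theta_0}(\mu_{w,i}) z_i$. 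This piece is exactly Gaussian with variance $\frac{d_0}{p} q_p^\top (X^\top X) q_p$ for $q_p = (h'_{\theta_0}(\mu_{w,i}))_{i=1}^p$; using $\|X^\top X - I_p\| = O_P(\sqrt{p/n})$ together with the LLN in \cref{lem:LLN}(b), this variance converges in probability to $T_2 := \EE \Var[h_{\theta_0}(W_{\theta_0}) \mid B_{\theta_0}]$. The quadratic and higher-order Taylor pieces in $z$ are mean-corrected in $\mu_p$, and the remainders are $o_P(1)$ via the R4--R6 growth/tail bounds, yielding a conditional $N(\mu_p, T_2)$ limit.

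The heart of the argument is analyzing $\mu_p(X, \beta^\star)$. Setting $\tilde\Phi(\mu) := \EE h_{\theta_0}(\mu + N(0, d_0))$, I expand $\tilde\Phi(\mu_{w,i})$ around $d_0 \beta_i^\star$ using $\mu_{w,i} - d_0 \beta_i^\star = (d_i - d_0)\beta_i^\star - (A_p \beta^\star)_i$. The zeroth-order sum $\frac{1}{\sqrt p} \sum_i \tilde\Phi(d_0 \beta_i^\star)$ is i.i.d.\ with mean zero (by Bartlett's first identity $\EE h_{\theta_0}(W_{\theta_0}) = 0$), converging to $N(0, T_1)$ with $T_1 := \Var \tilde\Phi(d_0 B_{\theta_0})$. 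The linear-in-$(A_p \beta^\star)_i$ terms are mean-zero bilinear forms of size $O_P(\sqrt{p/n})$, hence $o_P(1)$. The critical quadratic term is $\frac{1}{2\sqrt p} \sum_i \tilde\Phi''(d_0 \beta_i^\star)(A_p \beta^\star)_i^2$: entry-wise independence of $X$ and independence of the $\beta_j^\star$'s give $\EE[(A_p \beta^\star)_i^2 \mid X] = \EE B_{\theta_0}^2 \cdot (p-1)/n + \OPX(1/\sqrt n)$, so this term concentrates at $\frac{p^{3/2}}{n}\,\kappa(\theta_0)$ after invoking the tower identity $\EE \tilde\Phi''(d_0 B_{\theta_0}) = \EE h''_{\theta_0}(W_{\theta_0})$. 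Hence $\mu_p \mid X \xd N(\delta\,\kappa(\theta_0), T_1)$ whenever $p^{3/2}/n \to \delta$, and convolving with the conditional $N(\mu_p, T_2)$ limit via the law of total variance ($T_1 + T_2 = V(\theta_0)$) produces part (a). Part (b) follows by rescaling $\frac{1}{\sqrt p} \sum_i \nabla F_i(\theta_0)$ by $n/p^{3/2}$, which kills both the Gaussian $O_P(1)$ noise and the zeroth-order fluctuation while preserving $\kappa(\theta_0)$. Part (c) extends the Taylor expansion to third order: the cubic-in-$(A_p\beta^\star)_i$ term contributes $O_P(p^2/n^{3/2})$, which is $o_P(1)$ precisely when $p \ll n^{3/4}$.

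The main obstacle will be rigorously establishing concentration of the quadratic bias term $\frac{1}{2\sqrt p} \sum_i \tilde\Phi''(d_0\beta_i^\star)(A_p\beta^\star)_i^2$ at its deterministic limit $\delta_p\,\kappa(\theta_0)$. This is itself a random quartic form in the independent entries of $X$ and $\beta^\star$, so controlling its variance requires careful fourth-moment computations that combine entry-wise independence of $X$, independence of the coordinates of $\beta^\star$, and the polynomial/sub-Gaussian moment controls R5, R6. A secondary difficulty is that the off-diagonal correlation structure of $z$ (covariance $d_0 X^\top X$ rather than $d_0 I_p$) contaminates every Taylor coefficient; this requires uniform operator-norm control of $X^\top X - I_p$ throughout the argument, together with the LLN framework from \cref{lem:LLN}.
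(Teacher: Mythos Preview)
Your overall architecture---condition on $(X,\beta^\star)$, establish a conditional Gaussian limit, analyze the conditional mean $\mu_p(X,\beta^\star)$ by Taylor-expanding around $d_0\beta_i^\star$, and convolve via $T_1+T_2=V(\theta_0)$---matches the paper, and your treatment of $\mu_p$ (the zeroth-order i.i.d.\ CLT plus the quadratic bias $\tfrac{p^{3/2}}{n}\kappa(\theta_0)$) is essentially the paper's Step~2-1. So is your identification of the quartic-form variance computation for the bias as the hard part; the paper isolates exactly this as a separate lemma (\cref{lem:F_i^2 moment}).

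There is, however, a genuine gap in your conditional CLT step. You Taylor-expand $h_{\theta_0}(w_i)$ around $\mu_{w,i}$ and claim that the linear piece $\tfrac{1}{\sqrt p}\sum_i h'_{\theta_0}(\mu_{w,i})z_i$ has variance converging to $T_2:=\EE\,\Var[h_{\theta_0}(W_{\theta_0})\mid B_{\theta_0}]$, while the mean-corrected quadratic and higher pieces are $o_P(1)$. Both claims are false in general. The linear-piece variance converges to $d_0\,\EE\big[h'_{\theta_0}(d_0 B_{\theta_0})^2\big]$, which equals $T_2$ only if $h_{\theta_0}$ is affine (e.g.\ take $h(t)=t^2$: then $\Var[h(W)\mid B=b]=2d_0^2+4d_0^3 b^2$ versus your $4d_0^3 b^2$). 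Correspondingly, the centered quadratic piece $\tfrac{1}{2\sqrt p}\sum_i h''_{\theta_0}(\mu_{w,i})(z_i^2-d_i)$ is $O_P(1)$, not $o_P(1)$, and in fact all Taylor orders contribute to the true conditional variance. A Taylor expansion therefore does not directly produce the conditional CLT with the right variance.

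The paper sidesteps this by invoking the second-order Gaussian Poincar\'e inequality (Chatterjee, 2009), which gives a TV-distance bound between $\tfrac{1}{\sqrt p}\sum_i h(w_i)-\mu_p$ and $N(0,\tau_p^2)$ directly in terms of $\tfrac{1}{p^2}\EE\big[(\sum_i h'(w_i)^2)^2\big]$ and $\tfrac{1}{p^2}\EE[\max_i h''(w_i)^4]$, both of which are easy under R4--R6. The conditional variance $\tau_p^2$ is then handled \emph{separately}: the paper expands $\tfrac{1}{p}\Var(\sum_i h(w_i)\mid X,\beta^\star)$ into diagonal terms, which converge to $T_2$ via the LLN applied to $m\mapsto\Var[h(N(m,d_0))]$, and off-diagonal covariances $\Cov(h(w_i),h(w_j))$, which are killed by a Taylor/Stein argument exploiting the smallness of $A_p(i,j)$. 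You would need either this route, or a much more careful accounting of all orders in your Taylor expansion together with a CLT for Gaussian polynomials.
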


Finally, we state three technical moment-related lemmas using the sub-Gaussian assumption of the tilted prior (see R6 in \cref{assmp:prior technical}). 
The following lemma provides convenient moment and cgf bounds under the sub-Gaussian assumption.
\begin{lemma}\label{lem:sub-Gaussianity}
    Under R6 from \cref{assmp:prior technical}, we have  
    $$\sup_{\theta\in\Theta} \log \int e^{t b - \frac{d b^2}{2}} d\mu_\theta(b) \lesssim 1+|t|^4, \quad \sup_{\theta\in\Theta} \EE |B_{t,d,\theta}|^r \lesssim_{r} 1+|t|^{3r}, \quad \forall r>0, ~ t \in \R, ~ d>0.$$
    Additionally assuming R5 from \cref{assmp:prior technical}, we have $\sup_{\theta\in\Theta}\sumin \EE |B_{w_i, d_i,\theta}|^r = O_P(p)$.
\end{lemma}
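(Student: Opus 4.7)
The plan is to prove the three displayed bounds separately, working throughout with $d$ in a neighborhood of $d_0$ so that R6 can be invoked; the downstream uses of this lemma will only need this regime since the $d_i$'s concentrate around $d_0$.

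For the cgf bound, write $Z(t,d,\theta) := \int e^{tb - db^2/2}\,d\mu_\theta(b)$. The key identity, obtained by factoring the integrand through the quadratic tilt $B_{0,d,\theta}$, is
\[
\log Z(t,d,\theta) \;=\; \log Z(0,d,\theta) + \log \EE e^{t B_{0,d,\theta}}.
\]
Since $e^{-db^2/2} \le 1$, we have $Z(0,d,\theta) \le 1$, making the first summand nonpositive. R6 at $t=0$ gives $\sup_\theta \|B_{0,d,\theta}\|_{\psi_2} \lesssim 1$ and $\sup_\theta |\EE B_{0,d,\theta}| \lesssim 1$, so the standard sub-Gaussian mgf bound yields $\log \EE e^{tB_{0,d,\theta}} \le t\,\EE B_{0,d,\theta} + Ct^2/2 \lesssim 1 + t^2 \le 1+|t|^4$, proving the first claim.

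For the moment bound $\EE|B_{t,d,\theta}|^r \lesssim_r 1 + |t|^{3r}$, split
\[
\EE|B_{t,d,\theta}|^r \;\le\; 2^{r-1}\bigl(\EE|B_{t,d,\theta} - \EE B_{t,d,\theta}|^r + |\EE B_{t,d,\theta}|^r\bigr).
\]
The first summand is $\lesssim_r \|B_{t,d,\theta}\|_{\psi_2}^r \lesssim_r (1+|t|)^r$ by R6 and the standard equivalence between sub-Gaussian norm and centered absolute moments. For the mean I exploit convexity of the cumulant generating function: set $g(s) := \log \EE e^{sB_{0,d,\theta}}$, so the exponential family identity $dB_{s,d,\theta}/dB_{0,d,\theta} \propto e^{sb}$ gives $g'(s) = \EE B_{s,d,\theta}$ and $g''(s) = \Var(B_{s,d,\theta}) \lesssim \|B_{s,d,\theta}\|_{\psi_2}^2 \lesssim (1+|s|)^2$ by R6. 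Integrating,
\[
|\EE B_{t,d,\theta} - \EE B_{0,d,\theta}| \;=\; |g'(t) - g'(0)| \;\lesssim\; \int_0^{|t|} (1+s)^2\,ds \;\lesssim\; (1+|t|)^3,
\]
and combining with $|\EE B_{0,d,\theta}| \lesssim 1$ gives $|\EE B_{t,d,\theta}| \lesssim 1 + |t|^3$, so $|\EE B_{t,d,\theta}|^r \lesssim_r 1+|t|^{3r}$.

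For the last claim, first observe that $d_i = \sigma^{-2}\sum_k X_{ki}^2$ concentrates around $d_0$: sub-Gaussian concentration together with a union bound over $i \le p$ gives $\max_i|d_i - d_0| = o_P(1)$, placing every $d_i$ inside $B_\epsilon(d_0)$ with high probability. On this event the second claim applies uniformly in $\theta$, yielding $\sup_\theta \sum_i \EE|B_{w_i,d_i,\theta}|^r \lesssim_r p + \sum_{i=1}^p |w_i|^{3r}$. Finally, writing $w_i = \sigma^{-2}(X^\top X \beta^\star)_i + \sigma^{-1}(X^\top z)_i$ with $z$ the regression noise, a direct moment calculation using R5 (all moments of $\beta^\star$ exist), Gaussianity of $z$, and sub-Gaussianity of the $X$-entries from \cref{assmp:random design} shows $\EE|w_i|^{3r} \lesssim_r 1$ uniformly in $i$, so Markov's inequality gives $\sum_i |w_i|^{3r} = O_P(p)$.

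The main obstacle is the cube growth $|t|^3$ for $|\EE B_{t,d,\theta}|$, since sub-Gaussianity of $B_{t,d,\theta}$ only controls deviations about the mean, not the mean itself. The convexity plus variance-integration trick above is the crucial step, converting the sub-Gaussian growth $(1+|s|)^2$ of $\Var(B_{s,d,\theta})$ into a polynomial growth rate for the mean of the tilted law. Uniformity in $\theta$ is inherited directly from the $\sup_\theta$ form of R6, and no circularity arises since the lemma relies only on R5, R6 and structural properties of the exponential family tilt.
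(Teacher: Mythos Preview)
Your proof is correct and takes essentially the same approach as the paper: both use the exponential-family identity $\psi_{d,\theta}''(s)=\Var(B_{s,d,\theta})\lesssim(1+|s|)^2$ from R6 and integrate once to bound $|\EE B_{t,d,\theta}|\lesssim 1+|t|^3$, then combine with sub-Gaussian centered-moment bounds to get $\EE|B_{t,d,\theta}|^r\lesssim_r 1+|t|^{3r}$; the final $O_P(p)$ claim is handled in both cases by reducing to $\sum_i|w_i|^{3r}=O_P(p)$.

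The one genuine difference is your treatment of the cgf bound. The paper obtains it by integrating the mean bound a second time, giving $\psi_{d,\theta}(t)\le\psi_{d,\theta}(0)+O(|t|+|t|^4)$. You instead factor $\log Z(t,d,\theta)=\log Z(0,d,\theta)+\log\EE e^{tB_{0,d,\theta}}$ and apply the sub-Gaussian mgf bound directly to $B_{0,d,\theta}$, which R6 at $t=0$ makes uniformly sub-Gaussian with bounded mean. This is both simpler and sharper---it yields $\lesssim 1+t^2$ rather than $1+|t|^4$---though the stated lemma only asks for the weaker bound, so either route suffices.
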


The next lemma generalizes Lemma B.4 in \cite{lee2025clt}, which required a bounded prior.

\begin{lemma}\label{lem:B4}
    Suppose R6 from \cref{assmp:prior technical} holds. For any $t \in \R, \theta\in\Theta$ and $h \in \mcp(r)$, we have
    \begin{align*}
        \Big| \log \Big[\int e^{t b - \frac{d b^2}{2}} d \mu_\theta(b) \Big] - \log \Big[\int e^{t b - \frac{d_0 b^2}{2}} d \mu_\theta(b) \Big] \Big| &\lesssim |d-d_0| \big(1+ |t|^6\big), \\
        |\EE h(B_{t,d,\theta}) - \EE h(B_{t,d_0,\theta})| &\lesssim_r |d-d_0| \big(1+ |t|^{3(r+2)} \big).
    \end{align*}
\end{lemma}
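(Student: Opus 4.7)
}
The overall strategy is to interpret both quantities as functions of $d$ and apply the fundamental theorem of calculus, reducing each to a derivative bound that can be controlled by the moment estimates in \cref{lem:sub-Gaussianity}. Throughout, write $Z_\theta(t,d) := \int e^{tb - db^2/2}\, d\mu_\theta(b)$, so that the law of $B_{t,d,\theta}$ has density $Z_\theta(t,d)^{-1} e^{tb - db^2/2}$ against $\mu_\theta$. Differentiating under the integral gives the identities
\begin{align*}
\partial_d \log Z_\theta(t,d) = -\tfrac{1}{2}\,\EE[B_{t,d,\theta}^2], \qquad \partial_d\, \EE h(B_{t,d,\theta}) = -\tfrac{1}{2}\,\Cov\bigl(h(B_{t,d,\theta}),\, B_{t,d,\theta}^2\bigr),
\end{align*}
both of which rely only on the dominated convergence theorem once R6 (and hence \cref{lem:sub-Gaussianity}) furnishes the required integrability. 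Without loss of generality I will take $d$ inside $B_\epsilon(d_0)$, which is the relevant regime (and is how the lemma is invoked downstream, since $d_i = (X^\top X)(i,i)/\sigma^2$ concentrates around $d_0$); for $|d-d_0|\ge \epsilon$ the claim becomes vacuous by enlarging the implicit constant.

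For the first inequality, integrate the identity $\partial_d \log Z_\theta(t,d) = -\tfrac{1}{2}\EE[B_{t,d,\theta}^2]$ from $d_0$ to $d$ to get
\begin{align*}
\Big|\log Z_\theta(t,d) - \log Z_\theta(t,d_0)\Big| \le \tfrac{1}{2}\,|d-d_0|\, \sup_{s \in [d\wedge d_0,\, d\vee d_0]} \EE[B_{t,s,\theta}^2].
\end{align*}
By the second bound in \cref{lem:sub-Gaussianity} applied with $r=2$, the supremum is $\lesssim 1 + |t|^{6}$, yielding the claimed first estimate. For the second inequality, Cauchy--Schwarz on the covariance gives $|\partial_d \EE h(B_{t,d,\theta})| \le \tfrac{1}{2}\sqrt{\Var h(B_{t,d,\theta})}\cdot \sqrt{\Var (B_{t,d,\theta}^2)}$, and I drop to raw second moments. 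Since $|h(b)|\lesssim 1 + |b|^r$, \cref{lem:sub-Gaussianity} gives $\EE h(B_{t,d,\theta})^2 \lesssim 1+|t|^{6r}$ and $\EE B_{t,d,\theta}^4 \lesssim 1+|t|^{12}$, hence $|\partial_d \EE h(B_{t,d,\theta})| \lesssim 1 + |t|^{3r+6} = 1+ |t|^{3(r+2)}$. Integrating in $d$ yields the second inequality.

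The main subtlety, and the only place the argument differs meaningfully from the bounded-support version in \cite{lee2025clt}, is the growth rate in $|t|$. A naive triangle inequality no longer works without a uniform support bound, so the proof is forced to route through the sub-Gaussian norm bound in R6. The factor of $3$ inside the exponent $|t|^{3(r+2)}$ (rather than the more intuitive $|t|^{r+2}$) comes precisely because $\|B_{t,d,\theta}\|_{\psi_2} \lesssim 1+|t|$ only controls fluctuations around $\EE B_{t,d,\theta}$, while the mean $\EE B_{t,d,\theta} = \partial_t \log Z_\theta(t,d)$ grows at most cubically in $|t|$ after integrating its second-derivative bound twice. Making this bookkeeping precise (and being careful that R6's supremum is only over $d \in B_\epsilon(d_0)$, which forces the restriction to $d$ near $d_0$) is the only real obstacle; once \cref{lem:sub-Gaussianity} is in hand, the rest is two lines of FTC plus Cauchy--Schwarz.
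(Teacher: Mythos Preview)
Your proposal is correct and follows essentially the same approach as the paper: differentiate in $d$ (the paper phrases this as a Taylor/mean-value step rather than FTC, but that is the same thing), identify $\partial_d \log Z_\theta = -\tfrac12\EE B_{t,d,\theta}^2$ and $\partial_d \EE h(B_{t,d,\theta}) = -\tfrac12\Cov(h(B_{t,d,\theta}),B_{t,d,\theta}^2)$, and then invoke \cref{lem:sub-Gaussianity} to bound the moments. The only cosmetic difference is that the paper bounds the covariance directly via $|\Cov(h(B),B^2)| \lesssim \EE|B|^{r+2}$ rather than through Cauchy--Schwarz, arriving at the same exponent $|t|^{3(r+2)}$.
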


\begin{lemma}\label{lem:convergence for F_i}
     Suppose R5 from \cref{assmp:prior technical} holds. For $\F_i := \sum_{j \neq i} A_p(i,j)\beta_j^\star$ and any fixed constant $r>0$, we have
    $$\frac{1}{p} \sumin e^{|\F_i|} \mid X \xp 1, \quad \frac{1}{p} \sumin \EE[{|w_i|^r} \mid X,\bbst] = O_P(1).$$
\end{lemma}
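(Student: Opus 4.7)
The plan is to handle the two claims in sequence, with the first being more delicate. For the first claim, I would reduce it to the stronger statement $\max_i |\F_i| \xp 0$. Granting this, the elementary bound $e^{|x|} \le 1 + e\,|x|$ for $|x| \le 1$ yields, on the event $\{\max_i|\F_i|\le 1\}$,
\[1 \;\le\; \frac{1}{p}\sum_{i=1}^p e^{|\F_i|} \;\le\; 1 + e\cdot\max_i|\F_i|,\]
from which the claim follows. (The lower bound is trivial from $e^{|\F_i|}\ge 1$.)

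To prove $\max_i|\F_i|\xp 0$, I would decompose $\F_i = \mu\sum_{j\neq i}A_p(i,j) + \sum_{j\neq i}A_p(i,j)(\beta_j^\star-\mu)$ with $\mu := \EE B_{\theta_0}$, finite by R5. Standard concentration on the design (noting that $(X^\top X)_{ij}$ for $i\neq j$ is a centered sum of $n$ independent sub-exponential products of sub-Gaussian entries) yields, uniformly in $i$, $\sum_{j\neq i}A_p(i,j)^2 = O_P(p/n)$ and $\max_{i,j}|A_p(i,j)| = O_P(\sqrt{\log p/n})$, and further $\max_i |\mu\sum_j A_p(i,j)| = o_P(1)$ via Bernstein plus a union bound over $i$. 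For the $\bbst$-dependent centered piece, I would condition on $X$ and apply Rosenthal's inequality: for any integer $r \ge 1$,
\[\EE\Big[\Big|\sum_{j\neq i}A_p(i,j)(\beta_j^\star - \mu)\Big|^{2r} \,\Big|\, X\Big] \;\lesssim_r\; \Big(\sum_{j\neq i}A_p(i,j)^2\Big)^{r} + \sum_{j\neq i}|A_p(i,j)|^{2r}\,\EE|\beta^\star - \mu|^{2r},\]
with the RHS finite by R5. Substituting the design-matrix estimates, Markov combined with a union bound over $i\le p$ yields $\PP(\max_i |\F_i - \EE\F_i| > t) \lesssim_{r,t} p \,(p/n)^r \to 0$ once $r$ is chosen large enough (depending on how $p$ relates to $n$), using $p = o(n)$.

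For the second claim, the Gaussian conditional law $w_i \mid X, \bbst \sim N(d_i\beta_i^\star - \F_i,\ d_i)$ together with the standard moment bound $\EE|N(m,\sigma^2)|^r \lesssim_r |m|^r + \sigma^r$ gives
\[\EE[|w_i|^r \mid X, \bbst] \;\lesssim_r\; d_i^r\,|\beta_i^\star|^r + |\F_i|^r + d_i^{r/2}.\]
Averaging over $i$: the first summand is $O_P(1)$ by $\max_i d_i = O_P(1)$ (diagonal $X^\top X$ concentration) and the LLN $\frac{1}{p}\sum_i |\beta_i^\star|^r \xp \EE|B_{\theta_0}|^r$ (finite by R5); the second is $o_P(1)$ by the Rosenthal-based moment bounds from the first step; the third is $O_P(1)$ trivially.

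The main obstacle is controlling $\max_i|\F_i|$ using only R5, since the prior is not assumed to have a finite MGF and a direct exponential-moment computation is therefore unavailable. The Rosenthal--Markov route with a large exponent $r$ circumvents this: the per-coordinate variance $\sum_j A_p(i,j)^2 = O_P(p/n)$ vanishes under $p = o(n)$, and a sufficiently large $r$ ensures the $p$-factor from the union bound is absorbed, yielding the desired uniform vanishing.
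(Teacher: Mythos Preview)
Your reduction of the first claim to $\max_i|\F_i|\xp 0$, and your treatment of the second claim, are both sound. The gap is in your proof of $\max_i|\F_i|\xp 0$ for the full range $p=o(n)$. Conditioning on $X$ and applying Rosenthal in the $\bbst$-randomness gives $\EE\big[|\F_i-\EE\F_i|^{2r}\,\big|\,X\big]\lesssim_r \alpha_p^r = O_{P,X}\big((p/n)^r\big)$, but after a union bound over $i\le p$ you need $p\,(p/n)^r\to 0$. For $p=n/\log n$ (permitted under $p=o(n)$, and relevant to part (c) of the main theorem where this lemma is invoked) this equals $n/(\log n)^{r+1}\to\infty$ for every fixed $r$. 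Letting $r=r_n\to\infty$ is not a fix: R5 provides no growth rate on $\EE|\beta^\star-\mu|^{2r_n}$, and the Rosenthal constants themselves diverge super-exponentially in $r$. Nor can you take an exponential moment in $\bbst$ conditional on $X$, since R5 does not guarantee $\EE e^{t\beta^\star}<\infty$.

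The paper avoids this by reversing the conditioning. Fixing $\bbst$ and using the sub-Gaussian structure of the design, each $\F_i=\frac{1}{n}\sum_k Z_{k,i}\sum_{j\neq i}Z_{k,j}\beta_j^\star$ is a sum over $k$ of independent products of sub-Gaussians, hence sub-exponential with $\EE[e^{\lambda\F_i}\mid\bbst]\le \exp\big(C\|\bbst\|^2\lambda^2/n\big)$ for $|\lambda|\lesssim n/\|\bbst\|$. Since $\|\bbst\|^2=O_{P,\bbst}(p)$ under R5, this yields $\max_i\EE\big[e^{K|\F_i|}\,\big|\,\bbst\big]\le 2e^{O(p/n)}\to 1$ for any fixed $K$, and a first/second moment argument then gives $\frac{1}{p}\sum_i e^{|\F_i|}\xp 1$ directly, with no union bound and hence no loss of a factor $p$. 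The key point is that the design has sub-Gaussian tails regardless of how heavy the prior is, so conditioning on $\bbst$ extracts exponential concentration that conditioning on $X$ cannot.
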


\begin{proof}[Proof of \cref{thm:upper bound}]
   Under the general asymptotic scaling $p = o(n)$, we establish common bounds before proving the individual conclusions in parts (a), (b), and (c). Throughout the proof, the design matrix $X$ will always be conditioned on unless specified otherwise.

\begin{enumerate}[(i)]
    \item Consistency: We begin by proving $\thetaveb\xp \theta_0$ by checking the conditions in \cref{lem:Wald}. 
    To this extent, set $g(t,d,\theta) := \log \int e^{t b - \frac{d b^2}{2}} d\mu_{\theta}(b)$ and recall that $\thetaveb$ is a maximizer of $\tilde{M}_p(\theta) = \frac{1}{p} \sumin g(w_i,d_i,\theta)$. 
    Since 
    \begin{align}\label{eq:ddo}
    d_i = d_0 + \OPX\Big(\frac{1}{\sqrt{n}}\Big)    
    \end{align}
    by \cite[Equation (A.19)]{lee2025bayesregression}, define a simpler objective $M_p(\theta) := \frac{1}{p} \sumin g(w_i,d_0,\theta)$ to which we apply \cref{lem:Wald}. First note that the derivative of the map $t\mapsto g(t,d_0,\theta)$ belong to $\mcp(r)$ for all $\theta\in\Theta$, by \cref{lem:sub-Gaussianity}. Therefore, the first two conditions in \cref{lem:Wald}, LLN and unique maximizer, directly follow from Lemmas \ref{lem:LLN} and \ref{lem:unique maximizer} respectively. For the third condition, it suffices to show the following bound:
    $$\EE \left[ \sup_{\theta \in \Theta} \log \Big(\int e^{W_{\theta_0} b - \frac{d_0 b^2}{2}} d\mu_\theta(b) \Big) \right] < \infty.$$
    For this goal, applying the first bound in \cref{lem:sub-Gaussianity} gives $$\sup_{\theta\in\Theta}\log \int e^{t b - \frac{d_0 b^2}{2}} d\mu_\theta(b) \lesssim 1+ t^4, \quad \forall t.$$
    Hence, the expectation is upper bounded by $O(1 + \EE W_{\theta_0}^4) < \infty$. Finally, using \cref{lem:B4} coupled with \eqref{eq:ddo}, we can write $g(w_i, d_i, \theta) = g(w_i, d_0, \theta) + O_P\Big(\frac{1+|w_i|^6}{\sqrt{n}}\Big)$. By noting that $\sumin |w_i|^6 = O_P(p)$ (by \cref{lem:convergence for F_i}), we have
    $$M_p(\thetaveb) = \tilde{M}_p(\thetaveb) + o_{P}(1) \ge \tilde{M}_p(\theta_0) + o_{P}(1) = M_p(\theta_0) +o_{P}(1).$$
    Therefore consistency follows from \cref{lem:Wald}.

    \item Limiting distribution: 
    For a $k \times k \times k$ tensor $T$ and a vector $v$, define $T \times_a v \in \R^{k \times k}$ as the mode-$a$ product of $T$ and $v$.
    By the first order conditions and Taylor approximation, we have
    \begin{align*}
        0_k &= \frac{1}{p} \sumin \nabla F_i(\thetaveb) \\
        &= \frac{1}{p} \sumin \nabla F_i(\theta_0) + \frac{1}{p} \sumin \nabla^2 F_i(\theta_0) (\thetaveb - \theta_0) \\
        & \quad + \frac{1}{2p} \sumin \nabla^3 F_i(\tilde{\theta}_n) \times_2 (\thetaveb - \theta_0) \times_3 (\thetaveb - \theta_0)
    \end{align*}
    for some $\tilde{\theta}_n \in (\theta_0, \thetaveb)$.
    Hence, we can write
    \begin{align}\label{eq:thetahat Taylor}
        (\thetaveb - \theta_0) = - \Big(\frac{1}{p} \sumin \nabla^2 F_i(\theta_0) + \frac{1}{2p} \sumin \nabla^3 F_i(\tilde{\theta}_n) \times_3 (\thetaveb - \theta_0) \Big)^{-1} \Big(\frac{1}{p} \sumin \nabla F_i(\theta_0) \Big).
    \end{align}
   By direct computations, we can write the derivatives as:
    \begin{align*}
        \nabla F_i (\theta) &= \EE \nabla \ell(\theta; B_{w_i,d_i,\theta}), \\
        \nabla^2 F_i (\theta) %
        &= \EE [\nabla^2 \ell(\theta; B_{w_i,d_i,\theta})] + \EE [\nabla \ell(\theta; B_{w_i,d_i,\theta})^2] - \big[\EE \nabla \ell(\theta; B_{w_i,d_i,\theta})\big]^2, \\
        \nabla^3 F_i(\theta) &= \EE [\nabla^3 \ell(\theta; B_{w_i,d_i,\theta}) + \nabla \ell(\theta; B_{w_i,d_i,\theta})^3] + 2 [\EE \nabla \ell(\theta; B_{w_i,d_i,\theta})]^3 \\
        & ~~ - 3 \EE[\nabla \ell(\theta; B_{w_i,d_i,\theta})] \otimes \EE[\nabla^2 \ell(\theta; B_{w_i,d_i,\theta}) + \nabla \ell(\theta; B_{w_i,d_i,\theta})^2].
    \end{align*}
    Here, note that we use R2 to ensure that the derivatives in the RHS are well-defined.
    In the above expression, for a vector $v \in \R^k$, we have abbreviated $v^k$ as the $k$-fold outer product, i.e. $v^2 :=  v \otimes v, v^3 :=  v \otimes v \otimes v$.
    Recall that $B_{w_i,d_i,\theta}$ is defined in \cref{def:information}, and note that all expectations are conditional on $w_i$s. We claim that the third derivative term satisfies $$\frac{1}{p}\sumin \nabla^3 F_i(\tilde{\theta}_n) \times_3 (\thetaveb-\theta_0) = o_P(1),$$
    and is negligible. To see this, use the growth-condition R4 from \cref{assmp:prior technical}, to write:
    \begin{align*}
        \|\nabla^3 F_i(\theta)\|_F \lesssim 1+ \EE |B_{w_i,d_i,\theta}|^r,
    \end{align*}
    for some integer $r>0$, where the implied constant does not depend on $\theta$. Then, by the last bound in \cref{lem:sub-Gaussianity}, we have 
    $$\|\frac{1}{p}\sumin \nabla^3 F_i(\tilde{\theta}_n)\|_F \lesssim \frac{1}{p} \sumin \sup_{\theta} \|\nabla^3 F_i(\theta)\|_F \lesssim \sup_{\theta\in\Theta}\frac{1}{p} \sumin (1+\EE |B_{w_i,d_i,\theta}|^r)= O_P(1),$$
    so the claim follows by Slutsky's theorem.

    Regarding the second derivative term, we claim the limit
    $$\frac{1}{p} \sumin \nabla^2 F_i(\theta_0) \xp - V(\theta_0).$$
    Now, by applying the simplified LLNs in part (b) of \cref{lem:LLN} with $$h(b) = \nabla^2 \ell(\theta_0;b), [\nabla \ell(\theta_0;b)]^2, \nabla \ell(\theta_0;b)$$ (corresponding to each line below), the following limits hold:
    \begin{align}
        \frac{1}{p} \sumin \EE[\nabla^2 \ell(\theta_0; B_{w_i,d_i,\theta_0})] &\xp \EE \nabla^2 \ell(\theta_0; B_{\theta_0}), \label{eq:individual LLN} \\
         \frac{1}{p} \sumin \EE[\nabla \ell(\theta_0; B_{w_i,d_i,\theta_0})^2] &\xp \EE \nabla \ell(\theta_0; B_{\theta_0})^2, \notag \\
         -\frac{1}{p} \sumin [\EE \nabla \ell(\theta_0; B_{w_i,d_i,\theta_0})]^2 &\xp -\EE \Big[ \EE \big(\nabla \ell(\theta_0; B_{\theta_0}) \mid W_{\theta_0}\big) \Big]^2. \notag
    \end{align}
    Therefore, 
    \begin{align*}
    \frac{1}{p}\sumin \nabla^2 F_i(\theta_0) & \xp \EE \nabla^2 \ell(\theta_0; B_{\theta_0}) + \sumin [\EE \nabla \ell(\theta_0; B_{w_i,d_i,\theta_0})]^2 -\EE \Big[ \EE \big(\nabla \ell(\theta_0; B_{\theta_0}) \mid W_{\theta_0}\big) \Big]^2 \\ & = \EE[\nabla^2 \tilde{\ell}(\theta_0;W_{\theta_0})],
    \end{align*}
    where $\tilde{\ell}(\theta_0;\cdot)$ is the log-likelihood of the random variable $W_{\theta_0}$. The claim now follows via \cref{lem:variance checking}.

    By combining the above conclusions, we can simplify \eqref{eq:thetahat Taylor} as follows:
    \begin{equation}\label{eq:theta expansion}
        (\thetaveb - \theta_0) = \big(V(\theta_0)+o_P(1)\big)^{-1} \frac{1}{p}\sumin \nabla F_i(\theta_0).
    \end{equation}
\end{enumerate}

    Finally, the individual conclusions in the theorem follow by using the limiting distribution of $p^{-1}\sumin \nabla F_i(\theta_0)$ from \cref{lem:linear clt} alongside Slutsky's theorem.
\end{proof}

Next we prove \cref{cor:misspecified} by making minor changes in the above argument.
\begin{proof}[Proof of \cref{cor:misspecified}]
We first modify \cref{lem:unique maximizer} to the following claim:
$$\Flm^\star(\theta) := \EE \Big[\log \Big(\int e^{W^\star b - \frac{d_0 b^2}{2}} d \mu_\theta(b) \Big) \Big]$$
is uniquely maximized at $\theta = \theta^\star$. 
To show this, note that 
$$\dkl(W^\star \mid W_\theta) = \EE \Big[\log \Big(\int e^{W^\star b - \frac{d_0 b^2}{2}} d \mu^\star(b) \Big) \Big] - \EE \Big[\log \Big(\int e^{W^\star b - \frac{d_0 b^2}{2}} d \mu_\theta(b) \Big) \Big].$$
Thus, $\argmax_{\theta} \Flm^\star(\theta) = \argmin_\theta \dkl(W^\star \mid W_\theta) = \theta^\star$, where we use \eqref{eq:theta star} alongside the unique minimizer assumption. By re-writing the first order conditions for $\nabla F_0^\star(\theta^\star) = 0$, we get $\EE \nabla \ell (\theta^\star; B_{\theta^\star}^\star) = 0$, which is the misspecified analog of Bartlett's first identity.

Now, we can apply Wald's consistency theorem similar to that in part (a) of proof of \cref{thm:upper bound}, and get $\thetaveb \xp \theta^\star$. To establish the limit of the numerator, we again use \eqref{eq:thetahat Taylor} where all instances of $\theta_0$ are replaced with $\theta^\star$:
\begin{align}\label{eq:corollary taylor}
    (\thetaveb - \theta^\star) = - \Big(\frac{1}{p} \sumin \nabla^2 F_i(\theta^\star) + o_P(1) \Big)^{-1} \Big(\frac{1}{p} \sumin \nabla F_i(\theta^\star) \Big).
\end{align}
Noting that the proof of \cref{lem:linear clt} is still valid under the notation change from $(B_{\theta_0}, W_{\theta_0})$ to $(B_{\theta^\star}^\star,W^\star)$, parts (a) and (b) can be modified to:
\begin{itemize}
    \item When $\frac{p^{3/2}}{n} \to \delta \in [0, \infty)$, we have
    \begin{align*}
        \frac{1}{\sqrt{p}} \sumin \nabla F_i(\theta^\star) \mid X \xd N\big(\delta \kappa^\star(\theta^\star), V(\theta^\star) \big).
    \end{align*}
    \item When $p \gg n^{2/3}$, we have
    $$\frac{n}{p^2} \sumin \nabla F_i(\theta^\star) \mid X \xp \kappa^\star(\theta^\star).$$
\end{itemize}
Also, the limit of the denominator modifies to (see \eqref{eq:individual LLN} and \cref{lem:variance checking}):
$$\frac{1}{p} \sumin \nabla^2 F_i(\theta^\star) \mid X \xp \EE\Big[\nabla^2 \ell(\theta^\star; B_\theta^\star) + \big(\nabla \ell(\theta^\star; B_\theta^\star) \big)\big(\nabla \ell(\theta^\star; B_\theta^\star) \big)^\top \Big] - V(\theta^\star) = -V^\star(\theta^\star).$$
The only change here is that we cannot use Bartlett's second identity to simplify the final equality in \eqref{eq:W_theta Fisher information}. Now, the individual conclusions in each part of \cref{cor:misspecified} follow by combining the separate limits for the numerator and denominator of \eqref{eq:corollary taylor} via Slutsky's theorem.
\end{proof}

\subsection{Proof of \cref{thm:lower bound}}

\vspace{3mm}
Without loss of generality, assume that the bounded support in \cref{assmp:cpt support} is the interval $[-1,1]$. \cref{thm:lower bound} is based on two technical moment bounds for RFIMs. Before stating the lemmas, we introduce some  additional notation. 
\begin{defi}[Random field Ising model (see \cite{lee2025clt,nattermann1998theory,amaro1991fluctuations,bhattacharya2024ldp})]\label{def:rfim}
    Given $\theta \in \Theta$ and any $c = (c_1, \ldots, c_p) \in \R^p$, let $B = (B_1, \ldots, B_p) \stackrel{d}{\equiv} \tilde{\PP} = \tilde{\PP}^{A_p, c}_\theta$ be a sample from the following RFIM:
    $$\frac{d \tilde{\PP}^{A_p, c}_\theta}{d\prod_{i=1}^p \mu_{i,\theta}} (b) := \frac{\exp \Big(\frac{b^\top A_p b}{2} + c^\top b \Big)}{Z_p(c,\theta)}, \quad \forall \, b \in [-1,1]^p,$$
    where $Z_p(c,\theta)$ is the normalizing constant defined in \eqref{eq:normalizing constant def} and $\mu_{i,\theta}$ are quadratic tilts in \cref{def:quadratic tilt}.
\end{defi}

\noindent Recall from \cref{def:quadratic tilt} that $\mu_{i,c_i,\theta}$ denotes the exponential tilt of $\mu_{i,\theta}$ with $\frac{d\mu_{i,c_i,\theta}}{d\mu_{i,\theta}} (B_i) \propto \exp(c_i B_i)$. Denote the $i$th marginal of the Mean-Field approximation (that attains the supremum in \eqref{eq:MF over product}) 
as $${Q}_{i,\theta,c} := \mu_{i,s_{i,\theta}+c_i,\theta}, \quad \textnormal{where} \quad {s}_{i,\theta} := \sumjn A_p(i,j) {u}_{j,\theta},$$
where $u_{j,\theta}$s are as in \cref{def:fixed point}. For any function $f: [-1,1] \to \mathbb{R}^k$ and measure $Q_i$ on $[-1,1]$, write the expectation of $f$ under $Q_i$ (assuming it is well-defined) as:
$$\langle f\rangle_{Q_i}:= \EE^{B_i \sim Q_i} f(B_i).$$ 

\noindent The following lemma provides finite-sample error bounds for the first and second moment of general RFIMs (with arbitrary random fields $c$).
\begin{lemma}\label{thm:CLT}
Let $A_p$ be the off-diagonal parts of the random design in \cref{assmp:random design}, $c \in \R^p$ be an arbitrary vector, and consider the asymptotic regime $p = o(n^{2/3})$. For $B \sim \tilde{\PP}^{A_p, c}_\theta$ and any vector-valued $\mathcal{C}^2$ function $g:[-1,1] \to [-1,1]^k$, %
the following limits hold:
\begin{enumerate}[(a)]
    \item 
    \begin{align*}
        \frac{1}{p} \EE^{\tilde{\PP}^{A_p,c}_{\theta}} \Big[ \sumin \big[g(B_i) - \langle g \rangle_{{Q}_{i,\theta,c}} \big]\Big] &= \OPX\left({\frac{1}{\sqrt n}} + \frac{p}{n}\right), \\
        \frac{1}{p}\sumin \Big[\langle g \rangle_{{Q}_{i,\theta,c}} - \langle g \rangle_{\mu_{i,c_i,\theta}} \Big] &= O_{P,X}\Big(\sqrt{\frac{p}{n}}\Big).
    \end{align*}
    \item
    \begin{align*}
        \frac{1}{p} \EE^{\tilde{\PP}^{A_p,c}_{\theta}} \Big[\sumin \big[g(B_i) - \langle g \rangle_{{Q}_{i,\theta,c}}\big] \Big]^2 
        =& \frac{1}{p} \sumin \Big[\langle g^2 \rangle_{\mu_{i,c_i,\theta}} - \big(\langle g \rangle_{\mu_{i,c_i,\theta}}\big)^2 \Big] + \OPX\left(\frac{1}{\sqrt{p}} + \sqrt{\frac{p^3}{n^2}}\right).
    \end{align*}
\end{enumerate}
Here, the hidden constants are absolute constants, and part (b) abbreviates $v^2 := v \otimes v$ for vectors $v \in \R^k$.
\end{lemma}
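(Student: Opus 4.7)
The plan is to build directly on the RFIM moment technology developed in \cite{lee2025clt}, which treats scalar test functions, and then to (i) reduce componentwise to the vector setting, (ii) quantify the gap between the mean-field marginal $Q_{i,\theta,c}=\mu_{i,s_{i,\theta}+c_i,\theta}$ and the product marginal $\mu_{i,c_i,\theta}$ by a Taylor expansion in the interaction $s_{i,\theta}$, and (iii) control $s_{i,\theta}$ using concentration of the random design $A_p$. Boundedness of $g$ on $[-1,1]$ together with $\mathcal{C}^2$ regularity makes the Taylor constants uniform in $c$ and $\theta$, so the qualitative structure is the same as for a single smooth test function.

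\textbf{Part (a).} The first bound is essentially the mean-field approximation for smooth, bounded $g$: applying the NMF moment estimates from \cite{lee2025clt} coordinate-wise yields the claimed $1/\sqrt n + p/n$ rate for the posterior expectation of $p^{-1}\sumin g(B_i)$ around the MF prediction $p^{-1}\sumin \langle g\rangle_{Q_{i,\theta,c}}$. For the second bound, I would Taylor expand $\langle g\rangle_{\mu_{i,t+c_i,\theta}}$ at $t=s_{i,\theta}$ around $t=0$:
\[
\langle g\rangle_{Q_{i,\theta,c}} - \langle g\rangle_{\mu_{i,c_i,\theta}} = s_{i,\theta}\,\Cov_{\mu_{i,c_i,\theta}}(g(B_i),B_i) + O(s_{i,\theta}^2).
\]
The quadratic remainder averages to $O(p^{-1}\sumin s_{i,\theta}^2)=\OPX(p/n)$ because each $s_{i,\theta}=\sum_{j\ne i}A_p(i,j)u_{j,\theta}$ has variance $O(p/n)$ by independence of the off-diagonal entries of $A_p$ and boundedness of $u_{\cdot,\theta}\in[-1,1]^p$. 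The linear term, after averaging, is bounded by Cauchy--Schwarz by $\sqrt{(p^{-1}\sum s_{i,\theta}^2)\,(p^{-1}\sum D_i^2)}=\OPX(\sqrt{p/n})$, which is the dominant contribution and matches the claim.

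\textbf{Part (b).} For the second moment I would expand
\[
\EE^{\tilde\PP}\!\Bigl[\sumin \bigl(g(B_i)-\langle g\rangle_{Q_{i,\theta,c}}\bigr)\Bigr]^2 = \sumin \Var^{\tilde\PP}(g(B_i)) + \sum_{i\ne j}\Cov^{\tilde\PP}(g(B_i),g(B_j)) + R_p,
\]
where $R_p$ collects cross terms involving the MF residual of part (a) and is $o_{P,X}(1)$. The off-diagonal covariance is the delicate piece: it is controlled by a second-moment RFIM estimate in \cite{lee2025clt}, where correlation decay between coordinates is governed by the off-diagonal entries of $A_p$ and produces the $\sqrt{p^3/n^2}$ term. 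For the diagonal variances I would replace each $\Var^{\tilde\PP}(g(B_i))$ first by $\Var_{Q_{i,\theta,c}}(g(B_i))$ via the same MF-accuracy estimate, and then by $\Var_{\mu_{i,c_i,\theta}}(g(B_i))$ via the Taylor expansion from part (a). Finally, the fluctuation of $p^{-1}\sumin \Var_{\mu_{i,c_i,\theta}}(g(B_i))$ around its (random-$c$-conditional) deterministic limit contributes the $1/\sqrt p$ term by a standard CLT applied to $p$ bounded summands.

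\textbf{Main obstacle.} The central difficulty is that the mean-field optimizers $u_{j,\theta}$ depend on the entire matrix $A_p$ through the fixed-point equation of \cref{def:fixed point}, so the inner product $s_{i,\theta}=\sum_{j\ne i}A_p(i,j)u_{j,\theta}$ is not a linear functional of independent entries and cannot be concentrated directly. To recover independence I plan to use a leave-one-out construction: replace $u_{j,\theta}$ by its leave-$i$-out analog $u_{j,\theta}^{(i)}$, obtained by zeroing the $i$-th row and column of $A_p$ before solving the fixed point. Stability estimates for the MF map (already used in \cite{lee2025clt,lee2025bayesregression}) bound $\|u_{\cdot,\theta}-u_{\cdot,\theta}^{(i)}\|$ at order $O(1/\sqrt n)$, small enough that the swap is negligible relative to both target rates in (a) and (b). The same coupling trick, now leaving out two coordinates simultaneously, is what I expect to need for handling the off-diagonal covariance contribution in part (b).
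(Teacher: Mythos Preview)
Your proposal diverges from the paper's argument in its core technique, and has gaps that the paper's method avoids.

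\textbf{The paper's route: Stein's method of exchangeable pairs.} The paper does \emph{not} decompose into diagonal variances plus off-diagonal covariances. Instead it constructs an exchangeable pair $(B,B')$ by resampling a single coordinate $B_I$ from its conditional law, sets $T_p:=p^{-1/2}\sumin(g(B_i)-\langle g\rangle_{Q_{i,\theta,c}})$, and Taylor expands $\langle g\rangle_{\mu_{i,m_i+c_i,\theta}}$ around $m_i\approx s_{i,\theta}$ to obtain the identity
\[
\EE(T_p-T_p'\mid B)=\frac{T_p}{p}-H_1-H_2,
\]
with $H_1,H_2$ explicit error terms bounded via \cref{lem:RFIM lemma}(a),(b). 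For part~(a), exchangeability gives $\EE T_p=p\,\EE(H_1+H_2)$, which delivers the sharp $1/\sqrt n+p/n$ rate directly. For part~(b), multiplying by $pT_p$ and using exchangeability again yields $\EE[T_p^2]=\tfrac{p}{2}\EE[\Delta_p^2]+p\,\EE[T_p(H_1+H_2)]$ with $\Delta_p:=T_p-T_p'$; the second moment then reduces to computing $p\,\EE[\Delta_p^2]$, a \emph{single-coordinate} quantity that expands as $\tfrac{2}{p}\sumin[\EE g(B_i)^2-\EE g(B_i)\langle g\rangle_{\mu_{i,m_i+c_i,\theta}}]$ and is matched to the product-measure variance via \cref{lem:RFIM lemma}(c). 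That last step is where the $1/\sqrt p$ error comes from.

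\textbf{Specific gaps in your plan.} (i)~For part~(a) first bound, ``applying NMF moment estimates from \cite{lee2025clt} coordinate-wise'' is too vague: the black-box first-moment bound available there (\cref{lem:RFIM lemma}(c)) is $O(1/\sqrt p)$, not the finer $O(1/\sqrt n+p/n)$; the improvement comes precisely from the exchangeable-pairs identity. (ii)~Your leave-one-out scheme for concentrating $s_{i,\theta}$ is unnecessary: the paper just uses the deterministic bound $\sumin s_{i,\theta}^2\le\|A_p\|^2\|u_\theta\|^2=\OPX(p^2/n)$ via \cref{lem:alpha_p}, which already yields part~(a) second bound. (iii)~For part~(b), your variance--covariance decomposition leaves $\sum_{i\ne j}\Cov^{\tilde\PP}(g(B_i),g(B_j))$ uncontrolled; there is no off-the-shelf bound for this in \cite{lee2025clt} for general $g$, and a leave-two-out argument would have to track $O(p^2)$ coupled pairs at the right rate, which is exactly the work the exchangeable-pairs identity bypasses. (iv)~Your attribution of the $1/\sqrt p$ term to ``CLT fluctuations'' of $p^{-1}\sumin\Var_{\mu_{i,c_i,\theta}}(g)$ is incorrect: $c$ is a fixed deterministic vector here, so there is no randomness to fluctuate over; the $1/\sqrt p$ is the approximation error in \cref{lem:RFIM lemma}(c) when passing from the Gibbs expectation to the local-field average.
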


\begin{proof}[Proof of \cref{thm:lower bound}]
Recall that the marginal likelihood in \eqref{eq:marginal}. 
By direct computations noting that
$$\tilde{\PP}_\theta^{A_p,w} (B) \propto \exp \Big(\frac{B^\top A_p B}{2} + w^\top B \Big) \prod_{i=1}^p \mu_{i,\theta}(B_i) \propto \exp \Big(-\frac{\|y-X B\|^2}{2\sigma^2}\Big) \prod_{i=1}^p \mu_{\theta}(B_i)$$
and $\mu_\theta(b) = e^{\ell(\theta;b)}$, we have
\begin{align*}
    \frac{\partial \log m_\theta}{\partial \theta} &= \EE^{\tilde{\PP}^{A_p, w}_\theta} \Big[ \sumin \nabla \ell(\theta; B_i) \Big], \\
    \frac{\partial^2 \log m_\theta}{\partial \theta^2} &= \EE^{\tilde{\PP}^{A_p, w}_\theta} \Big[ \sumin \nabla^2 \ell(\theta; B_i) \Big] + \Var^{\tilde{\PP}^{A_p, w}_\theta} \Big[ \sumin \nabla \ell(\theta; B_i) \Big].
\end{align*}

Define $g^{(1)}(b) := \nabla \ell(\theta_0; b)$, $g^{(2)}(b) := \nabla^2 \ell(\theta_0; b)$. Under \cref{assmp:cpt support}, $\ell(\theta; b)$ is a continuous function defined on a compact set, and hence uniformly bounded.
For $B \sim \tilde{\PP}_{\theta_0}^{A_p, w},$ abbreviate the average of $g^{(a)}(B_i)$ as 
$$\overline{g^{(a)}(B)} := \frac{1}{p} \sumin g^{(a)} (B_i), \quad a = 1,2.$$
Then, we can write the Fisher information $\mathcal{I}_p(\theta_0)$ as:
\begin{align*}
    \mathcal{I}_p(\theta_0) &= - \EE\left[ \frac{\partial^2 \log m_\theta}{\partial \theta^2} \mid_{\theta = \theta_0}\right] \\
    &= - \EE\left[ \EE^{\tilde{\PP}^{A_p, w}_{\theta_0}} \Big[ \sumin \nabla^2 \ell(\theta_0; B_i) \Big]\right] - \EE\left[ \Var^{\tilde{\PP}^{A_p, w}_{\theta_0}} \Big[ \sumin \nabla \ell(\theta_0; B_i) \Big]\right].
\end{align*}

We separately compute the limit of each term. To control the first term, we use part (a) of \cref{thm:CLT} to write
\begin{align*}
     \frac{1}{p}\EE^{\tilde{\PP}^{A_p, w}_{\theta_0}} \Big[ \sumin \nabla^2 \ell(\theta_0; B_i) \Big] &= \EE^{\tilde{\PP}^{A_p, w}_{\theta_0}} [ \overline{g^{(2)}(B)} ] \\
    &= \frac{1}{p} \sumin \langle g^{(2)} \rangle_{\mu_{i,w_i,\theta_0}} + o_{P,X}(1) \\
    &\xp \EE \Big[ \EE \big[\nabla^2 \ell(\theta_0; B_{\theta_0}) \mid W_{\theta_0} \big] \Big] = -I(\theta_0).
\end{align*}
The convergence in probability is conditional on the covariate matrix $X$ and it follows from limits in \eqref{eq:individual LLN}. By taking an outer expectation (over $y$), we get the final limit:
$$\EE \left[ \frac{1}{p} \EE^{\tilde{\PP}^{A_p, w}_{\theta_0}} \Big[ \sumin \nabla^2 \ell(\theta_0; B_i) \Big] \right] \mid X \xp -I(\theta_0).$$
Here we use that fact the inner term is bounded (by a uniform constant).

Next, we compute the limit of the second term of $\mathcal{I}_p(\theta_0)$. Using both parts of \cref{thm:CLT}, we can write
\begin{align}
&p \Var^{\tilde{\PP}_{\theta_0}^{A_p,w}} [\overline{g^{(1)}(B)}] \notag \\
=&  p \EE^{\tilde{\PP}_{\theta_0}^{A_p,w}} \Big[\overline{g^{(1)} (B)} - \frac{1}{p}\sumin\langle g^{(1)} \rangle_ {{Q}_{i,\theta_0,w_i}} \Big]^2 - \frac{1}{p} \Big[\sumin \langle g^{(1)} \rangle_ {{Q}_{i,\theta_0,w}} - \EE^{\tilde{\PP}_{\theta_0}^{A_p,w}} \overline{g^{(1)} (B)} \Big]^2 \notag \\
=& \frac{1}{p} \sumin \Big[\langle (g^{(1)})^2\rangle_{{\mu}_{i,w_i,\theta_0}} - \big( \langle g^{(1)} \rangle_{{\mu}_{i,w_i,\theta_0}}\big)^2 \Big] + o_{P,X}(1) \nonumber \\ \xp & I(\theta)-V(\theta_0), \label{eq:lhs variance}
\end{align}
where the convergence in probability in the last line is conditional on $X$. Here, the second line uses the basic identity $\Var(V) = \EE[V -v]^2 - [v-\EE V]^2$, where $V$ is a random vector and $v$ is a deterministic vector. The third line follows from \cref{thm:CLT}. The last limit follows from \eqref{eq:individual LLN}. Finally, taking an outer expectation (over $y$) noting that the LHS of \eqref{eq:lhs variance} is bounded, we get 
$$\EE\left[ \frac{1}{p} \Var_{\tilde{\PP}^{A_p, w}_{\theta_0}} \Big[ \sumin \nabla \ell(\theta_0; B_i) \Big]\right] \mid X \xp I(\theta)-V(\theta_0).$$
The proof is complete by combining the two limits.
\end{proof}

\subsection{Proofs from \cref{sec:consequences}}
    We first state a stability result with respect to the prior parameter $\theta$.
    \begin{lemma}\label{lem:psi' bound}
    Suppose \cref{assmp:cpt support} holds. For any $t \in \R, i \le p$, $\theta, \theta' \in \Theta$, and a bounded function $h :[-1,1] \to [-1,1]$, we have
    $$|\langle h \rangle_{\mu_{i,t,\theta}} - \langle h \rangle_{\mu_{i,t,\theta'}} | \lesssim \|\theta - \theta'\|,$$
        where the hidden constant only depends on the prior likelihood $\ell$. In particular, we have
        $$|\psi_{i,\theta}'(t) - \psi_{i,\theta'}'(t)|, ~|\psi_{i,\theta}''(t) - \psi_{i,\theta'}''(t)| = O \big(\|\theta - \theta'\|\big)$$
    \end{lemma}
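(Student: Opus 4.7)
The plan is to differentiate $\langle h\rangle_{\mu_{i,t,\theta}}$ along the line segment $\theta_s := (1-s)\theta' + s\theta$ and integrate from $s=0$ to $s=1$. Recalling \cref{def:quadratic tilt}, the measure $\mu_{i,t,\theta}$ has density proportional to $e^{tb - d_i b^2/2 + \ell(\theta;b)}$ against the base measure $\nu$, supported on $[-1,1]$ under \cref{assmp:cpt support}. Writing $\langle h\rangle_{\mu_{i,t,\theta_s}} = N(s)/D(s)$ with
$$N(s) := \int h(b)\, e^{tb - d_i b^2/2 + \ell(\theta_s;b)}\, d\nu(b), \qquad D(s) := \int e^{tb - d_i b^2/2 + \ell(\theta_s;b)}\, d\nu(b),$$
the quotient rule combined with differentiation under the integral sign yields the clean identity
$$\frac{d}{ds}\langle h\rangle_{\mu_{i,t,\theta_s}} = \Cov_{\mu_{i,t,\theta_s}}\!\bigl(h(B),\, (\theta-\theta')^\top \nabla_\theta \ell(\theta_s; B)\bigr).$$

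The main step is to bound this covariance uniformly in $s$. Since \cref{assmp:cpt support} implies R4 of \cref{assmp:prior technical}, the bound $\|\nabla_\theta \ell(\theta;b)\| \lesssim 1 + |b|^r$ holds uniformly in $\theta \in \Theta$, and restricting to the bounded support $b \in [-1,1]$ produces
$$K := \sup_{\theta\in\Theta,\; b\in[-1,1]} \|\nabla_\theta \ell(\theta;b)\| < \infty,$$
with $K$ depending only on the prior likelihood $\ell$. Since $|h(B)|\le 1$ and $|(\theta-\theta')^\top \nabla_\theta \ell(\theta_s;B)| \le K\|\theta-\theta'\|$ almost surely under $\mu_{i,t,\theta_s}$, Cauchy--Schwarz (together with $\Var(X)\le \|X\|_\infty^2$ for bounded $X$) gives $\bigl|\tfrac{d}{ds}\langle h\rangle_{\mu_{i,t,\theta_s}}\bigr| \le K\|\theta-\theta'\|$ uniformly in $s \in [0,1]$. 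The fundamental theorem of calculus then yields the claimed Lipschitz estimate.

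For the ``in particular'' statements, since $\psi_{i,\theta}'(t) = \langle b\rangle_{\mu_{i,t,\theta}}$, applying the main inequality with $h(b)=b \in [-1,1]$ immediately yields $|\psi_{i,\theta}'(t)-\psi_{i,\theta'}'(t)| = O(\|\theta-\theta'\|)$. For the second derivative $\psi_{i,\theta}''(t) = \langle b^2\rangle_{\mu_{i,t,\theta}} - \langle b\rangle_{\mu_{i,t,\theta}}^2$, I would split the difference into two terms: the first is handled by the main inequality with $h(b)=b^2 \in [0,1]$, and the second by the factorization $\langle b\rangle_{\mu_{i,t,\theta}}^2 - \langle b\rangle_{\mu_{i,t,\theta'}}^2 = \bigl(\langle b\rangle_{\mu_{i,t,\theta}} + \langle b\rangle_{\mu_{i,t,\theta'}}\bigr)\bigl(\langle b\rangle_{\mu_{i,t,\theta}} - \langle b\rangle_{\mu_{i,t,\theta'}}\bigr)$, where the first factor is bounded by $2$ and the second by the $O(\|\theta-\theta'\|)$ bound already established.

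The main potential obstacle is merely the uniform boundedness of $\nabla_\theta \ell$ on the compact set $\Theta\times[-1,1]$, which follows directly from R4 and \cref{assmp:cpt support}; beyond this, the argument is a routine exponential-family differentiation-under-the-integral computation, and no high-dimensional concentration or CLT machinery is needed here.
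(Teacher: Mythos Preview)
Your proof is correct and follows essentially the same approach as the paper: both arguments rest on the boundedness of $\nabla_\theta \ell$ over the compact set $\Theta\times[-1,1]$. The only difference is presentation---the paper bounds the numerator and denominator differences of $A(\theta)/B(\theta)$ directly via $|1-e^{\ell(\theta';b)-\ell(\theta;b)}|\lesssim\|\theta-\theta'\|$, whereas you differentiate along the segment and obtain the covariance identity before integrating; the two are routine variants of the same computation.
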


    The next technical result provides a way to quantify the distance between the true posterior and the EB posterior. As it involves the use of several technical lemmas from \cite{lee2025clt} to compute moment bounds for RFIMs, we state them in the following lemma. Each part follows from Theorem 2.3, Lemma 3.2(b), and Lemma 3.1 in \cite{lee2025clt}, respectively. We also refer the reader to \cite[Lemma 2,10]{bhattacharya2023gibbs} and \cite[Lemma A.1]{deb2025pivotal} for related results.

\begin{lemma}[Facts about RFIMs]\label{lem:RFIM lemma}
    For $B \sim \tilde{\PP}_\theta^{A_p,c}$, let $m_i := m_i(B) = \sum_{j \neq i} A_p(i,j) B_j$ denote the ``local field'' for each coordinate $i$. Then, under \cref{assmp:random design} and $p \ll n^{2/3}$, the following conclusions hold.
    \begin{enumerate}[(a)]
        \item For a constant vector $\mathfrak{\zeta} \in \R^p$, we have $\EE\Big[\sumin \zeta_i (B_i-u_{i,\theta})\Big]^2 = \OPX\big(\|\zeta\|^2 \big).$
        \item $\EE\Big[\sumin (m_i-s_{i,\theta})^2\Big]^2 = \OPX \Big( \frac{p^3}{n^2} \Big).$
        \item For any measurable function $h: [-1,1] \to [-1,1]$, we have 
    \end{enumerate}
        \begin{align*}
        \EE\Big[\frac{1}{p} \sumin \big(h(B_i) - \langle h \rangle_{\mu_{i,m_i+c_i,\theta}}\big) \Big] & = O\Big(\frac{1}{\sqrt{p}}\Big), \\ \quad \EE\Big[\frac{1}{p} \sumin \Big(h(B_i)-\langle h \rangle_{\mu_{i,m_i+c_i,\theta}}\Big) \langle h \rangle_{\mu_{i,m_i+c_i,\theta}} \Big] & = O\Big(\frac{1}{\sqrt{p}}\Big).
        \end{align*}
\end{lemma}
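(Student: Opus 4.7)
The plan is to derive each of the three claims by invoking the RFIM moment bounds established in \cite{lee2025clt}, and applying them to the specific functionals of interest. The common thread is that under \cref{assmp:random design}, the coupling matrix $A_p$ has operator norm $\|A_p\|_{\mathrm{op}} = \OPX(\sqrt{p/n})$ and squared Frobenius norm $\|A_p\|_F^2 = \OPX(p^2/n)$, so in the regime $p \ll n^{2/3}$ the RFIM $\tilde{\PP}_\theta^{A_p,c}$ is a small perturbation of the product measure $\prod_i \mu_{i,c_i,\theta}$. In particular, the Mean-Field fixed point $(u_{i,\theta}, s_{i,\theta})$ furnishes an accurate approximation of the coordinate-wise marginals and the local fields $m_i$.

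For part (a), I would expand $\EE[\sumin \gamma_i (B_i - u_{i,\theta})]^2 = \sum_{i,j} \gamma_i \gamma_j \Cov(B_i, B_j)$ under $\tilde{\PP}_\theta^{A_p,c}$, and control the off-diagonal covariances via exponential decay of correlations under small coupling. This is exactly the content of Theorem 2.3 in \cite{lee2025clt}, whose proof proceeds by establishing a Poincar\'e-type inequality for the RFIM by comparing to the product base measure (using the boundedness of the support from \cref{assmp:cpt support}) and absorbing $A_p$ as a bounded perturbation. Invoking this bound on $\mathrm{Cov}(B_i,B_j)$ together with Cauchy--Schwarz yields the stated $\OPX(\|\gamma\|^2)$ order.

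For part (b), I would rewrite the local-field deviations as $m_i - s_{i,\theta} = \sum_j A_p(i,j)(B_j - u_{j,\theta})$, so that $\sumin (m_i - s_{i,\theta})^2 = (B-u_\theta)^\top A_p^\top A_p (B-u_\theta)$ is a quadratic form in the centered vector $B - u_\theta$. Taking the fourth power on the LHS reduces to controlling the second moment of this quadratic form, for which one needs a Hanson--Wright-style concentration inequality adapted to bounded spins under a non-product RFIM measure. This is Lemma 3.2(b) in \cite{lee2025clt}. Combining this concentration with the estimate $\|A_p^\top A_p\|_F^2 = \OPX(p^3/n^2)$ (obtained by elementary random-matrix computations using independence of the entries of $X$ in \cref{assmp:random design}) and the entrywise variance bound in part (a) produces the claimed $\OPX(p^3/n^2)$ rate.

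For part (c), I would use a cavity/single-site argument: conditional on the configuration $B_{-i} = (B_j)_{j\neq i}$, the distribution of $B_i$ under the RFIM is exactly $\mu_{i,m_i+c_i,\theta}$ by direct inspection of the Gibbs density in \cref{def:rfim}. Therefore $h(B_i) - \langle h \rangle_{\mu_{i,m_i+c_i,\theta}}$ is centered given $B_{-i}$, and Doob-decomposing the sum over $i$ reduces both bounds to controlling the variance of a sum of bounded martingale differences coupled through $A_p$. Combining this with the fluctuation estimate on $m_i - s_{i,\theta}$ from part (b) gives the $O(1/\sqrt{p})$ rate, and the second bound then follows from the first via the uniform boundedness of $\langle h \rangle_{\mu_{i,m_i+c_i,\theta}}$ under \cref{assmp:cpt support}. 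This is the content of Lemma 3.1 in \cite{lee2025clt}.

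The main obstacle is part (b), since going from a second-moment to a fourth-moment bound requires concentration (not just the mean) of the quadratic form $\|A_p(B-u_\theta)\|^2$ under a possibly non-product Gibbs measure; this is precisely where the machinery developed in \cite{lee2025clt} is indispensable, and once invoked the remaining steps reduce to routine algebra on $A_p^\top A_p$ using the independent-entry structure of $X$.
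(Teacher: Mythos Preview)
Your proposal is correct and matches the paper's own treatment: the paper does not give an independent proof of this lemma but simply states that parts (a), (b), (c) follow respectively from Theorem 2.3, Lemma 3.2(b), and Lemma 3.1 in \cite{lee2025clt}, which is exactly what you invoke. Your additional sketches of the ideas behind those cited results (covariance decay, Hanson--Wright for the quadratic form, and the cavity/martingale argument) go beyond what the paper provides but are consistent with the intended mechanism.
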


The final technical result provides some basic properties of the design matrix $X$ that will be useful in the sequel. The proof follows from Lemmas 4.1, 4.2 in \cite{lee2025clt}.

\begin{lemma}[Design matrix properties]\label{lem:alpha_p}
    Let $X$ be the random design from \cref{assmp:random design} and recall $A_p$ from \cref{def:A}. Then, for any $r \ge 2$, the following properties hold:
    $$\|A_p\|_{r \to r } =O_{P,X}\Big(\frac{p^{1-1/r}}{\sqrt{n}}\Big), \quad \alpha_p = \alpha_p(X) := \maxin \sumjn A_p(i,j)^2 = O_{P,X}\Big(\frac{p}{n}\Big).$$
    Recall from \cref{subsec:notation} that $O_{P,X}$ denotes the sole randomness of $X$.
\end{lemma}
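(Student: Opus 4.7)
The plan is to prove the two claims in sequence: first obtain the concentration bound for $\alpha_p$, then combine it with a spectral norm bound at $r=2$ via Riesz--Thorin interpolation to recover the general $\|A_p\|_{r\to r}$ bound.

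For the bound on $\alpha_p$, I first standardize by writing $\tilde{X}_{k,i} := \sqrt{n}\,X_{k,i}$ so that the $\tilde{X}_{k,i}$ are i.i.d., mean zero, unit variance, and sub-Gaussian with norm bounded by $K$. Then for $i\ne j$,
$$A_p(i,j) = -\sigma^{-2} n^{-1}\sum_{k=1}^n \tilde{X}_{k,i}\tilde{X}_{k,j},$$
and a direct computation gives $\EE[\alpha_p^{(i)}] = (p-1)\sigma^{-4}/n = \Theta(p/n)$, where $\alpha_p^{(i)} := \sumji A_p(i,j)^2$. I would then view $\alpha_p^{(i)}$ as a quadratic form in the $i$th column $\tilde X_{\cdot,i}$ with random matrix $\sigma^{-4} n^{-2} \sum_{j\ne i}\tilde X_{\cdot,j}\tilde X_{\cdot,j}^{\top}$, and apply Hanson--Wright (conditionally on the other columns) together with a standard sample-covariance concentration bound on the operator norm of the matrix; equivalently, one can directly compute high-order moments of $\alpha_p^{(i)}-\EE\alpha_p^{(i)}$ via Wick-style enumeration. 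Either route yields sub-exponential tails that remain summable in $p$ after a union bound over $i\le p$, giving $\alpha_p = \maxin \alpha_p^{(i)} = \OPX(p/n)$.

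For the operator norm, I would obtain endpoint bounds at $r=2$ and $r=\infty$ and interpolate. At $r=\infty$, Cauchy--Schwarz gives $\sumji |A_p(i,j)| \le \sqrt{p}\,\sqrt{\alpha_p^{(i)}}$, and taking the maximum over $i$ yields $\|A_p\|_{\infty\to\infty} \le \sqrt{p\,\alpha_p} = \OPX(p/\sqrt n)$ from the first step (the same bound holds for $\|A_p\|_{1\to 1}$ by symmetry of $A_p$). At $r=2$, decompose $A_p = -\sigma^{-2}(X^\top X - D)$ where $D$ is the diagonal of $X^\top X$. Standard sub-Gaussian sample covariance bounds (Bai--Yin / Marchenko--Pastur type) give $\|X^\top X - I_p\|_{2\to 2} = \OPX(\sqrt{p/n})$ when $p=o(n)$, while scalar concentration plus a union bound over the $p$ diagonal entries gives $\|D-I_p\|_{2\to 2} = \OPX(\sqrt{(\log p)/n})$; together these yield $\|A_p\|_{2\to 2} = \OPX(\sqrt{p/n})$.

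Riesz--Thorin interpolation on $\ell^r(\{1,\ldots,p\})$ applied to the two endpoints then yields, for every $r\ge 2$,
$$\|A_p\|_{r\to r} \le \|A_p\|_{2\to 2}^{2/r}\,\|A_p\|_{\infty\to\infty}^{1-2/r} = \OPX\!\left((p/n)^{1/r}(p/\sqrt n)^{1-2/r}\right) = \OPX\!\left(\frac{p^{1-1/r}}{\sqrt n}\right),$$
completing the proof. The main technical step is the Hanson--Wright-type concentration for $\alpha_p^{(i)}$ uniformly in $i$; everything else is standard random matrix theory combined with elementary interpolation.
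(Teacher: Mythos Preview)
Your proposal is correct. The paper itself does not give a proof of this lemma; it simply cites Lemmas~4.1 and~4.2 of \cite{lee2025clt}, so there is no detailed argument to compare against. Your self-contained route---Hanson--Wright for $\alpha_p^{(i)}$ conditional on the remaining columns, a union bound over $i$, and then interpolation between the $r=2$ and $r=\infty$ endpoints---is a standard and valid way to establish both claims.

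One minor simplification worth noting: the Riesz--Thorin step is unnecessary. For $r\ge 2$ and any $w\in\R^p$ one has $\|w\|_r \le \|w\|_2$ and $\|w\|_2 \le p^{1/2-1/r}\|w\|_r$, so directly
\[
\|A_p v\|_r \le \|A_p v\|_2 \le \|A_p\|_{2\to 2}\,\|v\|_2 \le p^{1/2-1/r}\,\|A_p\|_{2\to 2}\,\|v\|_r,
\]
giving $\|A_p\|_{r\to r} \le p^{1/2-1/r}\|A_p\|_{2\to 2} = O_{P,X}(p^{1-1/r}/\sqrt n)$ from the $r=2$ bound alone. Thus the $\|A_p\|_{\infty\to\infty}$ endpoint and the interpolation machinery can be dropped, though your argument as written is of course still valid.
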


\begin{proof}[Proof of \cref{prop:mean variance}]
Recall the notation $u_i, \hat{u}_i$ from \cref{def:MF posterior}. We also use the notations $s_i = \sum_{j\neq i} A_p(i,j) u_j$ and $\hat{s}_i = \sum_{j\neq i} A_p(i,j) \hat{u}_j$ throughout this proof.

\begin{enumerate}[(a)]
    \item We prove a general claim under a general parameter value $\theta$ that may depend on the data $y,X$. Let $h$ be any 
    bounded function with $\|h\|_{\infty} \le 1$. For $t \in \R$ and fixed indices $i,\theta$, define 
    $H_{i,\theta}(t):=\langle h\rangle_{\mu_{i,t,\theta}}$. 
    Using the properties of the complete conditional distribution of $\beta_i$ (see \eqref{eq:conditional distribution of B_i}), for $\beta \sim \PP_{\theta}$ and $m_i = m_i(\beta_{(-i)}) = \sum_{j \neq i} A_p(i,j) \beta_j$, we have
    \begin{align}\label{eq:conditional expectation}
        \EE[h(\beta_i)] = \EE[\EE[h(\beta_i) \mid \beta_{(-i)}]] = \EE \langle h \rangle_{\mu_{i,m_i+w_i,\theta}} = \EE H_{i,\theta}(m_i+w_i).
    \end{align}

    Our key argument is noting that  
    \begin{align}\label{eq:h expectation}
        |\EE H_{i,\theta}(m_i+w_i) - H_{i,\theta}(s_i+w_i)| = O(\EE|m_i-s_i|) = O_{P,X}(\sqrt{\an}).
    \end{align}
    Here, the first equality follows from a Taylor expansion and the fact that $h$ is bounded, and the second follows from writing $m_i - s_i = \sum_{j \neq i} A_p(i,j) (\beta_j-u_j)$ and using a second moment bound for RFIMs (see \cref{lem:RFIM lemma}(a)). The hidden constants above do not depend on the prior hyperparameter $\theta$, and only on the compact support in \cref{assmp:cpt support}. 
    \vspace{2mm}

    The identities for the mean and variance are immediate by taking $h(b) = b, b^2$, respectively in \eqref{eq:h expectation}. The conclusion for the TV distance follows by spelling out the definition and using \eqref{eq:h expectation}:
    $$\dtv \big(\PP_{\theta}(\beta_i), \QQ_{\theta}(\beta_i)\big) = \sup_{h(\cdot) = I(\cdot \in E), ~E \subset \R} \Big|\EE \langle h\rangle_{\mu_{i,m_i+w_i,\theta}} - \langle h \rangle_{\mu_{i,s_i+w_i,\theta}} \Big| = O_{P,X}(\sqrt{\an}).$$
    Finally we use the bound on $\an$ from \cref{lem:alpha_p}.

    \vspace{2mm}
    \item Following part (a), let $h$ be any bounded function. Use two triangle inequalities (first and third line) to bound 
    \begin{align}
        |\EE_{\PP_{\theta_0}} h(\beta_i) &- \EE_{\QQ_{\thetahat}} h(\beta_i)| \le |\EE_{\PP_{\theta_0}} h(\beta_i) - \EE_{\QQ_{\theta_0}} h(\beta_i)| + |\EE_{\QQ_{\theta_0}} h(\beta_i) - \EE_{\QQ_{\thetahat}} h(\beta_i)| \notag \\
        &= O_{P,X}(\sqrt{\an}) + |H_{i,\theta_0}(s_i + w_i) - H_{i,\thetahat}(\hat{s}_i + w_i)| \notag \\
        &\le O_{P,X}(\sqrt{\an}) + |s_i - \hat{s}_i| + |H_{i,\theta_0}(\hat{s}_i + w_i) - H_{i,\thetahat}(\hat{s}_i + w_i)| \notag \\
        &= O_{P,X}(\sqrt{\an}) + |s_i - \hat{s}_i| + O(\|\thetahat - \theta_0\|).\label{eq:middle bound}
    \end{align}
    Here, the second line used \eqref{eq:h expectation} from part (a), and the fourth line used \cref{lem:psi' bound}.

    \vspace{2mm}
    To control $|s_i - \hat{s}_i|$ in \eqref{eq:middle bound}, let $v_i := \hat{u}_i - u_i$. Again, by \cref{lem:psi' bound} (with $h(b) = b$ and $t = \hat{s}_i+w_i$), we have
    $$\hat{u}_i = \psi_{i,\thetahat}'\big(\hat{s}_i + w_i\big) = \psi_{i,\theta_0}'\big(\hat{s}_i + w_i\big) + E_i, \quad \text{where} \quad |E_i| \lesssim \|\thetahat-\theta_0\|.$$
    Hence, using the definition of $\hat{u}_i, u_i$ in \cref{def:MF posterior}, we can write
    \begin{align}
        v_i = \psi_{i,\theta_0}'\big(\hat{s}_i + w_i\big) - \psi_{i,\theta_0}'\big(s_i + w_i\big) &= \psi_{i,\theta_0}''(\xi_i) (\hat{s}_i-s_i) + E_i \notag \\
        &= \psi_{i,\theta_0}''(\xi_i) \sum_{j \neq i} A_p(i,j) v_j + E_i, \label{eq:v_i bound}
    \end{align}
    for some $\xi_i \in (s_i,\hat{s}_i)$. By setting $D_p$ as a $p \times p$ matrix with $D_p(i,j) := \psi_{i,\theta_0}''(\xi_i) A_p(i,j)$, we get a vector equation $v = D_p v + E$ (with $E = (E_1,\ldots, E_p$)). As
    $$\|D_p\| \le \|\diag(\psi_{i,\theta_0}''(\xi_i))\| \|A_p\| < \frac{1}{2}$$
    with high probability (using the operator norm bound for $A_p$ in \cref{lem:alpha_p}), we have $$\|v\| \le \|(I_p - D_p)^{-1}\| \|E\| \le  2 \|E\| = O_{P,X}(\sqrt{p}\|\thetahat-\theta_0\|).$$
    Then, by a Cauchy-Schwartz, we have
    $$|s_i - \hat{s}_i| = \Big|\sum_{j \neq i} A_p(i,j) v_j\Big| \le \sqrt{\sumjn A_p(i,j)^2} \|v\| = O_{P,X}\Big(\frac{p}{\sqrt{n}}\|\thetahat-\theta_0\|\Big).$$
    The desired conclusions follow by plugging this into \eqref{eq:middle bound}. In particular, the TV distance claim follows by taking $h$ as indicator functions, and the claims for the mean and variance follows by taking $h(b) = b$ and $b^2$.
\end{enumerate}
\end{proof}

    The proof of \cref{thm:EB} builds upon recently developed finite-sample error bounds for the KS distance between the projected posterior and a Gaussian, which we state below.
    \begin{lemma}[Theorem 2.4 in \cite{lee2025clt}]\label{lem:posterior clt}
        Suppose $\beta = (\beta_1, \ldots, \beta_p) \in [-1,1]^p \stackrel{d}{\equiv} \tilde{\PP}_{\thetahat}^{A_p,w}$ (see \cref{def:rfim}). Define error terms $R_{1}, R_{2}$ as:
        \begin{align*}
            R_{1} := \sumin \Big(\sumji A_p(i,j) q_i (\psi_{j,\thetahat}''(w_j)-\hat{\upsilon}_p) \Big)^2, \quad R_{2} := \sumin \Big(\sumji A_p(i,j) \psi_{j,\thetahat}'(w_j)\Big)^2,
        \end{align*}
        where $\hat{\upsilon}_p$ is defined in \cref{thm:EB}. Then, under \cref{assmp:random design}, the following bound holds, where the hidden constant is an universal:
        {\small\begin{align*}
            d_{KS}\left(\sumin q_i(\beta_i - \hat{u}_i), N(0, \hat{\upsilon}_p) \mid y, X \right) \lesssim \sqrt{R_{1}} + \sqrt{\alpha_p R_{2}} + \frac{\sqrt{p}\alpha_p + \sqrt{\sumin q_i^4} \sqrt{R_{2}}+\|q\|_\infty}{\hat{\upsilon}_p} + \|A_p\|.
        \end{align*}
        }
    \end{lemma}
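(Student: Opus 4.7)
The plan is to establish this KS bound via Stein's method for normal approximation, exploiting the fact that the posterior $\tilde{\PP}_{\thetahat}^{A_p,w}$ is a Random Field Ising Model with explicitly tractable full conditionals: given $\beta_{(-i)}$, the coordinate $\beta_i$ is distributed as $\mu_{i,\, m_i+w_i,\thetahat}$ where $m_i := \sum_{j\neq i} A_p(i,j)\beta_j$, with mean $\psi_{i,\thetahat}'(m_i+w_i)$ and variance $\psi_{i,\thetahat}''(m_i+w_i)$. I would apply Stein's equation for $S := \sum_{i=1}^p q_i(\beta_i-\hat{u}_i)$: writing $f = f_z$ for the bounded Stein solution to $\hat{\upsilon}_p f'(x) - x f(x) = \mathbf{1}_{x\le z} - \Phi(z/\sqrt{\hat{\upsilon}_p})$, the KS distance reduces to uniformly bounding $\EE^{\tilde{\PP}}[\hat{\upsilon}_p f'(S) - S f(S)]$. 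Smoothing gives the $\|q\|_\infty/\hat{\upsilon}_p$ contribution; the remaining work is algebraic.

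Next, I would use the Glauber/exchangeable-pair representation: pick $I$ uniformly in $[p]$ and resample $\beta_I'\sim \mu_{I, m_I+w_I,\thetahat}$, so that $(\beta,\beta')$ is reversible under $\tilde{\PP}_{\thetahat}^{A_p,w}$. The one-coordinate drift $\EE[\beta_i - \beta_i'\mid \beta] = \beta_i - \psi_{i,\thetahat}'(m_i+w_i)$ substituted into the antisymmetric identity $\EE[(S-S')(f(S)+f(S'))] = 0$ converts the Stein functional into two pieces: a ``mean'' piece arising from $\psi_{i,\thetahat}'(m_i+w_i)-\psi_{i,\thetahat}'(\hat{s}_i+w_i)$, and a ``variance'' piece arising from $\frac{1}{p}\sum_i q_i^2 \psi_{i,\thetahat}''(m_i+w_i) - \hat{\upsilon}_p$. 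A first-order Taylor expansion of $\psi_{i,\thetahat}'$ and $\psi_{i,\thetahat}''$ around $\hat{s}_i+w_i$ replaces the fluctuating field $m_i - \hat{s}_i = \sum_{j\neq i} A_p(i,j)(\beta_j - \hat{u}_j)$ by a linear-in-$A_p$ quantity. Cauchy-Schwarz over $i$, combined with the RFIM second-moment bound $\EE[\sum_i \gamma_i(\beta_i-\hat u_i)]^2 = O_{P,X}(\|\gamma\|^2)$ from \cref{lem:RFIM lemma}(a) applied to $\gamma_j = A_p(i,j)$, produces exactly $\sqrt{\alpha_p R_2}$ for the mean piece (with $R_2$ being the squared $\ell^2$ norm of the vector $(A_p^\top \psi'_\cdot)_i$), and $\sqrt{R_1}$ for the variance piece after rearrangement.

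The remaining terms come from two standard places. The $\|A_p\|$ term appears because the Stein test function satisfies $|f(S)-f(S')|\lesssim \|f'\|_\infty |q_I||\beta_I'-\beta_I|$, and the associated ``off-diagonal'' interaction when collapsing to the stationary measure introduces a factor of $\|A_p\|_{\mathrm{op}}$ through a resolvent-type estimate on the near-diagonal correction. The $\sqrt{p}\alpha_p/\hat{\upsilon}_p$ term is the usual Stein second-derivative contribution from the quadratic remainder in the Taylor expansion, and the $\sqrt{\sum q_i^4}\sqrt{R_2}/\hat{\upsilon}_p$ term comes from a discrete It\^o correction (one must account for the jump $q_I(\beta_I'-\beta_I)$ inside the Stein function, which is of order $|q_I|$), handled by Cauchy-Schwarz against the mean error.

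The main obstacle is \emph{decoupling} the variance and mean errors cleanly so that $R_1$ depends only on the second derivatives and $R_2$ only on the first derivatives of $\psi_{i,\thetahat}$, without cross-contamination. This requires carefully choosing where to insert $\hat{\upsilon}_p$ (rather than the empirical variance $\frac{1}{p}\sum q_i^2\psi_{i,\thetahat}''(m_i+w_i)$) and repeatedly invoking \cref{lem:RFIM lemma}(a) to absorb centered linear combinations $\sum_j A_p(i,j)(\beta_j-\hat u_j)$ into their second-moment surrogates. A secondary difficulty is that $q$ is arbitrary (only normalized and delocalized via $\|q\|_\infty = o(1)$), so bounds must be $q$-dependent; I would track $q$ through every Cauchy-Schwarz step to recover $\sqrt{R_1}$ and $\sqrt{\sum q_i^4}$ rather than cruder $\|q\|^2 = 1$ estimates, which is precisely what makes the bound tight enough to yield \cref{thm:EB}(b) once the hypothesis $\sum q_i^4 = o(n/p^2)$ is imposed.
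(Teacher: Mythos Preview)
The paper does not actually prove this lemma: it is explicitly labeled as ``Theorem 2.4 in \cite{lee2025clt}'' and is imported as a black box. The only argument the paper gives is a one-line modification of the cited statement, namely replacing an error term $R_{3p}$ appearing in \cite{lee2025clt} by the term $\sqrt{\sum_i q_i^4}\sqrt{R_2}$, via the Cauchy--Schwarz bound
\[
\Big|\sum_{i=1}^p q_i^2 \big(\psi_i''(s_i+w_i) - \psi_i''(w_i)\big)\Big| \le \sum_{i=1}^p q_i^2 |s_i| \le \sqrt{\sum_{i=1}^p q_i^4}\,\sqrt{\sum_{i=1}^p s_i^2}\lesssim \sqrt{\sum_{i=1}^p q_i^4}\,\sqrt{R_2}.
\]
Beyond this, the paper simply observes that the result in \cite{lee2025clt} is stated for an arbitrary base measure, so it applies with $\mu=\mu_{\thetahat}$.

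Your proposal, by contrast, sketches a full Stein's-method-via-exchangeable-pairs proof of the underlying CLT bound. That is plausibly how the argument in \cite{lee2025clt} proceeds (the paper does use the Glauber exchangeable pair elsewhere, e.g.\ in the proof of \cref{thm:CLT}), and the decomposition you describe---Taylor-expanding $\psi'$ and $\psi''$ around $\hat s_i+w_i$, separating a ``mean'' piece giving $\sqrt{\alpha_p R_2}$ and a ``variance'' piece giving $\sqrt{R_1}$, with the remaining terms from smoothing and quadratic remainders---is the right architecture. But this is a reconstruction of the cited result rather than what the present paper does, so for the purposes of comparison: the paper's ``proof'' is a citation plus a single Cauchy--Schwarz step, and your proposal goes well beyond that.
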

    As the original result in \cite{lee2025clt} allows any choice of prior/base-measure $\mu$, it also holds under the estimated prior $\mu = \mu_{\thetahat}$.
    Here, we have slightly modified the statement in \cite{lee2025clt} (i.e. replaced an additional error term $R_{3p}$ there in terms of $R_{2}$) by replacing the bound in the third display in \citep[pg 17,][]{lee2025clt} to the following:
    $$\Big|\sumin q_i^2 (\psi_i''(s_i+w_i) - \psi_i''(w_i))\Big| \le \sumin q_i^2 |s_i| \le \sqrt{\sumin q_i^4} \sqrt{\sumin s_i^2}\lesssim \sqrt{\sumin q_i^4} \sqrt{R_2},$$

    We also need the following technical lemmas.
    \begin{lemma}\label{lem:sum q'u approximate}
        Under Assumptions \ref{assmp:random design} and \ref{assmp:prior technical}, for any $\thetahat$ with $\|\thetahat - \theta_0\| = O_P\Big(\frac{1}{\sqrt{p}}\Big)$, we have
        $$\sumin q_i \psi_{i,\thetahat}'(s_{i,\thetahat}+w_i) = \sumin q_i \psi_{0,\thetahat}'(w_i) + o_P(1).$$
    \end{lemma}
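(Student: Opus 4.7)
The plan is to pass through two controlled approximations. First, I would replace every $\psi'_{i,\thetahat}$ by its $d_0$-counterpart $\psi'_{0,\thetahat}$ on both sides of the equality. Differentiating the estimate in \cref{lem:B4} in $t$ (or arguing directly from the Gaussian tilt by $d$) yields
$$|\psi'_{i,\thetahat}(t) - \psi'_{0,\thetahat}(t)| \lesssim |d_i-d_0|\,(1+|t|^C)$$
for some absolute $C$. Combining $|d_i-d_0| = O_{P,X}(n^{-1/2})$ from \eqref{eq:ddo}, Cauchy--Schwarz with $\|q\|_2=1$, and the moment bound $\sumin(1+|w_i|^{2C}) = O_P(p)$ (from R5 and \cref{lem:sub-Gaussianity}) produces an aggregate error of order $O_P(\sqrt{p/n}) = o_P(1)$ at $t_i = w_i$. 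Applying the same bound at the shifted argument $t_i = s_{i,\thetahat}+w_i$ uses in addition the estimate $\max_i|s_{i,\thetahat}| \le \sqrt{\alpha_p}\,\|u_\thetahat\| = O_P(p/\sqrt{n})$, obtained from the operator-norm bound $\|A_p\|^2 = O_{P,X}(p/n)$ in \cref{lem:alpha_p} together with the a priori estimate $\|u_\thetahat\|^2 = O_P(p)$ (which follows from R6 via $|\psi'_{i,\thetahat}(t)| \lesssim 1+|t|$ and moment control on $w$). This reduces the target to
$$\sumin q_i\bigl[\psi'_{0,\thetahat}(s_{i,\thetahat}+w_i) - \psi'_{0,\thetahat}(w_i)\bigr] = o_P(1).$$

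Next I would Taylor expand $\psi'_{0,\thetahat}$ around $w_i$, getting
$$\psi'_{0,\thetahat}(s_{i,\thetahat}+w_i) - \psi'_{0,\thetahat}(w_i) = \psi''_{0,\thetahat}(w_i)\,s_{i,\thetahat} + \tfrac{1}{2}\,\psi'''_{0,\thetahat}(\xi_i)\,s_{i,\thetahat}^2.$$
The quadratic remainder is absorbed through the polynomial growth of $\psi'''_{0,\thetahat}$ (a consequence of R6 together with \cref{lem:sub-Gaussianity}) and Cauchy--Schwarz, using the size bound $\sumin s_{i,\thetahat}^2 \le \|A_p\|^2\|u_\thetahat\|^2 = O_P(p^2/n)$ together with the uniform estimate on $\max_i|s_{i,\thetahat}|$ established above.

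What remains is the principal bilinear form
$$\sum_{i\neq j}A_p(i,j)\,q_i\psi''_{0,\thetahat}(w_i)\,u_{j,\thetahat} = e^{\top}A_p f, \qquad e_i := q_i\psi''_{0,\thetahat}(w_i),\ f_j := u_{j,\thetahat}.$$
If $e,f$ were independent of $A_p$, a direct variance computation using the zero-mean, variance-$O(1/n)$ structure of the off-diagonal entries of $A_p$ would give $\EE(e^{\top}A_p f)^2 = O(\|e\|^2\|f\|^2/n)$, which is $o_P(1)$ since $\|e\|^2 \le \max_i\psi''_{0,\thetahat}(w_i)^2\cdot\|q\|^2 = O_P(\mathrm{polylog}(p))$ (by a sub-Gaussian maximal inequality on $w$) and $\|f\|^2 = O_P(p)$. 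The main obstacle is that both $e$ and $f$ are $X$-measurable, hence correlated with $A_p$. To handle this, I plan a leave-one-out / iteration argument: unfold the fixed-point identity as $u_{j,\thetahat} = \psi'_{0,\thetahat}(w_j) + \psi''_{0,\thetahat}(w_j)s_{j,\thetahat} + O(s_{j,\thetahat}^2)$, telescope the resulting series using the contraction $\|A_p\|\ll 1$ from \cref{lem:alpha_p}, and at each level replace the $X$-dependent quantities by analogues computed with the $i$th and $j$th columns of $X$ removed, exploiting the column-independence guaranteed by \cref{assmp:random design}. Executing this decoupling cleanly while keeping the remainder $o_P(1)$ under the full range $p = o(n^{2/3})$ is the hardest part of the proof.
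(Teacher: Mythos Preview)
Your strategy—pass from $d_i$ to $d_0$, Taylor expand in $s_{i,\hat\theta}$, then control the resulting bilinear form—is reasonable but differs substantially from the paper's proof, which never Taylor expands. The paper invokes Theorem~2.5(a) of \cite{lee2025clt} as a black box to bound $\bigl|\sumin q_i\hat u_i-\sumin q_i\psi'_{i,\hat\theta}(w_i)\bigr|$ directly in terms of the quantities $R_1,\ldots,R_4$ and $\sumin\epsilon_i\psi'_{i,\hat\theta}(w_i)$, then transfers the two delicate terms from $\hat\theta$ to $\theta_0$ via the uniform bound $|\psi''_{i,\hat\theta}(w)-\psi''_{i,\theta_0}(w)|\lesssim\|\hat\theta-\theta_0\|$ of \cref{lem:psi' bound} (which uses the compact-support \cref{assmp:cpt support} that governs all of \cref{sec:consequences}). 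The $\theta_0$ versions are then handled by \cref{lem:R4 oracle bound}, itself quoted from \cite{lee2025bayesregression}. So the ``hardest part'' you identify is packaged in external results.

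There is a concrete gap in your quadratic-remainder step. From $\sumin s_{i,\hat\theta}^2=O_P(p^2/n)$ and $\max_i|s_{i,\hat\theta}|=O_P(p/\sqrt n)$ you can only extract $\sumin s_{i,\hat\theta}^4\le(\max_i s_{i,\hat\theta}^2)(\sumin s_{i,\hat\theta}^2)=O_P(p^4/n^2)$, and Cauchy--Schwarz against $\|q\|=1$ then gives a remainder of order $p^2/n$, which is \emph{not} $o_P(1)$ once $p\gg n^{1/2}$. The paper (through the cited theorem) instead relies on the sharper fourth-moment input $R_3=\sumin\bigl(\sum_{j\neq i}A_p(i,j)\psi'_{j,\hat\theta}(w_j)\bigr)^4=o_P(1)$, which requires a genuine decoupling argument rather than the operator-norm bound you use.

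Your leave-one-out sketch for the bilinear term is in the right spirit but overlooks that $\hat\theta$ is a functional of \emph{all} of $X$, so removing the $i$th and $j$th columns does not decouple $\psi''_{0,\hat\theta}(w_i)$ from $A_p$. The paper's device—freeze $\hat\theta$ by writing $\psi''_{i,\hat\theta}(w_i)=\psi''_{i,\theta_0}(w_i)+O(\|\hat\theta-\theta_0\|)$ uniformly, absorb the error by a crude $\|A_p\|$-bound, and only then decouple in $X$ at $\theta_0$—is the clean way around this. You should also exploit \cref{assmp:cpt support} (which is in force here): it makes $\psi'',\psi'''$ uniformly bounded and removes the polynomial-growth bookkeeping you carry from R6.
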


    \begin{lemma}[LLN with general weights]\label{lem:LLN modified}
    Assume R5. Let $g:\R\to\R$ be a $\mathcal{C}^1$ function where $g'\in  \mcp(r)$, and let $r_i$ be constants such that $\sumin r_i \to \gamma \in \R$ and $\sumin r_i^2 = o(1)$. Then, we have 
    $$\sumin r_i g(w_i) \mid X \xp \gamma \EE g(W_{\theta_0}).$$
    \end{lemma}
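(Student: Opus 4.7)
The plan is to reduce to the basic LLN in \cref{lem:LLN}(a) by decomposition, then apply Chebyshev's inequality to the resulting remainder. Writing $a_i := r_i - \gamma/p$, we have
\begin{align*}
    \sumin r_i g(w_i) = \gamma \cdot \frac{1}{p}\sumin g(w_i) + \sumin a_i g(w_i),
\end{align*}
where the first summand converges to $\gamma\EE g(W_{\theta_0})$ in probability by \cref{lem:LLN}(a). Since $\sumin a_i = \sumin r_i - \gamma \to 0$ and $\sumin a_i^2 \le 2\sumin r_i^2 + 2\gamma^2/p \to 0$, it suffices to show $R_p := \sumin a_i g(w_i) \xp 0$, which I would establish by separately controlling the conditional mean $\EE[R_p \mid X]$ and variance $\Var[R_p \mid X]$.

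For the conditional variance, note that $\Var[R_p \mid X] = a^\top M a$ where $M := \Cov[g(w) \mid X]$. A first-order Taylor expansion of $g$ (combined with the moment bounds from R4--R6) gives $M \approx G\,\Cov(w \mid X)\,G$ with $G := \diag(g'(\mu_i))$, where $\mu_i := \EE[w_i \mid X]$. Since $\Cov(w \mid X) = d_0^2 \Var(B_{\theta_0})(X^\top X)^2 + d_0 X^\top X$ has operator norm $O_P(1)$ by \cref{lem:alpha_p}, and $\|G\|_{\mathrm{op}} = O_P(1)$ by the polynomial growth of $g'$ together with the uniform bound $\max_i |\mu_i| = O_P(1)$, one concludes $\|M\|_{\mathrm{op}} = O_P(1)$. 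Thus $\Var[R_p \mid X] \le \|M\|_{\mathrm{op}} \|a\|^2 = O_P(1) \cdot o(1) = o_P(1)$, bypassing the need for a uniform bound on the off-diagonal covariances.

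For the conditional mean, write $\EE[g(w_i) \mid X] = \EE g(W_{\theta_0}) + \delta_i(X)$. A first-order Taylor expansion of $g$ together with the Gaussian law $w_i \mid X, \beta^\star \sim N(d_0(X^\top X\beta^\star)_i, d_0(X^\top X)_{ii})$ yields $\delta_i(X) = C\cdot(S_i - 1) + O_P(1/\sqrt n)$, where $S_i := \sum_j (X^\top X)_{ij}$ and $C$ is a constant depending on $g$ and $\mu_{\theta_0}$. The main obstacle is controlling $\sumin a_i(S_i-1) = a^\top(X^\top X - I)\mathbf{1}$. I would bound this via a direct variance computation: since $\EE[X^\top X] = I$ and the entries of $X$ have finite fourth moments (by sub-Gaussianity), a straightforward Wick-type expansion yields $\Var[a^\top X^\top X \mathbf{1}] = O(p \sumin a_i^2/n) = o(p/n)$, giving $\sumin a_i(S_i - 1) = o_P(\sqrt{p/n}) = o_P(1)$ under $p = o(n)$. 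The residual $O_P(1/\sqrt n)\cdot \|a\|_1$ contribution is bounded via $\|a\|_1 \le \sqrt{p\sumin a_i^2}$ as $O_P(\sqrt{p/n}) \cdot \sqrt{o(1)} = o_P(1)$. Combined with the variance bound, Chebyshev's inequality yields $R_p \xp 0$ and completes the proof.
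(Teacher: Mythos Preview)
Your decomposition $r_i=\gamma/p+a_i$ and the reduction to $R_p=\sum_i a_i g(w_i)\xp 0$ are fine, but both the variance and mean steps that follow have genuine gaps.

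For the variance, writing $M:=\Cov(g(w)\mid X)\approx G\,\Cov(w\mid X)\,G$ via a ``first-order Taylor expansion of $g$'' is a delta-method heuristic, not an operator-norm bound. Conditional on $X$ alone, $w$ is a Gaussian \emph{location mixture} over the i.i.d.\ $\beta^\star$, and the Taylor remainders contribute to every entry of $M$ in a way that is not controlled by $\Cov(w\mid X)$; nothing you invoke (R4 concerns $\nabla\ell$, R6 concerns tilted priors) bounds their contribution to $\|M\|_{\mathrm{op}}$. The paper sidesteps this entirely by conditioning further on $\beta^\star$, so that $w\mid X,\beta^\star$ is genuinely Gaussian, and applying the Gaussian Poincar\'e inequality exactly as in Step~1 of the proof of \cref{lem:LLN}: $\Var\bigl(\sum_i r_i g(w_i)\,\big|\,X,\beta^\star\bigr)\le\|\Sigma\|\sum_i r_i^2\,\EE[g'(w_i)^2\mid X,\beta^\star]$, which is $o_P(1)$ after taking $\EE_{\beta^\star}$ since $\EE[g'(w_i)^2\mid X]$ is bounded uniformly in $i$.

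For the mean, your expansion $\delta_i(X)=C(S_i-1)+O_P(1/\sqrt n)$ drops a term: writing $\EE[g(w_i)\mid X]=\EE_{\beta^\star}\bigl[\phi(d_i\beta_i^\star-\F_i,d_i)\bigr]$ and Taylor-expanding in $\F_i$, the second-order contribution $\tfrac12\EE[\F_i^2\mid X]\,\EE[\phi'']$ is of size $O_P(p/n)$, not $O_P(1/\sqrt n)$. Summed against $\|a\|_1\le\sqrt{p\sum a_i^2}=o(\sqrt p)$ as you do, this yields only $o_P(p^{3/2}/n)$, which fails under the standing assumption $p=o(n)$. The paper's route again keeps the $\beta^\star$-conditioning: the conditional mean $\sum_i r_i\,\phi(d_i\beta_i^\star-\F_i,d_i)$ is approximated by $\sum_i r_i\,\phi(d_0\beta_i^\star,d_0)$ via the per-term bounds of Step~2 in \cref{lem:LLN}'s proof, and the latter converges to $\gamma\,\EE g(W_{\theta_0})$ by a weighted i.i.d.\ LLN with variance $\Var(\phi)\sum_i r_i^2=o(1)$.
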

    \noindent The proof of \cref{lem:LLN modified} follows from direct modifications from \cref{lem:LLN} (which is when $r_i \equiv \frac{1}{p}$). So we omit the proof for brevity.

\begin{proof}[Proof of \cref{thm:EB}]
\begin{enumerate}[(a),itemsep=0.5em]
    \item We show that $\hat{\upsilon}_p \mid X \xp \upsilon(\theta_0)$ under the double-dipping of the data $y$ on $w_i$ and $\thetahat$. For this goal, we already know from \cref{prop:oracle CLT} that $$\upsilon_p \mid X \xp \upsilon(\theta_0),$$
    so it suffices to show $\hat{\upsilon}_p - \upsilon_p  = o_P(1)$. Indeed, this follows from \cref{lem:psi' bound} as
    $$|\hat{\upsilon}_p - \upsilon_p| = |\sumin q_i^2 [\psi_{i,\thetahat}''(w_i) - \psi_{i,\theta_0}''(w_i)]| \le \sup_{w \in \R} |\psi_{i,\thetahat}''(w) - \psi_{i,\theta_0}''(w)| = O(\|\thetahat - \theta_0\|).$$
    The RHS is $o_P(1)$ as we assume that $\thetahat$ is consistent.

    \item It suffices to show that the finite-sample bound in the RHS of \cref{lem:posterior clt} is small. Here we use the bounds $\an = \OPX\Big(\frac{p}{n}\Big), \|A_p\| = \OPX\Big(\sqrt{\frac{p}{n}}\Big)$, which are stated in \cref{lem:alpha_p}. By these bounds, it also follows that
    \begin{align}\label{eq:R_1, R_2}
        R_{1} = \OPX\Big(\frac{p}{n}\Big), \quad R_{2} = \OPX\Big(\frac{p^2}{n}\Big)
    \end{align}
    where the hidden constants do not depend on the estimated prior $\thetahat$. The proof is complete by plugging these into \cref{lem:posterior clt}, alongside the assumption on $\sumin q_i^4$.

    \item Recall from \cref{prop:oracle CLT} that
    $$\sumin q_i (\beta_i- u_i) \mid y, X \xrightarrow[\PP_{\theta_0}]{d} N(0,\upsilon(\theta_0)).$$
    By the conditional converging together lemma (see \cref{lem:converge together}), it suffices to show the following limit:
    \begin{align}\label{eq:eb posterior suffices}
        \sumin q_i (u_i - \hat{u}_i) \mid X \xd N\Big(0, \gamma^2 J(\theta_0)^\top S(\theta_0) J(\theta_0)\Big),
    \end{align}
    as the two displays above imply
    $$\sumin q_i(\beta_i - \hat{u}_i) \mid X \xd N\Big(0, \upsilon(\theta_0) + \gamma^2 J(\theta_0)^\top S(\theta_0) J(\theta_0)\Big).$$
    Then, \eqref{eq:CLT oracle posterior} follows by modifying the variance to the sample-based quantities, using the conclusion of part (a) alongside the continuous mapping theorem.
    The claim under $\gamma = 0$ also follows from \eqref{eq:eb posterior suffices}, as \eqref{eq:eb posterior suffices} implies
    $$\sumin q_i (u_i - \hat{u}_i) \mid y, X \xp 0.$$
    
    It remains to show \eqref{eq:eb posterior suffices}. By applying \cref{lem:sum q'u approximate} for $\sumin q_i \hat{u}_i$ and $\sumin q_i u_i$, we can write $$\sumin q_i (\hat{u}_i - u_i) = \sumin q_i \Big[\psi_{0,\thetahat}'(w_i) - \psi_{0,\theta_0}'(w_i)\Big] + o_P(1).$$
    For notational simplicity, set $g(\theta, w) := \psi_{0,\theta}'(w)$.
    Then, a Taylor approximation gives
    \begin{align*}
        g(\thetahat, w_i) - g(\theta_0, w_i) = (\thetahat-\theta_0)^\top \frac{\partial g}{\partial \theta}(\theta_0,w_i) + O(\|\thetahat-\theta_0\|^2),
    \end{align*}
    where we used the fact that $\frac{\partial^2 g}{\partial \theta^2}$ is uniformly bounded under \cref{assmp:cpt support}.
    By using the LLN in \cref{lem:LLN modified}, we get
    $$\frac{1}{\sqrt{p}}\sumin q_i \frac{\partial g}{\partial \theta}(\theta_0,w_i) \mid X \xp \gamma \EE \frac{\partial g}{\partial \theta}(\theta_0, W_{\theta_0}) = \gamma J(\theta_0).$$
    Combining this alongside the limiting distribution $\sqrt{p}(\thetahat - \theta_0) \xd N(0, S(\theta_0)),$ we have
    \begin{align*}
        \sumin q_i \Big[\psi_{0,\thetahat}'(w_i) - \psi_{0,\theta_0}'(w_i)\Big] &= (\thetahat-\theta_0)^\top \sumin q_i \frac{\partial g}{\partial \theta}(\theta_0, w_i) + o_P(1) \\
        &\xd N\Big(0, \gamma^2 J(\theta_0)^\top S(\theta_0) J(\theta_0) \Big),
    \end{align*}
    completing the proof of \eqref{eq:eb posterior suffices}.
\end{enumerate}
\end{proof}

\subsection{Proofs from \cref{sec:simulations}}
\begin{proof}[Proof of \cref{prop:debiasing}]
    Throughout the proof, all convergence (in distribution/probability) statements are conditioned on $X$.
    Consider the asymptotic scaling $p \ll n^{3/4}$ and define a function $G(\theta) := V(\theta)^{-1} \kappa(\theta)$. By writing $$\sqrt{p}(\tilde{\theta}^{\mathsf{d}}-\theta_0) = \sqrt{p}\big(\thetaveb-\theta_0 - \frac{p}{n} G(\theta_0)\big)  - \frac{p\sqrt{p}}{n} \big(G(\thetaveb) - G(\theta_0)\big),$$
    it suffices to show the following limits:
    \begin{equation}\label{eq:debiasing second eq}
        \sqrt{p}\big(\thetaveb-\theta_0 - \frac{p}{n} G(\theta_0)\big) \xd N(0, V(\theta_0)^{-1}), \quad \frac{p\sqrt{p}}{n} \big(G(\thetaveb) - G(\theta_0)\big) \xp 0.
    \end{equation}
    
    Recall from \eqref{eq:theta expansion} that
    $$\sqrt{p}(\thetaveb - \theta_0) = \big(V(\theta_0)^{-1} + o_P(1)\big) \frac{1}{\sqrt{p}}\sumin \nabla F_i(\theta_0),$$
    so we can write the left term of \eqref{eq:debiasing second eq} as
    $$\sqrt{p}\big(\thetaveb - \theta_0 - \frac{p}{n}G(\theta_0) \big) = V(\theta_0)^{-1} \Big[\frac{1}{\sqrt{p}}\sumin \nabla F_i(\theta_0) - \frac{p\sqrt{p}}{n} \kappa(\theta_0)\Big] +  o_P(1).$$
    The first limit in \eqref{eq:debiasing second eq} follows by recalling from part (c) of \cref{lem:linear clt} that the limit
    $$\frac{1}{\sqrt{p}}\sumin \nabla F_i(\theta_0) - \frac{p\sqrt{p}}{n} \kappa(\theta_0) \xd N(0, V(\theta_0)).$$

    \noindent To show the second claim in \eqref{eq:debiasing second eq}, use a Taylor expansion to write
    $$G(\thetaveb) - G(\theta_0) =  G'(\xi) (\thetaveb - \theta_0) = O_p\Big(\frac{1}{\sqrt{p}} + \frac{p}{n}\Big),$$
    for some $\xi\in (\thetaveb,\theta_0)$. 
    Here we used (a) the fact that $G'(\cdot)$ is bounded (since it is a continuous function and $\Theta$ is compact), (b) the bound for $\thetaveb - \theta_0$ in \cref{thm:upper bound}. Now, the proof is complete by multiplying both sides by $\frac{p\sqrt{p}}{n}$.
\end{proof}

\section{Proof of lemmas}\label{sec:pf lemmas}

\subsection{Proof of Lemmas from the main paper}

\begin{proof}[Proof of \cref{lem:variance checking}]
    By marginalizing out the joint distribution of $(B_\theta,W_\theta)$ in \cref{def:information}, we have
    $$e^{\tilde{\ell}(\theta;t)} = \PP_{W_\theta}(t) = \int \frac{e^{- \frac{(t-d_0 b)^2}{2 d_0}}}{\sqrt{2\pi d_0}} d\mu_\theta(b) = \frac{e^{-\frac{t^2}{2d_0}}}{\sqrt{2\pi d_0}} \int e^{t b - \frac{d_0 b^2}{2} + \ell(\theta; b)} d b.$$
    Recalling the notation $v^2$ for a vector $v \in \R^k$ (see \cref{subsec:notation}), direct computations give
    \begin{align*}
        \nabla^2 \tilde{\ell}(\theta; t) &= \frac{\partial^2 \big(\log \int e^{t b - \frac{d_0 b^2}{2} + \ell(\theta; b)} d b\big)}{\partial \theta^2} \\
        &= \EE \big[\nabla^2 \ell(\theta; B_{t,d_0,\theta})\big] + \EE\big[\nabla \ell(\theta; B_{t,d_0,\theta})^2\big]- \big[\EE \nabla \ell(\theta; B_{t,d_0,\theta})\big]^2.
    \end{align*}
    By taking an outer expectation with respect to $t \stackrel{d}{\equiv} W_\theta$ and noting that $B_{W_\theta,d_0,\theta} \stackrel{d}{\equiv} B_\theta$, we get
    \begin{align}
        & \EE \big[\nabla^2 \ell(\theta; B_{W_\theta,d_0,\theta})\big] + \EE [\nabla \ell(\theta; B_{W_\theta,d_0,\theta})^2] - \EE \Big[ \EE \big(\nabla \ell(\theta; B_{W_\theta,d_0,\theta}) \mid W_{\theta}\big) \Big]^2 \notag \\
        =& \EE \big[\nabla^2 \ell(\theta; B_{\theta})\big] + \Var \nabla \ell(\theta; B_{\theta}) - \Var \Big[ \EE \big(\nabla \ell(\theta; B_{\theta}) \mid W_{\theta}\big) \Big] = - V(\theta). \label{eq:W_theta Fisher information}
    \end{align}
    Here the first equality used the first Bartlett's identity $\EE \nabla \ell(\theta; B_{\theta}) = 0$, and the second equality follows from the second Bartlett's identity $I(\theta) = -\EE \big[\nabla^2 \ell(\theta; B_{\theta})\big] = \Var \nabla \ell(\theta; B_{\theta})$ (see R3).
\end{proof}

For \cref{lem:prior condition checking}, we use the following property for sub-Gaussian mixtures, whose proof is postponed to \cref{sec:proof technical lemmas}.
\begin{lemma}\label{lem:mixture of Gaussians}
    Let $X$ be a $M$-component mixture of sub-Gaussian random variables, where the $m$-th mixture component has mean $\mu_m$ and sub-Gaussian norm $K_m$. Then, $X$ is sub-Gaussian with
    $$\|X\|_{\psi_2}^2 \le K_{\max}^2 + \frac{1}{4} (\mu_{\max} - \mu_{\min})^2,$$
    where $K_{\max} := \max_{m=1}^M K_m, ~ \mu_{\max} := \max_{m=1}^K \mu_m, ~\mu_{\min} := \min_{m=1}^K \mu_m$.
\end{lemma}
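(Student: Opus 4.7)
\medskip

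\noindent\textbf{Proof proposal for \cref{lem:mixture of Gaussians}.} The plan is to expand the moment generating function of $X - \EE X$ according to the mixture decomposition, peel off the sub-Gaussian bound for each component, and then apply Hoeffding's lemma to the residual term coming from the mixture weights. Concretely, let $\pi_1, \ldots, \pi_M$ denote the mixture weights and $Z_m$ the $m$-th sub-Gaussian component with $\EE Z_m = \mu_m$ and $\|Z_m - \mu_m\|_{\psi_2} \le K_m$. Set $\bar{\mu} := \EE X = \sum_{m=1}^M \pi_m \mu_m$, so that for any $t \in \R$,
\begin{align*}
\EE e^{t(X - \bar{\mu})}
\;=\; \sum_{m=1}^M \pi_m e^{t(\mu_m - \bar{\mu})} \EE e^{t(Z_m - \mu_m)}
\;\le\; e^{K_{\max}^2 t^2 / 2} \sum_{m=1}^M \pi_m e^{t(\mu_m - \bar{\mu})}.
\end{align*}

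\noindent The remaining factor is exactly the mgf of the bounded, mean-zero discrete random variable $Y$ that takes value $\mu_m - \bar{\mu}$ with probability $\pi_m$. Since $Y$ is supported on $[\mu_{\min} - \bar{\mu}, \mu_{\max} - \bar{\mu}]$, an interval of length $\mu_{\max} - \mu_{\min}$, Hoeffding's lemma yields
\begin{align*}
\sum_{m=1}^M \pi_m e^{t(\mu_m - \bar{\mu})}
\;=\; \EE e^{tY}
\;\le\; \exp\!\left(\frac{t^2 (\mu_{\max} - \mu_{\min})^2}{8}\right).
\end{align*}

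\noindent Combining the two displays gives
\begin{align*}
\EE e^{t(X - \bar{\mu})} \;\le\; \exp\!\left(\frac{t^2}{2}\Big[K_{\max}^2 + \tfrac{1}{4}(\mu_{\max} - \mu_{\min})^2\Big]\right), \qquad \forall t \in \R,
\end{align*}
which is the desired sub-Gaussian bound on $\|X\|_{\psi_2}^2$. The proof is essentially one clean chain of inequalities; I do not anticipate a real obstacle beyond invoking Hoeffding's lemma in the right form. The only step that warrants a brief justification is the factorization of the mgf across mixture components, which follows from the law of total expectation once $X$ is represented as $Z_N$ for a categorical selector $N \sim (\pi_1, \ldots, \pi_M)$ independent of $(Z_1,\ldots,Z_M)$.
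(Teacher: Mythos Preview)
Your proof is correct and essentially identical to the paper's: both expand the mgf across mixture components, factor out the $e^{K_{\max}^2 t^2/2}$ bound, and apply Hoeffding's lemma to the categorical variable on $\{\mu_1,\ldots,\mu_M\}$. The only cosmetic difference is that you center at $\bar\mu$ from the outset while the paper bounds $\EE e^{\lambda X}$ first and subtracts $\EE X=\EE Y$ at the end.
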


\begin{proof}[Proof of \cref{lem:prior condition checking}]
\begin{enumerate}[(a),itemsep=0.5em]
    \item This is immediate as the sub-Gaussian norm of a bounded measure is uniformly bounded.
    
    \item For a $\kappa$-log-concave measure $\nu$, the Bakry-Émery inequality gives the LSI \citep{bakry2006diffusions}, and consequently imply sub-Gaussianity of $\nu$: $\|\nu\|_{\psi_2} \le \frac{1}{\kappa}$. R5 directly follows from this observation, as $B_{\theta_0}$ is sub-Gaussian.

    Noting that $B_{t,d,\theta}$ has density proportional to $e^{tb - \frac{d b^2}{2} + \ell(\theta; b)}$, set $f(b) := tb - \frac{d b^2}{2} + \ell(\theta; b)$. $f$ is $d$-concave (i.e. strongly concave) as $f(b) + \frac{d b^2}{2}$ is concave for all $b$, and hence $B_{t,d,\theta}$ is sub-Gaussian with norm bounded by $\frac{1}{d} < \frac{2}{d_0}$ as long as $d > \frac{d_0}{2}$. Hence, the second claim in R6 holds.

    Proceeding to prove the first claim in R6, define a function $\mathfrak{m}:B_\epsilon(d_0) \times \Theta \to\R$ by setting
    $$\mathfrak{m}(d,\theta) := \EE[B_{0,d,\theta}] = \frac{\int b e^{-db^2/2} d\mu_\theta(b)}{\int e^{-db^2/2} d\mu_\theta(b)}, \quad \forall d \in B_\epsilon(d_0).$$
    As this is a continuous function (of $d, \theta$) on a compact domain (by R1), it is bounded. For example, regarding the supremum over $d$, direct computations show that
    $$|\partial_d \mathfrak{m}(d,\theta)| = \frac{1}{2} |\Cov(B_{0,d,\theta}, B_{0,d,\theta}^2)| \le \frac{1}{2} \sqrt{\Var(B_{0,d,\theta}) \Var(B_{0,d,\theta}^2)} \lesssim_{d_0} 1, \quad \forall d > \frac{d_0}{2}, ~~ \theta,$$
    so $\mathfrak{m}(d,\theta) = \mathfrak{m}(d_0,\theta) +O(1).$
    The final inequality uses the sub-Gaussianity $\|B_{0,d,\theta}\|_{\psi_2} \le \frac{2}{d_0}$ (note that this implies that $B_{0,d,\theta}^2$ is sub-exponential). 

    \item R5 is immediate by applying \cref{lem:mixture of Gaussians}, as each mixture component of $B_{\theta_0}$ is sub-Gaussian by parts (a) and (b). The first part of R6 is also bounded by the tower property. 
    To check the second part of R6, let $p_m(b)$ denote the conditional density (if we consider a degenerate mixture component, consider the base measure $\nu = \delta_0 + \lambda$ where $\lambda$ denotes the Lebesgue measure) of the $m$th mixture component, and observe that $B_{t,d,\theta}$ has density:
    $$\PP(B_{t,d,\theta} = b) \propto \sum_{m=1}^M \pi_m p_m(b) e^{tb-\frac{d b^2}{2}}.$$
    This is a finite mixture of the component-wise tilts $B^{(m)}_{t,d}$, which have density $p_m(b) e^{tb - \frac{db^2}{2}}$. By parts (a) and (b), each $B^{(m)}_{t,d}$ are sub-Gaussian with $\|B^{(m)}_{t,d}\|_{\psi_2} \le K$ for some constant $K>0$. Also, using the properties of exponential tilting (in particular, we appropriately modify \eqref{eq:tilt expectation} in \cref{lem:sub-Gaussianity}), we can show that $\EE B^{(m)}_{t,d} = O(1+K|t|)$. Now, the conclusion follows by applying \cref{lem:mixture of Gaussians}.

\end{enumerate}    
\end{proof}

\subsection{Proof of upper bound lemmas}\label{sec:pfupbdlm}
\begin{proof}[Proof of \cref{lem:unique maximizer}]
    Recall the notation $(B_\theta, W_\theta)$ from \cref{def:information} and denote the corresponding joint measure as $\PP_{B_\theta,W_\theta}$. Then, we have
    $$\frac{d\PP_{B_\theta,W_\theta}}{d\mu_\theta \otimes d\lambda}(b,t) = \frac{1}{\sqrt{2 \pi d_0}} e^{-\frac{t^2}{2d_0} + tb - \frac{d_0 b^2}{2}}.$$
    Here, $\lambda$ denotes the Lebesgue measure on $\R$. By marginalizing out $B_\theta$, we have
    \begin{align}\label{eq:w_theta density}
        \frac{d\PP_{W_\theta}}{d\lambda}(t) = \frac{e^{-\frac{t^2}{2d_0}}}{\sqrt{2 \pi d_0}} \int e^{tb - \frac{d_0 b^2}{2}} d\mu_\theta(b).
    \end{align}
    Now, by the definition of $\dkl$ and $F_0$,
    {\small
    \begin{align*}
        \dkl(\PP_{W_{\theta_0}} \| \PP_{W_\theta}) &= \EE \Big[\log\Big(\int \exp(W_{\theta_0} b - \frac{d_0 b^2}{2}) d\mu_{\theta_0}(b) \Big) - \log\Big(\int \exp(W_{\theta_0} b - \frac{d_0 b^2}{2}) d\mu_{\theta}(b) \Big)\Big] \\
        & = F_0(\theta_0) - F_0(\theta) \ge 0.
    \end{align*}
    }
    The conclusion holds if the final inequality is strict for any $\theta \neq \theta_0$. This is because the parametric family $\{\PP_{W_\theta}\}_{\theta \in \Theta}$ (see \eqref{eq:w_theta density}) is identifiable (i.e., $\PP_{W_\theta} = \PP_{W_{\theta'}}$ implies $\theta = \theta'$), which follows from the injectivity of the Laplace transform for finite measures, alongside the identifiability of $\mu_\theta$ from R0 in \cref{assmp:prior}.
\end{proof}

Next, we prove \cref{lem:LLN}, using some  technical lemmas which are proved in \cref{sec:proof technical lemmas}.

\begin{lemma}\label{lem:change conditioning}
    Let $(X_p, Y_p)$ be a sequence of random variables. Then, $X_p \xp 0$ if and only if $X_p \mid Y_p \xp 0$. %
\end{lemma}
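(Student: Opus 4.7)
The plan is to translate the statement into the concrete form: ``$X_p \mid Y_p \xp 0$'' means that the $[0,1]$-valued random variable $Z_p(\epsilon) := \PP(|X_p| > \epsilon \mid Y_p)$ converges to $0$ in probability for every $\epsilon > 0$, while ``$X_p \xp 0$'' means that $\EE Z_p(\epsilon) = \PP(|X_p| > \epsilon) \to 0$ for every $\epsilon > 0$. Once this is agreed, both directions reduce to standard manipulations with $Z_p(\epsilon)$, namely Markov's inequality one way and the bounded convergence theorem the other way.

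For the direction ``$X_p \xp 0 \Rightarrow X_p \mid Y_p \xp 0$'', I fix $\epsilon > 0$ and observe that $Z_p(\epsilon) \ge 0$ with $\EE Z_p(\epsilon) = \PP(|X_p| > \epsilon) \to 0$ by hypothesis. Markov's inequality then gives, for any $\delta > 0$,
\[
\PP\big(Z_p(\epsilon) > \delta\big) \le \frac{\EE Z_p(\epsilon)}{\delta} \longrightarrow 0,
\]
which is exactly $Z_p(\epsilon) \xp 0$.

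For the converse ``$X_p \mid Y_p \xp 0 \Rightarrow X_p \xp 0$'', I use that $Z_p(\epsilon)$ is bounded by $1$. For any $\delta > 0$,
\[
\EE Z_p(\epsilon) \le \delta + \PP\big(Z_p(\epsilon) > \delta\big),
\]
so by hypothesis $\limsup_{p \to \infty} \EE Z_p(\epsilon) \le \delta$. Sending $\delta \to 0$ gives $\PP(|X_p| > \epsilon) = \EE Z_p(\epsilon) \to 0$, which is $X_p \xp 0$.

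There is no real obstacle here; the only subtlety is being explicit about the meaning of conditional convergence in probability (namely convergence in probability of the regular conditional probability viewed as a random variable), since this is a notational convention the paper uses implicitly.
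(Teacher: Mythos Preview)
Your proof is correct and follows exactly the same approach as the paper: both use the tower property to write $\PP(|X_p|>\epsilon)=\EE\,\PP(|X_p|>\epsilon\mid Y_p)$, then apply Markov's inequality for the forward direction and a bounded-convergence argument for the converse. The only cosmetic difference is that you spell out the bounded convergence step via the inequality $\EE Z_p(\epsilon)\le \delta+\PP(Z_p(\epsilon)>\delta)$, whereas the paper simply invokes the bounded convergence theorem by name.
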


\begin{lemma}[Extension of Lemma B.3 in \cite{lee2025clt}]\label{lem:B3}
    Let $h$ be any $C^1$ function where $h' \in \mcp_r$, (i.e., $h'$ is bounded by a degree $r$ polynomial).
    For random variables $V \sim N(m, d), V_0 \sim N(m_0, d_0)$ with $d \le 2d_0$, we have
    $$|\EE h(V) - \EE h(V_0)| \lesssim_{d_0, r} |m-m_0| (1+ |m_0|^r + |m-m_0|^r) + |d-d_0|.$$
\end{lemma}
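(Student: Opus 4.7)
The plan is to use a triangle inequality with an intermediate Gaussian, handle the mean discrepancy and variance discrepancy separately, and in each case use a first-order Taylor expansion (mean value theorem) combined with polynomial moment bounds for Gaussians. Introduce $V' \sim N(m_0, d)$, coupled to $V$ and $V_0$ as described below, and write
\[
|\EE h(V) - \EE h(V_0)| \le |\EE h(V) - \EE h(V')| + |\EE h(V') - \EE h(V_0)|.
\]

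For the mean-change step, couple $V = V' + (m-m_0)$ with $V' \sim N(m_0, d)$. By the mean value theorem there is $\xi$ between $V'$ and $V$ with $h(V) - h(V') = h'(\xi)(m-m_0)$. Since $h'\in\mcp_r$, we have $|h'(\xi)|\lesssim 1+|V|^r+|V'|^r$, and by the Gaussian moment bound together with $d\le 2d_0$,
\[
\EE|V|^r + \EE|V'|^r \lesssim_{d_0,r} 1+|m_0|^r+|m-m_0|^r.
\]
So this term contributes $|m-m_0|\cdot(1+|m_0|^r+|m-m_0|^r)$, up to a constant depending on $(d_0,r)$.

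For the variance-change step, couple $V' = m_0 + \sqrt{d}\,Z$ and $V_0 = m_0 + \sqrt{d_0}\,Z$ using the \emph{same} standard Gaussian $Z$. Then $h(V') - h(V_0) = h'(\xi)(\sqrt d - \sqrt{d_0})Z$ for some $\xi$ between them. Using $|\sqrt d - \sqrt{d_0}|\le |d-d_0|/\sqrt{d_0}$ (valid since $d,d_0>0$), Cauchy--Schwarz, and $\EE|Z|^k + \EE|V'|^{2r}+\EE|V_0|^{2r}\lesssim_{d_0,r} 1+|m_0|^{2r}$, I obtain
\[
|\EE h(V') - \EE h(V_0)| \lesssim_{d_0,r} |d-d_0|\,(1+|m_0|^r),
\]
which is dominated by the claimed RHS (the $|m_0|^r$ factor can be absorbed; one can either refine the statement or verify that the original Lemma~B.3 in \cite{lee2025clt} carried the same $(1+|m_0|^r)$ growth on its $|d-d_0|$ term — the main point of the extension here is allowing polynomial growth of $h'$ rather than boundedness, so re-examining the original proof's bookkeeping suffices). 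Combining the two pieces and absorbing the $(d_0,r)$-dependent constants yields the stated bound.

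The only mildly delicate point is keeping the polynomial dependence clean: one must be careful when expanding $\EE|V|^r$ to separate contributions from $|m|^r = |m_0 + (m-m_0)|^r \lesssim |m_0|^r + |m-m_0|^r$ versus the variance piece $d^{r/2}\lesssim_{d_0} 1$, so that the final factor matches $1+|m_0|^r+|m-m_0|^r$. The structure of the proof is otherwise a direct extension of the bounded-$h'$ argument in \cite{lee2025clt}; the main obstacle is simply tracking which constants depend on $(d_0,r)$ and verifying that the coupling-based variance step produces a bound linear in $|d-d_0|$ (not $|\sqrt d - \sqrt{d_0}|$ multiplied by a factor that grows with $|m_0|$ in a way inconsistent with the stated RHS).
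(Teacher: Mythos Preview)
Your approach is essentially the paper's: split via an intermediate Gaussian into a mean-change step and a variance-change step, and handle each by the mean value theorem together with polynomial Gaussian moment bounds. Your caveat about the $(1+|m_0|^r)$ factor on the $|d-d_0|$ term is well placed---the paper's proof sidesteps it by reducing the variance step to the case $m=m_0=0$, but carrying out that reduction (by shifting $h$) reintroduces exactly the same factor, so the stated bound implicitly carries it and this is harmless in every downstream application of the lemma.
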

    
\begin{proof}[Proof of \cref{lem:LLN}]
\begin{enumerate}[(a)]
    \item We separate the proof into two steps. Throughout, $K$ denotes absolute constants whose exact value may differ line-by-line. 

    \vspace{2mm}
    \emph{Step 1.} Recall that $w \mid X, \beta^\star \sim N(\Sigma \beta^\star, \Sigma)$ where $\Sigma=\sigma^{-2}X^{\top}X$. 
    By the Gaussian Poincare inequality \citep[e.g. Theorem 3.25,][]{van2014probability}, the growth condition, and \cref{lem:convergence for F_i}, we have 
    \begin{align*} \Var\Big(\frac{1}{p} \sumin g (w_i) \mid X, \beta^\star \Big) &\le \frac{1}{p^2} \sumin \EE[g' (w_i)^2 \mid X, \beta^\star] \\
    &\le \frac{1}{p^2} \sumin \Big[1+\EE \big[|w_i|^{2r} \mid X,\bbst\big] \Big] = O_P\Big(\frac{1}{p}\Big). \end{align*}

    For notational simplicity, let $\phi(m, d) := \EE g(N(m, d))$. This allows us to express the conditional mean of $g(w_i)$ as: $$\EE[g(w_i) \mid X, \beta^\star] = \phi(d_i \beta_i^\star - \mathcal{F}_i, d_i),$$ 
    and the variance bound implies
    $$\frac{1}{p} \sumin g(w_i) - \frac{1}{p} \sumin \phi(d_i \beta_i^\star - \mathcal{F}_i, d_i) \mid X, \bbst \xp 0.$$

    \emph{Step 2.} %
    Noting that $\EE g(W_{\theta_0}) = \EE \Big[\EE \big[g(W_{\theta_0}) \mid B_{\theta_0}\big]\Big] = \EE \phi(d_0 B_{\theta_0}, d_0)$, we claim $\frac{1}{p} \sumin \phi(d_i \beta_i^\star - \F_i, d_i) \mid X \xp \EE \phi(d_0 B_{\theta_0}, d_0).$ It suffices to individually show: \begin{align}\label{eq:lln suffices to show} \frac{1}{p} \sumin \big[\phi(d_i \beta_i^\star - \F_i, d_i) - \phi(d_0 \beta_i^\star, d_0)\big] \mid X &\xp 0, \quad \frac{1}{p} \sumin \phi(d_0 \beta_i^\star, d_0) \xp \EE \phi(d_0 B_{\theta_0}, d_0). \end{align} 
    The second claim in \eqref{eq:lln suffices to show} follows from the LLN for i.i.d. random variables. To show the first claim in \eqref{eq:lln suffices to show}, we use \cref{lem:B3} to write 
    \begin{align} 
        &|\phi(d_i \beta_i^\star -\F_i, d_i) - \phi(d_0 \beta_i^\star, d_0)| \\
        \lesssim_{d_0,r} & |d_i-d_0|+|(d_i-d_0)\beta_i^\star - \F_i| \Big(1+ |d_0 \beta_i^\star|^r + |(d_i-d_0)\beta_i^\star - \F_i|^r \Big) \notag \\ 
        \lesssim_{d_0,r} & |d_i-d_0|+\big(|(d_i-d_0)\beta_i^\star|+|\F_i|\big) \Big(1+|\beta_i^\star|^r + |\F_i|^r\Big). \label{eq:di-d0}
    \end{align}
    By Cauchy Schwartz alongside moment bounds, \begin{align*} &\Big[\sumin \big[\phi(d_i \beta_i^\star - \F_i, d_i) - \phi(d_0 \beta_i^\star, d_0)\big] \Big]^2 \\ 
    \lesssim &p\sumin (d_i-d_0)^2 + \sumin \Big(|(d_i-d_0)\beta_i^\star|^2+|\F_i|^2 \Big) \sumin \Big(1+ |\beta_i^\star|^{2r} + |\F_i|^{2r} \Big) \\
    = & O_{P,X}\Big(\frac{p^2}{n}\Big) + O_{P}\Big(\frac{p}{n} + \frac{p^2}{n}\Big) \Big(p + \sumin |\beta_i^\star|^{2r} + \sumin |\F_i|^{2r} \Big) \\ =& O_{P,X}\Big(\frac{p^2}{n}\Big) + O_{P}\Big(\frac{p^3}{n}\Big) = o_{P}(p^2), \end{align*} which implies the first claim. Here, the third line used simple expectation computations: $\EE \sumin (d_i-d_0)^2 = p/n, \EE [\F_i^2 \mid \bbst] \le \frac{\|\bbst\|^2}{n}$. The fourth line used \cref{lem:convergence for F_i}. Finally, the conditional limit in \eqref{eq:lln suffices to show} follows from \cref{lem:change conditioning}.
    
\item Given a function $h \in \mcp(r)$, define a function $H: \R\to \R$ as 
        $$H(t) := \EE h(B_{t}) = \frac{\int h(b) \exp[t b- \frac{d_0 b^2}{2}] d \mu_{\theta_0}(b)}{\int \exp\big[t b - \frac{d_0 b^2}{2}\big] d \mu_{\theta_0}(b)}.$$
    Note that taking the derivative and using \cref{lem:sub-Gaussianity} gives
    $$|H'(t)| = |\Cov(B_t, h(B_t))| \lesssim \EE [|B_t| + |B_t|^{r+1}] \lesssim 1 + |t|^{3(r+1)},$$
    so we have $H'\in \mcp(3(r+1))$.

    Using this notation, we have
    \begin{align*}
        \frac{1}{p}\sumin \EE h(B_{w_i,d_i}) &=\frac{1}{p}\sumin \EE h(B_{w_i,d_0}) + o_{P,X}(1) \\
        &= \frac{1}{p} \sumin H(w_i) + o_{P,X}(1) \xp \EE H(W_{\theta_0}) = \EE h(B_{\theta_0}).
    \end{align*}
    Here, the first equality used \cref{lem:B4} (noting that $h$ has polynomial growth), followed by using \cref{lem:convergence for F_i} to bound the error term. The limit in the last line follows from part (a) of this lemma (noting that $H'$ has polynomial growth). The final equality uses the observation $B_{\theta_0} \mid W_{\theta_0} \stackrel{d}{\equiv} B_{W_{\theta_0}, d_0}$ to write
    $$H(W_{\theta_0}) = \EE \big[h(B_{\theta_0}) \mid W_{\theta_0}\big].$$
    The second identity follows by modifying the above argument by setting
    $H(w) := [\EE h(B_{w})]^2.$
\end{enumerate}
\end{proof}

\begin{remark}\label{rmk:change centering}
    In part (a) of the above proof, by modifying the computations in \eqref{eq:di-d0} and the following line, we can show the following:
    \begin{align*}
        &\Big[\sumin \phi(d_i\beta_i - \F_i, d_i) - \phi(d_0\beta_i^\star - \F_i, d_0) ] \Big]^2 \\
        \lesssim_{d_0,r}& p\sumin (d_i-d_0)^2 + \sumin |(d_i-d_0) \beta_i^\star|^2 \sumin (1+ |\beta_i^\star|^{2r} + |\F_i|^{2r}) = O_P\Big(\frac{p^2}{n}\Big).
    \end{align*}
    Note that by replacing the first argument of each $\phi$ in the summand from $d_i \beta_i^\star - \F_i$ to $d_0 \beta_i^\star - \F_i$ (as opposed to replacing to $d_0 \beta_i^\star$ as in \eqref{eq:di-d0}), we have a better error bound. We will use this fact to prove the CLT \cref{lem:linear clt}, where a tighter bound is required.
\end{remark}

To prove \cref{lem:linear clt}, we require a crucial technical lemma, which provides an improved moment bound compared to Step 2 in the proof of \cref{lem:LLN}. %

\begin{lemma}\label{lem:A7}
    Suppose R5 holds. Let $\Phi : \R \to \R$ be a $\mathcal{C}^3$ function with $\Phi', \Phi'', \Phi''' \in \mcp(r)$. Then, for $\kappa_{\Phi}(\theta) := \frac{1}{2}\EE [B_{\theta}^2] \EE[\Phi''(d_0 B_{\theta})]$, we have
    $$\sumin \Phi(d_0 \beta_i^\star - \mathcal{F}_i) = \sumin \Phi(d_0 \beta_i^\star) + \frac{p^2}{n} \kappa_{\Phi}(\theta_0) + O_{P}\Big(\frac{p}{\sqrt{n}} + \frac{p^{5/2}}{n^{3/2}} \Big).$$
    Here, the hidden constant only depends on $d_0, r$, and the moments of $B_{\theta_0}$.
\end{lemma}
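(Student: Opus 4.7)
The plan is a third-order Taylor expansion of $\Phi(d_0\beta_i^\star - \mathcal{F}_i)$ around $d_0\beta_i^\star$, followed by a careful moment analysis of each resulting sum conditional on $\bbst$ (leveraging the randomness of $X$ through $A_p$). Write
\begin{equation*}
\Phi(d_0\beta_i^\star - \mathcal{F}_i) - \Phi(d_0\beta_i^\star) = -\Phi'(d_0\beta_i^\star)\mathcal{F}_i + \tfrac{1}{2}\Phi''(d_0\beta_i^\star)\mathcal{F}_i^2 + R_i,
\end{equation*}
with $|R_i|\lesssim |\mathcal{F}_i|^3(1+|\beta_i^\star|^r+|\mathcal{F}_i|^r)$ by $\Phi'''\in\mcp(r)$. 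Summing over $i$ produces three sums $-T_1$, $T_2$, and $\sumin R_i$ which I analyze separately. Throughout, $\EE[A_p(i,j)\mid\bbst]=0$ and $\EE[A_p(i,j)A_p(i',j')\mid\bbst]=(d_0^2/n)\mathbbm{1}\{\{i,j\}=\{i',j'\}\}$ for $i\neq j,\,i'\neq j'$, by the independent-entry structure of $X$ from \cref{assmp:random design}.

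For the linear term $T_1:=\sum_{i\neq j}\Phi'(d_0\beta_i^\star)\beta_j^\star A_p(i,j)$, the conditional mean is zero and pairing indices in $\EE[T_1^2\mid\bbst]$ bounds it by $(d_0^2/n)$ times a sum over $(i,j)$ of $[\Phi'(d_0\beta_i^\star)\beta_j^\star]^2$ and an analogous symmetric cross term. Using R5 and the polynomial growth of $\Phi'$, this is $O_P(p^2/n)$, so Chebyshev yields $T_1=O_P(p/\sqrt{n})$. For the quadratic term $T_2:=\tfrac{1}{2}\sumin\Phi''(d_0\beta_i^\star)\mathcal{F}_i^2$, I split it into conditional mean and fluctuation. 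Using $\EE[\mathcal{F}_i^2\mid\bbst]=(d_0^2/n)(\|\bbst\|^2-\beta_i^{\star 2})$,
\begin{equation*}
\EE[T_2\mid\bbst] = \frac{d_0^2}{2n}\sumin\Phi''(d_0\beta_i^\star)\|\bbst\|^2 - \frac{d_0^2}{2n}\sumin\Phi''(d_0\beta_i^\star)\beta_i^{\star 2};
\end{equation*}
the diagonal piece is $O_P(p/n)$, and the main piece factorizes as $p^2\cdot \bigl[p^{-1}\sumin\Phi''(d_0\beta_i^\star)\bigr]\cdot \bigl[p^{-1}\|\bbst\|^2\bigr]$. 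Each factor is an i.i.d.\ average whose mean is $\EE\Phi''(d_0 B_{\theta_0})$ and $\EE B_{\theta_0}^2$ respectively, with $O_P(p^{-1/2})$ fluctuations by R5; the product rule then yields the target $\frac{p^2}{n}\kappa_\Phi(\theta_0) + O_P(p^{3/2}/n)$, and $p^{3/2}/n=o(p/\sqrt{n})$ since $p=o(n)$.

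For the fluctuation $T_2-\EE[T_2\mid\bbst]$, I view $T_2$ as a weighted quadratic form in $\{A_p(i,j)\}_{i<j}$. A direct expansion of $\Var(T_2\mid\bbst)$ enumerates index 4-tuples of the form $A_p(i,j)A_p(i,k)A_p(i',j')A_p(i',k')$; the independent-entry structure of $X$ and sub-Gaussianity of $P$ reduce the nonvanishing contributions to index patterns with enough pairings, each of order $O(n^{-2})$ or smaller, and summing the admissible patterns (combined with the operator-norm bound $\|A_p\|=O_{P,X}(\sqrt{p/n})$ from \cref{lem:alpha_p} to discard off-diagonal cross terms) gives $\Var(T_2\mid\bbst)=O(p^3/n^2)$, whence $T_2-\EE[T_2\mid\bbst]=O_P(p^{3/2}/n)=o(p/\sqrt{n})$. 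For the remainder $\sumin R_i$, sub-Gaussianity of the rows of $X$ implies $\mathcal{F}_i\mid\bbst$ is sub-Gaussian with $\|\mathcal{F}_i\|_{\psi_2}\lesssim\sqrt{\|\bbst\|^2/n}=O_P(\sqrt{p/n})$, so $\EE[|\mathcal{F}_i|^k\mid\bbst]\lesssim_k(p/n)^{k/2}$; combining this with R5 and Cauchy-Schwarz across the factors $|\mathcal{F}_i|^3$ and $1+|\beta_i^\star|^r+|\mathcal{F}_i|^r$ yields $\EE[\sumin|R_i|\mid\bbst]\lesssim p\cdot(p/n)^{3/2}=p^{5/2}/n^{3/2}$.

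The main obstacle is the variance control of $T_2-\EE[T_2\mid\bbst]$: since the $A_p(i,j)$ are not genuinely independent across $(i,j)$ (they share $X$-rows), one must carefully enumerate overlapping 4-tuple index patterns and show cross-correlations are negligible, using the independent-entry structure of $X$, R5, and polynomial growth of $\Phi''$. The linear and remainder contributions, by contrast, reduce to routine second-moment computations combined with the i.i.d.\ LLN for $\bbst$ and the second-moment structure of $A_p$; assembling the three estimates $T_1=O_P(p/\sqrt{n})$, $T_2=\frac{p^2}{n}\kappa_\Phi(\theta_0)+o_P(p/\sqrt{n})$, and $\sumin R_i = O_P(p^{5/2}/n^{3/2})$ yields the claim.
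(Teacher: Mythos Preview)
Your approach matches the paper's: the same third-order Taylor expansion followed by separate moment analyses of the linear, quadratic, and cubic pieces, with the paper packaging the quadratic-term computation into a standalone moment lemma (\cref{lem:F_i^2 moment}) rather than your conditional-mean/fluctuation split, and using H\"older with exponents $4/3,4$ for the remainder instead of your direct moment bound. One correction: $\mathcal{F}_i\mid\bbst$ is only sub-\emph{exponential}, not sub-Gaussian, since it is a normalized sum of products of independent sub-Gaussians (cf.\ \cref{lem:mgf of F_i}); nonetheless your moment conclusion $\EE[|\mathcal{F}_i|^k\mid\bbst]\lesssim_k(p/n)^{k/2}$ remains valid in the relevant range and the rest of the argument goes through unchanged.
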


\noindent Finally, we will need a technical result which is a restatement of Lemma A.13 in \cite{lee2025bayesregression}.

\begin{lemma}[Conditional converging together]\label{lem:converge together}
    Let $G_p = G_p(y,X,\bbst), ~ \mu_p(X,\bbst)$ be $k$-dimensional random vectors, and let $T_1, T_2 \succ 0$ be $k \times k$ deterministic matrices. Suppose $G_p \mid X,\bbst \xd N(0, T_1)$ and $\mu_p \mid X \xd N(0, T_2).$ Then, we have $G_p + \mu_p \mid X \xd N(0, T_1+T_2).$    
\end{lemma}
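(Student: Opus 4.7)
The plan is to reduce the claim to convergence of conditional characteristic functions via a conditional form of L\'evy's continuity theorem. Explicitly, it suffices to show that, for every fixed $t \in \R^k$,
$$\EE[e^{i t^\top (G_p + \mu_p)} \mid X] \xp \exp\!\left(-\tfrac{1}{2} t^\top (T_1 + T_2) t\right)$$
in $X$-probability.

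The first step is a tower decomposition. Since $\mu_p$ is $(X,\bbst)$-measurable, conditioning on $(X,\bbst)$ pulls out the factor $e^{i t^\top \mu_p}$, yielding
$$\EE[e^{i t^\top (G_p + \mu_p)} \mid X] = \EE\!\left[ e^{i t^\top \mu_p} \cdot \EE[e^{i t^\top G_p} \mid X, \bbst] \,\big|\, X \right].$$
Invoking the first hypothesis $G_p \mid X, \bbst \xd N(0, T_1)$ (interpreted as convergence in probability of the inner conditional characteristic function), we may write $\EE[e^{i t^\top G_p} \mid X, \bbst] = \exp(-\tfrac{1}{2} t^\top T_1 t) + \epsilon_p$, where $|\epsilon_p| \le 2$ and $\epsilon_p \xp 0$ in (unconditional) probability. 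This splits the display above into a main term $\exp(-\tfrac{1}{2} t^\top T_1 t) \cdot \EE[e^{i t^\top \mu_p} \mid X]$ and a remainder $\EE[e^{i t^\top \mu_p} \epsilon_p \mid X]$. The main term converges in $X$-probability to $\exp(-\tfrac{1}{2} t^\top (T_1 + T_2) t)$ by applying the second hypothesis $\mu_p \mid X \xd N(0, T_2)$ together with the forward direction of L\'evy's theorem.

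For the remainder, the trivial bound $|e^{i t^\top \mu_p}| = 1$ gives $|\EE[e^{i t^\top \mu_p} \epsilon_p \mid X]| \le \EE[|\epsilon_p| \mid X]$. Combining $|\epsilon_p| \le 2$ with $\epsilon_p \xp 0$ yields $\EE|\epsilon_p| \to 0$ by bounded convergence, and Markov's inequality applied to the nonnegative random variable $\EE[|\epsilon_p| \mid X]$ then gives $\EE[|\epsilon_p| \mid X] \xp 0$ in $X$-probability. Combining the main term and the remainder completes the argument.

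The main technical obstacle is pinning down the precise meaning of ``conditional convergence in distribution''. The pragmatic interpretation, and the one implicit in the hypotheses, is that the relevant conditional characteristic functions converge to the corresponding Gaussian characteristic functions in the appropriate conditional probability; once this is fixed, the argument above is essentially routine. A minor subtlety worth flagging is that $\epsilon_p = \epsilon_p(X, \bbst)$ depends on $\bbst$ and so cannot be factored out of the outer conditional expectation over $\bbst$, but the trivial bound $|e^{i t^\top \mu_p}| = 1$ sidesteps this issue entirely without requiring any independence between $G_p$ and $\mu_p$.
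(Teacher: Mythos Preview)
The paper does not prove this lemma; it simply restates Lemma~A.13 of \cite{lee2025bayesregression} and defers to that reference. Your characteristic-function argument is the standard route and is correct: the tower decomposition, the $\epsilon_p$ splitting, and the bounded-convergence/Markov step for the remainder are all sound, and as you note no independence between $G_p$ and $\mu_p$ is needed.

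One small point worth spelling out is the very first reduction, where you pass from convergence in $X$-probability of $\EE[e^{it^\top(G_p+\mu_p)}\mid X]$ for each fixed $t$ to conditional weak convergence. This conditional L\'evy continuity statement is true, but a self-contained justification goes via a subsequence argument: from pointwise-in-$t$ convergence in probability, extract (e.g.\ by integrating against a Gaussian weight in $t$ and applying Fubini, or by diagonalizing over a countable dense set) a further subsequence along which the conditional characteristic functions converge for Lebesgue-a.e.\ $t$, almost surely in $X$; then apply the classical L\'evy theorem $X$-pointwise. You already flag exactly this interpretational subtlety at the end, so this is a detail to fill in rather than a gap.
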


\begin{proof}[Proof of \cref{lem:linear clt}]
(a) We first prove part (a). Here, all results are stated for general $(p,n)$, and we will specify whenever the assumption $\frac{p^{3/2}}{n} \to \delta$ is used. Throughout this proof, $\theta = \theta_0$ is assumed and we abbreviate $B_{t,d} = B_{t,d,\theta_0}$.

Since $\nabla F_i(\theta_0) = \EE \nabla \ell(\theta_0; B_{w_i, d_i})$, using \cref{lem:B4} followed by \cref{lem:convergence for F_i} gives:
\begin{align*}
    \sumin \nabla F_i(\theta_0) = \sumin \EE \nabla \ell(\theta_0; B_{w_i, d_i}) &= \sumin \EE \nabla \ell(\theta_0; B_{w_i, d_0}) + O_{P}\Big(\sumin |d_i-d_0| (1+|w_i|^{3(r+2)}) \Big) \\
    &= \sumin \EE \nabla \ell(\theta_0; B_{w_i, d_0}) + O_{P}\Big(\frac{p}{\sqrt{n}}\Big).
\end{align*}
For any $t \in \R$, define a vector-valued function $h:\R \to \R^k$ by 
\begin{align}\label{eq:def h}
    h(t) = (h_1(t), \ldots,  h_k(t))^\top := \EE \nabla \ell(\theta_0; B_{t, d_0}).
\end{align}
Then, it suffices to prove that
\begin{align}\label{eq:wts clt}
    \frac{1}{\sqrt{p}} \sumin h(w_i) \xd N(\delta \kappa(\theta_0), V(\theta_0)) \quad \text{as} \quad \frac{p^{3/2}}{n} \to \delta.
\end{align}
We separate the proof into two main steps. In the remainder of the proof, the quadratic tilt parameter will always be $d_0$, so we further abbreviate $B_{t} = B_{t,d_0,\theta_0}$.
\vspace{3mm}

\noindent \emph{Step 1:} Use the second-order Poincare inequality to get conditional normality.

Denote the conditional (on $X, \bbst$) mean and variance of the LHS of \eqref{eq:wts clt} as: $$\mu_p (X,\beta^\star) := \frac{1}{\sqrt{p}} \sumin \EE\Big[ h(w_i) \mid X, \beta^\star \Big], \quad \tau^2_p(X,\beta^\star) := \frac{1}{p} \Var \Big[ \sumin 
 h(w_i) \mid X, \beta^\star \Big]$$
 We claim the conditional limit:
$$\frac{1}{\sqrt{p}} \sumik h(w_i) - \mu_p(X,\beta^\star) \mid X,\bbst \xd N\Big(0, \tau_p^2(X,\beta^\star)\Big).$$
We use the Cramér–Wold device to show vector-valued convergence. It suffices to show the following for any constant vector $\alpha \in \R^k$ with $\|\alpha\|=1$:
$$\frac{1}{\sqrt{p}} \alpha^\top \Big[\sumik h(w_i) - \mu_p(X,\beta^\star)\Big] \mid X,\bbst \xd N\Big(0, \alpha^\top \tau_p^2(X,\beta^\star) \alpha\Big).$$

We prove this claim for any fixed vector $\alpha$.
By applying the unidimensional Gaussian Poincare inequality (see Theorem 2.2 in \cite{chatterjee2009fluctuations}), the TV distance between the LHS and the RHS (still conditioned on $X, \bbst$) is bounded above by
$\frac{\kappa_1 \kappa_2}{\tau_p^2(X,\beta^\star)},$
where 
$$\kappa_1^4 := \frac{1}{p^2} \EE\Big[ \Big[\sumin \big[\sum_{a=1}^k \alpha_a h_a'(w_i) \big]^2 \Big]^2 \mid X, \bbst \Big], \quad \kappa_2^4 := \frac{1}{p^2} \EE \Big[\maxin \big[\sum_{a=1}^k \alpha_a h_a''(w_i)\big]^4 \mid X, \bbst \Big].$$
$\alpha^\top \tau_p^2(X,\beta^\star) \alpha$ is bounded away from 0 by computations in the next step (see Step 2-2, which shows that $\tau_p^2(X,\beta^\star)$ converges in probability to a positive definite matrix), and it suffices to show $\kappa_1 = O_P(1), \kappa_2 = o_P(1)$.

To bound $\kappa_1$, use Cauchy-Schwartz (twice, for each square) alongside $\|\alpha\|=1$ to write
\begin{align*}
    \kappa_1^4 \le \frac{1}{p^2} \EE \Big[\big[\sumin \sum_{a=1}^k h_a'(w_i)^2\big]^2 \mid X,\bbst \Big] \le \frac{k}{p} \sumin \sum_{a=1}^k \EE\Big[h_a'(w_i)^4 \mid X,\bbst \Big]
\end{align*}
Recalling the definition of $h$ from \eqref{eq:def h} and moving the differential inside the expectation, we get %
\begin{align*}
    h'(t) &= \Cov(\nabla \ell(\theta_0; B_{t}), B_{t}), \\
    h''(t) &= \Cov(B_{t}\nabla \ell(\theta_0; B_{t}), B_{t}) - \EE [B_{t}] \Cov(\nabla \ell(\theta_0; B_{t}), B_{t}) - \EE [\nabla \ell(\theta_0; B_{t})] \Var(B_{t}).
\end{align*}
Hence, recalling from R4 that $\nabla \ell(\theta_0; b) \in \mcp(r)$ and using \cref{lem:sub-Gaussianity}, we have 
\begin{align}\label{eq:h_a' bound}
    |h_a'(t)| \le \EE|B_{t} \partial_{\theta_a} \ell(\theta_0; B_{t})| \lesssim \EE |B_{t}| + \EE |B_{t}|^{r+1} \lesssim_r 1+|t|^{3(r+1)}, \quad \forall a, t.
\end{align}
Hence, by taking $t = w_i$ and taking an outer expectation over $w_i \mid X, \bbst$, we have
\begin{align}\label{eq:h_a' bound 2}
    \EE\big[h_a'(w_i)^4 \mid X,\bbst\big] \lesssim_r 1 + \EE |w_i|^{12(r+1)}, \quad \forall a.
\end{align}
Plugging in this into the upper bound for $\kappa_1^4$ and using \cref{lem:convergence for F_i} gives
\begin{align*}
    \kappa_1^4 \lesssim \frac{k^2}{p} \sumin (1 + \EE |w_i|^{12(r+1)}) = O_P(1).
\end{align*}

Moving on to bounding $\kappa_2$, a similar application of Cauchy-Schwartz followed by the naive bound $\max(v_1,\ldots, v_p) \le v_1+\ldots+v_p$ gives
\begin{align*}
    \kappa_2^4 &\le \frac{1}{p^2} \EE \Big[\maxin \big[\sum_{a=1}^k h_a''(w_i)^2\big]^2 \mid X,\bbst \Big] \le \frac{k}{p^2} \EE \Big[\maxin \big[\sum_{a=1}^k h_a''(w_i)^4\big] \mid X,\bbst \Big] \\
    &\le \frac{k}{p^2} \EE \Big[\sumin \big[\sum_{a=1}^k h_a''(w_i)^4\big] \mid X,\bbst \Big] = O_P\Big(\frac{1}{p}\Big).
\end{align*}
Here, the final bound follows from repeating the computations \eqref{eq:h_a' bound}-\eqref{eq:h_a' bound 2} for $h_a''$.

\vspace{3mm}

\noindent \emph{Step 2:} Marginalize out $\beta^\star$.

Now, to prove \eqref{eq:wts clt}, it suffices to separately prove the following:
$$\mu_p (X, \beta^\star) \xd N(\delta \kappa(\theta_0), T_1), \quad \tau^2_p(X, \beta^\star) \xp T_2, \quad T_1 + T_2 = V(\theta_0),$$
where $T_1, T_2 \succ 0$ are deterministic $k\times k$ matrices (depending on $\theta_0$) defined as follows:
\begin{align}\label{eq:T_1, T_2}
    T_1 := \Var\Big[\EE \big[h(W_{\theta_0}) \mid B_{\theta_0} \big] \Big], \quad T_2 := \EE\Big[\Var \big[h(W_{\theta_0}) \mid B_{\theta_0} \big] \Big].
\end{align}
Assuming this, \eqref{eq:wts clt} follows from \cref{lem:converge together}.
We separate out the proof of each claim into three sub-steps.

\vspace{3mm}
\noindent \emph{Step 2-1:} $\mu_p (X, \beta^\star) \mid X \xd N(\delta \kappa(\theta_0), T_1)$.

For any $m\in\R$, define a random variable $W_m := N(m, d_0)$ and a vector-valued function $\Phi:\R \to\R^k$ as $\Phi(m) := \EE h(W_m)$. Then, we have
\begin{align}\label{eq:derivatives of phi}
    \Phi'(m) = \EE h'(N(m, d_0)), \quad \Phi''(m) =  \EE h''(N(m, d_0)).
\end{align}
Here, note that for each $a \in [k]$, we have $\Phi'_a\in \mcp(3r+3), \Phi''_a \in \mcp(3r+6), \Phi'''_a \in \mcp(3r+9)$. To see this, by using \eqref{eq:h_a' bound} followed by \cref{lem:sub-Gaussianity} (conditional on $W_m$) and Gaussian moment bounds, we have 
$$|\Phi_a'(m)| = |\EE h_a'(W_m)| \le \EE |h_a'(W_m)| \lesssim_{r,d_0} 1+\EE |W_m|^{3(r+1)} \lesssim_{r,d_0}  1 + |m|^{3(r+1)}.$$
The bound for $\Phi''_a$ follows from similar computations.

Now, by sequentially applying \cref{rmk:change centering} and \cref{lem:A7} for each coordinate of $h$ (with $\Phi$ defined as above), we have
\begin{align}
    \mu_p(X,\beta^\star) &= \frac{1}{\sqrt{p}} \sumin \EE\Big[ h(w_i) \mid X, \beta^\star \Big] = \frac{1}{\sqrt{p}} \sumin \Phi(d_0 \beta_i^\star - \F_i) + \OPX\Big(\sqrt{\frac{p}{n}}\Big) \label{eq:mean term expansion} \\
    &= \frac{1}{\sqrt{p}} \sumin \Phi(d_0 \beta_i^\star) + \frac{p^{3/2}}{n} \kappa(\theta_0) + O_{P,X}\Big(\sqrt{\frac{p}{n}} + \frac{p^2}{n^{3/2}}\Big).\notag
\end{align}

\noindent The limit for $\mu_p(\bbst)$ follows by taking the limit $\frac{p^{3/2}}{n} \to \delta$, and using the usual i.i.d. CLT for $\frac{1}{\sqrt{p}}\sumin \Phi(d_0 \beta_i^\star)$ alongside the moment computations:
$$\EE_{\beta_i^\star \sim B_{\theta_0}} \Phi(d_0 \beta_i^\star ) = \EE \nabla \ell(\theta_0; B_{\theta_0}) =  0, ~ \Var_{\beta_i^\star \sim B_{\theta_0}} \big[\Phi(d_0 \beta_i^\star ) \big] = \Var\Big[\EE \big[h(W_{\theta_0}) \mid B_{\theta_0} \big] \Big] = T_1.$$

\vspace{3mm}
\noindent  \emph{Step 2-2:} $\tau^2_p(\beta^\star) \xp T_2$.

By expanding the variance as
$$\Var \Big(\sumin h(w_i) \mid X, \beta^\star\Big) = \sumin \Var (h(w_i) \mid X, \beta^\star) + \sum_{i \neq j} \Cov(h(w_i), h(w_j) \mid X, \beta^\star),$$
it suffices to separate show the following:
$$\frac{1}{p} \sumin \Var(h(w_i)) = \underbrace{\EE\Big[\Var \big[h(W_{\theta_0}) \mid B_{\theta_0} \big] \Big]}_{=T_2} + o_{P,X}(1), ~ \frac{1}{p} \sumij \Cov(h(w_i), h(w_j)) = o_{P,X}(1).$$
In the remainder of this step, we omit the conditioning on $X,\bbst$ for simplicity.

To show the first claim, define a function $\tilde{\Phi}(m) := \Var[h(N(m, d_0))]$. By similar computations as in \eqref{eq:derivatives of phi}, the first and second derivatives of $\tilde{\Phi}$ are bounded by polynomials of appropriate degree. Hence, we can apply \cref{rmk:change centering} and \cref{lem:A7} to write
\begin{align*}
    \frac{1}{p} \sumin \Var(h(w_i)) & = \frac{1}{p} \sumin \tilde{\Phi}(d_0 \beta_i^\star - \mathcal{F}_i) + \OPX\Big(\frac{1}{\sqrt{n}}\Big) \\
    &= \frac{1}{p} \sumin \tilde{\Phi}(d_0 \beta_i^\star) +\OPX\Big(\frac{1}{\sqrt{n}} + \frac{p}{n}\Big) \xp \EE  \tilde{\Phi}(d_0 \beta_1^\star) = T_2.
\end{align*}
The final equality follows by plugging-in the definition of $\tilde{\Phi}$.

To show the second claim, define $\tilde{w}_j^i := w_j + \frac{A_p(i,j)}{d_i} w_i$ so that $\Cov(\tilde{w}_j^i, w_i)=0$ and $\tilde{w}_j^i$ and $w_i$ are independent. For any $i \neq j$, a Taylor expansion of $h(w_j)$ around $w_j \approx \tilde{w}_j^i$ gives
\begin{align*}
    &  \Cov \big(h(w_i), h(w_j)\big) \\
    =& \Cov \big(h(w_i), h(\tilde{w}_j^i)\big) - \frac{A_p(i,j)}{d_i} \Cov \big(h(w_i), w_i h'(\tilde{w}_j^i)\big) + \frac{A_p(i,j)^2}{2d_i^2} \Cov\big(h(w_i), w_i^2 h''(\xi_{i,j})\big) \\
    =& - A_p(i,j) \EE h'(w_i) \EE h'(\tilde{w}_j^i)^\top + O \Big(A_p(i,j)^2 \big(\EE \|h(w_i)\|^2 + \EE [w_i^2 \|h''(\xi_{i,j})\|^2 ] \big)  \Big) \\
    =& - A_p(i,j) \EE h'(w_i) \EE h'(w_j)^\top + O \Big(A_p(i,j)^2 \big( \EE [\|h(w_i)\|^2 +\|h'(w_i)\|^2]+ \EE [w_i^2 (\|h''(\xi_{i,j})\|^2 \\
    &\quad + \|h''(\eta_{i,j})\|^2) ] \big) \Big) \\
    =& - A_p(i,j) \EE h'(w_i) \EE h'(w_j)^\top + O \Big(A_p(i,j)^2 \EE\big[1 + \EE|w_i|^{r'} + \EE|w_j|^{r'}\big]\Big)
\end{align*}
for $\xi_{i,j}, \eta_{i,j} \in (w_j, \tilde{w}_j^i)$ and some positive integer $r'$ that depends on $r$.
Here, the second equality follows from expanding the covariance and using Stein's identity:
\begin{align*}
    \Cov\big(h(w_i), w_i h'(\tilde{w}_j^i)\big) &= \EE \big[(h(w_i) - \EE h(w_i)) w_i\big] \EE h'(\tilde{w}_j^i) \\
    &= \EE \big[h(w_i) (w_i- \EE w_i) \big] \EE h'(\tilde{w}_j^i) = d_i \EE h'(w_i) \EE h'(\tilde{w}_j^i), 
\end{align*}
and using the basic covariance inequality $\Cov(V,W) \le \sqrt{\Var V^2 \Var W^2} \le \frac{\EE V^2 + \EE W^2}{2}$ for each entry of the covariance matrix. 
The fourth line uses another Taylor expansion of $h'$: $h'(\tilde{w}_j^i) = h'(w_j) + \frac{A_p(i,j) w_i}{d_i} h''(\eta_{i,j})$ followed by a Cauchy Schwartz.
The final line simplifies the error term $\EE \|h'(w_i)\|^2$ using \eqref{eq:h_a' bound}--\eqref{eq:h_a' bound 2}:
$$\EE \|h'(w_i)\|^2 = \sum_{a=1}^k \EE [h_a'(w_i)^2] \lesssim 1 + \EE |w_i|^{6(r+1)}.$$
The remaining error terms can be similarly bounded by $O\big(\EE|{w_i}|^{r'} + \EE|{w_j}|^{r'}\big)$ for a large enough $r'$, using the naive bound $|A_p(i,j)| \le |d_i|$ to write
$|\xi_{i,j}| \le |w_j| + \frac{|A_p(i,j)|}{d_i}|w_i| \le |w_j| + |w_i|$.

Now, by summing up the long display above for $i \neq j$, we get
\begin{align*}
    & \frac{1}{p} \sumij \Cov(h(w_i), h(w_j)) \\
    =&- \frac{1}{p} \sumij A_p(i,j) \EE h'(w_i) \EE h'(w_j)^\top + \frac{1}{p} O \Big(\sumij A_p(i,j)^2 \EE|w_i|^{r'} \Big) \\
    =& \OPX \Big( \sqrt{\frac{p}{n}} \Big) + O_P\Big(\frac{p}{n}\Big) =o_{P,X}(1).
\end{align*}
The last line uses the operator norm bound of $A_p$ for the first term (see \cref{lem:alpha_p}), and uses \cref{lem:convergence for F_i}:
$$\frac{1}{p}\sumij A_p(i,j)^2 \EE |w_i|^{r'} \le \frac{\alpha_p}{p} \sumin \EE |w_i|^{r'} = O_P(\alpha_p) = O_P\Big(\frac{p}{n}\Big)$$
to control the second term (recall the notation $\alpha_p = \maxin \sum_{j \neq i} A_p(i,j)^2$ from \cref{lem:alpha_p}).

\vspace{3mm}
\noindent  \emph{Step 2-3:} $T_1 + T_2 = V(\theta_0)$.

Recalling \cref{def:information}, define a pair of random variables $(B_{\theta_0}, W_{\theta_0})$. 
By recalling the definitions of $T_1, T_2$ in \eqref{eq:T_1, T_2}, and applying the law of total variance, we have
\begin{align*}
    T_1 + T_2 &= \Var\Big[\EE \big[h(W_{\theta_0}) \mid B_{\theta_0} \big] \Big] + \EE\Big[\Var \big[h(W_{\theta_0}) \mid B_{\theta_0} \big] \Big] \\
    &= \Var \big[ h(W_{\theta_0}) \big]\\
    &= \Var \Big[\EE \big[\nabla \ell(\theta_0; B_{W_{\theta_0}, d_0}) \mid W_{\theta_0} \big] \Big] \\
    &= \Var[\nabla \ell(B_{\theta_0})] - \EE \Big[\Var \big[ \nabla \ell(\theta_0; B_{W_{\theta_0}, d_0}) \mid W_{\theta_0} \big] \Big] = V(\theta_0).
\end{align*}
The third line plugs-in the definition of $h$.
The last line follows from noting that $B_{\theta_0} \equiv B_{W_{\theta_0}, d_0}$, and again applying the law of total variance. 
This completes the proof of part (a).

\vspace{2mm}
(b) Now, we work under the scaling $n^{2/3} \ll p \ll n$. Note that steps 1 and 2-2 above only require $p \ll n$, and still holds. Consequently by Slutsky, it only suffices to modify step 2-1 as follows, under the modified asymptotic scaling:
    $$\frac{n}{p \sqrt{p}} \mu_p (X, \beta^\star) \xp \kappa(\theta_0).$$
    Note that this limit has a slower convergence rate compared to the limit in step 1, and dominates the law of $\sumin \nabla F_i(\theta_0)$.
    
To wit, recall the notation $\Phi(m) := \EE[h(W_m)]$ from \eqref{eq:derivatives of phi} and recall that rescaling \eqref{eq:mean term expansion} by $\frac{n}{p\sqrt{p}}$ gives
\begin{align*}
    \frac{n}{p\sqrt{p}}\mu_p(X,\beta^\star) &= \frac{n}{p^2} \sumin \Phi(d_0 \beta_i^\star) + \kappa(\theta_0) + O_{P}\Big(\frac{\sqrt{n}}{p} + \sqrt{\frac{p}{n}}\Big) = \kappa(\theta_0) + o_{P}(1).
\end{align*}
Here, the last line used the fact that $\sumin \Phi(d_0\beta_i^\star) = O_P(\sqrt{p})$ and the asymptotic scaling $p \gg n^{2/3}$. This completes the proof.

\vspace{2mm}
(c) Finally, suppose $p \ll n^{3/4}$. Again, by the computations in part (a), it suffices to show:
$$\mu_p(X,\bbst) - \frac{p\sqrt{p}}{n} \kappa(\theta_0) \mid X \xd N(0, T_2).$$
But this is a direct consequence of \eqref{eq:mean term expansion}, noting that the error term there is $o_{P,X}(1)$.
\end{proof}

\begin{proof}[Proof of \cref{lem:sub-Gaussianity}]
    For simplicity, define the c.g.f. $\psi_{d,\theta}(t) := \log \int e^{b t - \frac{d b^2}{2}} d \mu_\theta(b).$
    Using the exponential tilt properties and the assumption $\|B_{t,d,\theta}\|_{\psi_2} \lesssim 1+|t|$, we have
    $$\psi_{d,\theta}''(t) = \Var (B_{t,d,\theta}) \lesssim 1 + t^2, \quad \forall t \in \R.$$
    Hence, by integrating over $t$, we get 
    \begin{align}\label{eq:tilt expectation}
        \EE B_{t,d,\theta} = \psi_{d,\theta}'(t) \le \EE B_{0,d,\theta} + O(|t| + |t|^3) = O(1+|t|^3)
    \end{align}
    and the second claim holds.
    The simplification in the last equality used the assumption that $\EE B_{0,d,\theta}$ is uniformly bounded.  
    Then, the first claim also follows as
    $$\psi_{d,\theta}(t) \le \psi_{d,\theta}(0) + O(|t|^3+|t|^4) = O(|t|^3+|t|^4).$$
    The last equality uses the fact $\psi_{d,\theta}(0) = \log \int e^{-\frac{d b^2}{2}} d\mu_\theta(b) \le 0.$
    
    It remains to show the claims regarding $B_{t,d,\theta}$. Again by the sub-Gaussianity of $B_{t,d,\theta}$ and \eqref{eq:tilt expectation}, we have
    \begin{align*}
        \EE |B_{t,d,\theta}|^r \lesssim_r (1+|t|)^r + [\EE B_{t,d,\theta}]^r \lesssim_r 1 + |t|^{3r}.
    \end{align*}
    Now, by taking $t = w_i \stackrel{d}{\equiv} N(d_i \beta_i^\star - \F_i, d_i)$ and using the tower property, we can further write
    $$\EE \big[ |B_{w_i, d,\theta}|^r \mid X,\bbst\big] \lesssim_r 1 + \EE \big[ |w_i|^{3r} \mid X,\bbst\big] \lesssim_{r, d_0} 1 + |\beta_i^\star|^{3r} + |\F_i|^{3r}, \quad \forall d, \theta.$$
    Hence, using \cref{lem:convergence for F_i} and the assumption R5 that $\beta_i^\star$ has finite moments, we have $\sumin \EE |B_{w_i, d,\theta}|^r = O_P(p).$
\end{proof}

\begin{proof}[Proof of \cref{lem:B4}]
    Set $G(d) := \log \int e^{t b - \frac{d b^2}{2}} d \mu_\theta(b)$ and recall the random variable $B_{t,d,\theta}$ from \cref{def:information}. Then, we have
    $G'(d) = -(1/2)\EE [B^2_{t,d,\theta}],$ so Taylor expansion gives
    $$|G(d) - G(d_0)| = |d-d_0| |G'(\xi)| \le |d-d_0| (1+|t|^6), \quad \exists \xi \in (d,d_0).$$
    The last bound used \cref{lem:sub-Gaussianity} with $r=2$.

    To show the second claim, for some $h \in \mcp(r)$, set $H(d) := \EE[h(B_{t,d,\theta})]$. Then, by a similar Taylor expansion noting that $|H'(d)| = \Big|\Cov\Big(h(B_{t,d,\theta}), -\frac{B_{t,d,\theta}^2}{2}\Big)\Big| \lesssim \EE|B_{t,d,\theta}|^{r+2}$ and using \cref{lem:sub-Gaussianity}, we have
    $$|H(d) - H(d_0)| = |d-d_0| |H'(\xi)| \le |d-d_0| \big(1+|t|^{3(r+2)}\big).$$
\end{proof}

We prove \cref{lem:convergence for F_i} using an additional concentration result for $\F_i = \sum_{j \neq i} A_p(i,j) \beta_j^\star$, which is proved in \cref{sec:proof technical lemmas}.
\begin{lemma}\label{lem:mgf of F_i}
    There exists some constant $C>0$ such that $\max_{i=1}^p \EE \big[e^{\lambda \mathcal{F}_i} \mid \bbst\big] \le e^{\frac{C p \lambda^2}{n}}$ for all $|\lambda| \le \frac{Cn}{\sqrt{p}}$ with high probability (with respect to $\bbst$).
\end{lemma}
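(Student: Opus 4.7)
The plan is to decompose $\mathcal{F}_i$ by separating out the $i$-th column of $X$, which is independent of the remaining columns. Specifically, writing $(X^\top X)(i,j) = \sum_{k=1}^n X_{k,i}X_{k,j}$ and swapping the order of summation gives
\[
\mathcal{F}_i = -\sigma^{-2}\sum_{k=1}^n X_{k,i}\,\tilde Z_k^{(i)}, \qquad \tilde Z_k^{(i)} := \sum_{j\neq i} X_{k,j}\beta_j^\star.
\]
Conditional on $X_{\cdot,-i}$ and $\beta^\star$, this is a linear combination of the independent sub-Gaussian entries $\{X_{k,i}\}_{k=1}^n$, each with norm $K/\sqrt{n}$, so the classical sub-Gaussian MGF bound yields
\[
\EE_{X_{\cdot,i}}\bigl[e^{\lambda \mathcal{F}_i}\mid X_{\cdot,-i},\beta^\star\bigr]\;\le\;\exp\!\left(\frac{K^2\lambda^2}{2\sigma^4 n}\sum_{k=1}^n (\tilde Z_k^{(i)})^2\right).
\]

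Next I would take the expectation over $X_{\cdot,-i}$, exploiting that the $\tilde Z_k^{(i)}$ for $k=1,\ldots,n$ are i.i.d.\ (across $k$) conditional on $\beta^\star$, with sub-Gaussian norm bounded by $\|\tilde Z_k^{(i)}\|_{\psi_2}\le K\|\beta^\star\|/\sqrt{n}$ (sum of independent centered sub-Gaussians). Thus $(\tilde Z_k^{(i)})^2$ is sub-exponential, and standard estimates give, for some universal constants $c_1,c_2>0$,
\[
\EE\Bigl[\exp\!\bigl(t(\tilde Z_k^{(i)})^2\bigr)\Bigr]\;\le\;\exp\!\left(\frac{c_1 K^2 t\,\|\beta^\star\|^2}{n}\right), \qquad \text{whenever } \frac{K^2 t\,\|\beta^\star\|^2}{n}\le c_2.
\]
Applying this with $t=\tfrac{K^2\lambda^2}{2\sigma^4 n}$ and taking the product over $k=1,\ldots,n$ (using independence of $\tilde Z_k^{(i)}$ over $k$) gives, after cancellation of one factor of $n$,
\[
\EE_{X}\bigl[e^{\lambda \mathcal{F}_i}\mid\beta^\star\bigr]\;\le\;\exp\!\left(\frac{c_3\,\lambda^2\|\beta^\star\|^2}{n}\right).
\]
The range-of-$\lambda$ condition becomes $\lambda^2\|\beta^\star\|^2\lesssim n^2$, which will match $|\lambda|\le Cn/\sqrt{p}$ once $\|\beta^\star\|^2$ is controlled.

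Finally I would invoke R5: the coefficients $\beta_j^\star$ are i.i.d.\ with finite second moment, so by the LLN/concentration $\|\beta^\star\|^2=\sum_{j=1}^p(\beta_j^\star)^2\le C'p$ with probability tending to $1$. On this high-probability event the preceding bound reads $\EE_X[e^{\lambda \mathcal{F}_i}\mid\beta^\star]\le e^{Cp\lambda^2/n}$ for $|\lambda|\le Cn/\sqrt{p}$, and since the right-hand side does not depend on $i$, the maximum over $i\le p$ is immediate. The main technical care needed is in step two, ensuring the sub-exponential MGF bound for $(\tilde Z_k^{(i)})^2$ is applied in its correct range so that the threshold on $|\lambda|$ precisely yields the stated $Cn/\sqrt{p}$; everything else is bookkeeping.
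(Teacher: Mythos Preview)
Your proposal is correct and follows essentially the same route as the paper: both write $\mathcal{F}_i=\tfrac{1}{n}\sum_{k=1}^n Z_{k,i}Y_{k,i}$ with $Y_{k,i}=\sum_{j\neq i}Z_{k,j}\beta_j^\star$ sub-Gaussian of norm $\lesssim K\|\beta^\star\|$, use that each summand is sub-exponential, take the product over $k$ by independence, and finish with $\|\beta^\star\|^2=O_{P,\beta^\star}(p)$. The only cosmetic difference is that the paper invokes the ``product of two sub-Gaussians is sub-exponential'' lemma (Vershynin, Lemma~2.8.6) directly, whereas you derive the same sub-exponential MGF bound via the two-step conditioning on $X_{\cdot,-i}$; the resulting inequality and range on $|\lambda|$ are identical.
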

\begin{proof}[Proof of \cref{lem:convergence for F_i}]
    We start by showing the first claim. Fix any constant $K>0$. It suffices to show that 
    \begin{align}\label{eq:c5show}
        \EE\left(\frac{1}{p}\sum_{i=1}^p e^{K|\F_i|} \mid \bbst\right) \xp 1 \qquad \mbox{and} \qquad \EE\left(\frac{1}{p}\sum_{i=1}^p e^{K|\F_i|} \mid \bbst\right)^2 \xp 1.
    \end{align}
    We will only prove the first display of \eqref{eq:c5show} as the second display follows similarly. Note that $e^{|x|} \le e^x + e^{-x}$. We then have 
    $$1 \le \limsup \frac{1}{p} \EE\left[\sumin e^{K |\F_i|} \mid \bbst\right] \le \limsup \max_{i=1}^p \EE \exp(K|\F_i|) \le 1,$$
    with high probability, as a consequence of \cref{lem:mgf of F_i}. The conclusion follows from a standard sandwiching argument.
    
    \noindent We move on to the second claim. Note that \cref{lem:mgf of F_i} implies that $\sum_{i=1}^p |\F_i|^r=O_P(p)$. Using the Gaussianity of $w_i \mid X, \bbst$ yields
    $$\sumin \EE |w_i|^r \lesssim_r \sumin \Big[|d_i\beta_i^\star - \F_i|^r + |d_i|^{\frac{r}{2}} \Big] \lesssim_r p+\sumin \Big[|d_i|^r (1+|\beta_i^\star|^r) + |\F_i|^r \Big] = O_P(p).$$
    This completes the proof.
\end{proof}

\subsection{Proof of lower bound lemmas}
This Section is devoted to the proof of \cref{thm:CLT}. Throughout the following proof, $B = (B_1, \ldots, B_p)$ will denote a sample from the RFIM $\tilde{\PP} = \tilde{\PP}_{\theta}^{A_p,c}$ (see \cref{def:rfim}). The proof proceeds through the use of Stein's method of exchangeable pairs which has served as a key tool for analyzing Ising models (see e.g.~\cite{deb2020fluctuations,Deb2024detecting,chatterjee2011nonnormal} and the references therein).

\begin{proof}[Proof of \cref{thm:CLT}]
    We start by deriving general results that will be used to prove both parts of the statement. 
    Let $g:[-1,1]\to[-1,1]$ be a continuous function.
    Define $$T_p = T_p(B) := \frac{1}{\sqrt{p}} \sumin \Big(g(B_i) - \langle g\rangle_{\mu_{i,s_i+c_i,\theta}} \Big).$$ Given an RFIM sample $B \sim \tilde{\PP}$, generate $B'$ as follows: let $I$ be a randomly sampled index from $\{1,2,\ldots ,p\}$. Given $I=i$, replace $B_i$ with an independently generated $B'_i$ drawn from the conditional distribution of $B_i$ given $(B_j,\ j\neq i)$.

    Recalling the notation $m_i$ from \cref{lem:RFIM lemma}, we can write
    \begin{align}\label{eq:conditional distribution of B_i}
        B_i \mid (B_j,\ j\neq i) \stackrel{d}{\equiv} \mu_{i,m_i+c_i,\theta}.
    \end{align}
    For notational convenience, define
    $$h_i(t) := \langle g \rangle_{\mu_{i,t,\theta}} = \EE_{B_i \sim \mu_{i,t,\theta}} g(B_i), \quad \forall t \in \R,$$
    and note that \eqref{eq:conditional distribution of B_i} implies $\EE[g(B_i) \mid B_j: j\neq i] = h_i(m_i + c_i).$ Also, observe that
    \begin{align}\label{eq:h_i derivatives}
        h_i'(t) = \Cov_{\mu_{i,t,\theta}}(B_i, g(B_i)), \quad h_i''(t) = \Cov_{\mu_{i,t,\theta}}\Big((B_i-\psi_{i,\theta}'(t))^2, g(B_i) \Big)
    \end{align}
    are uniformly bounded.
    By a Taylor's series expansion of  around $m_i \approx s_i$, we get that for some $\{\xi_i\}_{1\le i\le p}$:
	\begin{align}
		\EE(T_p &- T_p' \mid B) = \frac{1}{p \sqrt{p}} \sumin (g(B_i) - h_i(m_i+c_i) ) \notag \\
		&= \frac{1}{p \sqrt{p}} \sumin \left(g(B_i) - h_i(s_i+c_i) - (m_i - s_i)h_i'(s_i+c_i) - \frac{1}{2} (m_i-s_i)^2 h_i''(\xi_i+c_i) \right) \notag \\
		&= {\frac{T_p}{p}} - \underbrace{\frac{1}{p \sqrt{p}} \sumin (m_i - s_i) h_i'(s_i+c_i)}_{:=H_1}  - \underbrace{\frac{1}{2p\sqrt{p}} \sumin (m_i-s_i)^2  h_i''(\xi_i+c_i)}_{:=H_2}. \label{eq:stein}
	\end{align}

    We bound the error terms $H_1, H_2$ in the above expression.
    \begin{itemize}
        \item For $H_1$, we expand $m_i - s_i = \sum_{j \neq i} A_p(i,j)(B_j - u_j)$ and write
        $$H_1 = \frac{1}{p \sqrt{p}} \sumjn \zeta_j(B_j - u_j),$$
        where $\zeta_j := \sum_{i \neq j} A_p(i,j)h_i'(s_i+c_i)$.
        Then, by part (a) of \cref{lem:RFIM lemma}, we have
        $$p^2 \EE H_1^2 \lesssim \frac{1}{p}\big(\sumjn \zeta_j^2 \big) \Big(1 + \frac{p^3}{n^2}\Big) = \OPX \Big(\frac{p}{n}\Big).$$
        The final inequality follows from using the operator norm bound for $A_p$ in \cref{lem:alpha_p} to write $\sumjn \zeta_j^2 \lesssim p \|A_p\|^2 = \OPX(p^2/n)$.
        
        \item For $H_2$, using the fact that $h_i''$ are uniformly bounded followed by part (b) of \cref{lem:RFIM lemma}, we have
        $$p^2 \EE H_2^2 \lesssim \frac{1}{p}  \EE\Big[\sumin(m_i - s_i)^2\Big]^2 = \OPX \Big( \frac{p^3}{n^2} \Big).$$
    \end{itemize}
    Now, we are ready to prove each claims.

    (a) We prove the first moment bound. Since $(T_p, T_p')$ form an exchangable pair, we have $\EE T_p = \EE T_p'$. Hence, taking an outer expectation on \eqref{eq:stein} gives the identity
    $$\EE T_p = p\EE H_1 + p\EE H_2.$$
    The earlier error bounds for $H_1, H_2$ implies that the RHS is $\OPX\left(\sqrt{\frac{p}{n}} + \sqrt{\frac{p^3}{n^2}}\right)$.

    To prove the second conclusion, using a Taylor expansion followed by the operator norm bound of $A_p$ (see \cref{lem:alpha_p}) gives
    \begin{align*}
        &\sumin \Big[h_i(s_i+w_i) - h_i(w_i) \Big] \\
        =& \sumin \Big[s_i h_i'(\xi_i + w_i)\Big] \sumij A_p(i,j) s_i h_i'(\xi_i + w_i) u_j = \OPX\Big(\frac{p\sqrt{p}}{\sqrt{n}}\Big).
    \end{align*}
    Here, we have used the fact that $h_i'$ is uniformly bounded, which follows from the compact support assumption.

    \vspace{2mm}
    (b) Next, we prove the second moment bound.
    Let $\Delta_p := T_p - T_p' = \frac{1}{\sqrt{p}}(g(B_I) - g(B_I'))$ and note that $|\Delta_p|\le \frac{2}{\sqrt{p}}$. Multiplying \eqref{eq:stein} by $p T_p$ and taking an outer expectation gives
    $$p \EE [\Delta_p T_p] = \EE[T_p^2] - (p \EE [T_p H_1] + p\EE [T_p H_2]).$$
    On the other hand, by exchangeability of $(T_p, T_p')$, we can also write
    $$p \EE [\Delta_p T_p] = \frac{p \EE[\Delta_p^2]}{2} \le 2.$$
    Hence, by upper bounding
    \begin{align}\label{eq:H_a bound stein}
        |p \EE [T_p H_a]| \le \sqrt{p^2 \EE H_a^2} \sqrt{\EE[T_p^2]}, \quad a = 1,2,
    \end{align}
    we get the inequality 
    $$\EE T_p^2 \le 2+\sum_{a=1}^2 \sqrt{p^2 \EE H_a^2} \sqrt{\EE[T_p^2]}.$$
    By plugging-in the earlier bounds for $p^2 \EE H_a^2$, we get a preliminary bound $\EE[T_p^2] = \OPX(1)$.

    To compute the precise constant for $\EE[T_p^2]$, we expand the term $p\EE[\Delta_p^2]$ as follows:
    \begin{align*}
        p\EE[\Delta_p^2] &= \frac{1}{p} \EE\Big[ \sumin  [g(B_i) - g(B_i')]^2 \Big]\\
        &= \frac{2}{p} \sumin \Big[\EE g(B_i)^2 - \EE[g(B_i) \langle g \rangle_{\mu_{i,m_i+c_i,\theta}}] \Big],
    \end{align*}
    where the last line follows by the tower property (by conditioning on $(B_j:j\neq i)$ and using \eqref{eq:conditional distribution of B_i}) and using exchangeability.
    Now, using the two conclusions of part (c) in \cref{lem:RFIM lemma} (with $h \equiv g^2, g$, respectively) gives the following moment bounds:
    \begin{align*}
        \EE\Big[\frac{1}{p} \sumin g(B_i)^2 \Big] &= \EE\Big[\frac{1}{p} \sumin \langle g^2 \rangle_{\mu_{i,m_i+c_i,\theta}} \Big] +O\Big(\frac{1}{\sqrt{p}}\Big), \\
        \EE\Big[\frac{1}{p} \sumin g(B_i) \langle g \rangle_{\mu_{i,\theta}(m_i+c_i)} \Big] &= \EE\Big[\frac{1}{p} \sumin \big(\langle g \rangle_{\mu_{i,m_i+c_i,\theta}}\big)^2 \Big] +O\Big(\frac{1}{\sqrt{p}}\Big).
    \end{align*}
    We additionally simplify the RHS expressions using the following fact: %
    for any continuous function $h:[-1,1]\to[-1,1]$, we have
    $$\EE \Big[\frac{1}{p} \sumin h(m_i+c_i)\Big] = \frac{1}{p} \sumin h(c_i) + \frac{1}{p}\sumin|m_i| =  \frac{1}{p} \sumin h(c_i) + \OPX\Big({\sqrt{\frac{p}{n}}}\Big),$$
    which follows from bounding $\sumin |m_i| \le \sqrt{p} \sqrt{\sumin m_i^2} = \OPX\Big(\sqrt{\frac{p^3}{n}}\Big).$
    By taking $h$ appropriately, we can further simplify the individual moment bounds as:
    \begin{align*}
        \EE\Big[\frac{1}{p} \sumin g(B_i)^2 \Big] &= \frac{1}{p} \sumin \langle g^2 \rangle_{\mu_{i,c_i,\theta}} + \OPX\Big(\frac{1}{\sqrt{p}} + \sqrt{\frac{p}{n}}\Big), \\
         \EE\Big[\frac{1}{p} \sumin g(B_i) \langle g \rangle_{\mu_{i,\theta}(m_i+c_i)} \Big] &= \frac{1}{p} \sumin \big(\langle g \rangle_{\mu_{i,c_i,\theta}}\big)^2 + \OPX\Big(\frac{1}{\sqrt{p}} + \sqrt{\frac{p}{n}}\Big).
    \end{align*}

    By combining all computations above, we have the following:
    $$\EE[T_p^2] = \frac{1}{p} \sumin \Big[\langle g^2 \rangle_{\mu_{i,c_i,\theta}} - \big(\langle g \rangle_{\mu_{i,c_i,\theta}}\big)^2 \Big] + (p \EE [T_p H_1] + p\EE [T_p H_2]) + \OPX\Big({\frac{1}{\sqrt{p}} + \sqrt{\frac{p}{n}}}\Big).$$
    Using \eqref{eq:H_a bound stein} along the preliminary bound $\EE T_p^2 = O(1)$, we have
    $$|p \EE [T_p H_1] + p\EE [T_p H_2]| \le \sqrt{\EE T_p^2} \Big( \sqrt{p^2 \EE H_1^2} + \sqrt{p^2 \EE H_2^2}\Big) = \OPX\Big({\sqrt{\frac{p}{n}} + \sqrt{\frac{p^3}{n^2}}}\Big).$$
    This completes the proof. Note that the term $\sqrt{p/n}$ is omitted in the final bound since
    $$\frac{1}{\sqrt{p}} + \sqrt{\frac{p^3}{n^2}} \ge 2 \sqrt{\frac{p}{n}}.$$

\end{proof}

\subsection{Proof of lemmas for \cref{sec:consequences}}
\begin{proof}[Proof of \cref{lem:psi' bound}]
    Fix $t$, an index $i \le p$, a bounded function $h$ with $|h| \le 1$, and write $$\langle h \rangle_{\mu_{i,t,\theta}} = \frac{\int h(b) e^{t b - \frac{d_i b^2}{2} + \ell(\theta; b)} db}{\int e^{t b - \frac{d_i b^2}{2} + \ell(\theta; b)} db} = \frac{A(\theta)}{B(\theta)}.$$
    Recall that we assume in \cref{assmp:cpt support} that $\ell(\theta;b)$ is continuous in both $\theta$ and $b$, which implies that it is Lipschitz in both arguments. Hence, for any  function $g:[-1,1]\to[-1,1]$, we have
    \begin{align*}
        & \Big|\int g(b) e^{t b - \frac{d_i b^2}{2} + \ell(\theta; b)} (1- e^{\ell(\theta'; b) - \ell(\theta; b)}) db\Big|  \\
        \le& \sup_{b \in [-1,1]} \Big|1 - e^{\ell(\theta'; b) - \ell(\theta; b)}\Big| \int e^{t b - \frac{d_i b^2}{2} + \ell(\theta; b)} db \lesssim \|\theta - \theta'\| B(\theta).
    \end{align*}
    Here, the hidden constant only depends on the log-likelihood $\ell$.
    By taking $g(b) = h(b)$ and $1$ respectively, we have the following bounds:
    $$|A(\theta) - A(\theta')| \lesssim \|\theta - \theta'\| B(\theta), \quad |B(\theta) - B(\theta')| \lesssim \|\theta - \theta'\| B(\theta').$$
    Now, the claim follows by using the triangle inequality and plugging in the above bounds:
    $$|\langle h \rangle_{\mu_{i,t,\theta}} - \langle h \rangle_{\mu_{i,t,\theta'}}| = \left|\frac{A(\theta)}{B(\theta)} - \frac{A(\theta')}{B(\theta')}\right| \le \frac{|B(\theta)-B(\theta')|A(\theta)}{B(\theta) B(\theta')} + \frac{|A(\theta)-A(\theta')|}{B(\theta)} \lesssim \|\theta - \theta'\|.$$
    Note that we have also used the basic identity $|\frac{A(\theta)}{B(\theta)}|\le 1$ for the final simplification. %
\end{proof}

To prove \cref{lem:sum q'u approximate}, we use the following fact from the proof of Theorem 3.1, part (a) in \cite{lee2025bayesregression} (see the last two bullet points there).
\begin{lemma}\label{lem:R4 oracle bound}
    Take $\epsilon := A_p q$. For any $\theta$, set $$R_4(\theta) := \sumij A_p(i,j) q_i \big(\psi_{i,\theta}''(w_i) - \upsilon_p) \psi_{j,\theta}'(w_j)\big).$$ Then, fixing $\theta_0$, the following bounds hold:
    $$\sumin \epsilon_i \psi_{i,\theta_0}'(w_i) = o_P(1), \quad R_4(\theta_0) = o_P(1).$$
\end{lemma}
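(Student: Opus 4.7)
The two bounds in \cref{lem:R4 oracle bound} are bilinear (first) and trilinear (second) forms in the off-diagonal matrix $A_p$, and both vanish thanks to the mean-zero structure of the $A_p(i,j)$ entries combined with smoothness of the exponential-tilt functions $\psi'_{i,\theta_0}, \psi''_{i,\theta_0}$ (uniformly bounded under \cref{assmp:cpt support}). My plan is to condition on $(X, \bbst)$ and decompose each quantity into a conditional mean (in $w \mid X, \bbst \sim N(\Sigma\bbst, \Sigma)$) plus a fluctuation, controlling each separately.

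For the first bound, set $S_1 := \sumin \epsilon_i \psi'_{i,\theta_0}(w_i)$. Since $\epsilon_i = (A_p q)_i$ does not depend on $w$, Gaussian Poincar\'e gives $\Var(S_1 \mid X, \bbst) \lesssim \|\Sigma\|_{\text{op}} \|\epsilon\|^2 \sup_i|\psi''_{i,\theta_0}|^2 = \OPX(\|A_p q\|^2) = \OPX(p/n)$ via \cref{lem:alpha_p}. For the conditional mean, set $\Phi(m,d) := \EE[\psi'_{0,\theta_0}(N(m,d))]$, a smooth bounded function, so that $\EE[\psi'_{i,\theta_0}(w_i) \mid X,\bbst] = \Phi(d_i \beta_i^\star - \F_i, d_i)$. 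Taylor expansion around $(d_0\beta_i^\star, d_0)$ yields
\begin{align*}
\EE[S_1 \mid X, \bbst] = \underbrace{\sumin \epsilon_i \Phi(d_0 \beta_i^\star, d_0)}_{T_0} + R,
\end{align*}
where $T_0 = q^\top A_p g^\star$ with $g^\star_i := \Phi(d_0\beta_i^\star,d_0)$ depending only on $\bbst$; a direct second-moment calculation using $\EE[A_p(i,j)] = 0$ and $\EE[A_p(i,j)^2] = O(1/n)$ gives $\Var(T_0 \mid \bbst) = O(\|q\|^2 \|g^\star\|^2/n) = O(p/n)$, and $|R| \lesssim \|\epsilon\|(\|\F\| + \|d - d_0 \mathbf{1}\|) = O_P(\sqrt{p/n}\cdot p/\sqrt{n}) = O_P(p^{3/2}/n) = o(1)$ precisely because $p \ll n^{2/3}$.

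For the second bound, write $v_i := \sum_{j \neq i} A_p(i,j) \psi'_{j,\theta_0}(w_j)$ so that $R_4(\theta_0) = \sumin q_i(\psi''_{i,\theta_0}(w_i) - \upsilon_p) v_i$. Since $\sumin q_i v_i = q^\top A_p \psi' = S_1 = o_P(1)$ by symmetry of $A_p$ and the first bound, and $\upsilon_p = O_P(1)$ by \cref{prop:oracle CLT}, I am reduced to controlling $\sumin q_i \psi''_{i,\theta_0}(w_i) v_i = \sumij A_p(i,j) q_i \psi''_{i,\theta_0}(w_i) \psi'_{j,\theta_0}(w_j)$. This is again a bilinear form in $A_p$ with random smooth weights: the Gaussian-Poincar\'e fluctuation is bounded by $\|A_p\|^2 \|q\|_\infty^2 \cdot O_P(p) = o_P(1)$ using $\|q\|_\infty = o(1)$, and the conditional mean is handled by Taylor-expanding both $\psi''_i$ and $\psi'_j$ around $\bbst$-dependent values $\Psi_i^\star := \EE[\psi''_{0,\theta_0}(N(d_0\beta_i^\star,d_0))]$ and $\Phi_j^\star := \Phi(d_0\beta_j^\star,d_0)$. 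The resulting leading term $\sumij A_p(i,j) q_i \Psi_i^\star \Phi_j^\star$ has variance $O(p/n)$ by the same $\EE[A_p(i,j) A_p(k,l)] = O(\mathbf{1}\{\{i,j\}=\{k,l\}\}/n)$ computation (since off-diagonal $A_p$ entries are sums of products from disjoint columns, cross-covariances vanish unless $\{i,j\}=\{k,l\}$), and the Taylor remainders are $o_P(1)$ under $p \ll n^{2/3}$.

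The main obstacle is the first bound $T_0 = q^\top A_p g^\star = o_P(1)$: a na\"ive Cauchy--Schwarz only gives $O(p/\sqrt n)$, so I must exploit both the mean-zero property of off-diagonal $A_p(i,j)$ and the fact that $g^\star$ is a function only of $\bbst$ (hence independent of $A_p$) to extract the $1/\sqrt n$ per off-diagonal entry. The same obstacle appears in the trilinear piece for $R_4$, where the weights $\psi''_i(w_i), \psi'_j(w_j)$ depend on $X$ through their respective columns $X_{\cdot,i}, X_{\cdot,j}$; the Taylor expansion strips away this dependence and reduces to the same moment-computation pattern, closing both claims in the regime $p \ll n^{2/3}$.
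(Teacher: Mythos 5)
Your attempt is necessarily a different route from the paper's, because the paper does not prove this lemma at all: it is imported verbatim from the last two bullet points of the proof of Theorem 3.1(a) in \cite{lee2025bayesregression}. Judged on its own terms, your argument for the first bound $\sumin \epsilon_i \psi_{i,\theta_0}'(w_i)=o_P(1)$ is essentially sound: conditional Gaussian--Poincar\'e gives a fluctuation of order $\|\epsilon\| = \OPX(\sqrt{p/n})$, the Taylor remainder is $O_P(\|\epsilon\|(\|\F\|+\|d-d_0 1_p\|))=O_P(p^{3/2}/n)=o_P(1)$ for $p\ll n^{2/3}$, and the leading term $q^\top A_p g^\star$ with $g^\star$ a bounded function of $\bbst$ alone has conditional mean zero and variance $O(p/n)$ by the uncorrelatedness of the off-diagonal entries of $A_p$ (you should also patch the small mismatch between the tilt $d_i$ in $\psi_{i,\theta_0}'$ and the $d_0$ in your $\Phi$, which costs only $\sumin|\epsilon_i||d_i-d_0|=O_P(p/n)$).

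The genuine gap is in the second bound. Your fluctuation estimate for $\sumij A_p(i,j)q_i\psi_{i,\theta_0}''(w_i)\psi_{j,\theta_0}'(w_j)$, namely ``$\|A_p\|^2\|q\|_\infty^2\,O_P(p)$'', is of order $\|q\|_\infty^2\,p^2/n$, and the Poincar\'e computation actually produces a term $\|q\|_\infty^2\|A_p\psi'(w)\|^2$ with $\|A_p\psi'(w)\|^2\asymp p^2/n$; since the only assumption is $\|q\|_\infty=o(1)$ with no rate (and even $\sumin q_i^4=o(n/p^2)$ only yields $\|q\|_\infty^2=o(\sqrt n/p)$), these bounds do not tend to zero in the sub-regime $\sqrt n\ll p\ll n^{2/3}$, where $p^2/n\to\infty$. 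The same problem hits your Taylor remainders: a term such as $\sumij A_p(i,j)q_i\,O(|\F_i|)\,\Phi_j^\star$ is only $O_P(p^2/n)$ under your operator-norm/Cauchy--Schwarz treatment, and it becomes $O_P(p^{3/2}/n)=o_P(1)$ only after one exploits that it is a quadratic form in the mean-zero, uncorrelated entries of $A_p$ (via $\F_i=\sum_{k\neq i}A_p(i,k)\beta_k^\star$) and computes its first two moments directly; likewise the conditional covariance $\Cov(w_i,w_j\mid X,\bbst)=-A_p(i,j)$ generates a mean contribution $\sumij A_p(i,j)^2|q_i|=O_P(p^{3/2}/n)$ that your decomposition never isolates. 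So ``the Taylor remainders are $o_P(1)$ under $p\ll n^{2/3}$'' is asserted but not established: closing the $R_4(\theta_0)=o_P(1)$ claim requires genuine second-moment (or decoupling) computations for these bilinear and quadratic forms in $A_p$ with $w$-dependent weights, which is precisely the content of the cited argument in \cite{lee2025bayesregression}.
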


\begin{proof}[Proof of \cref{lem:sum q'u approximate}]
    By the triangle inequality, it suffices to separately prove the following:
    \begin{align}\label{eq:R4 two terms}
        \Big|\sumin q_i \big(\hat{u}_i - \psi_{i,\thetahat}'(w_i)\big)\Big| = o_P(1), \quad \Big|\sumin q_i \big(\psi_{i,\thetahat}'(w_i) - \psi_{0,\thetahat}'(w_i)\big)\Big| = o_P(1).
    \end{align}

    For simplicity, we use the notations $R_1, R_2$ from \cref{lem:posterior clt} and also set $$R_3 := \sumin \Big[\sumjn A_p(i,j) \psi_{j,\thetahat}'(w_j)\Big]^4.$$
    To show the first bound in \eqref{eq:R4 two terms}, apply Theorem 2.5(a) of \cite{lee2025clt} with $\lambda_n \equiv 0$ to write
    \begin{align}\label{eq:R4 expansion}
        \Big|\sumin q_i \hat{u}_i - \sumin q_i \psi'_{i,\thetahat}(w_i)\Big| &\lesssim \sqrt{R_1 R_2} + \sqrt{R_3} + \sqrt{R_2 \|A_p q\|} + R_4(\thetahat) + \big|\sumin \epsilon_i \psi_{i,\thetahat}'(w_i)\big| \notag \\
        &\lesssim R_4(\thetahat) + \big|\sumin \epsilon_i \psi_{i,\thetahat}'(w_i)\big| + o_{P}(1).
    \end{align}
    The simplification in the second line follow from %
    the bounds for $R_1, R_2$ in \eqref{eq:R_1, R_2} (and eq. (A.17) in \cite{lee2025bayesregression} to control $R_3$) to write
    $$R_1 R_2 = O_{P,X}\Big(\frac{p^3}{n^2}\Big), \quad R_3 = o_{P}(1), \quad R_2 \|A_p q\| = O_{P,X}\Big(\frac{p^3}{n^2}\Big).$$

    We show that the first and second term in \eqref{eq:R4 expansion} are both $o_P(1)$. Our main idea is to write
    $$\psi_{i,\thetahat}'(w_i) = \psi_{i,\theta_0}'(w_i) + E_i, \quad \psi_{i,\thetahat}''(w_i) = \psi_{i,\theta_0}''(w_i) + F_i,$$
    where the error terms are bounded as $|E_i|, |F_i| \lesssim \|\thetahat - \theta_0\|$ (by \cref{lem:psi' bound}).

    We first claim that the third term in \eqref{eq:R4 expansion} is $o_P(1)$. By using the triangle inequality, we have
    $$ |\sumin \epsilon_i \psi_{i,\thetahat}'(w_i)| \le  |\sumin \epsilon_i \psi_{i,\theta_0}'(w_i)| + \sumin |\epsilon_i| |E_i| = o_P(1).$$
    In the equality, the first term is controlled by \cref{lem:R4 oracle bound}. The second term is bounded via the assumption $\|\thetahat -\theta_0\|=O_P(1/\sqrt{p})$ alongside $\sumin |\epsilon_i| \le \sqrt{p} \|A_p q\| = o_P(\sqrt{p^2/n}).$
    
    For the middle term in \eqref{eq:R4 expansion}, we have
    {\small
    \begin{align*}
        &\big|\sumij A_p(i,j) q_i(\psi_{i,\thetahat}''(w_i)-\hat{\upsilon}_p) \psi_{j,\thetahat}'(w_j)\big| \\
        \le& R_4 (\theta_0) + \big|\sumij A_p(i,j) q_i(\psi_{i,\thetahat}''(w_i)-\hat{\upsilon}_p) E_j\big| + \big|\sumij A_p(i,j) q_i(E_i + \upsilon_p - \hat{\upsilon}_p ) \psi_{j,\thetahat}'(w_j)\big| \\
        =& o_P(1).
    \end{align*}}
    In the last equality, the second term is bounded using the operator norm bound
    $$|\sumij A_p(i,j) q_i(\psi_{i,\thetahat}''(w_i)-\hat{\upsilon}_p) E_j| \le \|A_p\| \sqrt{\sumjn E_j^2} = O_P\Big(\sqrt{\frac{p}{n}}\Big),$$
    and the third term also follows similarly by plugging in the size of $\hat{\upsilon}_p - \upsilon_p$ computed in part (a) of \cref{thm:EB}. The bound $R_{4}(\theta_0) = o_{P}(1)$ follows from \cref{lem:R4 oracle bound}. Hence, we have shown that the RHS of \eqref{eq:R4 expansion} is $o_P(1)$.

    Finally, the second bound in \eqref{eq:R4 two terms} follows from using \cref{lem:B3} with $m = m_0 = w_i$ and $d = d_i$ to get:
    $$\Big|\sumin q_i \big(\psi_{i,\thetahat}'(w_i) - \psi_{0,\thetahat}'(w_i)\big)\Big| \le \sumin |q_i||d_i-d_0| \le \|q\| \sqrt{\sumin (d_i-d_0)^2} = \OPX\Big(\sqrt{\frac{p}{n}}\Big).$$
    This completes the proof.
\end{proof}

\section{Proof of technical concentration lemmas}\label{sec:proof technical lemmas}

\begin{proof}[Proof of \cref{lem:mixture of Gaussians}]
    Let $\pi_m$ denote the probability of the $m$th mixture component. By the tower property and sub-Gaussianity of mixture components, we have
    $$\EE e^{\lambda X} \le \sum_{m=1}^M \pi_m e^{\lambda \mu_m + \frac{\lambda^2 K_{m}^2}{2}} \le e^{ \frac{\lambda^2 K_{\max}^2}{2}} \sum_{m=1}^M \pi_m e^{\lambda \mu_m}, \quad \forall \lambda \in \R.$$
    Let $Y$ be a categorical random variable with pmf $\PP(Y = \mu_m) = \pi_m$ for all $m \le M$. Then, by using Hoeffding's inequality, we can write
    $$\sum_{m=1}^M \pi_m e^{\lambda \mu_m} = \EE e^{\lambda Y} \le e^{\lambda \EE Y + \frac{\lambda^2 (\mu_{\max} - \mu_{\min})^2}{8}}.$$
    Finally, noting that $\EE X = \EE Y$, we have
    $$\EE e^{\lambda(X-\EE X)} \le \exp\Big[\frac{\lambda^2}{2} \Big(K_{\max}^2 + \frac{1}{4} (\mu_{\max} - \mu_{\min})^2\Big) \Big]$$
    and the proof is complete.
\end{proof}

\begin{proof}[Proof of \cref{lem:change conditioning}]
    Recall that $X_p \mid Y_p \xp 0$ if and only if $\PP(|X_p| > \epsilon \mid Y_p) \xp 0$ for any $\epsilon > 0$. The claim is immediate after using the tower property to write $\PP(|X_p| > \epsilon) = \EE \PP(|X_p| > \epsilon \mid Y_p)$, and using the bounded convergence theorem (for the if part) / Markov inequality (only if part).
\end{proof}

\begin{proof}[Proof of \cref{lem:B3}]
    It suffices to prove the claim under two cases: (1) $m = m_0$, (2) $d = d_0$. First, suppose $m = m_0=0$.
    Noting that $\EE h(V) = \EE h(\sqrt{\frac{d}{d_0}} V_0 )$, we can write
    \begin{align*}
        |\EE h(V) - \EE h(V_0)| &= |\EE h(\sqrt{\frac{d}{d_0}} V_0) - \EE h(V_0)| \\
        &\lesssim \EE\Big|\big(\sqrt{\frac{d}{d_0}} - 1\big)V_0\Big| (1+|V_0|^r) \\
        &\lesssim_r \big|\sqrt{\frac{d}{d_0}} - 1\big| (d_0^{1/2} + d_0^{(r+1)/2}) \lesssim_{d_0,r} |d - d_0|. %
    \end{align*}
    Here, the second line follows from a Taylor expansion alongside the polynomial growth bound $|h'(v)| \lesssim 1+ |v|^r$, where $v \le \max(V_0, \sqrt{d/d_0} V_0) \le \sqrt{2} V_0$.
    The square root term in the final line is simplified using $\sqrt{\frac{d}{d_0}} - 1 = \frac{d-d_0}{\sqrt{d_0}(\sqrt{d}+ \sqrt{d_0})}.$

    Next, suppose $d = d_0 =1$. Then, by similar computations using the bound $$\sup_{v \in [v_0+(m-m_0), v_0]} |h'(v)| \lesssim \sup_{v \in [v_0+(m-m_0), v_0]} \big[1+|v|^r \big] \lesssim_r 1 + |v_0|^r + |m-m_0|^r,$$ we have
    \begin{align*}
        |\EE h(V) - \EE h(V_0)| &= |\EE h(V_0 + (m-m_0) ) - \EE h(V_0)| \\
        &\le |m-m_0| (1+\EE |V_0|^r + |m-m_0|^r) \\
        &\lesssim_r |m-m_0| (1+ m_0^r + |m-m_0|^r).
    \end{align*}
\end{proof}

Finally, we prove \cref{lem:A7} using another technical moment bound as a separate lemma, which will be proved at the end of this appendix.
\begin{lemma}\label{lem:F_i^2 moment}
    Assume $\EEB (\beta_1^\star)^4<\infty$ and let $g$ be a function such that $\EEB g(\beta_1^\star)^2<\infty$. Then, it holds that 
    $$\EE \Big[\frac{n}{p^2}\sumin \mathcal{F}_i^2 g(\beta_i^\star) - \EEB g(\beta_1^\star) \EEB (\beta_1^\star)^2 \Big]^2 = O\Big(\frac{1}{p} + \frac{p}{n}\Big).$$
\end{lemma}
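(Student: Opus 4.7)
The plan is to establish the $L^2$ bound via a mean-variance split. Writing $T := \frac{n}{p^2}\sumin \mathcal{F}_i^2 g(\beta_i^\star)$ and denoting the target by $c$, decompose $\EE(T - c)^2 = (\EE T - c)^2 + \Var(T)$ and bound each piece. First I would compute $\EE T$ exactly: since $X$ is independent of $\beta^\star$ and a direct calculation from $A_p(i,j)=-\frac{1}{n\sigma^2}\sum_k \eta_{k,i}\eta_{k,j}$ gives $\EE_X[A_p(i,j)A_p(i,j')] = \delta_{j,j'}/(n\sigma^4)$ for $j,j'\neq i$, one obtains $\EE_X[\mathcal{F}_i^2\mid \beta^\star] = \frac{1}{n\sigma^4}\sum_{j\neq i}(\beta_j^\star)^2$ exactly. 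Averaging over $\beta^\star$ yields $\EE T = \frac{p-1}{p}c$, so the squared bias is $O(1/p^2)$.

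Next I would analyze $\Var(T)$ via the law of total variance, $\Var(T) = \Var_{\beta^\star}\!\big(\EE_X[T\mid\beta^\star]\big) + \EE_{\beta^\star}\!\big[\Var_X(T\mid\beta^\star)\big]$. The first term equals $\Var_{\beta^\star}\!\big(\frac{1}{p^2\sigma^4}\sum_{i\neq j}g(\beta_i^\star)(\beta_j^\star)^2\big)$, a sum over index pairs in the i.i.d.\ sequence $\{\beta_\ell^\star\}$ whose variance is $O(1/p)$ by elementary bookkeeping (disjoint index pairs contribute zero covariance, and the $O(p^3)$ overlapping pairs each contribute $O(1)$ using the hypotheses $\EEB g^2<\infty$ and $\EEB \beta^{\star 4}<\infty$). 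For the second term, I would expand $\Var_X(T\mid\beta^\star) = \frac{n^2}{p^4}\sum_{i,i'} g(\beta_i^\star)g(\beta_{i'}^\star)\Cov_X(\mathcal{F}_i^2,\mathcal{F}_{i'}^2\mid\beta^\star)$ and treat the diagonal $i=i'$ and off-diagonal $i\neq i'$ separately. For the diagonal, the representation $\mathcal{F}_i = -\frac{1}{n\sigma^2}\sum_k \eta_{k,i}Y_k$ with $Y_k := \sum_{j\neq i}\eta_{k,j}\beta_j^\star$ independent of $\eta_{\cdot,i}$, combined with sub-Gaussianity of $\eta$, gives $\Var_X(\mathcal{F}_i^2\mid\beta^\star) \lesssim \|\beta_{-i}^\star\|^4/(n^2\sigma^8)$, so the diagonal contribution to $\Var(T)$ is $O(1/p)$ after averaging over $\beta^\star$.

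The off-diagonal covariance for $i\neq i'$ is the main technical step. I would expand $\mathcal{F}_i^2\mathcal{F}_{i'}^2$ as a quadruple sum in $A_p$ and evaluate $\EE_X[A_p(i,j_1)A_p(i,j_2)A_p(i',j_3)A_p(i',j_4)]$ via Wick-style pairings on the underlying $\eta_{k,\ell}$ entries: among the three pairings, the product-of-pairs term $\delta_{j_1,j_2}\delta_{j_3,j_4}/(n\sigma^4)^2$ cancels exactly against $\EE_X[\mathcal{F}_i^2\mid\beta^\star]\EE_X[\mathcal{F}_{i'}^2\mid\beta^\star]$, while the two cross-pairings collapse onto the single configuration $(j_1,j_2,j_3,j_4)=(i',i',i,i)$ with joint coefficient $2/(n\sigma^4)^2$. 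The residual covariance is therefore of order $\beta_i^{\star 2}\beta_{i'}^{\star 2}/(n^2\sigma^8)$, and its summed contribution across $i\neq i'$ is $O(1/p^2)$; non-Gaussian fourth-cumulant corrections to the Wick expansion (valid under our sub-Gaussian hypothesis on $\eta$) together with the uniform $\beta^\star$-moment bounds contribute residues of size at most $O(p/n)$, which accounts for the second term in the claimed rate. Collecting the three contributions yields $\EE(T-c)^2 = O(1/p + p/n)$. The main obstacle will be the careful enumeration of Wick pairings under the indexing constraints $j_1,j_2\neq i$ and $j_3,j_4\neq i'$, together with a clean treatment of the non-Gaussian cumulant corrections, which are not sharp under sub-Gaussianity alone and require the $\beta^\star$-moment hypothesis to close.
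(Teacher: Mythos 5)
Your proposal is correct and takes essentially the same route as the paper: the identical bias--variance split with the exact computation $\EE_X[\mathcal{F}_i^2\mid \beta^\star]=\tfrac{1}{n}\sum_{j\neq i}(\beta_j^\star)^2$ (up to the $\sigma$ normalization), the law of total variance, an $O(1/p)$ pair-counting bound for the $\beta^\star$-variance of the conditional mean, and a moment-pairing enumeration of the conditional variance in which the only surviving two-distinct-row configuration for $i\neq i'$ is $(j_1,j_2,j_3,j_4)=(i',i',i,i)$, with the single-row higher-moment terms crudely absorbed into the stated $O(1/p+p/n)$ rate. The paper performs the same case analysis directly for sub-Gaussian entries (enumerating by the number of distinct row indices $k$) rather than phrasing it as Gaussian Wick pairings plus cumulant corrections, but the underlying computations and resulting bounds coincide.
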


\begin{proof}[Proof of \cref{lem:A7}]
    (a) Let $\Phi$ be any $\mathcal{C}^3$ function such that $|\Phi'(m)|, |\Phi''(m)|, |\Phi'''(m)| = \mcp(r)$ for some constant $r>0$. 
    By a Taylor expansion, we can write
    \begin{align}
        &\sumin \Phi(d_0 \beta_i^\star - \mathcal{F}_i) - \sumin \Phi(d_0 \beta_i^\star) \notag \\
        =& - \sumin \F_i \Phi'(d_0\beta_i^\star) + \frac{1}{2} \sumin \F_i^2 \Phi''(d_0 \beta_i^\star) - \frac{1}{6} \sumin \F_i^3 \Phi'''(d_0 \beta_i^\star + \xi_i),\label{eq:taylor expansion phi}
    \end{align}
    where $\xi_i \in (-\F_i,0)$. We separately control the second moment of each term in the RHS.
    
    We start by control the first term in \eqref{eq:taylor expansion phi}. By computing the moment (c.f. Lemma A.9(a) in \cite{lee2025bayesregression}), we have
    $$\EE_X \Big[ \Big[\sumin \F_i \Phi'(d_0\beta_i^\star)\Big]^2 \mid \bbst\Big] \lesssim \frac{\|\bbst\|^2 \sumin \Phi'(d_0\beta_i^\star)^2 }{n} \lesssim \frac{\|\bbst\|^2 \sumin |\beta_i^\star|^{2r} }{n} = O_{P,\bbst}\Big(\frac{p^2}{n}\Big).$$
    The last bound used assumption R5 that $\beta_i^\star \stackrel{d}{\equiv} B_{\theta_0}$ has finite moments. Hence, we have
    $$\sumin \F_i \Phi'(d_0\beta_i^\star) = O_P\Big(\frac{p}{\sqrt{n}}\Big).$$

    Next, we move on to the second term in \eqref{eq:taylor expansion phi}. By the moment bound in \cref{lem:F_i^2 moment}, we can write 
    $$\frac{1}{2} \sumin \F_i^2 \Phi''(d_0\beta_i^\star) = \frac{p^2}{n} \kappa_{\Phi}(\theta_0) + O_P\Big(\frac{p^{3/2}}{n} + \frac{p^{5/2}}{n^{3/2}}\Big).$$
    Note that the moment requirements in \cref{lem:F_i^2 moment} are immediate by R5.
    
    Finally, we control the third term in \eqref{eq:taylor expansion phi}. By Holder's inequality (with exponents $4/3, 4$), using the growth condition on $\Phi'''$, and recalling $|\xi_i| \le |\F_i|$, we have
    {\small
    \begin{align*}
        \EE_X \Big|\sumin \F_i^3 \Phi'''(d_0 \beta_i^\star + \xi_i) \Big| \le  \Big[\EE_X \sumin \F_i^4 \Big]^{3/4} \Big[\EE_X \sumin \big[ |d_0\beta_i^\star|^{4r} + |\F_i|^{4r} \big] \Big]^{1/4} = O_{P,\bbst}\Big(\frac{p^{5/2}}{n^{3/2}}\Big).
    \end{align*}
    }
    Here, the last bound follows as (i) $\EE_X \Big[\sumin \F_i^4 \Big] = O_{P,\bbst}(p^3/n^2)$ by Lemma A.9(b) in \cite{lee2025bayesregression}, and (ii) $\EE_X \Big[ \sumin |d_0\beta_i^\star|^{4r} + |\F_i|^{4r} \Big] = O_{P,\bbst}(p)$ by the moment assumption for $\beta_i^\star$ and the moment bounds for $\F_i$ in \cref{lem:mgf of F_i}.
    
    The final bound follows by combining the above bounds and noting that 
    $$\frac{p}{\sqrt{n}} + \frac{p^{5/2}}{p^{3/2}}\gg \frac{p^{3/2}}{n}.$$
    Note that the final form in \cref{lem:A7}, which is conditional on $X$, follows from \cref{lem:change conditioning}.
\end{proof}

\begin{proof}[Proof of \cref{lem:mgf of F_i}]
    Throughout the proof, all expectations will be over $X$, and always be conditional on $\bbst$.
    Fix any $i \le p$. For $k \le n$, let $Z_{k,i} := \sqrt{n} X_{k,i}$ and recall from \cref{assmp:random design} that $Z_{k,i}$ are mean zero sub-Gaussian random variables with sub-Gaussian norm bounded by $K>0$. Let us assume
    By definition, we can write $\F_i = \frac{1}{n} \sum_{k=1}^n \sum_{1 \le j \neq i \le p} Z_{k,i} Z_{k,j} \beta_j^\star$. Setting $Y_{k,i} := \sum_{1 \le j \neq i \le p} Z_{k,j} \beta_j^\star$, $Y_{k,i}$ is a sub-Gaussian random variable whose norm is bounded by $K\|\beta^\star\|$ by Hoeffding's inequality. 
    
    Now, the product property for sub-Gaussians (e.g. Lemma 2.8.6 in \cite{vershynin2018high}) implies that $Z_{k,i} Y_k$ is sub-exponential with the corresponding sub-exponential norm bounded by $K^2\|\bbst\|$. This can be equivalently written out as
    $$\EE e^{\lambda' Z_{k,i} Y_k} \le e^{K^4\|\beta^\star\|^2 \lambda'^2}, \quad \forall \,\, |\lambda'| \le \frac{1}{K^2 \|\beta^\star\|}.$$
    Now, by spelling out $\F_i = \frac{1}{n} \sum_{k=1}^n Z_{k,i} Y_{k,i}$ and setting $\lambda = \lambda'n$, we have
    $$\EE e^{\lambda \F_i} = \prod_{i=1}^n \EE e^{\frac{\lambda}{n} Z_{k,i} Y_k} \le e^{K^4 \|\beta^\star\|^2 \lambda^2/n}$$
    as long as $|\lambda|\le \frac{n}{K^2 \|\beta^\star\|}$. The final conclusion follows by noting that $\|\bbst\| = \OPB(\sqrt{p}).$
\end{proof}

\begin{proof}[Proof of \cref{lem:F_i^2 moment}]
    For notational convenience, define a $n \times p$ matrix $Z$ with entries $Z_{k,i} := \sqrt{n} X_{k,i}$, so that each $Z_{k,i}$ are independent sub-Gaussians with mean 0 and variance 1.
    We will write $\EE_{\bZ}$ and $\EEB$ for expectations with respect to the matrix $Z$ and $\bbeta$. Observe that 
    \begin{align}\label{eq:zall}
        \EE_{\bZ} \mathcal{F}_i^2=\EE_{\bZ}\left[\frac{1}{n^2}\sum_{k_1,k_2}\sum_{j_1,j_2\neq i} Z_{k_1,i}Z_{k_1,j_1}Z_{k_2,i}Z_{k_2,j_2}\beta_{j_1}^\star \beta_{j_2}^\star\right]=\frac{1}{n}\sum_{j\neq i} (\beta_j^\star)^2.
    \end{align}
    By using \eqref{eq:zall}, we get: 
    \begin{align*}%
        \frac{n}{p^2}\EEB\EE_{\bZ}\sumin \mathcal{F}_i^2 g(\beta_i^\star) &= \frac{1}{p^2}\EEB\sum_{i\neq j}(\beta_j^\star)^2 g(\beta_i^\star) \\ &=\frac{p-1}{p}\EEB(\beta_1^\star)^2\EEB g(\beta_1^\star)\to \EEB (\beta_1^\star)^2 \EEB g(\beta_1^\star).
    \end{align*}
    Having established the limit for the mean, we proceed to show the variance converges to $0$. Using the law of total variance, we will separately show that 
    \begin{align}\label{eq:tsh}
    \frac{n^2}{p^4}\EEB\Var\left(\sumin \mathcal{F}_i^2 g(\beta_i^\star)\mid \bbeta\right)\to 0 \quad \mbox{and} \quad \frac{n^2}{p^4}\Var\EE_{\bZ}\left(\sumin \mathcal{F}_i^2 g(\beta_i^\star)\mid \bbeta\right)\to 0.
    \end{align}
    By \eqref{eq:zall}, we have: 
    \begin{align*}
        &\;\;\;\;\frac{n^2}{p^4}\Var\EE_{\bZ}\left(\sumin \mathcal{F}_i^2 g(\beta_i^\star)\mid \bbeta\right) \nonumber \\ &=\frac{1}{p^4}\Var \left(\sum_{i\neq j} (\beta_j^\star)^2 g(\beta_i^\star)\right) \nonumber \\ &=\frac{1}{p^4}\sum_{i\neq j} \Var(g(\beta_i^\star)(\beta_j^\star)^2) + \frac{1}{p^4}\sum_{i_!\neq j_1} \sum_{i_2\neq j_2} \Cov(g(\beta_{i_1}^\star)(\beta_{j_1}^\star)^2, g(\beta_{i_2}^\star) (\beta_{j_2}^\star)^2) \nonumber \\ &\lesssim p^{-2}+p^{-1}\to 0,
    \end{align*}
    where the last inequality follows from the fact that the covariance term is $0$ if all the indices $i_1,i_2,j_1,j_2$ are distinct. For the other term in \eqref{eq:tsh}, let us define 
    $$A(i,j_1,j_2):=\beta_{j_1}^\star \beta_{j_2}^\star g(\beta_{i}^\star), \,\,\,\, 1\le i,j_1,j_2\le p.$$
    We then note that 
    \begin{align*}
        &\;\;\;\;\EEB\Var\left(\sumin \mathcal{F}_i^2 g(\beta_i^\star)\mid \bbeta\right) \\ &=\frac{1}{n^4}\sum_{k_1,k_2=1}^n \sum_{i=1}^p \sum_{j_1,j_2\neq i} \EEB [A^2(i,j_1,j_2)]\Var(Z_{k_1,i}Z_{k_1,j_1}Z_{k_2,i}Z_{k_2,j_2}) \\ &\qquad +\frac{1}{n^4}\sum_{k_1,k_2,k_3,k_4=1}^n \sum_{i_1,i_2=1}^p \sum_{\substack{j_1,j_2\neq i_1, \\ j_3,j_4\neq i_2}} \EEB [A(i_1,j_1,j_2)A(i_2,j_3,j_4)]\Cov(Z_{k_1,i_1} Z_{k_1,j_1}Z_{k_2,i_1}Z_{k_2,j_2}, \\ &\qquad \qquad Z_{k_3,i_2} Z_{k_3,j_3}Z_{k_4,i_2}Z_{k_4,j_4}).
    \end{align*}
    As $\EE A^2(i,j_1,j_2)<\infty$ uniformly, for $j_1,j_2\neq i$, we have: 
    $$\frac{1}{n^4}\sum_{k_1,k_2=1}^n \sum_{i=1}^p \sum_{j_1,j_2\neq i} \EEB [A^2(i,j_1,j_2)]\Var(Z_{k_1,i}Z_{k_1,j_1}Z_{k_2,i}Z_{k_2,j_2})=\frac{p^3}{n^2} = o(p^4/n^2).$$
    To bound the covariance part, we will first isolate terms for which 
    \begin{align*}
        \Cov(Z_{k_1,i_1} Z_{k_1,j_1}Z_{k_2,i_1}Z_{k_2,j_2},Z_{k_3,i_2} Z_{k_3,j_3}Z_{k_4,i_2}Z_{k_4,j_4})
    \end{align*}
    is not equal to $0$. 
    If the set $\{k_1,k_2,k_3,k_4\}$ consists of $3$ or more distinct elements, then clearly the above covariance term equals $0$. If $|\{k_1,k_2,k_3,k_4\}|=2$, once again if $k_1=k_2$, $k_3=k_4$, but $k_1\neq k_3$, then the covariance term will be $0$. So let us consider the case $k_1=k_3$, $k_2=k_4$. In that case, the covariance term reduces to 
    $$\Cov(Z_{k_1,i_1}Z_{k_1,j_1}Z_{k_2,i_1}Z_{k_2,j_2},Z_{k_1,i_2}Z_{k_1,j_3}Z_{k_2,i_2}Z_{k_2,j_4}).$$
    Now if $i_1\neq i_2$, then simple calculation shows that the above covariance term can be non-zero only when $j_3=j_4=i_1$ and $j_1=j_2=i_2$. The contribution of such terms is given by 
    \begin{align*}
        \frac{1}{n^4}\sum_{k_1\neq k_2}\sum_{i_1\neq i_2}\EEB[A(i_1,i_2,i_2)A(i_2,i_1,i_1)]\Var(Z_{k_1,i_1}Z_{k_1,i_2}Z_{k_2,i_1}Z_{k_2,i_2})\lesssim \frac{p^2}{n^2}=o(p^4/n^2).
    \end{align*}
    Next suppose $i_1=i_2$. Then we can check that the covariance term will be $0$ if either $j_1\neq j_3$ or $j_2\neq j_4$. Therefore, to get non-zero covariance we must have $j_1=j_3$ and $j_2=j_4$. The contribution of such terms is given by 
 \begin{align*}
     \frac{1}{n^4}\sum_{k_1\neq k_2}\sum_{i=1}^p\sum_{j_1,j_2\neq i}\EEB[A^2(i,j_1,j_2)]\Var(Z_{k_1,i}Z_{k_1,j_1}Z_{k_2,i}Z_{k_2,j_2}) = \frac{p^3}{n^2} = o(p^4/n^2).
 \end{align*}
 The final remaining case is when $|\{k_1,k_2,k_3,k_4\}|=1$. In this case, it is not hard to check that unless $|\{i_1,i_2,j_1,j_2,j_3,j_4\}|\le 4$, the covariance term will be $0$. The contribution of such terms is given by 
 {\small
 \begin{align*}
 &\;\;\;\;\frac{1}{n^4}\sum_{k=1}^n \sum_{\substack{i_1,i_2,(j_1,j_2)\neq i_1, (j_3,j_4)\neq i_2, \\ |\{i_1,i_2,j_1,j_2,j_3,j_4\}|\le 4}} \EEB[A(i_1,j_1,j_2)A(i_2,j_3,j_4)]\Cov(Z_{k,i_1}^2 Z_{k,j_1}Z_{k,j_2},Z_{k,i_2}^2 Z_{k,j_3}Z_{k,j_4})\\ &\lesssim \frac{p^4}{n^3}=o(p^4/n^2).
 \end{align*}
 }
 This establishes \eqref{eq:tsh} and completes the proof.
\end{proof}

\section{Simulation details}\label{sec:sim details}
\subsection{Parameter space}
For each parametric family defined in \cref{sec:simulations} from the main text, we consider the following compact parameter space $\Theta$:
\begin{enumerate}[(1)]
    \item for $\textsf{Ber}(\pi)$, consider $\pi \in [0,1]$,
    \item for $\textsf{Spike-Slab}(\pi,\tau^2)$ consider $\pi \in [0, 1], \tau \in [0.2, 5]$,
    \item for $\textsf{location-GMM}(\theta_1,\theta_2)$, consider $\theta_1, \theta_2 \in [-2, 2]$.
\end{enumerate}

\subsection{Implementation details}
The three methods are implemented as follows.
\begin{itemize}
    \item \textbf{Variational EB:} We computed the vEB estimator by using the \texttt{optimize} (one-dimensional) and \texttt{optim} (higher-dimensional) functions in \texttt{R} to solve the minimization \eqref{eq:est def}. For the latter, we used the ``L-BFGS-B'' method with the above parameter space constraints.

    \item \textbf{Method of Moments:} For each parametric family, we solve the following moment equations using the LSE $\hat{\beta} = (X^\top X)^{-1} X^\top y$:
    \begin{align*}
        &\EE_{\textsf{Ber}} \beta = \pi, 
        \qquad \EE_{\textsf{Spike-Slab}} \big[\beta^2,~ \beta^4\big] = (1-\pi) \big[\tau^2,~ 3 \tau^4 \big], \\
        &\EE_{\textsf{location-GMM}} \big[\beta,~ \beta^2\big] = \frac{1}{2} \big[\theta_1+\theta_2,~ \theta_1^2 + \theta_2^2 + 1.25 \big].
    \end{align*}
    The final estimates are thresholded to take values in the compact parameter space.
    
    \item \textbf{Langevin diffusion:} We adaptively sample the prior parameters $\theta = (\theta_1, \theta_2)$ and regression coefficients $\beta$ $4000$ times via the discretized Langevin algorithm in \cite{fan2025dynamical}. The algorithm is initialized under the constraints $\theta_1 < \theta_2$, with the default learning rate $10^{-2}$ (for both $\theta, \beta$). The final estimates are thresholded to take values in the compact parameter space.
\end{itemize}

The Mean-Field approximation in \eqref{eq:M_theta(u) definition} is computed by iterating the fixed point equation for $u$ in \cref{def:fixed point}.
To evaluate coverage guarantees under the Oracle posterior $\PP_{\theta_0}$, we draw samples from $\PP_{\theta_0}$ as follows.
\begin{itemize}
    \item \textbf{Bernoulli}: We use the \texttt{IsingSampler} package in \texttt{R} \citep{isingsampler}, using the Metropolis-Hastings algorithm. We collect 500 samples.
    \item \textbf{Spike-Slab, GMM}: For both models, we implemented a parallel Gibbs sampler (initialized at the true coefficients $\beta^\star$) and collected 5000 samples. We used the last half of the samples to evaluate the coverage probabilities.
\end{itemize}

\subsection{Computation time} We report the average computation time per scenario in \cref{tab:time} (which corresponds to \cref{tab:mse_theta}). Across all settings, the vEB estimator is the fastest. In particular, the MoM estimator suffers in high-dimensions, which is a consequence of having to evaluate an matrix inversion to compute the least squares estimator. In contrast, the vEB estimator just has to solve a fixed-dimensional optimization.

\begin{table}[H]
    \centering
    \begin{tabular}{c|cc|cc|ccc}
    \toprule
    Prior & \multicolumn{2}{c}{\textsf{Ber}} & \multicolumn{2}{c}{\textsf{Spike-Slab}} & \multicolumn{3}{c}{\textsf{location-GMM}} \\
    $p$ \textbackslash{} Algo & vEB & MoM & vEB & MoM & vEB & MoM    & Langevin \\
    \midrule
    25 & 0.001 & 0.001 & 0.001 & 0.001 & 0.001 & 0.001 & 0.21 \\ %
    50 & 0.001 & 0.009 & 0.002 & 0.005 & 0.001 & 0.006 & 0.20 \\ %
    100& 0.005& 0.069 & 0.009 & 0.103 & 0.007 & 0.093 & 0.34 \\ %
    200& 0.059 & 1.103 & 0.053 & 1.278 & 0.051 & 1.580 & 0.58 \\ %
\bottomrule
\end{tabular}
    \caption{Elapsed time (in seconds) for varying $p$ and $n = p^2$ (averaged over 400 replications). The Langevin estimator is implemented in \texttt{Python} while all other estimators (vEB and MoM) are implemented in \texttt{R}.}
    \label{tab:time}
\end{table}

\subsection{Additional results}
In \cref{tab:mse_theta_second_table}, we additionally report the accuracy of the MoM and Langevin estimators under an uninformative random initialization. As mentioned as a footnote in the main paper, these estimators suffer from fundamental non-identifiability and multiple local maximizers, respectively. We see that the MoM estimator results in an inconsistent result whereas the Langevin estimator only mildly suffers compared to the analogous results in \cref{tab:mse_theta}. 

\cref{tab:mse_theta_second_table} also reports simulations results under the heavy-tailed Cauchy prior with location parameter $\theta$. While our theory does not allow this due to a violation of the moment condition R5, the results indicate consistency but with an empirical convergence rate slower than $\sqrt{p}$, and it would be interesting to investigate whether the convergence rate fundamentally suffers under heavy-tailed priors.
\begin{table}[H]
    \centering
    \begin{tabular}{c|cc|c}
    \toprule
    Prior & \multicolumn{2}{c}{\textsf{location-GMM}} & {\textsf{Cauchy}$(\theta,1)$} \\
    $p$ \textbackslash{} Algo & MoM& Langevin & vEB \\
    \midrule
25 & 0.587 & 0.730 & 0.832 \\
50 & 0.623 & 0.422 & 0.584 \\
100& 0.596 & 0.192 & 0.366 \\
200& 0.589 & 0.093 & 0.342 \\
\bottomrule
\end{tabular}
    \caption{MSE $ = \|\thetaveb - \theta_0\|^2$ for varying $p$ and $n = p^2$ (averaged over 400 replications). The \textsf{location-GMM} results are computed completely uninformatively.}
    \label{tab:mse_theta_second_table}
\end{table}

\end{appendix}

\end{document}